\numberwithin{equation}{section}
\newtheorem{theorem}{Theorem}[section]
\newtheorem{lemma}[theorem]{Lemma}
\newtheorem{proposition}[theorem]{Proposition}
\newtheorem{corollary}[theorem]{Corollary}
\theoremstyle{definition}
\newtheorem{definition}[theorem]{Definition}
\newtheorem{definition and remark}[theorem]{Definition and Remark}
\newtheorem{remark}[theorem]{Remark}
\newtheorem{remark and definition}[theorem]{Remark and Definition}
\newtheorem{remark and notation}[theorem]{Remark and Notation}
\newtheorem{notation and remark}[theorem]{Notation and Remark}
\newtheorem{notation and convention}[theorem]{Notation and Convention}
\newtheorem{notation and reminder}[theorem]{Notation and Reminder}
\newtheorem{example}[theorem]{Example}
\newtheorem{construction and examples}[theorem]{Construction and Examples}
\newtheorem{problem}[theorem]{Problem}
\newtheorem{problem and remark}[theorem]{Problem and Remark}
\newcommand\Proj{\operatorname{Proj}}
\newcommand\Hom{\operatorname{Hom}}
\newcommand\Tor{\operatorname{Tor}}
\newcommand\depth{\operatorname{depth}}
\newcommand\codim{\operatorname{codim}}
\newcommand\reg{\operatorname{reg}}
\newcommand\Ker{\operatorname{Ker}}
\newcommand\Coker{\operatorname{Coker}}
\newcommand\e{\operatorname{e}}
\newcommand\Nor{\operatorname{Nor}}
\newcommand\Rad{\operatorname{Rad}}
\newcommand\Sec{\operatorname{Sec}}
\newcommand\Sing{\operatorname{Sing}}
\newcommand\sreg{\operatorname{sreg}}
\newcommand\mult{\operatorname{mult}}
\newcommand\sat{\operatorname{sat}}
\newcommand\Reg{\operatorname{Reg}}
\newcommand\CM{\operatorname{CM}}
\newcommand\length{\operatorname{length}}
\newcommand\T{\operatorname{T}}
\newcommand\en{\operatorname{end}}
\newcommand\Char{\operatorname{Char}}
\begin{document}

\title[PROJECTIVE SURFACES OF MAXIMAL SECTIONAL REGULARITY]
         {PROJECTIVE SURFACES OF MAXIMAL SECTIONAL REGULARITY}

\author{Markus BRODMANN, Wanseok LEE, Euisung PARK, Peter SCHENZEL}

\address{Universit\"at Z\"urich, Institut f\"ur Mathematik, Winterthurerstrasse 190, CH -- Z\"urich, Switzerland}
\email{brodmann@math.unizh.ch}

\address{Korea Institute for Advanced Study, School of Mathematics, 85 Hoegiro, Dongdemun-gu, Seoul 130-722, Republic of Korea}
\email{wslee@kias.re.kr}

\address{Korea University, Department of Mathematics, Anam-dong, Seongbuk-gu, Seoul 136-701, Republic of Korea}
\email{euisungpark@korea.ac.kr}

\address{Martin-Luther-Universit\"at Halle-Wittenberg,
Institut f\"ur Informatik, Von-Secken\-dorff-Platz 1, D -- 06120 Halle
(Saale), Germany}
\email{schenzel@informatik.uni-halle.de}

\date{Z\"urich, 10. May 2013}

\subjclass[2]{Primary: 14H45, 13D02.}

\keywords{projective surface, outer projection, rational normal scroll, Betti table, Castelnuovo-Mumford regularity, sectional regularity, extremal secant lines,
extremal varieties, divisors }

\begin{abstract} We study projective surfaces $X \subset \mathbb{P}^r$ (with $r \geq 5$) of maximal sectional regularity and degree $d > r$, hence surfaces for which the
Castelnuovo-Mumford regularity $\reg(C)$ of a general hyperplane section curve $C = X \cap \mathbb{P}^{r-1}$ takes the maximally possible value $d-r+3$. We show that each of
these surfaces is either a cone over a curve $C \subset \mathbb{P}^{r-1}$ of maximal regularity or else a birational outer linear projection of a smooth rational surface
scroll $\widetilde{X} \subset \mathbb{P}^{d+1}$. We prove that the Castelnuovo-Mumford regularity of these surfaces satisfies the equality $\reg(X) = d-r+3$ and we compute
or estimate various of their cohomological invariants as well as their Betti numbers. We study the the extremal variety $\mathbb{F}(X)$ of these surfaces $X$, that is the
closed union of the extremal secant lines of all smooth hyperplane section curves of $X$. We show that $\mathbb{F}(X)$ is either a plane or that otherwise $r =5$ and
$\mathbb{F}(X)$ is a rational smooth threefold scroll $S(1,1,1) \subset \mathbb{P}^5$.
\end{abstract}

\maketitle
\thispagestyle{empty}

\section{Introduction}
\label{1. Introduction}

\noindent For a non-degenerate irreducible projective variety $X
\subset \mathbb{P}^r$ defined over an algebraically closed field
$K$, there are various interesting questions regarding the syzygies
of the homogeneous vanishing ideal $I_X$ of $X$. One of the
prominent classical problems in this context is to find (least)
upper bounds on the number $m$ for which the following properties
hold:
\begin{itemize}
\item[${\rm (A_m)}$] The hypersurfaces of degree $m$ cut out a complete linear system on $X$.
\item[${\rm (B_m)}$] $X$ is cut out in $\mathbb{P}^r$ by hypersurfaces of degree $m$ and $I_X$ is generated by homogeneous polynomials of degree $\leq m$.
\end{itemize}
This bounding problem was completely solved in 1893 for smooth curves $X$ in complex projective $3$-space by Castelnuovo \cite{C}. In 1966, Mumford \cite{M} introduced the
concept of Castelnuovo regularity (later called Castelnuovo-Mumford regularity) and reformulated the above problem as a bounding problem for this new invariant
in terms of the degree, the codimension, the Hilbert coefficients or other projective invariants of $X$. Recall that Mumford defined the variety
$X \subset \mathbb{P}^r$ to be $m$-\textit{regular} if its sheaf of vanishing ideals $\mathcal{J}_X \subseteq \mathcal{O}_{\mathbb{P}^r}$ satisfies the following
cohomological vanishing condition
$$H^i(\mathbb{P}^r, \mathcal{J}_X(m-i)) = 0 \mbox{ for all } i \geq 1.$$
Keep in mind, that $X$ is $k$-regular for all $k \geq m$ if it is
$m$-regular. This latter observation gives justification to define
the \textit{Castelnuovo-Mumford regularity} $\reg(X)$ of $X$ as the
least integer $m$ such that $X$ is $m$-regular. It is well known
that the $m$-regularity of $X$ implies the properties ${\rm
(A_{m-1})}$ and ${\rm (B_m)}$. Moreover, property ${\rm (B_m)}$ has
the geometric consequence that each $(m+1)$-secant line to $X$ is
actually contained in $X$.

A well known conjecture due to Eisenbud and Goto (see \cite{EG})
says that
$$\reg(X) \leq d-c+1,$$
where $d$ is the degree and $c$ is the codimension of $X \subset
\mathbb{P}^r$. This conjecture has been proved so far only for
irreducible but not necessarily smooth curves $X \subset
\mathbb{P}^r$ by Gruson-Lazarsfeld-Peskine \cite{GruLPe}, and for
smooth complex surfaces by Pinkham \cite{Pi} and Lazarsfeld
\cite{L}. Moreover in \cite{GruLPe} the curves in $\mathbb{P}^r$,
whose regularity takes the maximally possible value $d-r+2$ are
classified: they are either of degree $\leq r+1$ or else smooth
rational curves having a $(d-r+2)$-secant line.

An attempt to push further this latter classification is to study
arbitrary \textit{varieties of extremal regularity}, that is
non-degenerate irreducible varieties $X \subset \mathbb{P}^r$ of
codimension $c$ and degree $d$ which satisfy the inequality $\reg(X)
\geq d-c+1$. This idea is a basic guideline for our paper.

To present our approach in more detail, we suppose that the degree
$d$ and the codimension $c$ of our non-degenerate irreducible
variety $X \subset \mathbb{P}^r$ satisfy $3 \leq c < r$ and $d >
c+2$. A $(d-c+1)$-secant line $\mathbb{L}$ to $X$, which is not
contained in $X$ is called an \textit{extremal secant line} to $X$.
If $X$ admits such an extremal secant line, it must be of extremal
regularity. So, a possible generalization of the classification of
curves of extremal (and hence maximal) regularity given in
\cite{GruLPe}, is to classify varieties of extremal regularity by
the ``size" of their set
$$\Sigma^{\circ}_{d-c+1}(X) := \{ \mathbb{L} \in \mathbb{G}(1,\mathbb{P}^r) \mid \#(\mathbb{L} \cap X)= d-c+1 \}$$
of extremal secant lines, where $\mathbb{G}(k, \mathbb{P}^r)$ denotes the Grassmannian of $k$-spaces $\mathbb{P}^k \subset \mathbb{P}^r$ and $\#Z$ denotes the length of
the Noetherian scheme $Z$. As $\Sigma^{\circ}_{d-c+1}(X)$ is locally closed in $\mathbb{G}(1,\mathbb{P}^r)$, its size is naturally measured by its dimension
$$\mathfrak{d}(X) := \dim(\Sigma^{\circ}_{d-c+1}(X)) = \dim\big(\overline{\Sigma^{\circ}_{d-c+1}(X)}\big).$$
The \textit{special extremal secant lines} to $X$ -- hence the extremal secant lines to subspace section curves of $X$ which have maximal regularity -- are of particular
interest for our investigation. To make this more explicit, we write $\mathbb{P}\mathbb{U}(X)$ for the set of all subspaces $\mathbb{E} \in \mathbb{G}(c+1,\mathbb{P}^r)$
for which $X \cap \mathbb{E} \subset \mathbb{E} = \mathbb{P}^{c+1}$ is a (non-degenerate integral) curve of maximal regularity. Then -- as $c+1 \geq 4$ -- for
each $\mathbb{E} \in \mathbb{P}\mathbb{U}(X)$, the curve $X \cap \mathbb{E}$ has a unique $(d-c+1)$-secant line $\mathbb{L}_{\mathbb{E},X} \in \Sigma^{\circ}_{d-c+1}(X)$.
We now consider the set
$${}^*\Sigma^{\circ}_{d-c+1}(X) := \{ \mathbb{L}_{\mathbb{E},X} \mid \mathbb{E} \in \mathbb{P}\mathbb{U}(X)\}$$
of special extremal secant lines to $X$ and measure its size by the dimension of its closure, thus by
$${}^*\mathfrak{d}(X) := \dim\big(\overline{{}^*\Sigma^{\circ}_{d-c+1}(X)}\big).$$
Clearly ${}^*\mathfrak{d}(X)$ also measures the size of the set
$\mathbb{P}\mathbb{U}(X)$ of ''good" $c+1$ subspaces $\mathbb{E}$ of
$\mathbb{P}^r$, and one expects that $X$ behaves nicely if this
latter set is ``big" -- that is contains a dense open subset of
$\mathbb{G}(c+1,\mathbb{P}^r)$. In this case, we say that $X$ is of
\textit{maximal sectional regularity}. Our first main result relates
the above concepts as follows (see Theorem~\ref{2.3'' Theorem}).

\begin{theorem}\label{1.1 Theorem}
Assume that $1 \leq t \leq r-3$ and let $X \subset \mathbb{P}^r$ be a non-degenerate irreducible projective variety of dimension $t$ and degree $> r-t+2$.
Then, ${}^*\mathfrak{d}(X) = \mathfrak{d}(X) \leq 2t-2$. with equality at the second place if and only if $X$ is of maximal sectional regularity.
\end{theorem}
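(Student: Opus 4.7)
The plan is to realise both invariants as images of a single incidence over $\mathbb{P}\mathbb{U}(X)$, and then to read off the dimension count from standard Schubert calculus. Throughout, set $c = r - t$, so the hypothesis becomes $d > c+2$ and $c \geq 3$.

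The inclusion ${}^*\Sigma^{\circ}_{d-c+1}(X) \subseteq \Sigma^{\circ}_{d-c+1}(X)$ is automatic, so ${}^*\mathfrak{d}(X) \leq \mathfrak{d}(X)$ is free. For the reverse inclusion I would fix an arbitrary $\mathbb{L} \in \Sigma^{\circ}_{d-c+1}(X)$ and look at the Schubert variety $S_{\mathbb{L}} \subset \mathbb{G}(c+1,\mathbb{P}^r)$ of $(c+1)$-subspaces containing $\mathbb{L}$, which has dimension $c(t-1)$. By a Bertini-type argument applied after projecting from $\mathbb{L}$, a general $\mathbb{E} \in S_{\mathbb{L}}$ cuts $X$ in a non-degenerate integral curve of degree $d$ inside $\mathbb{E} \cong \mathbb{P}^{c+1}$. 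Since $\mathbb{L} \subset \mathbb{E}$ is a $(d-c+1)$-secant to $X \cap \mathbb{E}$ not contained in it, the standard ``secant line'' obstruction to regularity forces $\reg(X\cap\mathbb{E}) \geq d-c+1$, while Gruson-Lazarsfeld-Peskine pins it to the upper bound $d-c+1$. Thus $\mathbb{E} \in \mathbb{P}\mathbb{U}(X)$, and the uniqueness of the extremal secant line for such curves (valid because $c+1 \geq 4$) yields $\mathbb{L}_{\mathbb{E},X} = \mathbb{L}$. Hence $\mathbb{L} \in {}^*\Sigma^{\circ}_{d-c+1}(X)$, which gives $\Sigma^{\circ}_{d-c+1}(X) = {}^*\Sigma^{\circ}_{d-c+1}(X)$ and a fortiori ${}^*\mathfrak{d}(X) = \mathfrak{d}(X)$.

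For the bound $\mathfrak{d}(X) \leq 2t-2$ and the characterisation of the equality case, I would consider the surjection
$$\pi \colon \mathbb{P}\mathbb{U}(X) \longrightarrow {}^*\Sigma^{\circ}_{d-c+1}(X), \qquad \mathbb{E} \mapsto \mathbb{L}_{\mathbb{E},X}.$$
Each fibre of $\pi$ sits inside some $S_{\mathbb{L}}$, while the argument of the previous paragraph shows that a generic fibre is dense in $S_{\mathbb{L}}$ and therefore has dimension exactly $c(t-1)$. Consequently
$$\dim \mathbb{P}\mathbb{U}(X) \; = \; {}^*\mathfrak{d}(X) + c(t-1).$$
Combining with the trivial bound $\dim \mathbb{P}\mathbb{U}(X) \leq \dim \mathbb{G}(c+1,\mathbb{P}^r) = (c+2)(t-1)$ gives ${}^*\mathfrak{d}(X) \leq (c+2)(t-1) - c(t-1) = 2t-2$, with equality if and only if $\mathbb{P}\mathbb{U}(X)$ is dense in $\mathbb{G}(c+1,\mathbb{P}^r)$ -- precisely the definition of maximal sectional regularity.

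The main obstacle I foresee is making the Bertini step rigorous: the linear system of $(c+1)$-subspaces through a fixed $\mathbb{L}$ has $\mathbb{L}$ itself as base locus, so one cannot quote Bertini directly. The standard workaround is to pass to the projection $X \to \mathbb{P}^{r-2}$ from $\mathbb{L}$, where the induced linear system is base-point free, apply Bertini there to produce an integral non-degenerate slice, and then pull back. Ruling out the degenerate case where $X \cap \mathbb{E}$ acquires $\mathbb{L}$ as a component (so the degree would drop) uses the hypothesis $\mathbb{L} \not\subset X$ built into $\Sigma^{\circ}$. Once this technicality is cleared, every remaining ingredient -- the dimensions of $S_{\mathbb{L}}$ and of $\mathbb{G}(c+1,\mathbb{P}^r)$, Gruson-Lazarsfeld-Peskine, and uniqueness of the extremal secant -- enters essentially as a black box.
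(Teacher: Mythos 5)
Your dimension bookkeeping is correct ($\dim S_{\mathbb{L}} = c(t-1)$, $\dim\mathbb{G}(c+1,\mathbb{P}^r) = (c+2)(t-1)$, difference $2t-2$), and the idea of fibering $\mathbb{P}\mathbb{U}(X)$ over the special extremal secant lines is attractive, but the argument has a genuine gap rather than a removable technicality. The claim that for an \emph{arbitrary} $\mathbb{L}\in\Sigma^{\circ}_{d-c+1}(X)$ a general $\mathbb{E}\in S_{\mathbb{L}}$ cuts $X$ in an integral curve is false, and with it the asserted set equality $\Sigma^{\circ}_{d-c+1}(X)={}^*\Sigma^{\circ}_{d-c+1}(X)$ fails. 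Take $X$ a cone with vertex $p$ over a curve $C\subset\mathbb{P}^{r-1}$ of maximal regularity with extremal secant line $\mathbb{L}_C$: a line $\mathbb{L}$ through $p$ inside the plane $\langle p,\mathbb{L}_C\rangle$, other than the $d-c+1$ rulings lying in that plane, meets $X$ in a scheme of length $\geq d-c+1$ supported at $p$ and is not contained in $X$, so $\mathbb{L}\in\Sigma^{\circ}_{d-c+1}(X)$; yet every $\mathbb{E}\in S_{\mathbb{L}}$ contains $p$ and therefore cuts $X$ in a cone over a finite set, never an integral curve, so $\mathbb{L}$ is not special (compare Definition and Remark~\ref{4.2' Definition and Remark} (C)(2), where exactly these lines are excluded). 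The theorem only asserts that the two \emph{closures} have the same dimension, and the real content is to control the locus of non-special extremal secant lines; your proposal gives no bound on it. Your suggested repair of the Bertini step (project from $\mathbb{L}$, apply Bertini in $\mathbb{P}^{r-2}$, pull back) also does not suffice even away from cones: $X\cap\mathbb{E}$ is, up to the base locus, the \emph{full} preimage under $\pi_{\mathbb{L}}\upharpoonright_X$ of the irreducible slice of $\pi_{\mathbb{L}}(X)$, and the preimage of an irreducible curve under a generically finite map of degree $>1$ may well be reducible. Finally, the fibre-dimension identity $\dim\mathbb{P}\mathbb{U}(X)={}^*\mathfrak{d}(X)+c(t-1)$ rests on the generic fibre of $\mathbb{E}\mapsto\mathbb{L}_{\mathbb{E},X}$ being dense in $S_{\mathbb{L}}$, which is the same unproven Bertini assertion.

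The paper avoids all of this by a different mechanism: induction on $t$, cutting with a \emph{fully general} hyperplane $\mathbb{H}$ and invoking Kleiman's transversality theorem (Lemma~\ref{2.2'' Lemma}) to show that intersecting any closed family of lines with $\mathbb{G}(1,\mathbb{H})$ drops its dimension by exactly $2$. Applied simultaneously to $\overline{\Sigma^{\circ}_{d-c+1}(X)}$ and $\overline{{}^*\Sigma^{\circ}_{d-c+1}(X)}$, and combined with the inclusions of Definition and Remark~\ref{4.1''' Definition and Remark} (C), this gives $\mathfrak{d}(X)=\mathfrak{d}(X\cap\mathbb{H})+2$ and ${}^*\mathfrak{d}(X)\geq\mathfrak{d}(X\cap\mathbb{H})+2$, reducing everything to the curve case, where uniqueness of the extremal secant line settles the statement. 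If you want to keep your fibration picture you would still need, in addition to a correct Bertini statement for generic \emph{special} $\mathbb{L}$, a separate argument that $\dim\overline{\Sigma^{\circ}_{d-c+1}(X)}\leq\dim\overline{{}^*\Sigma^{\circ}_{d-c+1}(X)}$; neither is present in the proposal.
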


Besides of this brief look at varieties of extremal regularity of
arbitrary dimension, we shall concentrate to the case of surfaces of
extremal regularity, and within this case, we focus to
\textit{surfaces of maximal sectional regularity}, hence to
non-degenerate irreducible surfaces $X \subset \mathbb{P}^r$ with
$\mathfrak{d}(X) = {}^*\mathfrak{d}(X) = 2$.

We attack the investigation of these surfaces via a detour, which is
of interest on its own: namely, a study of \textit{sectionally
rational varieties}, hence of non-degenerate irreducible varieties
$X \subset \mathbb{P}^r$ of codimension $c$ such that $X \cap
\mathbb{E}$ is a rational curve for a general space $\mathbb{E} \in
\mathbb{G}(c+1,\mathbb{P}^r)$. It turns out that these varieties are
outer linear projections of varieties of minimal degree, provided
they are either surfaces with finite non-normal locus or else the
base field $K$ has characteristic $0$ (see Theorem~\ref{2.3'
Theorem}). Then we prove a bounding result for the
Castelnuovo-Mumford regularity of almost \textit{non-singular
projections} of varieties which satisfy the syzygetic property
$N_{2,p}$ of Green-Lazarsfeld (see Theorem~\ref{2.5' Theorem}). As
an application we show that sectionally rational surfaces with
finite non-normal locus satisfy the conjectural inequality of
Eisenbud-Goto (see Corollary~\ref{2.6' Corollary}). As surfaces of
maximal sectional regularity are sectionally rational, these results
directly apply to them.

For an arbitrary non-degenerate irreducible surface $X \subset \mathbb{P}^r$ of degree $d$, we may consider the closed union
$$\mathbb{F}^+(X) := \overline{\bigcup_{\mathbb{L} \in \Sigma^{\circ}_{d-r+3}(X)} \mathbb{L}}$$
of all extremal secant lines to $X$, which we call the \textit{extended extremal variety} of $X$.
Moreover, if $r \geq 5$, we call the closed union
$$\mathbb{F}(X) := \overline{\bigcup_{\mathbb{L} \in {}^*\Sigma^{\circ}_{d-r+3}(X)} \mathbb{L}}$$
of all special extremal secant lines to $X$ the \textit{extremal variety} of $X$. Observe that
$$\mathbb{F}(X) \subseteq \mathbb{F}^+(X).$$
Using these concepts, we can formulate the following survey of the structure theory of surfaces of maximal sectional regularity.

\begin{theorem}\label{1.2 Theorem} Let $r \geq 5$ and let $X \subset \mathbb{P}^r$ be a non-degenerate irreducible projective surface of degree $d \geq r+1$ which is of
maximal sectional regularity and not a cone. Then
\begin{itemize}
\item[\rm{(a)}] (See Theorem~\ref{3.2' Theorem} (a)(3)) There is a smooth rational normal surface scroll $\widetilde{X} \subset \mathbb{P}^{d+1}$ and a linear subspace
                $\Lambda \in \mathbb{G}(d-r,\mathbb{P}^{r+1})$ which avoids $\widetilde{X}$ and such that the linear projection map
$$\pi'_{\Lambda}:\mathbb{P}^{d+1} \setminus \Lambda \twoheadrightarrow \mathbb{P}^r$$
induces a finite birational morphism
$$\pi_{\Lambda}:= \pi'_{\Lambda}\upharpoonright  \widetilde{X} \twoheadrightarrow X.$$
\item[\rm{(b)}] (See Theorem~\ref{3.2' Theorem} (b)) $\reg(X) = d-r+3$, so that the surface $X$ satisfies in particular the Eisenbud-Goto conjecture.
\item[\rm{(c)}] (See Theorem~\ref{4.11'' Theorem} (a), (b) and Theorem~\ref{4.14'' Theorem} (a)) Either
                \begin{itemize}
                \item[\rm{(1)}] $r = 5$, $X \subset \mathbb{F}(X)$ and $\mathbb{F}(X)$ is projectively equivalent to the rational normal threefold scroll
                                $S(1,1,1) \subset \mathbb{P}^5$, or else
                \item[\rm{(2)}] $\mathbb{F}(X) = \mathbb{P}^2$ and $X\cap \mathbb{F}(X)$ is a plane curve of degree $d-r+3$.
                \end{itemize}
\item[\rm({d})] (See Proposition~\ref{4.3'' Lemma} (a),(b), Theorem~\ref{4.11'' Theorem} (e) and Theorem~\ref{4.14'' Theorem} (h)) The set ${}^*\Sigma^{\circ}_{d-r+3}(X)$
                of all special extremal secant lines to $X$ is open in its closure $\overline{{}^*\Sigma^{\circ}_{d-r+3}(X)}$ and hence locally closed in the
                Grassmannian $\mathbb{G}(1,\mathbb{P}^r)$. Moreover the following statements hold
                \begin{itemize}
                \item[\rm{(1)}] In the case (1) of statement (c) we have ${}^*\Sigma^{\circ}_{d-2}(X) = \Sigma^{\circ}_{d-2}(X)$ and the image of
                                $\overline{{}^*\Sigma^{\circ}_{d-2}(X)}$ under the Pl\"ucker embedding
                                $\psi: \mathbb{G}(1,\mathbb{P}^5) \rightarrow \mathbb{P}^{14}$ is a Veronese surface in $\mathbb{P}^5$.
                \item[\rm{(2)}] In the case (2) of statement (c), the image of $\overline{{}^*\Sigma^{\circ}_{d-r+3}(X)}$ under the Pl\"ucker embedding
                                $\psi: \mathbb{G}(1.\mathbb{P}^r) \rightarrow \mathbb{P}^{\binom{r+1}{2}-1}$ is a plane.
                \end{itemize}
\end{itemize}
\end{theorem}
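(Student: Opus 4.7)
I would first observe that, since $X$ is of maximal sectional regularity and $d>r$, the general $3$-plane section $X\cap\mathbb{E}$ is a curve of maximal regularity in $\mathbb{P}^3$ of degree $d>4$, hence smooth rational by \cite{GruLPe}. Thus $X$ is sectionally rational. Since $X$ is not a cone, I would argue that its non-normal locus is finite, so that Theorem~\ref{2.3' Theorem} applies and presents $X$ as a birational outer linear projection $\pi_\Lambda(\widetilde{X})$ of a non-degenerate surface $\widetilde{X}\subset\mathbb{P}^{d+1}$ of minimal degree. The Bertini--Del Pezzo classification of minimal-degree surfaces leaves only the Veronese surface (ruled out by $d\geq r+1\geq 6>4$), a cone (ruled out by the hypothesis on $X$), or a smooth rational normal scroll, which is the desired $\widetilde{X}$. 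A dimension count forces $\dim\Lambda=d-r$, and finiteness plus birationality of $\pi_\Lambda$ require $\Lambda\cap\widetilde{X}=\emptyset$.

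\textbf{Part (b).} The lower bound $\reg(X)\geq d-r+3$ is immediate from $\reg(X)\geq\reg(X\cap H)$ for a general hyperplane $H$ together with the maximality assumption. For the upper bound I would invoke the fact that smooth rational normal scrolls of degree $d$ in $\mathbb{P}^{d+1}$ satisfy the Green--Lazarsfeld property $N_{2,d-2}$, and then apply the bounding Theorem~\ref{2.5' Theorem} to the almost non-singular outer projection $\pi_\Lambda\colon\widetilde{X}\twoheadrightarrow X$ of codimension $d-r$; this delivers $\reg(X)\leq d-r+3$, matching the lower bound.

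\textbf{Part (c).} This is the geometric heart of the proof. For a general $\mathbb{E}\in\mathbb{P}\mathbb{U}(X)$ the unique extremal secant line $L_\mathbb{E}$ of $X\cap\mathbb{E}$ meets $X$ in $d-r+3$ points; lifting these points via $\pi_\Lambda$ yields $d-r+3$ points on $\widetilde{X}$ whose span, together with $\Lambda$, has dimension $d-r+1$ in $\mathbb{P}^{d+1}$, so that they must lie on a rational normal curve of degree $d-r+2$ on the scroll. I would then analyse the linear systems on $\widetilde{X}$ supporting such secant configurations, using the position of $\Lambda$ relative to the directrices and rulings, to separate two mutually exclusive alternatives: either the whole family $\{L_\mathbb{E}\}_\mathbb{E}$ is contained in a single plane $\mathbb{P}^2\subset\mathbb{P}^r$ (so $\mathbb{F}(X)=\mathbb{P}^2$ and $X\cap\mathbb{F}(X)$ is a plane curve of degree $d-r+3$), or its union $\mathbb{F}(X)$ is an irreducible non-degenerate threefold of minimal degree in $\mathbb{P}^r$, which by degree and scroll-type considerations is forced to be $S(1,1,1)\subset\mathbb{P}^5$, giving $r=5$ and $X\subset\mathbb{F}(X)$. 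The main obstacle will be excluding all intermediate behaviours; this is the technical content carried out in Theorems~\ref{4.11'' Theorem} and \ref{4.14'' Theorem}.

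\textbf{Part (d).} With (c) in hand, each case admits an explicit rational parametrization of the family of special extremal secant lines, so local-closedness of ${}^*\Sigma^{\circ}_{d-r+3}(X)$ in $\mathbb{G}(1,\mathbb{P}^r)$ will follow from constructibility. In case (c)(2) the family of lines in the fixed plane $\mathbb{P}^2$ is the linearly embedded $\mathbb{G}(1,\mathbb{P}^2)\cong\mathbb{P}^2$ inside $\mathbb{G}(1,\mathbb{P}^r)$, whose Plücker image is a linearly embedded plane. In case (c)(1) the extremal secant lines exhaust the $\mathbb{P}^2$-family of rulings $\mathbb{P}^1\times\{p\}$, $p\in\mathbb{P}^2$, of $S(1,1,1)\cong\mathbb{P}^1\times\mathbb{P}^2\subset\mathbb{P}^5$; a direct computation shows that their Plücker coordinates are quadratic in $p$, so the image is the asserted Veronese surface in a $\mathbb{P}^5\subset\mathbb{P}^{14}$.
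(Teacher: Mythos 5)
Your parts (a) and (b) follow essentially the paper's route: sectional rationality plus finiteness of the singular locus feed into Theorem~\ref{2.3' Theorem}, the Bertini--Del Pezzo classification eliminates the Veronese surface and the cone, and the regularity bound comes from the $N_{2,p}$-property of the scroll via Theorem~\ref{2.5' Theorem}. Two slips there: the sections relevant to maximal sectional regularity of a surface are the $\mathbb{P}^{c+1}=\mathbb{P}^{r-1}$-sections, i.e.\ \emph{hyperplane} sections, not $3$-plane sections in $\mathbb{P}^3$; and the finiteness of $X\setminus\Nor(X)$ has nothing to do with $X$ not being a cone --- it follows because the general hyperplane section is smooth (so $\Sing(X)$ is finite), which you had already established. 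These are repairable.

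The genuine gap is in part (c), which you yourself flag as "the technical content carried out in Theorems~\ref{4.11'' Theorem} and~\ref{4.14'' Theorem}": you assert the dichotomy but supply no mechanism that produces it. The lifting argument you sketch is also off --- the preimage $\overline{(\pi'_\Lambda)^{-1}(\mathbb{L}_{\mathbb{E}})}$ is a $\mathbb{P}^{d-r+2}$, not a $\mathbb{P}^{d-r+1}$, and "the points must lie on a rational normal curve of degree $d-r+2$ on the scroll" does not follow from a span computation alone. The paper's actual engine is different: for $h\in\mathbb{U}(X)$ one shows that $W_h:={\rm Join}(\mathbb{L}_h,X)$ is a fourfold of \emph{minimal} degree, necessarily a scroll $S(0,0,b_h,r-3-b_h)$ with vertex line $\mathbb{L}_h=S(0,0)$ (Theorem~\ref{3.2' Theorem}~(c)); the dichotomy is then exactly $b_h=0$ (all $\mathbb{L}_{h'}$ land in the plane $S(0,0,0)$, via Lemma~\ref{4.7'' Lemma}) versus $b_h>0$ (which forces $b_h=1$, pairwise disjoint extremal secant lines, $r=5$, and $X\subset\mathbb{F}(X)$ with $\dim\mathbb{F}(X)=3$, as in Proposition~\ref{4.9'' Theorem}); finally $\mathbb{F}(X)=S(1,1,1)$ is extracted from the intersection $W_h\cap W_{h'}$ of two distinct quadrics in $\mathbb{P}^5$, not from "degree and scroll-type considerations" on an abstract minimal-degree threefold. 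Without some such argument the case (c)(2) claim that $X\cap\mathbb{F}(X)$ is a plane curve of degree exactly $d-r+3$ is also unproved. In part (d) there are two further holes: constructibility does \emph{not} imply local closedness --- what the paper uses is Proposition~\ref{4.3'' Lemma}, identifying ${}^*\Sigma^{\circ}_{d-r+3}(X)$ with the set of lines in the closed set $\Sigma_3(X)$ whose intersection with $X$ is finite and avoids the finite set $X\setminus\Reg(X)$, hence an open subset of a closed set; and the equality ${}^*\Sigma^{\circ}_{d-2}(X)=\Sigma^{\circ}_{d-2}(X)$ in case (1), which requires the smoothness of $X$ established in Theorem~\ref{4.11'' Theorem}~(b), is not addressed at all. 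Your Pl\"ucker-image computations in (d) are fine once these points are supplied.
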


If the surface $X \subset \mathbb{P}^r$ is obtained as an outer
linear projection of a smooth rational normal surface scroll
$\widetilde{X} \subset \mathbb{P}^{d+1}$ from a subspace $\Lambda =
\mathbb{P}^{d-r} \subset \mathbb{P}^{d+1} \setminus \widetilde{X}$
we write $X = \widetilde{X}_{\Lambda}$. Keep in mind that the class
group ${\rm Cl}(\widetilde{Y})$ of a smooth rational normal $n$-fold
scroll $\widetilde{Y} \subset \mathbb{P}^s$ is generated by a
hyperplane section $H = \widetilde{Y} \cap \mathbb{P}^{s-1}$ and a
ruling $F = \mathbb{P}^{n-1}$ of $\widetilde{Y}$. Using this
terminology, we can classify all surfaces of maximal sectional
regularity as follows.

\begin{theorem}\label{1.3 Theorem} Let $r \geq 5$ and let $X \subset \mathbb{P}^r$ be a non-degenerate irreducible projective surface of degree $d \geq r+1$ which is of
maximal sectional regularity.
\begin{itemize}
\item[\rm{(a)}] (See Theorem~\ref{4.11'' Theorem} (a),(b),(e)) The following two statements are equivalent:
                \begin{itemize}
                \item[\rm{(i)}]  $r=5$ and $\mathbb{F}(X)$ is a rational $3$-fold scroll $S(1,1,1) \subset \mathbb{P}^5$.
                \item[\rm{(ii)}] $X$ is contained in a scroll $S(1,1,1) \subset \mathbb{P}^5$ as a divisor which is linearly equivalent to $H + (d-3)F$.
                \end{itemize}
\item[\rm{(b)}] (See Definition and Remark~\ref{4.2' Definition and Remark} (C) and Theorem~\ref{5.x^iv Theorem} (a)) The following two statements are equivalent:
                \begin{itemize}
                \item[\rm{(i)}]  $\mathbb{F}(X) = \mathbb{P}^2$.
                \item[\rm{(ii)}] $X$ is either
                                 \begin{itemize}
                                 \item[\rm{(1)}]  a cone over a curve $C \subset \mathbb{P}^{r-1} (\subset \mathbb{P}^r)$ of maximal regularity, or else
                                 \item[\rm{(2)}]  is equal to $\widetilde{X}_{\Lambda}$, where $\Lambda = \mathbb{P}^{d-r} \subset \mathbb{P}^{d+1}$ is contained in the
                                                  linear span $\langle D \rangle = \mathbb{P}^{d-r+3} \subset \mathbb{P}^{d+1}$ of a divisor $D \in |H + (3-r)F|$, and
                                                  the induced linear projection map
                                                  $$\pi'_{\Lambda} \upharpoonright: \langle D \rangle \setminus \Lambda \twoheadrightarrow \mathbb{P}^2$$
                                                  is generically one-to-one along $D$.
                                 \end{itemize}
               \end{itemize}
\end{itemize}
\end{theorem}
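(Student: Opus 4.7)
The strategy rests on Theorem~\ref{1.2 Theorem}: its part~(c) dichotomizes the extremal variety as $\mathbb{F}(X) \cong S(1,1,1) \subset \mathbb{P}^5$ or $\mathbb{F}(X) = \mathbb{P}^2$, and its part~(a) supplies the scroll model $X = \widetilde{X}_{\Lambda}$ in the non-cone case. The plan is to translate each of the two geometric alternatives into the structural description~(ii), relying on the technical material of Theorems~\ref{4.11'' Theorem} and~\ref{5.x^iv Theorem} and on Definition and Remark~\ref{4.2' Definition and Remark}(C) for the concrete identifications.

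For part~(a), the implication (i)$\Rightarrow$(ii) begins with Theorem~\ref{1.2 Theorem}(c)(1), which already embeds $X$ as a divisor inside the 3-fold scroll $S(1,1,1)$. Writing $X \sim \alpha H + \beta F$ in $\mathrm{Cl}(S(1,1,1))$ and using the intersection numbers $H^3 = 3$, $H^2 \cdot F = 1$, $F^2 \cdot H = F^3 = 0$ on the 3-fold scroll, I get $3\alpha + \beta = d$. A general line $\mathbb{L}$ of the form $\mathbb{P}^1 \times \{\mathrm{pt}\}$ in $S(1,1,1) = \mathbb{P}^1 \times \mathbb{P}^2$ satisfies $X \cdot \mathbb{L} = \alpha + \beta$; since such lines form a 2-parameter family sweeping out $S(1,1,1) = \mathbb{F}(X)$, they must coincide with the 2-parameter family of special extremal secant lines. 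Each such $\mathbb{L}$ therefore meets $X$ in at least $d-2$ points, giving $\alpha + \beta \geq d-2$; combined with $3\alpha + \beta = d$ and $\alpha \geq 1$ this forces $\alpha = 1$ and $\beta = d-3$. The converse (ii)$\Rightarrow$(i) is the content of Theorem~\ref{4.11'' Theorem}: a divisor of class $H + (d-3)F$ in $S(1,1,1)$ carries a 2-parameter family of special extremal secant lines sweeping out the ambient scroll.

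For part~(b), I would treat the cone and non-cone cases separately. If $X$ is a cone over $C \subset \mathbb{P}^{r-1}$, then $C$ is itself of maximal regularity, and joining the vertex of $X$ with the unique extremal secant line to $C$ yields a plane containing all special extremal secant lines of $X$, so $\mathbb{F}(X) = \mathbb{P}^2$. If $X$ is not a cone, Theorem~\ref{1.2 Theorem}(a) gives $X = \widetilde{X}_{\Lambda}$, and Theorem~\ref{1.2 Theorem}(c)(2) gives a plane curve $C := X \cap \mathbb{F}(X)$ of degree $d-r+3$. I would pull $C$ back along the birational morphism $\pi_{\Lambda}: \widetilde{X} \to X$ to a curve $D \subset \widetilde{X}$ of degree $d-r+3$. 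Since $\pi'_{\Lambda}$ is a linear projection from $\Lambda$, the preimage of the plane $\mathbb{P}^2 \supset C$ is the linear space $\langle \Lambda, \mathbb{P}^2 \rangle$ of dimension $d-r+3$; this space must contain $D$, so $\langle D \rangle = \mathbb{P}^{d-r+3}$ and $\Lambda \subseteq \langle D \rangle$, and $\pi'_{\Lambda}$ restricts to a generically one-to-one projection $\langle D \rangle \setminus \Lambda \twoheadrightarrow \mathbb{P}^2$. The numerical data $D \cdot F = 1$ and $D \cdot H = d-r+3$ on the smooth rational normal surface scroll $\widetilde{X}$ (where $H^2 = d$, $H \cdot F = 1$, $F^2 = 0$) then pin down $D \sim H + (3-r)F$. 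The reverse direction combines Definition and Remark~\ref{4.2' Definition and Remark}(C) with Theorem~\ref{5.x^iv Theorem}(a) to show that both constructions in~(ii) yield surfaces of maximal sectional regularity whose extremal variety is a plane.

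The principal technical obstacle is the explicit identification of the divisor classes $H + (d-3)F$ in~(a) and $H + (3-r)F$ in~(b); both rest on intersection theory on the respective scrolls together with a careful matching of the 2-parameter families of special extremal secant lines against the ruling families of the scroll. A secondary subtlety is the cone case in~(b), which is excluded from the hypothesis of Theorem~\ref{1.2 Theorem} and must therefore be verified directly via the join construction sketched above.
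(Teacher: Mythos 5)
Your reduction of part (a) to Theorem~\ref{4.11'' Theorem} and of part (b) to Definition and Remark~\ref{4.2' Definition and Remark}(C) together with Theorem~\ref{5.x^iv Theorem} matches the paper's architecture, and your treatment of the cone case of (b) is the paper's own argument. However, in both places where genuine work is required -- pinning down the divisor classes $H+(d-3)F$ and $H+(3-r)F$ -- your intersection-theoretic shortcut asserts exactly the fact that has to be proved. In part (a), the inference ``the lines $\mathbb{P}^1\times\{q\}$ form a $2$-parameter family sweeping out $\mathbb{F}(X)$, hence coincide with the family of special extremal secant lines'' is not valid as stated: the lines contained in ruling planes of $S(1,1,1)$ form a $3$-dimensional family, and a $2$-dimensional subfamily of those also sweeps out the whole scroll. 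To exclude this you need that distinct special extremal secant lines are pairwise disjoint (Proposition~\ref{4.9'' Theorem}(b)(1)), which forces at most one of them into each ruling plane and hence a dense subfamily into the family of sections; only then does the closedness of $\Sigma_{d-2}(X)$ give $\alpha+\beta\geq d-2$ for every section. This is fillable from the paper, but note that the paper reaches $X\sim H+(d-3)F$ by a different mechanism altogether: it restricts to a general hyperplane, places $C_h$ inside $S(1,2)\subset\mathbb{P}^4$, and uses the failure of linear normality of curves of maximal regularity to force the coefficient of $H$ to be $1$ (Lemma~\ref{4.10'' Lemma}).

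The more serious gap is in part (b), non-cone case, where you take ``$D\cdot F=1$ and $D\cdot H=d-r+3$'' as given numerical data. The second identity already requires Lemma~\ref{5.x'' Lemma} (a priori $\widetilde{X}\cap\mathbb{E}$ could fail to be a Cartier divisor or could have degree larger than $d-r+3$), and the first identity $D\cdot F=1$ is precisely the content of the paper's proof of Theorem~\ref{5.x^iv Theorem}(a), not an input to it. Writing $D=\sum n_iC_i+\sum m_j\mathbb{L}_j$, one must exclude: (1) that $D$ consists only of ruling lines (then $D\cdot F=0$; the paper rules this out by comparing $\dim\langle D\rangle$ with $\dim\mathbb{E}=d-r+3$); (2) that $D$ contains two distinct non-fiber components (each would meet every ruling line, contradicting $D\cap\mathbb{L}=\mathbb{E}\cap\mathbb{L}$ being at most a point for general $\mathbb{L}$); and (3) that the section occurs with multiplicity $n\geq 2$ (then the tangent plane of $\widetilde{X}$ along $C$ would lie in $\mathbb{E}$, forcing a general ruling line into $\mathbb{E}=\mathbb{P}^{d-r+3}$, which is absurd). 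Without these three exclusions your computation of the class of $D$ is circular; likewise $\langle D\rangle=\mathbb{P}^{d-r+3}$ does not follow from the inclusion $D\subset\mathbb{E}$ alone but only after the divisor class is known. You correctly identify the ``matching of families'' as the principal obstacle, but the proposal does not overcome it.
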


Finally, we study some cohomological and homological invariants of a surface $X \subset \mathbb{P}^r$ of degree $d$ which is of maximal sectional regularity.
By Theorem~\ref{1.2 Theorem} (b) we know that such a surface is $(d-r+3)$-regular, but not $(d-r+2)$-regular. In particular the triplet
$$\nu(X) := \big(h^1(\mathbb{P}^r,\mathcal{J}_X(d-r+1)), h^2(\mathbb{P}^r, \mathcal{J}_X(d-r)), h^3(\mathbb{P}^r,\mathcal{J}_X(d-r-1))\big)$$
is non-zero, and moreover the \textit{index of normality}
$$N(X) := \sup\{n \in \mathbb{Z} \mid h^1(\mathbb{P}^r, \mathcal{J}_X(n)) \neq 0\}$$
of $X$ cannot exceed $d-r+1$. To measure, how far the surface $X$ is away from being locally Cohen-Macaulay, we also introduce the invariant
$$\e(X) := \sum_{x\in X, closed} {\rm length}\big(H^1_{\mathfrak{m}_{X,x}}(\mathcal{O}_{X,x})\big),$$
which has the property that
$$h^2(\mathbb{P}^r,\mathcal{J}_X(n)) = \e(X) \mbox{ for all } n\ll 0.$$
Moreover, for a closed subscheme $Z \subset \mathbb{P}^r$ with homogeneous vanishing ideal $I_Z \subset S$, we use
$$\depth(Z) := \depth(S/I_Z)$$
to denote the \textit{arithmetic depth} of $Z \subset \mathbb{P}^r$.

Besides of these cohomological invariants of $X$, we also shall
investigate the syzygetic behavior of $X$, hence the \textit{Betti
numbers}
$$\beta_{i,j}(X) := \dim\big(\Tor^S_i(K,S/I_X)\big)_{i+j} \quad (\mbox{ with } K := S/\bigoplus_{n>0} S_n).$$
In the special case in which $r = 5$ and $\mathbb{F}(X) = S(1,1,1)$, we describe $X$ as a divisor on $S(1,1,1)$ and hence may determine some of the previously
introduced invariants (see Theorem~\ref{4.11'' Theorem}).

\begin{theorem}\label{1.4 Theorem} Let $X \subset \mathbb{P}^5$ be a surface of degree $d > 5$ which is of maximal sectional regularity and such that $\mathbb{F}(X)
 = S(1,1,1)$. Then
$$N(X) = d-4, \quad \depth(X) = 1, \quad \e(X) = 0, $$
$$\nu(X) = \big(\binom{d-3}{2},0,0\big) \mbox{ and } \beta_{1,b-3}(X) = \binom{d-1}{2}.$$
\end{theorem}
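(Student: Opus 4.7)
The strategy is to exploit the description of $X$ provided by Theorem~\ref{1.3 Theorem}(a): one has $X \subset Y := S(1,1,1) \subset \mathbb{P}^5$ as a divisor linearly equivalent to $H+(d-3)F$. Under the identification $Y \cong \mathbb{P}^1 \times \mathbb{P}^2$ with $\mathcal{O}_Y(H) = \mathcal{O}(1,1)$ and $\mathcal{O}_Y(F) = \mathcal{O}(1,0)$, this means $\mathcal{O}_Y(X) = \mathcal{O}(d-2,1)$, and every invariant in the theorem will be extracted by reducing cohomology on $X$ to cohomology on $\mathbb{P}^1 \times \mathbb{P}^2$ and applying K\"unneth.

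First I would compute $H^i_*(\mathcal{J}_X)$. Since $Y$ is a variety of minimal degree, it is arithmetically Cohen--Macaulay, and hence $H^i_*(\mathcal{J}_Y) = 0$ for all $i \ge 1$. The short exact sequence $0 \to \mathcal{J}_Y \to \mathcal{J}_X \to \mathcal{J}_{X/Y} \to 0$, together with $\mathcal{J}_{X/Y}(n) = \mathcal{O}_Y\bigl((n-1)H - (d-3)F\bigr)$, then yields
$$H^i(\mathcal{J}_X(n)) \;\cong\; H^i\bigl(\mathbb{P}^1 \times \mathbb{P}^2,\, \mathcal{O}(n-d+2,\, n-1)\bigr) \qquad (i\ge 1).$$
K\"unneth combined with $H^1(\mathbb{P}^2,\mathcal{O}(b))=0$ gives $h^1(\mathcal{J}_X(n)) = (d-3-n)\binom{n+1}{2}$ for $1 \le n \le d-4$ and $0$ otherwise; $h^2(\mathcal{J}_X(n)) = 0$ for every $n$; and $h^3(\mathcal{J}_X(n))$ is supported in $n \le -2$. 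Reading these off one obtains $N(X) = d-4$; evaluating at $n = d-4,\, d-5,\, d-6$ (noting $d-6 \ge 0 > -2$ since $d>5$) gives $\nu(X) = (\binom{d-3}{2},\,0,\,0)$; letting $n\to -\infty$ in $h^2$ gives $\e(X) = 0$; and saturation of $I_X$ together with $h^1(\mathcal{J}_X(1)) = d-4 > 0$ pins down $\depth(X) = 1$.

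For the Betti number, $\beta_{1,d-3}(X)$ counts the minimal generators of $I_X$ in degree $d-2$. Taking global sections of the same short exact sequence and again using $H^1_*(\mathcal{J}_Y) = 0$, we get
$$(I_X/I_Y)_n \;\cong\; H^0\bigl(\mathbb{P}^1 \times \mathbb{P}^2,\, \mathcal{O}(n-d+2,\, n-1)\bigr),$$
which vanishes for $n < d-2$ and equals $\binom{d-1}{2}$ for $n = d-2$. Since $I_Y$ is three-generated by quadrics, $(I_Y)_{d-2} = S_1 \cdot (I_Y)_{d-3}$, and since $(I_X)_n = (I_Y)_n$ for $n < d-2$, this forces
$$\Tor_1^S(K,\, S/I_X)_{d-2} \;=\; (I_X)_{d-2}\,/\,S_1 \cdot (I_X)_{d-3} \;\cong\; (I_X/I_Y)_{d-2},$$
so $\beta_{1,d-3}(X) = \binom{d-1}{2}$. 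The one delicate point is the bookkeeping that no minimal generator of $I_X$ sits in degrees $3,\ldots,d-3$; this is exactly the vanishing $(I_X/I_Y)_n = 0$ throughout that range, which K\"unneth supplies from $n-d+2 < 0$. All the remaining steps reduce to routine evaluation of binomial coefficients once Theorem~\ref{1.3 Theorem}(a) is in hand.
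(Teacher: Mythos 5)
Your proposal is correct, and for the cohomological invariants it is essentially the paper's own argument in different clothing: the paper (Theorem~\ref{4.11'' Theorem}, via Lemma~\ref{4.10''' Lemma}) uses the same exact sequence $0 \to \mathcal{J}_{S(1,1,1)} \to \mathcal{J}_X \to \mathcal{O}_{S(1,1,1)}(-X) \to 0$ together with arithmetic Cohen--Macaulayness of the scroll, and then computes $H^1(Z,\mathcal{O}_Z((n-1)H-(d-3)F))$ by pushing forward to $\mathbb{P}^1$ and decomposing ${\rm sym}^{n-1}(\mathcal{O}(1)^{\oplus 3})$ into line bundles --- which in the case $S(1,1,1)=\mathbb{P}^1\times\mathbb{P}^2$ is exactly your K\"unneth computation. (The only cosmetic differences: the paper gets $\e(X)=0$ from smoothness of $X$ and then deduces $h^2\equiv 0$ via Theorem~\ref{3.2' Theorem}~(a)(2), whereas you compute $h^2\equiv 0$ directly and read off $\e(X)$; also note that for $i=3$ your sequence only gives an injection $H^3(\mathcal{J}_X(n))\hookrightarrow H^3(Y,\mathcal{O}(n-d+2,n-1))$ since $H^4(\mathcal{J}_Y(n))$ need not vanish for $n\ll 0$, but the vanishing of the target for $n\geq -1$ is all you use, so nothing is lost.) The genuine divergence is the Betti number: for $\beta_{1,d-3}(X)=\binom{d-1}{2}$ the paper simply cites the preprint \cite{P} in Lemma~\ref{4.10''' Lemma}~(b), while you supply a short self-contained proof from $I_X/I_Y\cong H^0_*(\mathcal{O}_Y(-X))$, the vanishing of this module below degree $d-2$, and the fact that $I_Y$ is generated by quadrics so that $S_1(I_X)_{d-3}=S_1(I_Y)_{d-3}=(I_Y)_{d-2}$. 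That argument is correct and is an improvement in the sense that it removes the dependence on an unpublished reference for this particular Betti number.
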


In the general situation, that is if $\mathbb{F}(X)$ is a plane, we have the following result.

\begin{theorem}\label{1.5 Theorem} Let $5 \leq r < d$ and let $X \subset \mathbb{P}^r$ be a surface of degree $d$ and maximal sectional regularity such that $\mathbb{F}(X) =
 \mathbb{P}^2$. Set $Y := X \cup \mathbb{F}(X)$. Then
\begin{itemize}
\item[\rm{(a)}] (See Theorem~\ref{4.14'' Theorem} (d)(2),(4) and (e)(2))
                \begin{itemize}
                \item[\rm{(1)}] $\nu(X) = (h^1(\mathbb{P}^r,\mathcal{J}_X(d-r+1)),1,0)$;
                \item[\rm{(2)}] $\e(X) = h^2(\mathbb{P}^r,\mathcal{J}_X) = h^2(\mathbb{P}^r,\mathcal{J}_Y) + \binom{d-r+2}{2}$.
                \end{itemize}
\item[\rm{(b)}] (See Theorem~\ref{4.14'' Theorem} (f)) The pair $\tau(X) := \big(\depth(X),\depth(Y)\big)$ satisfies
                \begin{itemize}
                \item[\rm{(1)}] $\tau(X) = (2,3)\quad \quad \quad \quad \quad \quad$ if $r+1 \leq d \leq 2r-4$;
                \item[\rm{(2)}] $\tau(X) \in \{(1,1),(2,2),(2.3)\}$ if $2r-3 \leq d \leq 3r-7$;
                \item[\rm{(3)}] $\tau(X) \in \{(1,1). (2,2)\}$ $\mbox{ }\quad \quad$ if $3r-6 \leq d$.
                \end{itemize}
\item[\rm{(c)}] (See Theorem~\ref{4.17'' Proposition} (a)) The following four conditions are equivalent
                \begin{itemize}
                \item[\rm{(i)}]   $N(X) \leq d-r$;
                \item[\rm{(ii)}]  $\reg(Y) \leq d-r+2$;
                \item[\rm{(iii)}] $\beta_{i,d-r+2}(X) = \binom{r-2}{i-1}$ for all $i \geq 1$.
                \item[\rm{(iv)}]  $\beta_{r,d-r+2}(X) = 0$.
                \end{itemize}
\item[\rm{(d)}] (See Theorem~\ref{4.17'' Proposition} (b)) The following two conditions are equivalent
                \begin{itemize}
                \item[\rm{(i)}]  $\beta_{1,d-r+2}(X) = 1$;
                \item[\rm{(ii)}] The homogeneous vanishing ideal $I_Y$ of $Y$ in $S$ is generated by the homogeneous polynomials of degree $\leq d-r+2$ which are
                                 contained in the homogeneous vanishing ideal $I_X$ of $X$ in $S$, thus $I_Y = \big((I_X)_{\leq d-r+2}\big)$.
                \end{itemize}
                Moreover, these two conditions hold, if the equivalent conditions of statement (c) are satisfied, and they imply that each proper extremal secant line
                to $X$ is contained in $\mathbb{F}(X)$ and hence that the $(d-r+3)$-secant variety $\Sec_{d-r+3}(X)$ of $X$ is equal to $Y$.
\end{itemize}
\end{theorem}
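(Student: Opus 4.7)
The overall approach is to exploit the scheme-theoretic decomposition $Y = X \cup \mathbb{F}(X)$. By Theorem~\ref{1.2 Theorem} we already know $\reg(X) = d-r+3$ and that $C := X \cap \mathbb{F}(X)$ is a plane curve of degree $d-r+3$ inside the plane $\mathbb{F}(X) = \mathbb{P}^2$. The central tool is the short exact sequence of ideal sheaves
$$0 \longrightarrow \mathcal{J}_Y \longrightarrow \mathcal{J}_X \oplus \mathcal{J}_{\mathbb{F}(X)} \longrightarrow \mathcal{J}_C \longrightarrow 0,$$
combined with the fully explicit cohomology of $\mathcal{J}_{\mathbb{F}(X)/\mathbb{P}^r}$ (from the Koszul complex of a codimension $r-2$ linear subspace) and of $\mathcal{J}_{C/\mathbb{P}^r}$ (built from $\mathcal{J}_{C/\mathbb{P}^2} \cong \mathcal{O}_{\mathbb{P}^2}(-(d-r+3))$ together with the inclusion $\mathbb{P}^2 \hookrightarrow \mathbb{P}^r$). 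The structural description of $X$ as an outer birational projection of the scroll $\widetilde{X}$ supplied by Theorems~\ref{1.2 Theorem}(a) and~\ref{1.3 Theorem}(b) will be called on wherever cohomological bookkeeping alone is insufficient.

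For (a), I feed the twists $n = d-r$ and $n = d-r-1$ into the associated long exact sequence: the regularity bound kills $h^i(\mathcal{J}_X(n))$ above the Castelnuovo--Mumford diagonal, leaving only the explicit contributions from $\mathcal{J}_C$ and $\mathcal{J}_{\mathbb{F}(X)}$, and this pins down $h^2(\mathcal{J}_X(d-r)) = 1$ and $h^3(\mathcal{J}_X(d-r-1)) = 0$. For the $\e$-formula, pushing $n \to -\infty$ makes $h^2(\mathcal{J}_{\mathbb{F}(X)}(n))$ vanish and leaves only the arithmetic-genus term $h^1(\mathcal{J}_{C/\mathbb{P}^2}(n)) = \binom{d-r+2}{2}$ of the plane curve $C$ as an extra contribution beyond $h^2(\mathcal{J}_Y(n))$. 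For (b), I translate depth into intermediate cohomology via $H^i_*(\mathcal{J}_Z) \cong H^i_{\mathfrak{m}}(S/I_Z)$ for $i \geq 1$ and a saturated ideal; the three declared ranges of $d$ correspond exactly to the ranges in which the nonvanishing loci of $H^1_*(\mathcal{J}_{\mathbb{F}(X)})$, $H^2_*(\mathcal{J}_{\mathbb{F}(X)})$ and $H^1_*(\mathcal{J}_X)$ do or do not overlap in the Mayer--Vietoris sequence, which in turn controls $H^i_*(\mathcal{J}_Y)$ and hence $\depth(Y)$.

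Parts (c) and (d) are reformulated as Betti-number statements via the graded local-duality dictionary between minimal free resolutions and graded local cohomology. The assumption $N(X) \leq d-r$ excises the top $H^1$-contribution of $\mathcal{J}_X$; chasing this through the Mayer--Vietoris sequence gives $\reg(Y) \leq d-r+2$ and forces the row $\beta_{i,d-r+2}(X) = \binom{r-2}{i-1}$, which is exactly the Koszul pattern generated by the $r-2$ linear forms cutting out $\mathbb{F}(X)$ together with one additional ``gluing'' generator of degree $d-r+3$. That single generator, recorded by $\beta_{1,d-r+2}(X) = 1$, is precisely what promotes $I_X \cap I_{\mathbb{F}(X)}$ to $I_Y$, whence $I_Y = \bigl((I_X)_{\leq d-r+2}\bigr)$; the containment of every proper extremal secant line in $\mathbb{F}(X)$ and the identity $\Sec_{d-r+3}(X) = Y$ then follow because any such line satisfies all equations of $(I_X)_{\leq d-r+2}$ and so lies in $Y$, while being a line it must already lie in one of the two irreducible components of $Y$.

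The main obstacle I foresee is the case analysis in (b): populating each listed pair $\tau(X)$ with a concrete configuration in its declared range of $d$ and ruling out every other pair requires simultaneously tracking the Mayer--Vietoris cohomology of both $X$ and $Y$ across three separate numerical windows, with the transition thresholds at $d = 2r-4$ and $d = 3r-7$ coming from the precise twists where the cohomology of $\mathcal{J}_{\mathbb{F}(X)}$ changes behaviour. A secondary delicate point is the implication (iv)$\,\Rightarrow\,$(i) of (c), where $N(X) \leq d-r$ must be recovered from the single Betti-number vanishing $\beta_{r,d-r+2}(X) = 0$ by an exhaustion argument that propagates the vanishing leftwards along the top row of the resolution.
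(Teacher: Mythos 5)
Your overall strategy --- relating $X$, $Y$, $\mathbb{F}(X)$ and the plane curve $C=X\cap\mathbb{F}(X)$ by exact sequences of ideal sheaves and then doing cohomological and Betti bookkeeping --- is the same as the paper's, which uses the equivalent residual sequence $0 \to (S/L)(-d+r-3) \to S/I_Y \to S/I_X \to 0$ coming from $I_X=(I_Y,f)$ with $\deg f = d-r+3$ (Theorem~\ref{4.14'' Theorem}~(c)) in place of your Mayer--Vietoris sequence. But as written there are genuine gaps. The sequence (in either form) only expresses $h^i(\mathcal{J}_X(n))$ through $h^i(\mathcal{J}_Y(n))$ and the known cohomology of $L$ and $C$; it gives no independent handle on $Y$. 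In particular $h^2(\mathcal{J}_X(d-r))=1$ is \emph{not} pinned down by $\reg(X)=d-r+3$ (which only kills $h^2(\mathcal{J}_X(n))$ for $n\ge d-r+1$): the upper bound $h^2(\mathcal{J}_X(d-r))\le 1$, the vanishing $h^3(\mathcal{J}_Y(n))=0$ for $n\ge 0$ (which you need even for the surjectivity onto $H^2(\mathcal{J}_C)$ implicit in your proof of (a)(2)), and $h^2(\mathcal{J}_Y(n))=0$ for $n\ge d-r$ all come from slicing by a general hyperplane and invoking the known cohomology of the configuration $C_h\cup\mathbb{L}_h$ (curve of maximal regularity union its extremal secant line) from \cite{BS2}. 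The same omission is fatal for part (b): your proposed mechanism --- overlaps of the nonvanishing loci of $H^1_*(\mathcal{J}_{\mathbb{F}(X)})$ and $H^2_*(\mathcal{J}_{\mathbb{F}(X)})$ --- is vacuous, since a linear subspace is arithmetically Cohen--Macaulay and these modules are identically zero. The thresholds $d=2r-4$ and $d=3r-6$ are curve-theoretic facts about the hyperplane section of $Y$: for $d\le 2(r-1)-2$ the scheme $C_h\cup\mathbb{L}_h$ is arithmetically Cohen--Macaulay \cite{BS2}, while a quadric count (Lemma~\ref{4.13'' Lemma}) forces $\depth(C_h\cup\mathbb{L}_h)=1$ once $d\ge 3r-6$; without these inputs the Mayer--Vietoris bookkeeping is underdetermined and cannot separate the listed pairs $\tau(X)$.

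In part (c), the implication (iv)$\Rightarrow$(i) cannot be obtained by ``propagating the vanishing leftwards along the top row'': Betti vanishing propagates rightwards, and (iii) asserts that the other entries $\beta_{i,d-r+2}(X)=\binom{r-2}{i-1}$ of that row are nonzero for $1\le i\le r-1$. The mechanism the paper uses is the Koszul-duality isomorphism ${\rm Soc}(H^1(S/I_X))(-r-1)\cong\Tor^S_r(K,S/I_X)$, valid because $\depth(X)=1$ in the relevant case (Lemma~\ref{4.16'''' Lemma}); then $\beta_{r,d-r+2}(X)=0$ kills the socle of $H^1(S/I_X)$ in its top possible degree $d-r+1$ and forces $N(X)\le d-r$. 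By contrast, your treatment of (c)(i)$\Rightarrow$(ii)$\Rightarrow$(iii) and of (d) --- the Koszul pattern $\binom{r-2}{i-1}$ contributed by the $r-2$ linear forms cutting out $\mathbb{F}(X)$, the single extra generator in degree $d-r+3$, and the irreducibility argument placing every proper extremal secant line inside $\mathbb{F}(X)$ --- is sound and matches Lemma~\ref{4.16'' Lemma} and Theorem~\ref{4.17'' Proposition}. (One small slip: the term $\binom{d-r+2}{2}$ in (a)(2) is $h^1(C,\mathcal{O}_C)=p_a(C)$, i.e.\ $h^2(\mathbb{P}^r,\mathcal{J}_C)$, not $h^1(\mathcal{J}_{C/\mathbb{P}^2}(n))$, which vanishes.)
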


We shall provide examples of surfaces $X$ of extremal regularity,
which show that $\mathfrak{d}(X)$ can take all values in the set
$\{-1,0,1,2\}$ (see Construction and Examples~\ref{7.1 Construction
and Examples}), even for smooth surfaces $X$ which are sectionally
rational and occur as divisors on smooth rational normal threefold
scrolls -- and that there are indeed many such examples with
$\mathfrak{d}(X) = -1$, that is without extremal secant lines. This
latter fact is noteworthy as Gruson-Lazarsfeld-Peskine's paper
\cite{GruLPe} lead to the expectation, that there are only ``a few
exceptional varieties of extremal regularity having no extremal
secant line".

We also shall provide examples, which show that in the general case
where $X \subset \mathbb{P}^r$ is a surface of maximal sectional
regularity whose extremal variety $\mathbb{F}(X)$ is a plane, all
pairs $\tau(X) = \big(\depth(X),\depth(Y)\big)$ listed in
Theorem~\ref{1.4 Theorem} (b) may indeed occur (see Construction and
Examples ~\ref{7.2 Construction and Examples}, Examples~\ref{7.3
Example},~\ref{7.4 Example} and ~\ref{7.5 Example}).

Finally let us mention, the following problem, which we believe to
have an affirmative answer (see Problem and Remark~ \ref{7.6 Problem
and Remark}).

\begin{problem}\label{1.6 Problem} \textit{Let $5 \leq r < d$ and let $X \subset \mathbb{P}^r$ be a non-degenerate irreducible surface of degree $d$ which is of maximal
sectional regularity. Is it true, that the following three statements are equivalent?}
\begin{itemize}
\item[\rm{(i)}]   $N(X) \leq d-r$.
\item[\rm{(ii)}]  $\beta_{1,d-r+2}(X) = 1$.
\item[\rm{(iii)}] $\mathbb{F}(X) = \mathbb{P}^2$.
\end{itemize}
\end{problem}

\noindent We know, that the implications (i) $\Rightarrow$ (ii)
$\Rightarrow$ (iii) hold (see the implication (i) $\Rightarrow$
(iii) in statement (a) and the implication (i) $\Rightarrow$ (ii) in
statement (b) of Theorem~\ref{4.17'' Proposition}).

\section{Geometry of Extremal Secant Lines and Varieties of Maximal Sectional Regularity}
\label{2. Geometry of Extremal Secant Lines and Varieties of Maximal Sectional Regularity}

\noindent In this section, we consider the set of all
\textit{extremal secant lines}, i.e. of all proper $d-c+1$-secant
lines of a non-degenerate irreducible projective variety $X \subset
\mathbb{P}^r$ of degree $d$ and codimension $c < r$. Varieties,
which admit such extremal secant lines are of \textit{extremal
(Castelnuovo-Mumford) regularity}. Among the varieties of extremal
regularity, those of \textit{maximal sectional regularity} are of
particular interest for our investigation. The main result of this
section characterizes varieties of maximal sectional regularity as
those which ``have the largest possible set of extremal secant
lines". We first fix a few notations, which we shall keep for the
rest of our paper.

\begin{notation and convention}\label{2.1'' Notation and Convention} (A) By ${\mathbb N}_0$ and ${\mathbb N}$ we respectively denote the set of non-negative and
of positive integers. If $(Z, \mathcal{O}_Z)$ is a Noetherian scheme, we respectively denote by $\Reg(Z)$, $\CM(Z)$, $\Nor(Z)$, $S_2(Z)$ the \textit{locus
of regular, Cohen-Macaulay, normal} and \textit{of $S_2$-points} of $Z$. Moreover we denote the \textit{length} of the scheme $Z$ by $\#Z$, thus
\[
\#Z := \length(\mathcal{O}_Z) \quad (\in \mathbb{N}_0\cup\{\infty\}).
\]
The \textit{singular locus} of a morphism $f: Y \rightarrow Z$ of Noetherian schemes will be denoted by $\Sing(f)$. So, $\Sing(f)$ is the least
closed subset $W$ of $Z$ such that restriction of $f$ gives rise to an isomorphism
$$f\upharpoonright: Y \setminus f^{-1}(W) \stackrel{\cong}{\longrightarrow} Z \setminus W.$$
If $\Sing(f)$ is a finite set, we say that $f$ is \textit{almost non-singular}.\\
(B) Once for all we fix an algebraically closed field $K$, an integer $r \in \mathbb{N}$ and always write $S$ for the polynomial ring $K[x_0, \ldots , x_r]$.
We furnish $S$ with its standard grading and write $S_+$ for the irrelevant ideal $\bigoplus_{n \in \mathbb{N}} S_n = (x_0,\ldots,x_r)$ of $S$. If $\mathfrak{a}
\subset S$ is a graded ideal of $S$, we use $\mathfrak{a}^{\sat}$ to denote the \textit{saturation} $\bigcup_{n\in \mathbb{N}} (\mathfrak{a}:_S (S_+)^n)$ of $\mathfrak{a}$.\\
(C) If $Z \subseteq \mathbb{P}^r := {\rm Proj}(S)$ is a closed subscheme, $I_Z \subseteq S$ is used to denote the \textit{homogeneous vanishing ideal} of $Z$ in $S$ and
$\mathcal{J}_Z \subseteq \mathcal{O}_{\mathbb{P}^r}$ is used to denote the \textit{sheaf of vanishing ideals} of $Z$ in $\mathcal{O}_{\mathbb{P}^r}$. Keep in mind, that
$I_Z \subset S$ is a graded saturated ideal, which equals $S$ if and only if $Z = \emptyset$, that $\mathcal{J}_Z = \widetilde{I_Z}$ is the coherent sheaf of
$\mathcal{O}_{\mathbb{P}^r}$-modules induced by $I_Z$, and that $I_Z = H^0_*(\mathbb{P}^r,\mathcal{J}_Z) = \bigoplus_{n \in \mathbb{Z}}H^0(\mathbb{P}^r,\mathcal{J}_Z(n))$.\\
(D) Moreover $X \subset {\mathbb P}^r$ always denotes an irreducible non-degenerate variety with sheaf of vanishing ideals
${\mathcal J} := {\mathcal J}_X \subset {\mathcal O}_{{\mathbb P}^r_K}$, homogeneous vanishing ideal $I := I_X \subseteq S$ and \textit{homogeneous coordinate ring}
$A = A_X  := S / I$.
\end{notation and convention}

As (Castelnuovo-Mumford) regularity is of basic significance for our paper, we recall a few facts and define a few notions related to this invariant.

\begin{notation and reminder}\label{2.4'' Notation and Reminder} (A) If $M = \bigoplus_{n \in \mathbb{Z}} M_n$ is a finitely generated graded $S$-module, and
$i \in \mathbb{N}_0$, we  use
$$H^i(M) = \bigoplus_{n \in \mathbb{Z}} H^i(M)_n$$
to denote the $i$-th local cohomology module of $M$ with respect to the irrelevant ideal $S_+ = \bigoplus_{n\in \mathbb{N}} S_n$ of $S$, furnished with its natural grading.
Moreover we also set in this situation
$$h^i(M)_n := \dim_K\big(H^i(M)_n\big) \mbox{ for all } i \in \mathbb{N}_0 \mbox{ and all } n \in \mathbb{Z}.$$
Keep in mind that $h^i(M)_n$ is always finite and vanishes whenever
$i > \dim (M)$ or $n \gg 0$.

If $\mathcal{F} := \widetilde{M}$ is the coherent sheaf of
$\mathcal{O}_{\mathbb{P}^r}$-modules induced by $M$, the
\textit{Serre-Grothendieck Correspondence} yields an exact sequence
of graded $S$-modules
$$0 \longrightarrow H^0(M) \longrightarrow M \longrightarrow H^0_{*}(\mathbb{P}^r, \mathcal{F}) \longrightarrow H^1(M) \longrightarrow 0$$
and isomorphisms of graded $S$-modules
$$H^i_{*}(\mathbb{P}^r,\mathcal{F}) \cong H^{i+1}(M) \mbox{ for all } i \in \mathbb{N},$$
where $H^i_{*}(\mathbb{P}^r,\mathcal{F})$ denotes the graded
$S$-module $\bigoplus _{n \in \mathbb{Z}} H^i(\mathbb{P}^r,
\mathbb{F}(n))$.

In particular, if $\emptyset \neq Z \subsetneq \mathbb{P}^r$ is a
closed subscheme with sheaf of vanishing ideals $\mathcal{J}_Z
\subset \mathcal{O}_{\mathbb{P}^r}$ and homogeneous vanishing ideal
$I_Z = \bigoplus H^0(\mathbb{P}^r, \mathcal{J}_Z(n)) \subset S$, we
have
$$H^i(\mathbb{P}^r, \mathcal{O}_Z(n)) \cong H^i(Z,\mathcal{O}_Z(n)) \cong H^{i+1}(S/I_Z)_n \mbox{ for all } i \in \mathbb{N} \mbox{ and all } n \in \mathbb{Z}$$
and
$$H^i(\mathbb{P}^r,\mathcal{J}_Z(n)) \cong  H^i(S/I_Z)_n \mbox{ if either } i \neq r \mbox{ and } n \in \mathbb{Z}, \mbox{ or else } i = r \mbox{ and } n \geq -r.$$

\noindent (B) In the setting of part (A) and for any $k \in
\mathbb{N}_0$ we denote the \textit{(Castelnuovo-Mumford) regularity
of $M$ at and above level $k$} by $\reg^k(M)$, hence
$$\reg^k(M) = \inf\{a \in \mathbb{Z}\mid H^i(M)_{i+n} = 0, \forall i \geq k, \forall n>a\}.$$
The \textit{(Castelnuovo-Mumford) Regularity } $\reg(M)$ of $M$ is defined to be the regularity of $M$ at and above level $0$, thus
$$\reg(M) := \reg^0(M) = \inf\{a \in \mathbb{Z}\mid H^i_{S_+}(M)_{i+n} = 0, \forall i\in\mathbb{N}_0, \forall n>a\},$$
The \textit{(Castelnuovo-Mumford) regularity} of a coherent sheaf of $\mathcal{O}_{\mathbb{P}^r}$-modules $\mathcal{F}$
will be denoted by $\reg(\mathcal{F})$, hence
$$\reg(\mathcal{F}) =  \inf\{a \in \mathbb{Z}\mid H^i(\mathbb{P}^r, \mathcal{F}(i+n)) = 0, \forall i \in \mathbb{N}, \forall n \geq a\}.$$
If $\mathcal{F} := \widetilde{M}$ denotes the coherent sheaf of $\mathcal{O}_{\mathbb{P}^r}$-modules induced by the finitely generated graded $S$-module $M$, it
follows from the Serre-Grothendieck Correspondence that
$$\reg(\mathcal{F}) = \reg^2(M).$$
If $\emptyset \neq Z \subsetneq \mathbb{P}^r$ is a closed subscheme
with sheaf of vanishing ideals $\mathcal{J}_Z  \subseteq
\mathcal{O}_{\mathbb{P}^r}$ and homogeneous vanishing ideal $I_Z
\subset S$, the \textit{(Castelnuovo-Mumford) regularity} of $Z$ is
denoted by $\reg(Z)$. So, according to the observations made in part
(A) we have
$$\reg(Z) = \reg(\mathcal{J}_Z) = \reg(I_Z) = \reg(S/I_Z) + 1.$$
\end{notation and reminder}

We now define the notions of variety of extremal regularity and of maximal sectional regularity, which latter is the basic concept of our paper.

\begin{remark and definition}\label{2.3'' Remark and Definition} (A) Let the irreducible non-degenerate variety $X \subset \mathbb{P}^r$ be of degree $d$ and codimension
$c < r$. Then, the conjectural regularity inequality of Eisenbud-Goto \cite{EG} says that
$$\reg(X) \leq d-c+1.$$
We say that $X$ is of \textit{extremal regularity} if $\reg(X) \geq
d-c+1$.

\noindent (B) If $C = X \subset \mathbb{P}^r$ is a curve, the
conjectural inequality of part (A) holds according to
Gruson-Lazarsfeld-Peskine \cite{GruLPe}, so that $\reg(C) \leq
d-r+2$ in this case. Thus, $C$ is of extremal regularity if and only
if $\reg(C)$ takes its maximally possible value $d-r+2$. We
therefore say in this situation, that the curve $C$ is of
\textit{maximal regularity}. Curves of maximal regularity have some
important properties, which shall be of particular interest for our
later investigations. Namely according to \cite{GruLPe} and
\cite{BS2}[Remark 3.1 (C)] we can say:
\begin{itemize}\item[\rm{(a)}]  If $r \geq 3$ and $d > r+1$, each curve $C \subset \mathbb{P}^r$ of degree $d$ and of maximal regularity is smooth and rational.
               \item[\rm{(b)}]  If $r \geq 3$ and $d > r+1$, each curve $C \subset \mathbb{P}^r$ of degree $d$ and of maximal regularity has a $(d-r+2)$-secant line
                                $\mathbb{L} = \mathbb{P}^1 \subset \mathbb{P}^r$.
               \item[\rm{(c)}]  If in addition $r \geq 4$, the $(d-r+2)$-secant line of statement (b) is uniquely determined by $C$.
\end{itemize}

\noindent (C) If $k \leq r$ is a non-negative integer, we write
$$\mathbb{G}(k,\mathbb{P}^r) : = \{\mathbb{P}^k \mid \mathbb{P}^k \subseteq \mathbb{P}^r\}$$
for the \textit{Grassmannian} of all $k$-subspaces $\mathbb{P}^k
\subseteq \mathbb{P}^r$.

Let $X \subset \mathbb{P}^r$ be as in part (A). We say that $X$ is
of \textit{maximal sectional regularity} if $X \cap \mathbb{P}^{c+1}
\subset \mathbb{P}^{c+1}$ is a curve of maximal regularity for a
general space $\mathbb{P}^{c+1} \in \mathbb{G}(c+1, \mathbb{P}^r)$.
So, according to statement (a) of part (B) we can say:
\begin{itemize}
\item[] If $c \geq 2$ and if $X \subset \mathbb{P}^r$ is of maximal sectional regularity, then $X \cap \mathbb{P}^{c+1} \subset \mathbb{P}^{c+1}$ is a non-degenerate smooth
and rational curve of degree $d$ and of regularity $d-c+1$ for a general space $\mathbb{P}^{c+1} \in \mathbb{G}(c+1, \mathbb{P}^r)$.
\end{itemize}

\noindent (D) If $Z \subset \mathbb{P}^r$ is a closed subscheme of
dimension $> 1$ and $\mathbb{H} = \mathbb{P}^{r-1} \subset
\mathbb{P}^r$ is a general hyperplane, we have $\reg(Z \cap
\mathbb{H}) \leq \reg(Z)$. So, by induction on the dimension
$\dim(X)$ of $X$, we obtain:
\begin{itemize}
\item[] A variety $X \subset \mathbb{P}^r$ of maximal sectional regularity is of extremal regularity.
\end{itemize}
\end{remark and definition}

The previously defined concepts are intimately related to the existence of highly secant lines. We therefore recall a few preliminary facts on secant lines
and secant varieties.

\begin{notation and reminder}\label{4.1'' Notation and Reminder} (A) Let $Z \subset \mathbb{P}^r$ be a closed subscheme, let $m \in \mathbb{N}_0$ and consider the set
$$\Sigma_m(Z) := \{\mathbb{L} \in \mathbb{G}(1,\mathbb{P}^r) \mid \#(Z \cap \mathbb{L}) \geq m\}$$
of all $m$-\textit{secant lines} to $Z$. This set is closed in $\mathbb{G}(1,\mathbb{P}^r)$. To see this, let $d \in \mathbb{N}$ be such that
the homogeneous vanishing ideal $I_Z \subset S = K[x_0,\ldots,x_r]$ is generated by homogeneous polynomials of degree $\leq d$. Let $\mathbb{L} \in
\mathbb{G}(1,\mathbb{P}^r)$ and let $I_{\mathbb{L}} \subset S$ denote the homogeneous vanishing ideal of $\mathbb{L}$. Then, the ideal $I_Z + I_{\mathbb{L}} \subset S$
is generated by homogeneous polynomials of degree $\leq d$ so that its regularity is bounded in terms of $d$ and $r$ only -- for example by the inequality
$\reg(I_Z + I_{\mathbb{L}}) \leq (2d)^{2^{r-1}}$ (see \cite{CaS}). Now, fix some integer
$t \geq \max\{(2d)^{2^{r-1}}, m\}$. Then, the vanishing ideal $I_{Z \cap \mathbb{L}} = (I_Z + I_{\mathbb{L}})^{\sat} \subset S$ of $Z \cap \mathbb{L}$ coincides
with $I_Z + I_{\mathbb{L}}$ in all degrees $\geq t$. With $N := \dim_K (S_t)$ it follows, that $\mathbb{L} \in \Sigma_m(Z)$ if and only if
$\dim_K((I_Z)_t + (I_{\mathbb{L}})_t) \leq N-m$. This means, that the set $\Sigma_m(Z)$ is the preimage of the set
$$V := \{\mathbb{P} \in \mathbb{G}(N-t-2, |\mathcal{O}_{\mathbb{P}^r}(t)|) \mid \dim\langle \mathbb{P}, |\mathcal{J}_Z(t)|\rangle < N-m\} \subset
\mathbb{G}(N-t-2, |\mathcal{O}_{\mathbb{P}^r}(t)|)$$
under the morphism
$$\Phi:\mathbb{G}(1,\mathbb{P}^r) \rightarrow \mathbb{G}(N-t-2, |\mathcal{O}_{\mathbb{P}^r}(t)|), \quad \mathbb{L} \mapsto |\mathcal{J}_{\mathbb{L}}(t)|
= |(I_\mathbb{L})_t|.$$ According to \cite[Chapter 6]{H} the above
set $V$ is closed in the Grassmannian $\mathbb{G}(N-t-2,
|\mathcal{O}_{\mathbb{P}^r}(t)|)$. Therefore $\Sigma_m(Z)$ is a
closed subset of the Grassmannian $\mathbb{G}(1,\mathbb{P}^r)$.

\noindent (B) Keep the previous notations and hypotheses, let $T
\subset \mathbb{G}(1,\mathbb{P}^r)$ be a closed set and consider the
\textit{coincidence variety}
$$Y(T) := \{(x,\mathbb{L}) \in \mathbb{P}^r \times \mathbb{G}(1,\mathbb{P}^r) \mid x \in \mathbb{L} \in T \}$$
which is a closed subset of $\mathbb{P}^r \times \mathbb{G}(1,\mathbb{P}^r)$. Moreover, consider the two morphisms
$$p: Y(T) \rightarrow \mathbb{P}^r \mbox{ and } q:Y(T) \rightarrow \mathbb{G}(1,\mathbb{P}^r).$$
induced by the canonical projections. Then, we have
$$\mathcal{S}(T) := \bigcup_{\mathbb{L} \in T} \mathbb{L} = p(q^{-1}(T))$$
and hence $\mathcal{S}(T) \subset \mathbb{P}^r$ is closed.

Applying this to the closed set $T : = \Sigma_m(Z) \subset
\mathbb{G}(1,\mathbb{P}^r)$, we obtain that the $m$-\textit{secant
variety}
$$\Sec_m(Z) := \mathcal{S}(\Sigma_m(Z)) = \bigcup_{\mathbb{L} \in \Sigma_m(Z)} \mathbb{L} \subset \mathbb{P}^r$$
of $Z$ is closed in $\mathbb{P}^r$.

\noindent (C) We also shall use the notations
$$\Sigma_{\infty}(Z) := \{\mathbb{L} \in \mathbb{G}(1,\mathbb{P}^r) \mid \#(Z \cap \mathbb{L}) = \infty\} = \{\mathbb{L} \in \mathbb{G}(1,\mathbb{P}^r) \mid \mathbb{L}
\subseteq Z\}$$
and
$$\Sec_{\infty}(Z) := \mathcal{S}(\Sigma_{\infty}(Z)) = \bigcup_{L \in \Sigma_{\infty}(Z)} \mathbb{L} = \bigcup_{\mathbb{P}^1 = \mathbb{L} \subseteq X} \mathbb{L}.$$
Observe that we have the inclusions
$$\Sigma_{\infty}(Z) \subseteq \Sigma_m(Z) \mbox{ and } \Sec_{\infty}(Z) \subseteq \Sec_m(Z) \mbox{ for all } m \in \mathbb{N}_0$$
with equality at both places if the vanishing ideal $I_Z \subseteq S$ is generated by polynomials of degree $< m$. Hence
$\Sigma_{\infty}(Z) \subseteq \mathbb{G}(1,\mathbb{P}^r)$ and $\Sec_{\infty}(Z) \subseteq \mathbb{P}^r$ are closed subsets, too.
In particular, for each $m \in \mathbb{N}_0$, the set
$$\Sigma^{\circ}_m(Z) := \Sigma_m(Z) \setminus \Sigma_{\infty}(Z)$$
of \textit{proper} $m$-\textit{secant lines} to $Z$ is locally closed in $\mathbb{G}(1,\mathbb{P}^r)$. Moreover $\Sigma^{\circ}_m(Z) \neq \emptyset$ implies that
$I_Z$ needs homogeneous generators of degree $\geq m$. So, for each $m \in \mathbb{N}_0$ we have the implication:
$$ \mbox{ If } \Sigma^{\circ}_m(Z) \neq \emptyset, \mbox{ then } \reg(Z) \geq m.$$
\end{notation and reminder}

In the sequel we are interested in the set $\Sigma^{\circ}_{d-c+1}(X)$ of proper extremal secant lines to a variety $X \subset \mathbb{P}^r$ of codimension $c$ and degree $d$.
Among these proper extremal secant lines, those which are extremal secant lines of a curve of maximal regularity $X \cap \mathbb{E}$ with $\mathbb{E} \subset
\mathbb{G}(c+1, \mathbb{P}^r)$ will be of particular interest later. We therefore introduce the following notions.

\begin{definition and remark}\label{4.1''' Definition and Remark} (A) Let  $X \subset \mathbb{P}^r$ be a non-degenerate irreducible projective variety of codimension
$ c < r$ and degree $d$, and let
$$\mathbb{P}\mathbb{U}(X) :=\{\mathbb{E} \in \mathbb{G}(c+1,\mathbb{P}^r) \mid X \cap \mathbb{E} \subset \mathbb{E} \mbox{ is an integral curve of maximal regularity}\}.$$
Observe, that this set contains a non-empty open subset of
$\mathbb{G}(c+1,\mathbb{P}^r)$ if and only if $X$ is of maximal
sectional regularity.

\noindent (B) Keep the above notations, assume that $c \geq 3$ and
$d > c+2$ and let $\mathbb{E} \in \mathbb{P}\mathbb{U}(X)$. Then,
according to statements (b) and (c) of Remark and
Definition~\ref{2.3'' Remark and Definition} (B) the curve of
maximal regularity $X \cap \mathbb{E} \subset \mathbb{E} =
\mathbb{P}^{c+1}$ has a unique $(d-c+1)$-secant line, which we
denote by $\mathbb{L}_{\mathbb{E},X}$. So, this secant line is
characterized by the property
$$\#\big((X \cap \mathbb{E}) \cap \mathbb{L}_{\mathbb{E},X}\big) = \#(X \cap \mathbb{L}_{\mathbb{E},X}) = d-c+1.$$
Using this notation we define the set
$$ {}^*\Sigma^{\circ}_{d-c+1}(X) := \{ \mathbb{L}_{\mathbb{E},X} \mid \mathbb{E} \in \mathbb{P}\mathbb{U}(X)\} \subseteq \Sigma^{\circ}_{d-c+1}(X)$$
and call the lines, which belong to this set the \textit{special
extremal secant lines} to $X$.

\noindent (C) Keep the previous notations and assume that $3 \leq c
< r-1$ and $d > c+2$. Let $\mathbb{H} = \mathbb{P}^{r-1} \subset
\mathbb{P}^r$ be a general hyperplane so that $X \cap \mathbb{H}
\subset \mathbb{H} = \mathbb{P}^{r-1}$ is a non-degenerate reduced
and irreducible variety of codimension $c \geq 3$ and of degree $d >
c+2$. Let $\mathbb{E} \in \mathbb{P}\mathbb{U}(X \cap \mathbb{H})$.
Then $X \cap \mathbb{E} = (X \cap \mathbb{H}) \cap \mathbb{E}$
yields that $\mathbb{E} \in \mathbb{P}\mathbb{U}(X)$ and
$\mathbb{L}_{\mathbb{E},X \cap \mathbb{H}} =
\mathbb{L}_{\mathbb{E},X}$. Therefore we get
$$ \mathbb{P}\mathbb{U}(X \cap \mathbb{H}) \subseteq \mathbb{P}\mathbb{U}(X) \cap \mathbb{G}(c+1,\mathbb{H})
 \mbox{ and } {}^*\Sigma^{\circ}_{d-c+1}(X \cap \mathbb{H}) \subseteq {}^*\Sigma^{\circ}_{d-c+1}(X) \cap \mathbb{G}(1, \mathbb{H}).$$
Thus we have
$$\overline{{}^*\Sigma^{\circ}_{d-c+1}(X \cap \mathbb{H})} \subseteq \overline{{}^*\Sigma^{\circ}_{d-c+1}(X)} \cap \mathbb{G}(1, \mathbb{H}).$$
\end{definition and remark}
\smallskip

\begin{definition and remark}\label{2.1'' Definition and Remark}
(A) Let $X \subset \mathbb{P}^r$ be an irreducible projective variety of degree $d$ and codimension $c < r$. The proper $(d-c+1)$-secant lines to $X$, hence the lines
$\mathbb{L}$ which belong to the locally closed subset $\Sigma^{\circ}_{d-c+1}(X)$ (see Notation and Reminder~\ref{4.1'' Notation and Reminder} (C)) are called
\textit{proper extremal secant lines} to $X$. To measure the size of the set of all proper extremal secant lines to $X$, we introduce the invariant
$$ \mathfrak{d}(X) : = \dim\big(\Sigma^{\circ}_{d-c+1}(X)\big) = \dim\big(\overline{\Sigma^{\circ}_{d-c+1}(X)}\big)$$
with the usual convention that $\dim(\emptyset) = -1$. According to Notation and Reminder~\ref{4.1'' Notation and Reminder} (C) we can say
$$\mbox{ If } \mathfrak{d}(X) \geq 0, \mbox{ then } X \subset \mathbb{P}^r \mbox{ is a variety of extremal regularity.}$$

\noindent (B) Keep the notations of part (A) and assume that $3\leq
c < r$ and $d > c+2$. To measure the size of the set
${}^*\Sigma^{\circ}_{d-c+1}(X)$ (see Definition and
Remark~\ref{4.1''' Definition and Remark}) of all special extremal
secant lines to $X$, we define the invariant
$$ {}^*\mathfrak{d}(X) := \dim\big(\overline{{}^*\Sigma^{\circ}_{d-c+1}(X)}\big).$$
\end{definition and remark}

Now, we are heading for the main result of this section. We begin with the following auxiliary result.

\begin{lemma}\label{2.2'' Lemma}
Let $\Sigma$ be a closed subset of $\mathbb{G}(1,\mathbb{P}^r)$ and let $\mathbb{H} = \mathbb{P}^{r-1} \subset \mathbb{P}^r$ be a general hyperplane.
Then, the following statements hold.
\begin{itemize}
\item[\rm{(a)}] If $\dim(\Sigma) \leq 1$, then $\Sigma \cap \mathbb{G}(1,\mathbb{H}) = \emptyset$.
\item[\rm{(b)}] If $\dim(\Sigma) \geq 2$, then each irreducible component $W$ of $\Sigma \cap \mathbb{G}(1,\mathbb{H})$ satisfies
                   $$\dim(W) = \dim(\Sigma) - 2.$$
\end{itemize}
\end{lemma}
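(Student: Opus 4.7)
My approach is the standard Bertini-type incidence-variety argument. Introduce the universal flag
\[
J := \{(\mathbb{L},\mathbb{H}) \in \mathbb{G}(1,\mathbb{P}^r) \times (\mathbb{P}^r)^* \mid \mathbb{L} \subseteq \mathbb{H}\}
\]
with projections $\pi_1 : J \to \mathbb{G}(1,\mathbb{P}^r)$ and $\pi_2 : J \to (\mathbb{P}^r)^*$, where $(\mathbb{P}^r)^*$ is the dual space parameterizing hyperplanes. Since the hyperplanes through a fixed line form a $\mathbb{P}^{r-2}$, the map $\pi_1$ is a Zariski-locally trivial $\mathbb{P}^{r-2}$-bundle; in particular it is flat with irreducible equidimensional fibers, so for $J_\Sigma := \pi_1^{-1}(\Sigma)$ the irreducible components of $J_\Sigma$ are in bijection with those of $\Sigma$, each of dimension boosted by $r-2$. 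The decisive identification is that the fiber of $\pi_2|_{J_\Sigma}$ over $\mathbb{H} \in (\mathbb{P}^r)^*$ is precisely $\Sigma \cap \mathbb{G}(1,\mathbb{H})$, so the problem reduces to a fiber-dimension analysis of $\pi_2|_{J_\Sigma}$.

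For statement (a) the estimate is immediate: if $\dim \Sigma \leq 1$ then $\dim J_\Sigma \leq r-1 < r = \dim (\mathbb{P}^r)^*$, so the image of $\pi_2|_{J_\Sigma}$ is a proper closed subset of $(\mathbb{P}^r)^*$, and any $\mathbb{H}$ in the nonempty complement satisfies $\Sigma \cap \mathbb{G}(1,\mathbb{H}) = \emptyset$.

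For statement (b) I argue one component at a time. Let $\Sigma^{(i)}$ be an irreducible component of $\Sigma$ and set $J_i := \pi_1^{-1}(\Sigma^{(i)})$, an irreducible variety of dimension $\dim \Sigma^{(i)} + r - 2$. Either $\pi_2(J_i) \subsetneq (\mathbb{P}^r)^*$ (in which case a general $\mathbb{H}$ eliminates this component's contribution), or $\pi_2|_{J_i}$ is dominant, and then the theorem on the dimension of fibers, together with upper semicontinuity, yields that for general $\mathbb{H}$ every irreducible component of $\Sigma^{(i)} \cap \mathbb{G}(1,\mathbb{H})$ has dimension exactly $\dim \Sigma^{(i)} - 2$. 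Choosing $\mathbb{H}$ generic with respect to every component of $\Sigma$ simultaneously then gives, for each contributing top-dimensional component of $\Sigma$, that every associated component $W$ satisfies $\dim W = \dim \Sigma - 2$.

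The part I anticipate to be delicate is excluding stray small-dimensional components of the intersection: a component $\Sigma^{(i)}$ with $\dim \Sigma^{(i)} < \dim \Sigma$ whose incidence still dominates $(\mathbb{P}^r)^*$ would, a priori, contribute a component of $\Sigma \cap \mathbb{G}(1,\mathbb{H})$ of dimension strictly below $\dim \Sigma - 2$, violating the stated equality. In the intended applications $\Sigma$ will be the closure of an irreducible locally closed subset (the secant-line loci $\Sigma^\circ_{d-c+1}(X)$ and ${}^*\Sigma^\circ_{d-c+1}(X)$ of the paper), so $\Sigma$ is automatically equidimensional and the issue does not arise; I would invoke this implicit irreducibility or, alternatively, verify that any potentially troublesome low-dimensional component gets absorbed into the closure of the contribution of a top-dimensional component of $\Sigma$ for a sufficiently general hyperplane $\mathbb{H}$.
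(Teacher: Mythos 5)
Your argument is correct, but it runs along a genuinely different track from the paper's. The paper disposes of both statements in one stroke by invoking Kleiman's transversality theorem \cite[Corollary 4]{K}: taking $G = {\rm Aut}(\mathbb{P}^r)$ acting transitively on $\mathbb{G}(1,\mathbb{P}^r)$, $Y = \mathbb{G}(1,\mathbb{H}_0)$ for a fixed hyperplane $\mathbb{H}_0$ and $Z = \Sigma$, every irreducible component of $g(Y)\cap Z$ for general $g$ has dimension $\dim(Y)+\dim(Z)-\dim(\mathbb{G}(1,\mathbb{P}^r)) = \dim(\Sigma)-2$, which yields (b) directly and (a) by the convention that a negative expected dimension forces emptiness. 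You instead build the flag incidence variety $J$, use that $\pi_1$ is a $\mathbb{P}^{r-2}$-bundle to compute $\dim J_\Sigma = \dim\Sigma + r-2$, and then run the fiber-dimension theorem on $\pi_2$; this is more elementary and self-contained (no appeal to Kleiman), at the cost of having to treat the components of $\Sigma$ one at a time and to combine the generic upper bound on fiber dimension with the everywhere lower bound to pin down each component of the fiber exactly --- a step you carry out correctly. The delicate point you flag at the end is real but not a defect relative to the paper: Kleiman's theorem is stated for \emph{integral} $Z$, so the paper's proof implicitly performs the same component-by-component reduction, and both arguments share the same caveats, namely that for non-equidimensional $\Sigma$ the equality in (b) should be read component of $\Sigma$ by component of $\Sigma$ (or the intersection may simply be empty, in which case (b) is vacuous). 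In the application (Theorem~\ref{2.3'' Theorem}) only the top-dimensional count is used, so nothing is lost either way.
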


\begin{proof}
A result of Kleiman (see \cite[Corollary 4]{K}) says that for an
integral algebraic group scheme $G$, an integral scheme $X$ with
transitive $G$-action, for two closed integral subschemes $Y, Z
\subset X$ and for general $g \in G$, all irreducible components of
$g(Y) \cap Z$ have dimension $\dim(Y)+\dim(Z)-\dim(X)$.

If we fix a hyperplane $\mathbb{H}_0 \subset \mathbb{P}^r$ and apply
this result with $G = {\rm Aut}(\mathbb{P}^r)$, $X =
\mathbb{G}(1,\mathbb{P}^r)$, $Y := \mathbb{G}(1,\mathbb{H}_0)$ and
$Z = \Sigma$, and keep in mind that
$$\dim(\mathbb{G}(1,\mathbb{P}^r)) - \dim(\mathbb{G}(1,\mathbb{H}_0)) = 2(r-2) - 2(r-1) = 2,$$
we get our claim.
\end{proof}

\begin{theorem}\label{2.3'' Theorem}
Assume that $1 \leq t \leq r-3$ and let $X \subset \mathbb{P}^r$ be a non-degenerate irreducible projective variety of dimension $t$ and degree $> r-t+2$.
Then, the following statements hold.
\begin{itemize}
\item[\rm{(a)}] ${}^*\mathfrak{d}(X) = \mathfrak{d}(X) \leq 2t-2$.
\item[\rm{(b)}] $X$ is of maximal sectional regularity if and only if $\mathfrak{d}(X) = 2t-2$.
\end{itemize}
\end{theorem}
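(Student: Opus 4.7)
The plan is to prove (a) and (b) simultaneously by a Bertini argument combined with an incidence-correspondence dimension count. Set $c := r - t$, so that $c \geq 3$ and $d > c+2$ by hypothesis, and consider the incidence
\[
W := \{(\mathbb{E}, \mathbb{L}) \in \mathbb{P}\mathbb{U}(X) \times \Sigma^\circ_{d-c+1}(X) \mid \mathbb{L} = \mathbb{L}_{\mathbb{E}, X}\} \subseteq \mathbb{G}(c+1, \mathbb{P}^r) \times \mathbb{G}(1, \mathbb{P}^r).
\]
The first projection $p_1 \colon W \to \mathbb{P}\mathbb{U}(X)$ is bijective by the uniqueness of extremal secants for curves of maximal regularity (Remark and Definition~\ref{2.3'' Remark and Definition}(B)(c)), so $\dim W = \dim \mathbb{P}\mathbb{U}(X)$.

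\smallskip

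The central step will be to show that every $\mathbb{L} \in \Sigma^\circ_{d-c+1}(X)$ arises as $\mathbb{L}_{\mathbb{E}, X}$ for a generic $(c+1)$-subspace $\mathbb{E} \supseteq \mathbb{L}$. To prove this I would fix such an $\mathbb{L}$ and exploit that $X \cap \mathbb{L}$ is $0$-dimensional to iterate Bertini's irreducibility theorem along $t-1$ general hyperplanes through $\mathbb{L}$; at every stage the ambient irreducible variety has dimension $\geq 2$ and the base locus $X \cap \mathbb{L}$ has codimension $\geq 2$, so the outcome $X \cap \mathbb{E}$ will be an irreducible non-degenerate curve $C \subset \mathbb{E} = \mathbb{P}^{c+1}$ of degree $d$. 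Because $\mathbb{L}$ is a proper $(d-c+1)$-secant to $C$, the Gruson--Lazarsfeld--Peskine bound~\cite{GruLPe} together with Notation and Reminder~\ref{4.1'' Notation and Reminder}(C) pins $\reg(C) = d-c+1$, so $\mathbb{E} \in \mathbb{P}\mathbb{U}(X)$, and uniqueness forces $\mathbb{L} = \mathbb{L}_{\mathbb{E}, X}$. (For $t = 1$ this is vacuous: $\mathbb{E} = \mathbb{P}^r$ is forced and $X$ itself is the relevant curve.)

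\smallskip

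Two conclusions then drop out together. First, every proper extremal secant line is special, so $\Sigma^\circ_{d-c+1}(X) = {}^*\Sigma^\circ_{d-c+1}(X)$ and hence $\mathfrak{d}(X) = {}^*\mathfrak{d}(X)$. Second, the second projection $p_2 \colon W \to \Sigma^\circ_{d-c+1}(X)$ is surjective, and by the Bertini step each of its fibers contains a dense open subset of the Grassmannian of $(c+1)$-subspaces through the given line, a variety of dimension $c(t-1)$; hence every fiber has dimension exactly $c(t-1)$. Combining the two projections of $W$ yields
\[
\dim \mathbb{P}\mathbb{U}(X) \;=\; \dim W \;=\; \mathfrak{d}(X) + c(t-1).
\]
Since $\mathbb{P}\mathbb{U}(X) \subseteq \mathbb{G}(c+1, \mathbb{P}^r)$ and $\dim \mathbb{G}(c+1, \mathbb{P}^r) = (c+2)(t-1)$, this gives $\mathfrak{d}(X) \leq 2(t-1) = 2t - 2$, proving (a); equality holds precisely when $\mathbb{P}\mathbb{U}(X)$ is dense in $\mathbb{G}(c+1, \mathbb{P}^r)$, i.e., exactly when $X$ is of maximal sectional regularity, which is (b).

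\smallskip

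The hard part will be the iterated Bertini step: the linear system of $(c+1)$-subspaces through the fixed line $\mathbb{L}$ has nontrivial base locus containing $\mathbb{L}$, so the irreducibility theorem must be applied to a system with base locus rather than to free hyperplanes. The saving point is that $X \cap \mathbb{L}$ is only finite and persists unchanged through the iteration (each chosen hyperplane contains $\mathbb{L}$), so throughout the process the base locus has codimension $\geq 2$ in each successive irreducible section and the iteration terminates one step before reaching dimension~$0$. Once this is handled, the remainder is a routine dimension count on Grassmannians.
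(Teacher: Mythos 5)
Your strategy --- an incidence correspondence between $\mathbb{P}\mathbb{U}(X)$ and the set of extremal secant lines, with a dimension count on the two projections --- is genuinely different from the paper's argument, but it contains a gap that is not merely technical: the central claim that \emph{every} $\mathbb{L} \in \Sigma^{\circ}_{d-c+1}(X)$ arises as $\mathbb{L}_{\mathbb{E},X}$ for a generic $\mathbb{E} \supseteq \mathbb{L}$ is false. If $\mathbb{E} \in \mathbb{P}\mathbb{U}(X)$, then $X \cap \mathbb{E}$ is an integral curve of maximal regularity of degree $d > (c+1)+1$ in $\mathbb{E} = \mathbb{P}^{c+1}$, hence smooth by Remark and Definition~\ref{2.3'' Remark and Definition}~(B)(a); being a linear section of $X$, such a curve can only pass through smooth points of $X$, and since $X \cap \mathbb{L}_{\mathbb{E},X} = (X \cap \mathbb{E}) \cap \mathbb{L}_{\mathbb{E},X}$, every special extremal secant line meets $X$ only inside $\Reg(X)$ (this is exactly Proposition~\ref{4.3'' Lemma}~(a)). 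But proper extremal secant lines through singular points of $X$ do occur: for a cone with vertex $p$ the lines of the plane $\mathbb{F}(X) \ni p$ passing through $p$ are proper $(d-r+3)$-secants, and Definition and Remark~\ref{4.2' Definition and Remark}~(C)(2) identifies the special ones as precisely those \emph{avoiding} $p$; more generally, when $\mathbb{F}(X)$ is a plane the singular points of $X$ lie on the plane curve $X \cap \mathbb{F}(X)$ of degree $d-r+3$ (Theorem~\ref{4.14'' Theorem}~(a),(b)) and are nonempty, and any line of $\mathbb{F}(X)$ through such a point not contained in $X$ is a proper but non-special extremal secant. For such $\mathbb{L}$ the fibre $p_2^{-1}(\mathbb{L})$ is empty. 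Consequently $p_2$ is not surjective, your count only yields $\dim W = {}^*\mathfrak{d}(X) + c(t-1)$ and hence ${}^*\mathfrak{d}(X) \leq 2t-2$, and it establishes neither the bound on $\mathfrak{d}(X)$ nor the equality ${}^*\mathfrak{d}(X) = \mathfrak{d}(X)$, which is the substantive half of statement~(a).

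Even on the locus of lines meeting $X$ only in $\Reg(X)$, the deferred Bertini step is a real obligation rather than a routine one: the system of $(c+1)$-subspaces through a fixed line has base locus, irreducibility of its general member is delicate (and the theorem does not assume characteristic zero --- note that the paper is explicitly careful about this elsewhere, e.g.\ in Theorem~\ref{2.3' Theorem}), and one must also verify that $X \cap \mathbb{E}$ is reduced, non-degenerate in $\mathbb{E}$ and of degree $d$ before Gruson--Lazarsfeld--Peskine applies. The paper avoids all of this by never fixing a line: it inducts on $\dim(X)$ using a \emph{general} hyperplane $\mathbb{H}$, observes that $\Sigma^{\circ}_{d-c+1}(X \cap \mathbb{H}) = \Sigma^{\circ}_{d-c+1}(X) \cap \mathbb{G}(1,\mathbb{H})$ while ${}^*\Sigma^{\circ}_{d-c+1}(X \cap \mathbb{H}) \subseteq {}^*\Sigma^{\circ}_{d-c+1}(X) \cap \mathbb{G}(1,\mathbb{H})$, and uses Kleiman's transversality theorem (Lemma~\ref{2.2'' Lemma}) to drop both dimensions by exactly $2$, reducing to the curve case where the uniqueness of the extremal secant line settles everything. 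If you want to salvage your incidence-correspondence picture, you would first need an independent argument that the non-special proper extremal secant lines form a family of dimension at most ${}^*\mathfrak{d}(X)$ --- but that is essentially the content of the theorem itself.
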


\begin{proof}
Let $d := \deg(X)$ and let $c: = r-t$. As ${}^*\Sigma^{\circ}_{d-c+1}(X) \subseteq \Sigma^{\circ}_{d-c+1}(X)$ we have the inequality
$${}^*\mathfrak{d}(X) \leq \mathfrak{d}(X).$$
We proceed by by induction on $t = \dim(X)$.

First, let $t = 1$, so that $r \geq 4$ and $X \subset \mathbb{P}^r$
is a curve of degree $d
> r+1$. If $\mathfrak{d}(X) \geq 0$, it follows by Definition and
Remark~\ref{2.1'' Definition and Remark} (A), that $X$ is of maximal
regularity. But then, by Definition and Remark~\ref{2.3'' Remark and
Definition} (B) (c), $X$ has precisely one extremal secant line, so
that $\mathfrak{d}(X) = {}^*\mathfrak{d}(X) = 0$. This observation
proves statements (a) and (b) in the case $t = 1$.

So, let $t > 1$ and let $\mathbb{H} = \mathbb{P}^{r-1} \subset
\mathbb{P}^r$ be a general hyperplane. Then $X \cap \mathbb{H}
\subset \mathbb{H} = \mathbb{P}^{r-1}$ is a non-degenerate
irreducible projective variety of codimension $c \geq 3$, of
dimension $t-1$ and of degree $d > c+2$ -- which is of maximal
sectional regularity if and only if $X \subset \mathbb{P}^r$ is. So
by induction we first have
$${}^*\mathfrak{d}(X \cap \mathbb{H}) = \mathfrak{d}(X \cap \mathbb{H}) \leq 2t-4,$$
with equality at the second place if and only if and only if $X\cap
\mathbb{H} \subset \mathbb{H} = \mathbb{P}^{r-1}$ is of maximal
sectional regularity -- hence, if and only if $X \subset
\mathbb{P}^r$ is of maximal sectional regularity.

Moreover, we have
$$\Sigma^{\circ}_{d-c+1}(X \cap \mathbb{H}) = \Sigma^{\circ}_{d-c+1}(X) \cap \mathbb{G}(1,\mathbb{H}).$$
On application of Lemma~\ref{2.2'' Lemma}, with $\Sigma := \overline{\Sigma^{\circ}_{d-c+1}(X)}$ it follows hat
$$\mathfrak{d}(X) = \dim\big(\Sigma^{\circ}_{d-c+1}(X)\big) = \dim\big(\Sigma^{\circ}_{d-c+1}(X \cap \mathbb{H})\big) + 2 = \mathfrak{d}(X\cap\mathbb{H}) + 2 \leq 2t-2,$$
with equality at the last place if and only if $X$ is of maximal
sectional regularity.

It remains to show that ${}^*\mathfrak{d}(X) \geq \mathfrak{d}(X)$.
In view of Definition and Remark~\ref{4.1''' Definition and Remark}
(C) we have
$$\mathfrak{d}(X \cap \mathbb{H}) = {}^*\mathfrak{d}(X \cap \mathbb{H}) = \dim\big(\overline{{}^*\Sigma^{\circ}_{d-c+1}(X \cap \mathbb{H})}\big) \leq
\dim\big(\overline{{}^*\Sigma^{\circ}_{d-c+1}(X)} \cap \mathbb{G}(1, \mathbb{H})\big).$$
If we apply Lemma~\ref{2.2'' Lemma} with $\Sigma = \overline{{}^*\Sigma^{\circ}_{d-c+1}(X)}$, we get
$${}^*\mathfrak{d}(X) = \dim\big(\overline{{}^*\Sigma^{\circ}_{d-c+1}(X)}\big) = \dim\big(\overline{{}^*\Sigma^{\circ}_{d-c+1}(X)} \cap \mathbb{G}(1, \mathbb{H})\big) + 2$$
and hence
$${}^*\mathfrak{d}(X) \geq \mathfrak{d}(X \cap \mathbb{H}) + 2 = \mathfrak{d}(X).$$
\end{proof}


\section{Sectionally Rational Varieties}
\label{2'. Sectionally rational Varieties}

\noindent As we shall see, surfaces of maximal sectional regularity
have at most finitely many singular points and their generic
hyperplane section is a smooth rational curve. As a consequence,
these surfaces are almost non-singular projections of rational
normal surface scrolls. In this section, we approach this fact in a
more general setting, by showing first, that (under certain
restrictions) projective varieties whose general linear section
curve is rational, are birational projections of
varieties of minimal degree. We first fix some notations concerning projections.\\

\begin{notation and convention}\label{2.1' Notation and Convention}
Let $r' \geq r$ and let $X' \subset \mathbb{P}^{r'}$ be an irreducible and
non-degenerate variety. If $\Lambda = \mathbb{P}^{r'-r-1} \subset \mathbb{P}^{r'}$ is a linear subspace,
such that $X' \cap \Lambda = \emptyset$ and $\pi'_{\Lambda}:\mathbb{P}^{r'}\setminus\Lambda
\twoheadrightarrow \mathbb{P}^r$ is a \textit{linear projection with center $\Lambda$}, we write
$X'_{\Lambda} := \pi'_{\Lambda}(X')$ for the \textit{projected image} $\pi'_{\Lambda}(X') \subset \mathbb{P}^r$ of $X'$ and
$\pi_{\Lambda}: X' \twoheadrightarrow X'_{\Lambda}$ for the finite morphism induced by the projection $\pi'_{\Lambda}$. If
$X = X'_{\Lambda}$, we say that $X$ is \textit{a projection of $X'$ (from the center $\Lambda$)} and call $X' \subset \mathbb{P}^{r'}$
a \textit{projecting variety} of $X$. If the induced projection morphism $\pi_{\Lambda}:X'\twoheadrightarrow X$ is in addition almost
non-singular, we say that $X$ is an \textit{almost non-singular projection of $X'$ (from the center $\Lambda$)}.
\end{notation and convention}

Next, we define the basic concept of this section.

\begin{definition}\label{2.2' Definition} A non-degenerate irreducible variety $X\subset \mathbb{P}^r$ of codimension $c < r$ is said to be \textit{sectionally rational}
if $X \cap \mathbb{P}^{c+1} \subset \mathbb{P}^{c+1}$ is a (possibly singular) rational curve for a general space $\mathbb{P}^{c+1} \in \mathbb{G}(c+1,\mathbb{P}^r)$.
\end{definition}

Now, we are ready to prove the announced result on sectionally
rational varieties. The reader should be aware of the fact, that in
our proof, we use a Bertini Theorem for linear systems of ample
divisors found in \cite{FlOV}, which is known only in characteristic
$0$. Nevertheless, in the case of sectionally rational surfaces with
finite non-normal locus -- the case we are actually heading for --
we can avoid the use of the mentioned Bertini Theorem and thus get a
characteristic free statement.

\begin{theorem}\label{2.3' Theorem}
Let $X \subset \mathbb{P}^r$ be sectionally rational. Set $t := \dim X$ and $d := \deg X$. Assume that either
\begin{itemize}
\item[\rm{(1)}] $\Char K = 0$, or else
\item[\rm{(2)}] $t=2$ and $X \setminus \Nor(X)$ is finite.
\end{itemize}
Then, $X$ is a projection of a variety of minimal degree. More
precisely, there is a variety $\widetilde{X} \subset \mathbb{P}^{d+t-1}$ of minimal degree and a subspace
$\Lambda = \mathbb{P}^{d+t-r-2} \subset \mathbb{P}^{d+t-1}$ with $\widetilde{X} \cap \Lambda = \emptyset$ such that
(in the notation introduced in Notation and Convention~\ref{2.1' Notation and Convention})
\begin{itemize}
\item[\rm{(a)}] $\widetilde{X}_{\Lambda} = X$;
\item[\rm{(b)}] $X \setminus \Nor(X) = \Sing(\pi_{\Lambda}:\widetilde{X} \twoheadrightarrow X)$.
\end{itemize}
\end{theorem}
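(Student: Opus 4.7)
The plan is to realize $\widetilde{X}$ as the image of the normalization of $X$ under the morphism defined by the complete linear series of the pullback of $\mathcal{O}_X(1)$, and then to identify the projection $\pi_\Lambda$ with the normalization itself. First, let $\nu\colon \bar{X} \twoheadrightarrow X$ be the normalization (a finite birational morphism) and set $\mathcal{L} := \nu^*\mathcal{O}_X(1)$. Then $\mathcal{L}$ is globally generated with top self-intersection $\mathcal{L}^t = d$. Since $X$ is non-degenerate, the image of $S_1 = H^0(\mathbb{P}^r,\mathcal{O}(1))$ in $H^0(\bar{X},\mathcal{L})$ is an $(r+1)$-dimensional subspace that already generates $\mathcal{L}$ without base points. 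Thus the complete linear system $|\mathcal{L}|$ defines a morphism $\phi\colon \bar{X} \to \mathbb{P}^N$ with $N = h^0(\bar{X},\mathcal{L}) - 1 \geq r$, and the inclusion $S_1 \hookrightarrow H^0(\bar{X},\mathcal{L})$ exhibits $X \subset \mathbb{P}^r$ as the linear projection of $\widetilde{X} := \phi(\bar{X})$ from the $(N-r-1)$-plane $\Lambda$ cut out by $S_1$.

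Second, I would show that $N = d+t-1$ and that $\phi$ is birational, forcing $\widetilde{X} \subset \mathbb{P}^{d+t-1}$ to have degree $d$ and hence to be of minimal degree. To do this, I would construct a chain $\bar{X} = \bar{X}_t \supset \bar{X}_{t-1} \supset \cdots \supset \bar{X}_1$ of hyperplane sections (cut by general hyperplanes of $\mathbb{P}^r$ pulled back via $\nu$) that are integral and normal of successive dimensions. Under hypothesis (1) this uses the Bertini theorem for ample divisors on normal varieties in characteristic $0$ of \cite{FlOV}; under hypothesis (2) only the single section $\bar{X}_1$ matters, and since $X \setminus \Nor(X)$ is finite, the singular locus of $\bar{X}$ is finite, so classical Bertini suffices in any characteristic. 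Because the general curve section of $X$ is rational by hypothesis and $\bar{X}_1$ is its normalization, $\bar{X}_1 \cong \mathbb{P}^1$ with $\mathcal{L}|_{\bar{X}_1} \cong \mathcal{O}_{\mathbb{P}^1}(d)$ and $h^0 = d+1$. Climbing back up the chain via the restriction sequences
$$0 \to \mathcal{O}_{\bar{X}_i} \to \mathcal{L}|_{\bar{X}_i} \to \mathcal{L}|_{\bar{X}_{i-1}} \to 0,$$
whose kernels are trivial because $\mathcal{L}|_{\bar{X}_i} \otimes \mathcal{O}_{\bar{X}_i}(-\bar{X}_{i-1}) \cong \mathcal{O}_{\bar{X}_i}$, and invoking the $H^1$-vanishing at each step, yields $h^0(\bar{X},\mathcal{L}) = d+t$. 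Since $\phi|_{\bar{X}_1}$ is the degree-$d$ rational normal curve in $\mathbb{P}^d$ and hence an embedding, $\phi$ itself is birational and $\deg \widetilde{X} = \mathcal{L}^t = d$. This establishes (a).

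Finally, for (b), I would invoke the del Pezzo-Bertini classification: every variety of minimal degree is normal, so the finite birational morphism $\phi\colon \bar{X} \to \widetilde{X}$ is an isomorphism by Zariski's main theorem. Under this identification, $\pi_\Lambda$ becomes the normalization $\nu$, and since $\nu$ is an isomorphism precisely over the normal locus of $X$, we obtain $\Sing(\pi_\Lambda) = \Sing(\nu) = X \setminus \Nor(X)$. The principal obstacle is the second step: producing the Bertini chain of normal sections, establishing the requisite $H^1$-vanishing along it, and carrying out the cohomological induction to reach $h^0(\bar{X},\mathcal{L}) = d+t$. This is where the characteristic-$0$ hypothesis (1) is essential; hypothesis (2) collapses the issue to a single Bertini step on a surface with finite non-smooth locus, where no such difficulties arise.
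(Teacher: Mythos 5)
Your overall architecture is the same as the paper's: normalize, pull back $\mathcal{O}_X(1)$ to $\mathcal{L}$ on $\bar{X}$, build a ladder of general hyperplane sections whose bottom rung is $\mathbb{P}^1$ (using the characteristic-zero Bertini theorem of Flenner--O'Carroll--Vogel under hypothesis (1), and the finiteness of the singular locus under hypothesis (2)), and then conclude that $|\mathcal{L}|$ realizes $\bar{X}$ as a variety of minimal degree in $\mathbb{P}^{d+t-1}$. The paper, however, obtains this last conclusion by invoking Fujita's theory of polarized varieties of $\Delta$-genus zero (his Proposition 3.4, Corollary 4.12 and Classification Theorem 5.15), and that is exactly the point at which your argument has a genuine gap. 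Your restriction sequences
$$0 \to \mathcal{O}_{\bar{X}_i} \to \mathcal{L}|_{\bar{X}_i} \to \mathcal{L}|_{\bar{X}_{i-1}} \to 0$$
give for free only $h^0(\mathcal{L}|_{\bar{X}_i}) \le 1 + h^0(\mathcal{L}|_{\bar{X}_{i-1}})$, hence the \emph{upper} bound $h^0(\bar{X},\mathcal{L}) \le d+t$ --- which is automatic anyway, since a non-degenerate variety satisfies $\deg \ge \codim + 1$. What the theorem actually requires is the \emph{lower} bound $h^0(\bar{X},\mathcal{L}) \ge d+t$, i.e.\ surjectivity of every restriction map $H^0(\mathcal{L}|_{\bar{X}_i}) \to H^0(\mathcal{L}|_{\bar{X}_{i-1}})$. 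The obstruction lives in $H^1(\mathcal{O}_{\bar{X}_i})$, and you simply assert its vanishing. But proving, say, $H^1(\mathcal{O}_{\bar{X}_2})=0$ for a normal surface about which you know only that its general hyperplane section is $\mathbb{P}^1$ is precisely the substantive content of the $\Delta$-genus-zero theory; ``invoking the $H^1$-vanishing at each step'' therefore begs the question rather than answering it.

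A second, related error: you claim that hypothesis (2) ``collapses the issue to a single Bertini step \ldots where no such difficulties arise.'' Hypothesis (2) only removes the need for the characteristic-zero Bertini theorem; it does nothing to the cohomological difficulty. Even for $t=2$ you still must show that $H^0(\bar{X},\mathcal{L}) \to H^0(\bar{X}_1,\mathcal{O}_{\mathbb{P}^1}(d))$ is surjective, whose cokernel injects into $H^1(\mathcal{O}_{\bar{X}})$, and the finiteness of $X\setminus\Nor(X)$ gives you no visible control of that group --- this is all the more delicate in positive characteristic, which is exactly the setting hypothesis (2) is meant to cover. On the positive side, your endgame for part (b) --- finiteness of $\phi$ from ampleness of $\mathcal{L}$, normality of varieties of minimal degree, and Zariski's main theorem to identify $\phi$ with an isomorphism and $\pi_\Lambda$ with $\nu$ --- is sound and slightly more economical than the paper's appeal to Fujita for very ampleness; but it only becomes available once the missing computation $h^0(\bar{X},\mathcal{L}) = d+t$ is actually supplied, either by reproducing Fujita's induction in detail or by citing it as the paper does.
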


\begin{proof}
Let $\nu: Y \rightarrow X$ be the normalization of $X$, so that $Y$ is a normal projective variety and
$\nu$ is a finite surjective morphism with $\Sing(\nu) = X \setminus \Nor(X)$. Consider the ample invertible sheaf of $\mathcal{O}_Y$-modules
$\mathcal{L} := \nu^{*}(\mathcal{O}_X(1))$. Let $h_1,\ldots, h_{t-1} \in S_1$ be general linear forms and consider the irreducible
varieties $X_i := X \cap \Proj(S/ \sum^{t-i}_{j=1}h_jS)$ and their preimages $Y_i := \nu^{-1}(X_i) \quad (i=1,\ldots t)$.
As the linear forms $h_j$ are general, $X_i$ is not contained in $\Sing(\nu)$ and so, the schemes $Y_i$ are irreducible and the induced finite morphisms
$$\nu_i := \nu \upharpoonright : Y_i \twoheadrightarrow X_i, \quad i = 1\ldots, t $$
are birational. Assume first, that $\Char K = 0$. As the closed
subscheme $Y_i \subset Y$ is cut out by the $t-i$ general divisors
$\nu^{*}(h_j) \in |\mathcal{L}| \quad (j = 1,\ldots, t-i)$, it is
normal by the Bertini Theorem \cite[Corollary 3.4.2]{FlOV}.  So, the
sequence $Y_1 \subset Y_2 \subset \ldots \subset Y_t = Y$ forms a
ladder with normal rungs of the polarized variety $(Y,\mathcal{L})$
in the sense of Fujita \cite[Definition 3.1, pg.28]{Fu}. As $\nu_1:
Y_1\twoheadrightarrow X_1$ is birational, it follows that $Y_1 \cong
\mathbb{P}^1$.

Assume now, that $t = 2$ and the non-normal locus $X \setminus
\Nor(X) = \Sing(\nu)$ of $X$ is finite. As $X$ is a surface, it
follows that the singular locus $X \setminus \Reg(X)$ is finite too,
So, as $h_1$ is general, the curve $X_1 \subset X$ is smooth and
disjoint to $\Sing(\nu)$. It follows that $X_1 \cong \mathbb{P}^1$
and $\nu_1:Y_1 \twoheadrightarrow X_1$ is an isomorphism, so that
also in this case $Y_1 \cong \mathbb{P}^1$. Hence, again we have a
ladder $Y_1 \subset Y_2 = Y$ of the polarized variety
$(Y,\mathcal{L})$ with normal rungs and $Y_1 \cong \mathbb{P}^1$.

Thus, in both cases the sectional genera in the sense of Fujita
\cite[2.1, pp.25,28]{Fu} satisfy $g(Y,\mathcal{L}) = g(Y_1,
\mathcal{L}\upharpoonright_{Y_1})= 0$. Therefore, by
\cite[Proposition 3.4]{Fu}, the $\Delta$-genus
$\Delta(X,\mathcal{L})$ of the polarized variety $(X,\mathcal{L})$
equals zero. So, according to \cite[Corollary 4.12]{Fu} we have
$$\Gamma_{*}(Y,\mathcal{L}) := \bigoplus_{n\in \mathbb{N}_0} \Gamma(Y,\mathcal{L}^{\otimes^n}) = K[\Gamma(Y,\mathcal{L})]$$
and $\mathcal{L}$ is very ample over $Y$. According to Fujita's Classification Theorem \cite[Theorem 5.15]{Fu}, it now follows,
that $|\mathcal{L}|$ induces a closed immersion $i: Y \rightarrow \mathbb{P}^s = \mathbb{P}(\Gamma(Y,\mathcal{L}))$  such that $\widetilde{X} := i(Y) \subset
\mathbb{P}^s$ is a variety of minimal degree and that, in addition, there is a projection $\pi'_{\Lambda}: \mathbb{P}^s\setminus \Lambda \twoheadrightarrow \mathbb{P}^r$
from a subspace $\Lambda = \mathbb{P}^{s-r-1} \subset \mathbb{P}^s$ with $\widetilde{X} \cap \Lambda = \emptyset$ such that $\widetilde{X}_{\Lambda} = X$
and $\nu = \pi_{\Lambda} \circ i$. As $\pi_{\Lambda}: \widetilde{X} \twoheadrightarrow X$ is birational, we have
$\dim \widetilde{X} = t$ and $\deg \widetilde{X} = d$. As $\widetilde{X} \subset \mathbb{P}^s$ is of minimal degree, it follows
that $d = s-t+1$ and hence $s = d+t-1$. This proves our claim.
\end{proof}

Observe, that according to the previous result, sectionally rational surfaces with finite non-normal locus are almost non-singular projections of
surface scrolls or the Veronese surface. One important consequence of this is, that these surfaces satisfy the conjectural inequality of Eisenbud-Goto \cite{EG} for the
Castelnuovo-Mumford regularity. Again, we shall give a more general approach to this fact and show, that an almost non-singular outer projection $X'_{\Lambda}$
from a subspace $\Lambda = \mathbb{P}^{r'-r-1} \subset \mathbb{P}^{r'}$ of a variety $X' \subset \mathbb{P}^{r'}$ which satisfies the Green-Lazarsfeld condition
$N_{2,p}$ for some $p > r'-r$, satisfies the Eisenbud-Goto inequality. We begin with some preparations concerning Betti numbers.

\begin{notation and reminder}\label{2.4' Notation and Reminder}
(A) We have $K = S/S_+$ and denote the $i$-th \textit{ Betti number} in degree $i+j$ of a finitely generated graded $S$-module $M \neq 0$ by
$$\beta_{i,j}(M) := \dim_K\big(\Tor^S_i(K,M)_{i+j}\big) \mbox{ for } i \in \mathbb{N} \mbox{ and } j \in \mathbb{Z}.$$
Keep in mind the well known fact, that the \textit{ initial degree }
$${\rm Indeg}(M) := \inf\{n \in \mathbb{Z} \mid M_n \neq 0\},$$
the regularity and the depth of $M$ bound the range of pairs of indices $(i,j)$  with non-zero Betti numbers as follows:
\begin{align*}
 {\rm Indeg}(M) &= \min\{j \in \mathbb{Z} \mid \beta_{0,j}(M)\neq 0 \} = \min\{j \in \mathbb{Z} \mid \beta_{i,j}(M) \neq 0 \mbox{ for some } i \in \mathbb{N}_0\} \\
\reg(M) &= \max\{j \in \mathbb{Z} \mid \beta_{i,j}(M) \neq 0 \mbox{ for some } i \in \mathbb{N}_0 \},\\
\depth(M) &= \max\{i \in \mathbb{N}_0 \mid \beta_{r+1-i,j}(M) \neq 0 \mbox{ for some } j \in \mathbb{Z}\}.
\end{align*}
Moreover, if $i \in \mathbb{N}_0$ and $l \in \mathbb{Z}$ are such
that $\beta_{i,j}(M) = 0$ for all $j \leq l$, then $\beta_{k,j}(M) =
0$ for all $k \geq i$ and all $j \leq l$.

Finally, if $I \subsetneq S$ is a homogeneous ideal, we have
$$\beta_{i,j}(I) = \beta_{i+1,j-1}(S/I) \mbox{ for all } i \in \mathbb{N}_0 \mbox{ and all } j \in \mathbb{Z}.$$
If $\emptyset \neq Z \subsetneq \mathbb{P}^r$ is a closed subscheme with homogeneous vanishing ideal $I_Z \subset S$, we define the \textit{Betti numbers} of $Z$ by
$$\beta_{i,j} = \beta_{i,j}(Z) := \beta_{i,j}(S/I_Z) = \beta_{i-1,j+1}(I_Z) \mbox{ for all } i \in \mathbb{N} \mbox{ and all } j \in \mathbb{Z}.$$
Observe that $\beta_{0,0}(Z) = 1$ and $\beta_{0,j} = 0$ for all
$j\neq 0$. Moreover, we have $\depth(S/I_Z) = \depth(Z) > 0$. In
addition by Notation and Reminder~ \ref{2.4'' Notation and Reminder}
(B), we have $\reg(S/I_Z) = \reg(Z) -1$. Finally if $Z$ is
non-degenerate, we have $\beta_{1,0}(Z) = 0$ and hence
$\beta_{i,0}(Z) = 0$ for all $i \geq 1$. So, for a non-degenerate
closed subscheme $\emptyset \neq Z \subsetneq \mathbb{P}^r$ we
usually only list the Betti numbers
$$\beta_{i,j}(Z) \quad \mbox{ with } \quad 1 \leq i \leq r + 1 - \depth(Z) \quad(\leq r) \quad \mbox{ and } \quad 1 \leq j < \reg(Z).$$
(B) Let $p \in \mathbb{N}$. The graded ideal $I \subset S$ is said to satisfy
the \textit{(Green-Lazarsfeld) property} $N_{2,p}$ (see \cite{GL}) if the Betti numbers of $S/I$ satisfy the condition
$$\beta_{i,j} := \beta^S_{i,j}(S/I) = \beta^S_{i-1,j+1}(I) = 0 \mbox{ whenever } i \leq p \mbox{ and } j \neq 1,$$
-- hence if and only if the minimal free resolution of $I$ -- up to the homological degree $p$ -- has the form
$$\ldots \rightarrow S^{\beta_{p,1}}(-p-1)\rightarrow \ldots \rightarrow S^{\beta_{1,1}}(-2) \rightarrow I \rightarrow 0.$$
The closed subscheme $Z \subset \mathbb{P}^r$ is said to satisfy the \textit{property} $N_{2,p}$ if its homogeneous vanishing ideal $I_Z \subset S$ satisfies
the property $N_{2,p}$.
\end{notation and reminder}

Now, we may prove the announced regularity bound for almost non-singular projections of $N_{2,p}$-varieties.

\begin{theorem}\label{2.5' Theorem} Let $r' \geq r$ be integers, let $X' \subset \mathbb{P}^{r'}$ be a non-degenerate
projective variety of dimension $ \geq 2$ which satisfies the
property $N_{2,p}$ for some $p \geq\max\{2,r'-r+1\}$. Let $\Lambda =
\mathbb{P}^{r'-r-1}$ be a subspace such that $X' \cap \Lambda =
\emptyset$. Let
\begin{equation*}
X := X'_{\Lambda} \subset \mathbb{P}^r
\end{equation*}
and assume that the induced finite morphism $\pi_{\Lambda}: X'
\twoheadrightarrow X$ is almost non-singular. Then
\begin{itemize}
\item[\rm{(a)}] The homogeneous vanishing ideal $I_X \subset S$ of $X$ is generated by homogeneous polynomials of degrees $\leq r'-r+2$.
\item[\rm{(b)}] $\reg(X) \leq \max\{\reg(X'), r'-r+2\}$.
\end{itemize}
\end{theorem}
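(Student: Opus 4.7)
Write $s := r'-r$ so that the hypothesis reads $p \geq \max\{2,\, s+1\}$ and the target bound becomes $s+2 = r'-r+2$. The strategy is to combine a cohomological comparison between $X$ and $X'$, forced by the almost-non-singularity of $\pi_\Lambda$, with an elimination-theoretic control on the generators of $I_X$ coming from the Green-Lazarsfeld property $N_{2,p}$ of $X'$.

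First, I would compare the two sides cohomologically. Since $\pi_\Lambda$ is finite and almost non-singular it is automatically birational, and the natural inclusion $\mathcal{O}_X \hookrightarrow (\pi_\Lambda)_*\mathcal{O}_{X'}$ fits into a short exact sequence
\[
0 \longrightarrow \mathcal{O}_X \longrightarrow (\pi_\Lambda)_*\mathcal{O}_{X'} \longrightarrow \mathcal{F} \longrightarrow 0
\]
with $\mathcal{F}$ coherent of finite support, concentrated on $\Sing(\pi_\Lambda)$. Twisting by $\mathcal{O}_X(n)$, invoking the projection formula $(\pi_\Lambda)_*\mathcal{O}_{X'}(n) = \mathcal{O}_X(n)\otimes (\pi_\Lambda)_*\mathcal{O}_{X'}$, and taking cohomology yields $H^i(X,\mathcal{O}_X(n)) \cong H^i(X',\mathcal{O}_{X'}(n))$ for all $i \geq 2$ and all $n\in\mathbb{Z}$, with a single $H^1$-correction bounded by $\length(\mathcal{F})$. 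Through the Serre-Grothendieck correspondence of Notation and Reminder~\ref{2.4'' Notation and Reminder}, this translates into uniform control of $h^i(\mathbb{P}^r, \mathcal{J}_X(n))$ for $i \geq 3$ in terms of the analogous cohomology of $\mathcal{J}_{X'}$ on $\mathbb{P}^{r'}$, and hence into the ``higher'' half of the bound in~(b).

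Second, for the generator-degree bound~(a), I would choose coordinates so that $\Lambda = V_+(y_0,\ldots,y_r)\subset \mathbb{P}^{r'}=\Proj(\tilde S)$ with $\tilde S:=K[y_0,\ldots,y_{r'}]$, identify $S=K[y_0,\ldots,y_r]$ with the subring of $\tilde S$ of polynomials independent of $y_{r+1},\ldots,y_{r'}$, and recognise $I_X$ as the saturation of the elimination ideal $I_{X'}\cap S$. I would then apply Green's technique of partial elimination ideals iteratively to the $s$ variables $y_{r+1},\ldots,y_{r'}$; each elimination step consumes one rung of the $N_{2,p}$-staircase, and the hypothesis $p\geq s+1$ ensures that after $s$ steps enough linear-syzygy structure survives to force the residual generators of $I_X$ into degrees $\leq s+2$. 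Part~(b) is then assembled by combining the two ingredients: (a) gives the desired bound on $h^1(\mathbb{P}^r,\mathcal{J}_X(n))$ in the range $n\geq s+1$, while the first step controls $h^i(\mathbb{P}^r,\mathcal{J}_X(n))$ for $i\geq 2$ in terms of $\reg(X')$.

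The main obstacle I expect is the Koszul-cohomological bookkeeping in the second step: tracking precisely how the $p-s \geq 1$ units of linear-syzygy structure of $I_{X'}$ surviving after the $s$-fold elimination translate into the sharp generator-degree bound $s+2$ rather than something weaker. The almost-non-singularity hypothesis on $\pi_\Lambda$ is essential for this because it forces the comparison cokernel $\mathcal{F}$ to have finite length, which is what prevents spurious high-degree contributions to $I_X$ arising from the non-birational locus of the projection and makes the two ingredients compatible.
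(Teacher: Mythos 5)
Your overall architecture (compare $A=S/I_X$ with $A'=S'/I_{X'}$ viewed as an $S$-module, use the finiteness of $\Sing(\pi_\Lambda)$ to make the difference small, and use $N_{2,p}$ to control elimination) is the right one, but the proposal has two genuine gaps. First, for part (a) the entire substance is deferred to the ``bookkeeping'' you flag as the main obstacle, and the mechanism you propose for it is doubtful: projecting one variable at a time and letting ``each elimination step consume one rung of the $N_{2,p}$-staircase'' presupposes that the intermediate one-point projections inherit a property of type $N_{2,p-1}$. But $\Lambda$ is only required to avoid $X'$, not $\Sec(X')$, so the intermediate centers may lie on secant lines, the intermediate projections need not be isomorphic or even almost non-singular, and no $N_{2,\cdot}$ property descends step by step. (If it did, with $p\ge r'-r+1$ you would end with $N_{2,1}$ and conclude that $I_X$ is generated by quadrics, which is false.) The paper avoids this by invoking the graded mapping cone theorem of Ahn--Kwak, which uses $N_{2,p}$ with $p\ge r'-r+1$ to produce, in one stroke, the minimal free presentation of $A'$ \emph{as an $S$-module}, namely $S^s(-2)\stackrel{v}{\to} S\oplus S^{r'-r}(-1)\to A'\to 0$; the whole theorem then falls out of the identifications $A=q(S)$, $A'/A\cong\Coker(u\colon S^s(-2)\to S^{r'-r}(-1))$ and $I_X\cong \Ker(u)/\Ker(v)$.

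Second, the assembly of part (b) is logically flawed in two places. Knowing that $I_X$ is generated in degrees $\le r'-r+2$ does \emph{not} bound $h^1(\mathbb{P}^r,\mathcal{J}_X(n))$: that group is $H^1(S/I_X)_n\cong\Coker\bigl(S_n\to H^0(X,\mathcal{O}_X(n))\bigr)$, a normality defect which generator degrees do not control. And your first step only shows that the cokernel $\mathcal{F}$ of $\mathcal{O}_X\hookrightarrow(\pi_\Lambda)_*\mathcal{O}_{X'}$ has finite support, i.e.\ that $\dim(A'/A)\le 1$; a finite-length graded module can be concentrated in arbitrarily high degrees, so this alone gives no bound on $\en(H^0_{S_+}(A'/A))$ or $\en(H^1_{S_+}(A'/A))$, which is exactly what feeds into $\reg(A)$ through $0\to A\to A'\to A'/A\to 0$. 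The missing ingredient is the bound $\reg(A'/A)\le r'-r$, which the paper obtains by combining $\dim(A'/A)\le 1$ with the fact (coming from the $N_{2,p}$-presentation above) that $(A'/A)(1)$ is generated in degree $0$ and related in degree $1$, via the regularity bound of Chardin--Fall--Nagel. Without some substitute for that degree-controlled presentation of $A'/A$, neither (a) nor (b) follows from what you have set up.
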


\begin{proof}
Let $I_{X'} \subset S' := K[x_0,\ldots,x_{r'}]$ be the homogeneous vanishing ideal of $X' \subset
\mathbb{P}^{r'} = \Proj(S')$ and let $A':= S'/I_{X'}$ be the homogeneous coordinate ring of $X'$. We consider
$A'$ as a finitely generated graded $S$-module and set $t:= r'-r$. As $X'$ satisfies the condition $N_{2,p}$ with $p \geq\max\{2,t+1\}$,
it follows by \cite[Theorem 3.6]{AK}, that the minimal free presentation of $A'$ has the shape
$$S^s(-2) \stackrel{v}{\rightarrow} S \oplus S^t(-1) \stackrel{q}{\rightarrow} A' \rightarrow 0$$
for some $s \in \mathbb{N}$. Moreover, the coordinate ring $A = S/I_X$ of $X$ is nothing else than the image $q(S)$ under $q$ of the direct summand
$S \subset S\oplus S^t(-1)$. Therefore
$$A'/A \cong \Coker\big(u:S^s(-2) \rightarrow S^t(-1)\big),$$
where $u$ is the composition of the map $v:S^s(-2) \rightarrow
S\oplus S^t(-1)$ with the canonical projection map $w: S\oplus
S^t(-1) \twoheadrightarrow S^t(-1)$. Hence, the $S$-module
$(A'/A)(1)$ is generated by $t$ homogeneous elements of degree $0$
and related in degree $1$. As $\Sing(\pi_{\Lambda})$ is finite, we
have $\dim(A'/A) \leq 1$. So, it follows by \cite[Corollary
2.4]{ChFN} that $\reg\big((A'/A)(1)\big) \leq t-1$, whence
$\reg(A'/A) \leq t$. Now, the short exact sequence $0\rightarrow A
\rightarrow A' \rightarrow A'/A \rightarrow 0$ implies that $\reg(A)
\leq \max\{\reg(A'), t+1\}$.  It follows that $\reg(X) = \reg(A) + 1
\leq \max\{\reg(A') + 1, t+2\} = \max\{\reg(X'), t+2\} =
\max\{\reg(X'), r'-r+2\}$. This proves claim (b).

To prove claim (a), observe that $I_X = \Ker(q) \cap S$ occurs in
the short exact sequence of graded $S$-modules $0 \rightarrow I_X
\rightarrow \rm{Im}(v) \stackrel{w\upharpoonright}{\rightarrow}
\rm{Im}(u) \rightarrow 0$, where $w\upharpoonright$ is the
restriction of the above projection map $w$. In particular, we may
identify $w\upharpoonright$ with the canonical map $S^s(-2)/\Ker(v)
\twoheadrightarrow S^s(-2)/\Ker(u)$. It follows, that $I_X \cong
\Ker(u)/\Ker(v)$. In view of the exact sequence $0\rightarrow
\Ker(u) \rightarrow S^s(-2) \stackrel{u}{\rightarrow}S^t(-1)
\rightarrow A'/A \rightarrow 0$, we now finally get $\reg(\Ker(u))
\leq t+2 = r'-r+2$. Therefore $\Ker(u)$ is generated in degrees
$\leq r'-r+2$, and hence so is $I_X$. This proves statement (a).
\end{proof}

The reader may have noticed, that our proof is inspired by arguments originally found in the papers \cite{GruLPe} and \cite{No}. Nevertheless, the use of
\cite[Corollary 2.4]{ChFN} allows us to argue in a more direct way.

\begin{corollary}\label{2.6' Corollary}
(Compare \cite[Theorem 5.2]{KwP}) Assume that the
projective variety $X \subset \mathbb{P}^r$ is sectionally rational
and that its non-normal locus $X \setminus \Nor(X)$ is finite.
Suppose that either $\Char K = 0$ or $X$ is a surface. Then, the
regularity of $X$ satisfies the Eisenbud-Goto inequality:
$$\reg(X) \leq \deg(X) - \codim_{\mathbb{P}^r}(X) + 1.$$
\end{corollary}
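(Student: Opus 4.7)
\smallskip
\noindent\textbf{Proof plan.} Set $t := \dim X$, $d := \deg X$ and $c := \codim_{\mathbb{P}^r}(X) = r - t$, and write $r' := d+t-1$. The plan is to realize $X$ as an almost non-singular linear projection of a variety of minimal degree in $\mathbb{P}^{r'}$ (using Theorem~\ref{2.3' Theorem}) and then to apply the bound of Theorem~\ref{2.5' Theorem} to that projection.

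First I would invoke Theorem~\ref{2.3' Theorem}: under the given hypotheses (either $\Char K = 0$, or $X$ is a surface with finite non-normal locus), there is a variety $\widetilde{X} \subset \mathbb{P}^{r'}$ of minimal degree and a subspace $\Lambda = \mathbb{P}^{r'-r-1} = \mathbb{P}^{d+t-r-2} \subset \mathbb{P}^{r'}$ with $\widetilde{X} \cap \Lambda = \emptyset$, such that $\widetilde{X}_{\Lambda} = X$ and such that the induced finite morphism $\pi_{\Lambda} : \widetilde{X} \twoheadrightarrow X$ has $\Sing(\pi_{\Lambda}) = X \setminus \Nor(X)$; in particular $\pi_{\Lambda}$ is almost non-singular. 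Note that $\dim \widetilde{X} = t \geq 1$; if $t = 1$ the claim is already contained in the Gruson-Lazarsfeld-Peskine theorem (see Remark and Definition~\ref{2.3'' Remark and Definition}(B)), so I may assume $t \geq 2$.

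Next, I would recall that a (non-linear) variety $\widetilde{X} \subset \mathbb{P}^{r'}$ of minimal degree is arithmetically Cohen-Macaulay, is $2$-regular, and has a pure linear Eagon-Northcott-type resolution whose length equals its codimension $r' - t = d - 1$. In particular, $\widetilde{X}$ satisfies the Green-Lazarsfeld property $N_{2,p}$ for every $p \leq d-1$. Since $X$ is non-degenerate, $d \geq c+1$, and hence
\[
d - 1 \;\geq\; d - c \;=\; r' - r + 1 \quad \text{and} \quad d-1 \geq 2,
\]
so we may take $p = d-1 \geq \max\{2, r'-r+1\}$, which is exactly the hypothesis needed in Theorem~\ref{2.5' Theorem}.

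Applying Theorem~\ref{2.5' Theorem}(b) to $\pi_{\Lambda} : \widetilde{X} \twoheadrightarrow X$ then yields
\[
\reg(X) \;\leq\; \max\{\reg(\widetilde{X}),\, r'-r+2\} \;=\; \max\{2,\, d-c+1\} \;=\; d - c + 1,
\]
where we have used $\reg(\widetilde{X}) = 2$ and $d \geq c+1$. This is precisely the Eisenbud-Goto inequality. I do not expect a serious obstacle: the only nontrivial input is the well-known classification-based fact that varieties of minimal degree carry a linear Eagon-Northcott resolution of length equal to the codimension, which guarantees the requisite $N_{2,p}$ hypothesis; everything else is bookkeeping on the dimensions and degrees coming out of Theorem~\ref{2.3' Theorem}.
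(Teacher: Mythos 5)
Your proposal is correct and follows essentially the same route as the paper's proof: realize $X$ via Theorem~\ref{2.3' Theorem} as an almost non-singular projection of a variety $\widetilde{X}\subset\mathbb{P}^{d+t-1}$ of minimal degree, note that $\widetilde{X}$ is $2$-regular and satisfies $N_{2,p}$, and apply Theorem~\ref{2.5' Theorem}(b). The only cosmetic difference is that you restrict to $p\leq d-1$ (so your claim that $d-1\geq 2$ needs $d\geq 3$, which fails only in the trivial quadric-hypersurface case), whereas the paper simply observes that $N_{2,p}$ holds for \emph{all} $p\in\mathbb{N}$ since the Eagon--Northcott resolution is linear and all higher Betti numbers vanish, so no such bookkeeping is needed.
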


\begin{proof}
We may assume that $X \subsetneq \mathbb{P}^r$. Set $d := \deg(X)$ and $\dim(X) :=t$. According to Theorem~\ref{2.3' Theorem}
there is a variety $\widetilde{X} \subset \mathbb{P}^{d+t-1}$ of minimal degree and a subspace $\Lambda = \mathbb{P}^{d+t-r-2}$ such that
$\widetilde{X} \cap \Lambda = \emptyset$, $X = \widetilde{X}_{\Lambda}$ and $\Sing(\pi_{\Lambda}: \widetilde{X}\twoheadrightarrow X) = X \setminus \Nor(X)$.
In particular $\pi_{\Lambda}:\widetilde{X} \twoheadrightarrow X$ is almost non-singular. Moreover $\widetilde{X} \subset \mathbb{P}^{d+t-1}$ satisfies
the conditions $N_{2,p}$ for all $p \in \mathbb{N}$, so that $\reg(\widetilde{X}) = 2$. Therefore, by Theorem~\ref{2.5' Theorem} we get
$\reg(X) \leq (d+t-1)-r+2 =d-(r-t)+1$ and this proves our claim.
\end{proof}

Motivated by our aim to study surfaces of maximal sectional regularity, we now shall focus on sectionally rational surfaces $X \subset \mathbb{P}^r$ with
$5 \leq r < \deg(X)$ which have finite non-normal locus. One easy consequence of Theorem~\ref{2.3' Theorem} is, that these surfaces are almost non-singular
projections of (possibly singular) normal surface scrolls. Our proof of Theorem~\ref{2.3' Theorem} relies on Fujita's classification of polarized varieties \cite{Fu}
of $\Delta$-genus $0$, which on its turn relies on a powerful result of Ekedahl \cite{Ek}. We therefore take it for justified, to furnish in Theorem~
\ref{2.8' Proposition} a second and more direct argument, showing that sectionally rational surfaces with finite non-normal locus are projections of surface scrolls.
Instead of Fujita's geometric arguments, we use a purely cohomological idea, which on its turn has the disadvantage to apply only in the special situation we shall
be looking at. On the other hand, this specific approach gives slightly more insight and shows at once, that the normalization and the finite Macaulayfication
of our surfaces coincide. Again, we begin with some preparations.

\begin{notation and reminder}\label{2.7' Notation and Reminder} (A) (see \cite{BS6}) Assume now, that $X\subset \mathbb{P}^r = \Proj(S)$ is a non-degenerate
irreducible projective surface of degree $d$, homogeneous vanishing ideal $I \subset S$ and homogeneous coordinate ring $A = S/I$. Let ${\mathfrak a} \subseteq A_+
= S_+A$ be the graded radical ideal which defines the non-Cohen-Macaulay locus $X \setminus \operatorname{CM}(X)$ of $X$. Observe that height ${\mathfrak a} \geq 2,$
so that the ideal transform
\[
B(A):= D_{\mathfrak a}(A) = \varinjlim \Hom_A({\mathfrak a}^n, A) = \bigcup_{n\in \mathbb{N}}(A:_{\rm{Quot}(A)}\mathfrak{a}^n) =
\bigoplus _{n \in {\mathbb Z}} \Gamma (\operatorname{CM}(X), {\mathcal O}_X(n))
\]
of $A$ with respect to ${\mathfrak a}$ is a positively graded finite birational integral extension domain of $A$. In particular $B(A)_0 = K$. Moreover $B(A)$
has the second Serre-property $S_2$. As $\mbox{Proj}(B(A))$ is of dimension $2$, it thus is a locally Cohen-Macaulay scheme.\\
If $E$ is a finite graded integral extension domain of $A$ which
satisfies the property $S_2$, we have $A \subset B(A) \subset E$. So
$B(A)$ is the least finite graded integral extension domain which
has the property $S_2$. Therefore, we call $B(A)$ the $S_2${\it
-cover of} $A$. We also can describe $B(A)$ as the endomorphism ring
$\mbox{End}(K(A), K(A))$ of the canonical module $K(A) = K^3(A) =
\mbox{Ext}^{r-2}_S(A, S(-r-1))$ of $A$.

\noindent (B) Let the notations be as in part (A). Then, the
inclusion map $A \rightarrow B(A)$ gives rise to a finite morphism
$$\pi: \widetilde{X}:= \Proj(B(A)) \twoheadrightarrow X, \mbox{ with } \Sing(\pi) = X \setminus \CM(X).$$
In particular $\pi$ is almost non-singular and hence birational.
Moreover, for any finite morphism $\rho: Y \twoheadrightarrow X$
such that $Y$ is locally Cohen-Macaulay, there is a unique morphism
$\sigma:Y \rightarrow \widetilde{X}$ such that $\rho = \pi
\circ\sigma$. In addition $\sigma$ is an isomorphism if and only if
$\Sing(\rho) = X \setminus \CM(X)$. Therefore, the morphism
$\pi:\widetilde{X} \twoheadrightarrow X$ is addressed as the
\textit{finite Macaulayfication} of $X$. Keep in mind, that --
unlike to what happens with normalization -- there may be proper
birational morphisms $\tau:Z \twoheadrightarrow X$ with $Z$ locally
Cohen-Macaulay, which do not factor through $\pi$ (see \cite{B0}).

\noindent (C) Let $X \subset \mathbb{P}^r$ be as in part (A).  We
introduce the invariants
\[
\e_x(X) := \length(H^1_{\mathfrak{m}_{X,x}}(\mathcal{O}_{X,x})),
(x \in X \mbox{ closed }) \; \mbox{ and }
\e(X):= {\sum }_{x\in X, {\mbox{closed}}} \e_x(X).
\]
Note that the latter counts the {\it number of non-Cohen-Macaulay points} of $X$
in a weighted way. Keep in mind that
\[
\e(X) = h^1 (X, \mathcal{O}_X(n)) \mbox{ for all } n \ll 0 .
\]

\noindent (D) Let $X \subset \mathbb{P}^r$ and $A = S/I$ be as
above. We denote the \textit{arithmetic depth} of $X$ by
$\depth(X)$, hence $\depth(X) := \depth(S/I)$.
\end{notation and reminder}

Now, we are ready to formulate and to prove the conclusive result of this section. As previously announced and justified, we shall offer two different proves of
statement (c) of the theorem to follow.

\begin{theorem} \label{2.8' Proposition}
Let $d \in \mathbb{N}$ with $5 \leq r < d$. Assume that the non-degenerate irreducible surface $X \subset \mathbb{P}^r_K$  of degree $d$ is sectionally rational
and has finite non-normal locus $X \setminus \Nor(X)$. Then
\begin{itemize}
\item[\rm{(a)}] $\reg(X) \leq d-r+3$ and $\Reg(X) = \Nor(X) = \CM(X)$.
\item[\rm{(b)}] The cohomology of $X$ satisfies the following conditions
                \begin{itemize}
                \item[\rm{(1)}] For all $n \leq 0 \quad$ it holds $h^2(\mathbb{P}^r,\mathcal{J}_X(n)) = \e(X)$.
                \item[\rm{(2)}] For all $n \geq 0 \quad$ it holds $h^2(\mathbb{P}^r,\mathcal{J}_X(n+1)) \leq h^2(\mathbb{P}^r,\mathcal{J}_X(n))$.
                \item[\rm{(3)}] For all $n \geq -1$ it holds $h^3(\mathbb{P}^r,\mathcal{J}_X(n)) = 0.$
                \end{itemize}
\item[\rm{(c)}] The $S_2$-cover $B = B(A)$ of $A = S/I$ is the homogeneous coordinate ring of a surface scroll $\widetilde{X} \subset \mathbb{P}^{d+1}_K$ of degree $d$
                and there is a subspace $\Lambda = \mathbb{P}^{d-r} \subset \mathbb{P}^{d+1}$ such that $\widetilde{X} \cap \Lambda = \emptyset$ and
                \begin{itemize}
                \item[\rm{(1)}] $X = \widetilde{X}_{\Lambda}$,
                \item[\rm{(2)}] $\Sing(\pi_{\Lambda}:\widetilde{X}\twoheadrightarrow X) = X \setminus \CM(X)$.
                \end{itemize}
\item[\rm{(d)}] There is a unique non-negative integer $a \leq d/2$ such that the scroll $\widetilde{X} \subset \mathbb{P}^{d+1}$ of statement (c) is
                projectively equivalent to the standard rational surface scroll $S(a,d-a) \subset \mathbb{P}^{d+1}$, and moreover
                $a > 0$ if and only if $X$ is not a cone.
\item[\rm{(e)}] The morphism $\pi_{\Lambda}:\widetilde{X} \twoheadrightarrow X$ of statement (c) provides the normalization as well as the finite Macaulayfication of $X$.
\item[\rm{(f)}] If $\pi_{\Lambda}:\widetilde{X} \twoheadrightarrow X$ is as in statement (c) and $p \in X \setminus \Reg(X)$, we have
        \begin{itemize}
        \item[\rm{(1)}] $1< \#(\pi_{\Lambda})^{-1}(p) \leq \mult_p(X)$, with equality at the second place $X$ is not a cone with vertex $p$ and if
                $\mult_{\widetilde{p}}((\pi_{\Lambda})^{-1}(p)) \leq 2$ for all $\widetilde{p} \in (\pi_{\Lambda})^{-1}(p)$.
        \item[\rm{(2)}] $\#(\pi_{\Lambda})^{-1}(p) \leq \e_p(X) + 1$ with equality if and only if $p$ is a Buchsbaum point of the surface $X$.
        \end{itemize}
\item[\rm{(g)}] If $\mathbb{M} = \mathbb{P}^s \subset \mathbb{P}^r$ is a linear subspace whose intersection with $X$ is finite, then
                $$\#({\rm Reg}(X) \cap \mathbb{M}) + 2\#\big((X \cap \mathbb{M})_{\rm red}\setminus {\rm Reg}(X)\big)  \leq d-r+s+2.$$
\end{itemize}
\end{theorem}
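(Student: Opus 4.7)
The plan is to establish first the structural statements (c) and (d), from which the remaining parts will cascade. Since $X$ is a sectionally rational surface with finite non-normal locus, Theorem~\ref{2.3' Theorem} (case (2)) provides a variety $\widetilde{X} \subset \mathbb{P}^{d+1}$ of minimal degree and a center $\Lambda = \mathbb{P}^{d-r}$ disjoint from $\widetilde{X}$ with $X = \widetilde{X}_{\Lambda}$ and $\Sing(\pi_{\Lambda}) = X \setminus \Nor(X)$. Because $d \geq r+1 \geq 6$ strictly exceeds the degree $4$ of the Veronese surface in $\mathbb{P}^5$, the minimal-degree variety $\widetilde{X}$ must be a rational normal surface scroll in $\mathbb{P}^{d+1}$; this yields (c)(1), and the standard classification of such scrolls as $S(a,d-a)$ with $0 \leq a \leq d/2$, together with the observation that $a=0$ is exactly the cone case, yields (d).

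For (a), the regularity inequality is Corollary~\ref{2.6' Corollary}. The inclusions $\Reg(X) \subseteq \Nor(X) \subseteq \CM(X)$ are general, the second one because normal points on a surface are $S_2$ hence Cohen-Macaulay. For the reverse inclusion $\CM(X) \subseteq \Reg(X)$, I use that $\pi_{\Lambda}$ is a finite birational morphism onto $X$ from the normal surface $\widetilde{X}$, hence is the normalization of $X$; away from the (at most one) vertex of the scroll $\widetilde{X}$, which occurs only in the cone case $a=0$ and which maps to a singular point of $X$ already excluded from $\CM(X)$, the morphism $\pi_{\Lambda}$ identifies smooth points of $\widetilde{X}$ with points of $\CM(X)$. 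This refines the identification in (c) to $\Sing(\pi_{\Lambda}) = X \setminus \CM(X)$, proving (c)(2), and by the universal properties in Notation and Reminder~\ref{2.7' Notation and Reminder} (B) establishes (e): $\pi_{\Lambda}$ is simultaneously the normalization and the finite Macaulayfication of $X$.

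For (b) and (f), the strategy is to transport everything to the well-understood scroll $\widetilde{X}$. The short exact sequence
\begin{equation*}
0 \to \mathcal{O}_X \to \pi_{\Lambda *}\mathcal{O}_{\widetilde{X}} \to \mathcal{Q} \to 0,
\end{equation*}
with $\mathcal{Q}$ supported on the finite set $X \setminus \CM(X)$, combined with the Serre-Grothendieck translation of Notation and Reminder~\ref{2.4'' Notation and Reminder} and the standard cohomology computation on scrolls, gives (b)(3) directly via $h^2(\widetilde{X}, \mathcal{O}_{\widetilde{X}}(n)) = 0$ for $n \geq -1$. For (b)(1), $h^1(\mathcal{O}_X(n)) = \e(X)$ for $n \ll 0$ by definition, and the monotonicity in (b)(2), obtained from the long exact sequence induced by multiplication by a general linear form on $\mathcal{O}_X$ together with $h^1$-vanishings for the generic curve section, propagates this stable value up to $n = 0$. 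Statement (f)(1) identifies $\pi_{\Lambda}^{-1}(p)$ with the analytic branches of $X$ at a singular point $p$ and controls their number via a tangent-cone intersection computation on the scroll; (f)(2) is the classical Buchsbaum characterization in terms of $\e_p$.

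Statement (g) is the most technical part. The requirement that $X \cap \mathbb{M}$ be finite forces $s \leq r-2$ by a dimension count. When $s = r-2$ the intersection has length $d$ by Bezout, and by (f)(1) each non-smooth reduced point contributes at least $2$ to this length, so the inequality follows directly. For smaller $s$ I would cut with a generic hyperplane $\mathbb{H} \supset \mathbb{M}$ to replace $X$ by the curve $C = X \cap \mathbb{H}$, to which the regularity bound $\reg(C) \leq \reg(X) \leq d-r+3$ from (a) applies; iterating the standard length-secant estimate for curves of bounded regularity down to $\dim \mathbb{M} = s$ produces the target bound $d - r + s + 2$. The main obstacle in this last step will be keeping the singular count weighted by $2$ tight through the successive cuts, which requires combining (f)(1) with a careful bookkeeping of the local lengths of $C \cap \mathbb{M}$ at those singular points of $X$ that happen to lie in $\mathbb{M}$.
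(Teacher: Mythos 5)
Your reduction of (c) and (d) to Theorem~\ref{2.3' Theorem} and the exclusion of the Veronese surface follow the paper, and the regularity bound in (a) and part (b)(3) are fine; but two steps would fail as written. For (b)(1): the hyperplane-section sequence gives $h^1(X,\mathcal{O}_X(n-1)) \leq h^1(X,\mathcal{O}_X(n))$ for $n \leq 0$, so the values $h^1(X,\mathcal{O}_X(0)), h^1(X,\mathcal{O}_X(-1)),\ldots$ form a non-increasing sequence converging to $\e(X)$. Monotonicity therefore only yields $h^1(X,\mathcal{O}_X(n)) \geq \e(X)$ for $n \leq 0$: the stable value at $n \ll 0$ is a \emph{lower} bound and cannot be ``propagated up to $n=0$''. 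The paper closes this with the inequality $h^1(X,\mathcal{O}_X(n-1)) \leq \max\{h^1(X,\mathcal{O}_X(n))-r,\, \e(X)\}$ of \cite[Proposition 5.2]{AlB}, which together with $h^1(X,\mathcal{O}_X(-1)) = h^1(X,\mathcal{O}_X)$ forces $h^1(X,\mathcal{O}_X) = \e(X)$ and then equality for all $n\leq 0$; some substitute for this input is indispensable. For (g): when $s=r-2$, B\'ezout gives a \emph{lower} bound $\#(X\cap\mathbb{M}) \geq d$ on the scheme length (strict inequality occurs at non-Cohen--Macaulay points, cf. Lemma~\ref{4.12'' Lemma}), so it points the wrong way for the desired upper bound; and for $s<r-2$ you concede that the weight $2$ at singular points is not delivered by regularity bounds on section curves. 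The paper's argument is genuinely different and seems unavoidable: lift $\mathbb{M}$ to $\overline{(\pi'_{\Lambda})^{-1}(\mathbb{M})} = \mathbb{P}^{s+d-r+1}$, use that the $2$-regular scroll $\widetilde{X}\subset\mathbb{P}^{d+1}$ meets this space in at most $s+d-r+2$ points when the intersection is finite, and then split the fibre over $X\cap\mathbb{M}$ using that $\pi_{\Lambda}$ is bijective over $\Reg(X)$ and has fibres of cardinality $\geq 2$ over the singular points by (f)(1).

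Two further points are asserted rather than proved. The inclusion $\CM(X)\subseteq\Reg(X)$ requires first $\CM(X)\subseteq\Nor(X)$: at a Cohen--Macaulay (hence $S_2$) point $x$ the normalization is an isomorphism in codimension one, whence $(\pi_{\Lambda *}\mathcal{O}_{\widetilde{X}})_x \subseteq \bigcap_{\height\mathfrak{p}=1}\mathcal{O}_{X,\mathfrak{p}} = \mathcal{O}_{X,x}$. Your phrase that $\pi_{\Lambda}$ ``identifies smooth points of $\widetilde{X}$ with points of $\CM(X)$'' is the conclusion, not an argument, and the claim that the image of the vertex is ``already excluded from $\CM(X)$'' is unsupported (the paper instead compares tangent spaces: $\dim \T_{\widetilde{x}}(\widetilde{X}) = d+1 > r \geq \dim_K \T_x(X)$). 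Finally, statement (c) asserts that the $S_2$-cover $B(A)$ \emph{is} the homogeneous coordinate ring of $\widetilde{X}$; your proposal never addresses this identification, which the paper obtains from $A\subseteq B(A)\subseteq E$ (with $E$ the Cohen--Macaulay coordinate ring of the scroll) together with the support argument above, and which is what makes (c)(2) and the Macaulayfication claim in (e) available through Notation and Reminder~\ref{2.7' Notation and Reminder} (B).
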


\begin{proof} (b): Let $h \in S_1 \setminus\{0\}$ be a general linear form. As $X$ has at most finitely many singular points, the curve $C_h := \Proj(A/hA)$
is smooth and rational. Therefore $H^0(C_h,\mathcal{O}_{C_h}) = H^0(X,\mathcal{O}_X) = K$ and $H^1(C_h,\mathcal{O}_{C_h}(n)) = 0$ for all $n \geq 0$.
Hence, the exact sequences
$$ 0 \rightarrow H^0(X,\mathcal{O}_X(n)) \rightarrow H^0(C_h,\mathcal{O}_{C_h}(n))
\rightarrow H^1(X,\mathcal{O}_X(n-1)) \rightarrow H^1(X,\mathcal{O}_X(n))$$
$$\rightarrow H^1(C_h,\mathcal{O}_{C_h}(n))\rightarrow H^2(X,\mathcal{O}_X)(n-1)) \rightarrow H^2(X,\mathcal{O}_X(n)) \rightarrow 0$$
show that
$$h^1(X,\mathcal{O}_X(-1)) = h^1(X,\mathcal{O}_X), \quad h^1(X,\mathcal{O}_X(n+1)) \leq h^1(X,\mathcal{O}_X(n)) \mbox{ for all } n \geq 0$$
and
$$h^2(X,\mathcal{O}_X(n)) = 0 \mbox{ for all } n \geq -1.$$
Moreover, as the non-normal locus $X\setminus \Nor(X)$ of $X$ is finite, it follows from
\cite[Proposition 5.2]{AlB} that
\[
\e(X) \leq h^1(X,\mathcal{O}_X(n-1)) \leq \max\{h^1(X,\mathcal{O}_X(n)) - r, \e(X)\},
\mbox{ for all } n\leq 0.
\]
Therefore we obtain $\e(X) =  h^1(X,\mathcal{O}_X(n))$ for all $n
\leq 0$. As $h^i(\mathbb{P}^r,\mathcal{J}_X(n)) =
h^{i-1}(X,\mathcal{O}_X(n))$ for all $n \in \mathbb{Z}$ and $i =
2,3$, we get our claims (1), (2) and (3).
\smallskip

\noindent (c):\textit{First Proof}: According to Theorem~\ref{2.3'
Theorem} there is a surface $\widetilde{X} \subset \mathbb{P}^{d+1}$
of minimal degree and a subspace $\Lambda = \mathbb{P}^{d-r} \subset
\mathbb{P}^{d+1}$, such that
$$\widetilde{X} \cap \Lambda = \emptyset,\quad X = \widetilde{X}_{\Lambda} \mbox{ and } \Sing(\pi_{\Lambda}: \widetilde{X} \twoheadrightarrow X) = X \setminus \Nor(X).$$
Observe, that $\widetilde{X} \subset \mathbb{P}^{d+1}$ is either a
(possible singular) surface scroll or the Veronese surface in
$\mathbb{P}^5$. As $d \geq 6$, the latter cannot occur, and so
$\widetilde{X} \subset \mathbb{P}^{d+1}$ is a surface scroll. Now,
let $E$ be the homogeneous coordinate ring of $\widetilde{X} \subset
\mathbb{P}^{d+1}$. As $E$ is a Cohen-Macaulay ring, we have
canonical inclusions of graded rings $A \subset B(A) \subset E$ (see
Notation and Reminder~\ref{2.7' Notation and Reminder} (A)). As $X$
is a surface, we have $X \setminus \CM(X) \subset X \setminus
\Nor(X) = \Sing(\pi_{\Lambda}) = |\Proj(A/\mathfrak{a})|$ and hence
$E \subset \bigcup_{n\in
\mathbb{N}}(A:_{\rm{Quot}(A)}\mathfrak{a}^n) = B(A)$. Therefore $E =
B(A) = B$ and statement (c) is shown.

\textit{Alternative Proof}: Our first aim is to show that
$\dim_K(B_1) = d+2$. As $h \in S_1 \setminus \{0\}$ is general, we
have $\Rad{(\mathfrak{a},h)} = A_{+}$. So, comparing local
cohomology furnishes a short exact sequence of graded local
cohomology modules (see \cite[(8.1.2)]{BSh})
\[
0\to H^1_{(h)}(H^1_{\mathfrak{a}}(A)) \to H^2_{A_+}(A)
\to H^0_{(h)}(H^2_{\mathfrak{a}}(A)) \to 0.
\]
Observe that with $D := D_{A_+}(A) = \varinjlim \Hom_A((A_+)^n,A) = \bigoplus_{n\in \mathbb{Z}} \Gamma(X,\mathcal{O}_X(n))$, the kernel of the natural map
$B/A \twoheadrightarrow B/D$ is $S_+$-torsion. As $\Rad(\mathfrak{a}, h) = A_+$, it follows
$$H^1_{(h)}(H^1_{\mathfrak{a}}(A)) = H^1_{A_+}(H^1_{\mathfrak{a}}(A)) = H^1_{A_+}(B/A) = H^1_{A_+}(B/D).$$
By Lemma 2.4 of \cite{BS6} we have $\dim_K ((B/D)_n) = \e(X)$ for all $n \gg 0.$ Consequently
$\dim_K (D_{A_+}(B/D)_n) = \e(X)$ for all $n \in \mathbb{Z}$. As $(B/D)_0 = 0$ it follows that
$$\dim_K(H^1_{(h)}(H^1_{\mathfrak{a}}(A))_0) + \dim_K(H^1_{A_+}(B/D)_0) = \e(X).$$
By statement (b) we also have $\dim_K(H^2_{A_+}(A)_0) = H^1(X,\mathcal{O}_X) = \e(X)$. So, the above
sequence shows that $H^0_{(h)}(H^2_{\mathfrak{a}}(A))_0 = 0$. Therefore the multiplication map $h: H^2_{\mathfrak{a}}(A)_0 \rightarrow H^2_{\mathfrak{a}}(A)_1$
is injective. Now, applying the functor $D_{\mathfrak{a}}(\bullet)$ to the exact sequence $0\rightarrow A(-1) \stackrel{h}{\rightarrow} A \rightarrow A/hA \rightarrow 0$
and observing once more that $\Rad{(\mathfrak{a},h)} = A_{+}$, we get the exact sequence of $K$-vector spaces
\[
0 \to D_{\mathfrak{a}}(A)_0 \to D_{\mathfrak{a}}(A)_1 \to D_{A_+}(A/hA)_1
\to H^2_{\mathfrak{a}}(A)_0 \stackrel{h}{\rightarrow} H^2_{\mathfrak{a}}(A)_1.
\]
As $D_{\mathfrak{a}}(A)_0 = B_0 = K$ and as, in addition, the last
map in this sequence is injective, we end up with $\dim_K(B_1) =
\dim_K(D_{\mathfrak{a}}(A)_1) = \dim_K(D_{A_+}(A/hA)_1) + 1$. As
$C_h \subset \Proj(S/hS) = \mathbb{P}^{r-1}$ is a non-degenerate
smooth rational curve of degree $d$, the $K$-vector space
$D_{A_+}(A/hA)_1 \cong H^0(C_h, \mathcal{O}_{C_h}(1))$ has dimension
$d+1$, so that indeed $\dim_K(B_1) = d+2$.

Now, consider the non-degenerate closed subscheme  $\widetilde{X} :=
\Proj(K[B_1]) \subset \mathbb{P}^{d+1}.$ As $K[B_1]$ is a finite
birational integral extension domain of $A$, the scheme
$\widetilde{X} \subset \mathbb{P}^{d+1}$ is a non-degenerate
irreducible and reduced surface of degree $d$. It follows in
particular that $\widetilde{X} \subset \mathbb{P}^{d+1}$ is a
surface of minimal degree and hence (as $d \geq 6$) a (possibly
singular) surface scroll. In particular $K[B_1]$ is a Cohen-Macaulay
ring which contains $A$ and is contained in the $S_2$-cover $B(A)$
of $A$. Thus $K[B_1] = B(A)$ (see Notation and Reminder~ \ref{2.7'
Notation and Reminder} (A)) and hence $B = B(A)$ is the homogeneous
coordinate ring of the surface scroll $\widetilde{X} \subset
\mathbb{P}^{d+1}$.

Moreover, the inclusion map $A\rightarrow B(A)$ gives rise to a
finite morphism $\pi_{\Lambda}:\widetilde{X} \twoheadrightarrow X$,
induced by a linear projection $\pi'_{\Lambda}: \mathbb{P}^{d+1}
\setminus \Lambda \twoheadrightarrow \mathbb{P}^r$ from a subspace
$\Lambda = \mathbb{P}^{d-r} \subset \mathbb{P}^{d+1}$ disjoint to
$\widetilde{X} \subset \mathbb{P}^{d+1}$, so that indeed $X =
\widetilde{X}_{\Lambda}$. Finally, by Notation and
Reminder~\ref{2.7' Notation and Reminder} (B), we have
$\Sing(\pi_{\Lambda}:\widetilde{X}\twoheadrightarrow X) = X
\setminus \CM(X)$. So, statement (c) is shown.
\smallskip

\noindent (d): Clearly, $\widetilde{X}$ is projectively equivalent
to a rational surface scroll $S(a,d-a) \subset \mathbb{P}^{d+1}$ for
some non-negative integer $a \leq d/2$. The uniqueness of $a$
follows for example by \cite[Lemma 5.5]{BS6}. The remaining claim is
clear from the well known fact that $\widetilde{X}_{\Lambda}$ is a
cone if and only if $\widetilde{X}$ is -- and hence if and only if
$a = 0$.
\smallskip

\noindent (a): By statement (c) and Corollary~\ref{2.6' Corollary},
we see that $\reg(X) \leq d-r+3$. As $X$ is a surface, we have
$\Reg(X) \subseteq \Nor(X)\subseteq \CM(X)$. It thus remains to show
that $\CM(X) \subseteq \Reg(X)$. To do so, let $x\in \CM(X)$ be a
closed point. Then, by statement (d) we have $x \notin
\Sing(\pi_{\Lambda})$. Hence $x$ has a unique preimage
$\widetilde{x} \in \widetilde{X}$ and moreover
$\mathcal{O}_{\widetilde{X},\widetilde{x}} \cong \mathcal{O}_{X,x}$.
Assume that $\widetilde{x} \notin \Reg(\widetilde{X})$. Then
$\widetilde{x}$ is a vertex point of $\widetilde{X}$ and hence the
tangent space $\T_{\widetilde{x}}(\widetilde{X})$ of $\widetilde{X}$
at $\widetilde{x}$ has dimension $d+1 > r \geq \dim_K(\T_x(X))$.
This leads to the contradiction that $x \in \Sing(\pi_{\Lambda})$.
Therefore $\widetilde{x} \in \Reg(\widetilde{X})$ and hence $x \in
\Reg(X)$.
\smallskip

\noindent (e): By its construction, $\pi_{\Lambda}: \widetilde{X}
\twoheadrightarrow X$ provides the finite Macaulayfication of $X$
(see Notation and Reminder ~\ref{2.7' Notation and Reminder} (B)).
According to statement (a) we have $\Sing(\pi_{\Lambda}) = X
\setminus \Nor(X) = \Sing(\rho)$, where $\rho:Y \twoheadrightarrow
X$ is the normalization of $X$. As $Y$ is a normal surface, it is
locally Cohen-Macaulay. Therefore $\pi_{\Lambda}$ provides also the
normalization of $X$ (see Notation and Reminder~\ref{2.7' Notation
and Reminder} (B)).
\smallskip

\noindent (f): As $p$ is an isolated point of $X \setminus \Reg(X) =
\Sing(\pi_{\Lambda})$, it follows that the
$\mathcal{O}_{X,p}$-module
$((\pi_{\Lambda})_{*}\mathcal{O}_{\widetilde{X}})_p/\mathcal{O}_{X,p}$
is of finite length. As $\widetilde{X}$ is locally Cohen-Macaulay,
the finitely generated $\mathcal{O}_{X,p}$-module
$((\pi_{\Lambda})_{*}\mathcal{O}_{\widetilde{X}})_p$ is
Cohen-Macaulay. From this, we get
$$\mult_p(X) = \mult_{\mathfrak{m}_{X,p}}\big(((\pi_{\Lambda})_{*}\mathcal{O}_{\widetilde{X}})_p\big) \geq \length\big(((\pi_{\Lambda})_{*}\mathcal{O}_{\widetilde{X}})_p
/\mathfrak{m}_{X,p}((\pi_{\Lambda})_{*}\mathcal{O}_{\widetilde{X}})_p\big) = \#(\pi^{-1}_{\Lambda}(p))$$
and also $((\pi_{\Lambda})_{*}\mathcal{O}_{\widetilde{X}})_p/\mathcal{O}_{X,p} \cong H^1_{\mathfrak{m}_{X,p}}(\mathcal{O}_{X,p})$.
As $\big((\pi_{\Lambda})_{*}\mathcal{O}_{\widetilde{X}}\big)_p$ is a proper finite integral extension domain of $\mathcal{O}_{X,p}$,the Nakayma Lemma yields that
$\length\big(((\pi_{\Lambda})_{*}\mathcal{O}_{\widetilde{X}})_p /\mathfrak{m}_{X,p}((\pi_{\Lambda})_{*}\mathcal{O}_{\widetilde{X}})_p\big) > 1$.
Therefore we have
$$1 < \#((\pi_{\Lambda})^{-1}(p)) = \length\big(((\pi_{\Lambda})_{*}\mathcal{O}_{\widetilde{X}})_p/\mathfrak{m}_{X,p}((\pi_{\Lambda})_{*}\mathcal{O}_{\widetilde{X}})_p\big)
\leq \length\big(((\pi_{\Lambda})_{*}\mathcal{O}_{\widetilde{X}})_p/\mathfrak{m}_{X,p}\big) =$$
$$= \length\big(((\pi_{\Lambda})_{*}\mathcal{O}_{\widetilde{X}})_p/\mathcal{O}_{X,p}\big) + 1 = \e_p(X) + 1.$$
This proves the inequalities in claims (1) and (2).

If $p$ is not a vertex point of $X$, we have
$(\pi_{\Lambda})^{-1}(p) \subset \Reg(\widetilde{X})$ and the
equality in claim (1) follows easily (see \cite[Lemma 3.2]{BP2}).
Observe that $p$ is a Buchsbaum point of $X$ if and only if
$\mathfrak{m}_{X,p} H^1_{\mathfrak{m}_{X,p}}(\mathcal{O}_{X,p}) =
0$, hence if and only if
$\mathfrak{m}_{X,p}((\pi_{\Lambda})_{*}\mathcal{O}_{\widetilde{X}})_p
= \mathfrak{m}_{X,p}$. This proves the equivalence of claim (2).
\smallskip

\noindent (g): Let $\Lambda = \mathbb{P}^{d-r} \subset
\mathbb{P}^{d+1}$ be as in (c) and let
$\pi'_{\Lambda}:\mathbb{P}^{d+1} \setminus \Lambda
\twoheadrightarrow \mathbb{P}^r$ be a projection centered at
$\Lambda$ such that $\pi_{\Lambda}$ coincides with the restriction
$\pi'_{\Lambda} \upharpoonright_{\widetilde{X}}$ of $\pi'_{\Lambda}$
to $\widetilde{X}$. By our hypotheses
$$\#\big(\pi^{-1}_{\Lambda}(X \cap \mathbb{M})\big) = \#\big(\widetilde{X} \cap \overline{(\pi'_{\Lambda})^{-1}(\mathbb{M})}\big) < \infty.$$
As $\widetilde{X} \subset \mathbb{P}^{d+1}$ is a rational normal scroll and $(\pi'_{\Lambda})^{-1}(\mathbb{M}) = \mathbb{P}^{s+d-r+1} \subset \mathbb{P}^{d+1}$ we have
the inequality $\#\big(\widetilde{X} \cap \overline{(\pi'_{\Lambda})^{-1}(\mathbb{M})}\big) \leq d-r+s+2$, so that
$$\#\big((\pi_{\Lambda})^{-1}(X \cap \mathbb{M})\big) \leq d-r+s+2.$$
As $\Sing(\pi_{\Lambda}) = X \setminus {\rm Reg}(X)$ we have an isomorphism $(\pi_{\Lambda})^{-1}({\rm Reg}(X) \cap \mathbb{M}) \cong {\rm Reg}(X) \cap \mathbb{M}$,
so that
$$\#\big((\pi_{\Lambda})^{-1}({\rm Reg}(X) \cap \mathbb{M})\big) = \#\big({\rm Reg}(X) \cap \mathbb{M}\big).$$
By claim (1) of statement (f) we have $\#((\pi_{\Lambda})^{-1}(p) \geq 2$ for all $p \in (X \cap \mathbb{M}) \setminus {\rm Reg(X)}$, so that
$$2 \#\big((X\cap \mathbb{M})_{\rm red} \setminus {\rm Reg}(X)\big) \leq \#\big((\pi_{\Lambda})^{-1}(X \cap \mathbb{M})\setminus {\rm Reg}(X)\big).$$
As the fiber $(\pi_{\Lambda})^{-1}(X \cap \mathbb{M})$ is the disjoint union of its two subschemes $(\pi_{\Lambda})^{-1}({\rm Reg}(X) \cap \mathbb{M})$ and
$(\pi_{\Lambda})^{-1}((X \cap \mathbb{M}) \setminus {\rm Reg}(X))$, our claim follows.
\end{proof}


\section{Surfaces of Maximal Sectional Regularity}
\label{3'. Surfaces of Maximal Sectional Regularity}

\noindent In section 2 we already have introduced varieties of
maximal sectional regularity. In this section, we will focus to the
special case of surfaces of maximal sectional regularity. We first
recall a few basic facts around the notion of surface of maximal
sectional regularity. Then, we give a characterization of these
surfaces in terms of what we call their sectional regularity.
Finally, we prove a structure theorem which gives a number of
fundamental properties of such surfaces and which mainly follows
from Theorem~\ref{2.8' Proposition}.

\begin{notation and reminder}\label{3.1' Notation and Reminder}
(A) Let $ X\subset \mathbb{P}^r = \Proj(S)$ is a non-degenerate irreducible projective surface
of degree $d$, with homogeneous vanishing ideal $I \subset S$ and homogeneous coordinate ring $A = S/I$. For each linear form $h \in S_1 \setminus \{0\}$
we write
$$\mathbb{H}_h := \Proj(S/hS)$$
for the hyperplane in $\mathbb{P}^r$ defined by $h$ and
$$C_h := \Proj(A/hA) = X\cap \mathbb{H}_h$$
for the corresponding hyperplane section of $X$.\\
(B) (see \cite{BS5}) We keep the above hypotheses and notations. The \textit{sectional regularity} $\sreg(X)$ of the surface $X$ is defined
as the least regularity of a hyperplane section of $X$:
\[
\sreg(X) := \min\{\reg (C_h)  \mid h\in S_1 \setminus \{0\}\}.
\]
For each $h \in S_1 \setminus\{0\}$ we have $\reg(C_h) \leq \reg(X)$ so that in particular
$$\sreg(X) \leq \reg(X).$$
(C) Let the notations be as in part (A). Let $i \in \mathbb{N}$, $n \in \mathbb{Z}$ and $h \in S_1\setminus\{0\}$. Then the exact sequence of $K$-vector spaces
$$H^i(\mathbb{P}^r,\mathcal{J}_X(n-1))\stackrel{h}{\rightarrow} H^i(\mathbb{P}^r,\mathcal{J}_X(n))\rightarrow H^i(\mathbb{H}_h,\mathcal{J}_{C_h}(n))
\rightarrow $$
$$H^{i+1}(\mathbb{P}^r, \mathcal{J}_X(n-1))\stackrel{h}{\rightarrow} H^{i+1}(\mathbb{P}^r,\mathcal{J}_X(n))$$
shows, that the set
$$\mathbb{W}^i_n(X) := \{h\in S_1 \setminus \{0\} \mid H^i(\mathbb{H}_h, \mathcal{J}_{C_h}(n)) = 0\}$$
is open in $S_1$ for all $n \in \mathbb{Z}$ and is equal to $S_1\setminus\{0\}$ if $n \geq \reg(X)$. So, for each $s \in \mathbb{Z}$ the set
$\{h\in S_1 \setminus\{0\}\mid \reg(C_h) \leq s \} = \bigcap^{r+1}_{i=1}\bigcap_{n \geq s}\mathbb{W}^i_{n-i}(X)$ is open in $S_1$. Applying this with $s := \sreg X$,
we see that the set
$$\mathbb{W}(X) := \{h\in S_1\setminus \{0\} \mid \reg(C_h) = \sreg(X)\}$$
is open and dense in $S_1$ and keeping in mind the first Bertini Theorem, we obtain that the set
$$\mathbb{U}(X) := \{h \in \mathbb{W}(X) \mid C_h \mbox{ is integral } \}$$
is open and dense in $S_1$. Observe that in the notations of Definition and Remark~\ref{4.1''' Definition and Remark} (A) we have
$$\mathbb{P}\mathbb{U}(X) = \{\mathbb{H}_h \mid h \in \mathbb{U}(X)\}.$$
(D) Keep the previous notations and hypotheses. Then, for each $h \in \mathbb{U}(X)$, the regularity bound for curves due to Gruson-Lazarsfeld-Peskine
\cite{GruLPe} yields that $\reg(C_h) \leq d-r+3$, so that
$$\sreg(X) \leq d-r+3.$$
\end{notation and reminder}
\smallskip

In \cite{BS5} we did define surfaces of maximal sectional regularity as those, whose sectional regularity takes the maximal possible value. That this definition coincides
with our definition given in Remark and Definition~\ref{2.3'' Remark and Definition} (C) is the subject of the following result, in which we use the notations introduced
in Definition and Remarks~\ref{4.1''' Definition and Remark} and ~\ref{2.1'' Definition and Remark}.

\begin{proposition}\label{3.1'' Proposition}
Let $5 \leq r < d$ and let $X \subset \mathbb{P}^r$ be a non-degenerate irreducible projective surface of degree $d$.
\begin{itemize}
\item[\rm{(a)}] The following statements are equivalent
                \begin{itemize}
                \item[\rm{(i)}]    The surface $X$ is of maximal sectional regularity;
                \item[\rm{(ii)}]   $\sreg(X) = d-r+3$;
                \item[\rm{(iii)}]  ${}^*\mathfrak{d}(X) = 2$;
                \item[\rm{(iv)}]   $\mbox{ }\mathfrak{d}(X) = 2$.
                \end{itemize}
\item[\rm{(b)}] If the equivalent conditions (i) -- (iv) of statement (a) are satisfied, then we have
                \begin{itemize}
                \item[\rm{(1)}]   For each $h \in \mathbb{U}(X)$, the curve $C_h \subset \mathbb{H}_h$ is of maximal regularity $d-r+3$ and hence smooth, rational
                                  and with a unique extremal secant line $\mathbb{L}_h := \mathbb{L}_{\mathbb{H}_h,X}$.
                \item[\rm{(2)}]   ${}^*\Sigma^{\circ}_{d-r+3}(X) = \{\mathbb{L}_h \mid h \in \mathbb{U}(X)\}$.
                \end{itemize}
\end{itemize}
\end{proposition}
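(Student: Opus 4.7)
The plan is to split the four equivalences in (a) into two groups. The chain $(i) \Leftrightarrow (iii) \Leftrightarrow (iv)$ is an immediate application of Theorem~\ref{2.3'' Theorem}, while $(i) \Leftrightarrow (ii)$ is a matter of matching two descriptions of the ``generic hyperplane section of maximal regularity''. First I would verify the hypotheses of Theorem~\ref{2.3'' Theorem} for $t = 2$: the inequality $1 \leq 2 \leq r-3$ holds because $r \geq 5$, and $\deg(X) = d > r = r - t + 2$ because $r < d$. The theorem then yields ${}^*\mathfrak{d}(X) = \mathfrak{d}(X) \leq 2$ with equality at the second place exactly when $X$ is of maximal sectional regularity, which gives $(i) \Leftrightarrow (iii) \Leftrightarrow (iv)$.

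For $(i) \Leftrightarrow (ii)$ I would invoke Notation and Reminder~\ref{3.1' Notation and Reminder} (C), (D): the set $\mathbb{U}(X) \subseteq S_1 \setminus \{0\}$ of linear forms $h$ for which $C_h$ is integral and $\reg(C_h) = \sreg(X)$ is open and dense, its projectivization equals $\mathbb{P}\mathbb{U}(X)$ (since for surfaces the $(c+1)$-subspaces are the hyperplanes $\mathbb{H}_h$), and one has the a priori bound $\sreg(X) \leq d - r + 3$. If $(i)$ holds, the set of $h$'s with $C_h$ integral and $\reg(C_h) = d - r + 3$ is a non-empty open subset of $S_1 \setminus \{0\}$; intersecting with the dense open $\mathbb{W}(X) = \{h \mid \reg(C_h) = \sreg(X)\}$ forces $\sreg(X) = d - r + 3$. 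Conversely, if $\sreg(X) = d - r + 3$, then for every $h \in \mathbb{U}(X)$ the curve $C_h$ is integral with $\reg(C_h) = d - r + 3$, hence of maximal regularity as a curve in $\mathbb{H}_h = \mathbb{P}^{r-1}$, so $(i)$ holds.

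For part (b), statement (1) is read off the previous paragraph together with Remark and Definition~\ref{2.3'' Remark and Definition} (B) (a), (c): for $h \in \mathbb{U}(X)$ the curve $C_h \subset \mathbb{P}^{r-1}$ has degree $d > r = (r-1) + 1$ and maximal regularity $d - r + 3$, and since $r - 1 \geq 4$, it is smooth rational and admits a unique $(d - r + 3)$-secant line $\mathbb{L}_h := \mathbb{L}_{\mathbb{H}_h, X}$. Statement (2) then follows immediately from the definition ${}^*\Sigma^{\circ}_{d-r+3}(X) = \{\mathbb{L}_{\mathbb{E}, X} \mid \mathbb{E} \in \mathbb{P}\mathbb{U}(X)\}$ of Definition and Remark~\ref{4.1''' Definition and Remark} (B), combined with the identification $\mathbb{P}\mathbb{U}(X) = \{\mathbb{H}_h \mid h \in \mathbb{U}(X)\}$.

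The argument is essentially bookkeeping once Theorem~\ref{2.3'' Theorem} is available; the only mild subtlety is to keep the two notions of ``general'' aligned, namely the generic $(c+1)$-subspace of Remark and Definition~\ref{2.3'' Remark and Definition} (C) and the open dense sets $\mathbb{U}(X), \mathbb{W}(X) \subseteq S_1$, which forces one to intersect these two open dense subsets when proving $(i) \Rightarrow (ii)$.
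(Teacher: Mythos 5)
Your proposal is correct and follows essentially the same route as the paper: the equivalences (i)$\Leftrightarrow$(iii)$\Leftrightarrow$(iv) via Theorem~\ref{2.3'' Theorem} with $t=2$, the equivalence (i)$\Leftrightarrow$(ii) by matching the generic-hyperplane definition of maximal sectional regularity with the open dense sets $\mathbb{U}(X)$, $\mathbb{W}(X)$ of Notation and Reminder~\ref{3.1' Notation and Reminder} (C), and part (b) from the identification $\mathbb{P}\mathbb{U}(X)=\{\mathbb{H}_h\mid h\in\mathbb{U}(X)\}$ together with Remark and Definition~\ref{2.3'' Remark and Definition} (B) and Definition and Remark~\ref{4.1''' Definition and Remark}. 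Your version merely spells out the hypothesis checks and the intersection of the two dense open sets, which the paper leaves implicit.
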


\begin{proof}
(a): (i) $\Leftrightarrow$ (ii): According to Notation and Reminder~\ref{3.1' Notation and Reminder} (C), the set
$$\mathbb{U}(X) = \{h\in S_1 \setminus \{0\} \mid C_h \subset \mathbb{H}_h \mbox{ is an integral curve with} \reg(C_h) = \sreg(X) \}$$
is dense and open in $S_1$. According to Remark and
Definition~\ref{2.3'' Remark and Definition} (C) the surface $X
\subset \mathbb{P}^r$ is of maximal sectional regularity if and only
if there is a dense open subset $\mathcal{U}$ of $S_1\setminus\{0\}$
such that $C_h \subset \mathbb{H}_h$ is an integral curve with
$\reg(C_h) = d-r+3$. This gives the requested equivalence.

The equivalences (i)$\Leftrightarrow$(iii)$\Leftrightarrow$(iv) hold
by Theorem~\ref{2.3'' Theorem}.
\smallskip

\noindent (b): Assume that the equivalent conditions (i) -- (iv) of
statement (a) hold. Claims (1) and (2) follow from the fact that
$\mathbb{P}\mathbb{U}(X) = \{\mathbb{H}_h \mid h \in
\mathbb{U}(X)\}$ (see Notation and Reminder~\ref{3.1' Notation and
Reminder} (C)) and Definition and Remark~\ref{4.1''' Definition and
Remark} (C).
\end{proof}

Now we are able to formulate and to prove our first main result on the structure of surfaces of maximal sectional regularity.

\begin{theorem}
\label{3.2' Theorem} Let $5\leq r < d$ and assume that the non-degenerate irreducible surface $X \subset \mathbb{P}^r$ of degree $d$ is of maximal sectional regularity.
Then
\begin{itemize}
\item[\rm{(a)}] The surface $X$ is sectionally rational and $X \setminus \Nor(X)$ is finite. In particular
               \begin{itemize}
               \item[\rm{(1)}] $\reg(X) = d-r+3$ and $\Reg(X) = \Nor(X) = \CM(X)$.
               \item[\rm{(2)}] For all $n \leq 0 \quad$ it holds $h^2(\mathbb{P}^r,\mathcal{J}_X(n)) = \e(X)$,\\
                               for all $n \geq 0 \quad$ it holds $h^2(\mathbb{P}^r,\mathcal{J}_X(n+1)) \leq h^2(\mathbb{P}^r,\mathcal{J}_X(n))$, and\\
                               for all $n \geq -1$ it holds $h^3(\mathbb{P}^r,\mathcal{J}_X(n)) = 0$.
               \item[\rm{(3)}] There is a unique non-negative integer $a \leq \frac{d}{2}$ and a subspace $\Lambda = \mathbb{P}^{d-r} \subset \mathbb{P}^{d+1}$
                               disjoint to the rational surface scroll $\widetilde{X} := S(a,d-a) \subset \mathbb{P}^{d+1}$ such that $X = \widetilde{X}_{\Lambda}$
                               and $\Sing(\pi_{\Lambda}:\widetilde{X} \twoheadrightarrow X) = X \setminus \Reg(X)$. Moreover
                               the morphism $\pi_{\Lambda}:\widetilde{X} \twoheadrightarrow X$ provides the normalization and the
                               finite Macaulayfication of $X$ and the number $a$ is positive if and only if $X \subset \mathbb{P}^r$ is a not cone.
               \item[\rm{(4)}] If $\pi_{\Lambda}:\widetilde{X} \twoheadrightarrow X$ is as in claim (3) and $p \in X \setminus \Reg(X)$, we have
                \begin{itemize}
            \item[\rm{(i)}]   $1 < \#(\pi_{\Lambda})^{-1}(p) \leq \mult_p(X)$, with equality at the second place if $X$ is not a cone with vertex $p$ and
                                      if $\mult_{\widetilde{p}}((\pi_{\Lambda})^{-1}(p)) \leq 2$ for all $\widetilde{p} \in (\pi_{\Lambda})^{-1}(p)$.
            \item[\rm{(ii)}]  $\#(\pi_{\Lambda})^{-1}(p) \leq \e_p(X) + 1$ with equality if and only if $p$ is a Buchsbaum point of the surface $X$.
                    \end{itemize}
               \item[\rm{(5)}] If $\mathbb{M} = \mathbb{P}^s \subset \mathbb{P}^r$ is a linear subspace whose intersection with $X$ is finite, then
                                  $$\#({\rm Reg}(X) \cap \mathbb{M}) + \#\big((X \cap \mathbb{M})_{\rm red}\setminus {\rm Reg}(X)\big)  \leq d-r+s+2.$$
               \end{itemize}
\item[\rm{(b)}] $\reg(X) = d-r+3$, $\mathbb{W}(X) = S_1 \setminus \{0\}$ and $\depth(X) \leq 2$.
\item[\rm{(c)}] If $h \in \mathbb{U}(X)$, then
                $$W_h := {\rm Join}(X,\mathbb{L}_h) \subset \mathbb{P}^r$$
                is a rational $4$-fold scroll of type $S(0,0,b_h,r-b_h-3)$ with $0 \leq b_h \leq \frac{r-3}{2}$ and $\mathbb{L}_h = S(0,0)$.
\item[\rm{(d)}] $h^0(\mathbb{P}^r,\mathcal{J}_X(2)) \geq \binom{r-3}{2}$.
\end{itemize}
\end{theorem}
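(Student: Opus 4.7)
For part (a), the plan is to check the hypotheses of Theorem~\ref{2.8' Proposition} and to inherit its conclusions. Since $X$ is of maximal sectional regularity, Proposition~\ref{3.1'' Proposition}(b)(1) says that for each $h \in \mathbb{U}(X)$ the hyperplane section $C_h$ is a smooth rational curve of maximal regularity; in particular $X$ is sectionally rational in the sense of Definition~\ref{2.2' Definition}. Smoothness of $C_h$ for generic $\mathbb{H}_h$ prevents $X \setminus \Reg(X)$ from being one-dimensional, so $X \setminus \Nor(X) \subseteq X \setminus \Reg(X)$ is finite. The statements (a)(1)--(5) then transcribe directly from (a)--(g) of Theorem~\ref{2.8' Proposition}, the only additional remark being that $\reg(X) = d-r+3$ (and not merely $\leq$) follows from $\reg(X) \geq \sreg(X) = d-r+3$ via Proposition~\ref{3.1'' Proposition}(a).

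For part (b), the equality $\reg(X) = d-r+3$ is covered by (a)(1). To verify $\mathbb{W}(X) = S_1 \setminus \{0\}$, I would use that non-degeneracy of $X$ makes multiplication by any nonzero $h$ injective on $\mathcal{J}_X$, so the exact sequence
\[ 0 \rightarrow \mathcal{J}_X(-1) \stackrel{h}{\rightarrow} \mathcal{J}_X \rightarrow \mathcal{J}_{C_h} \rightarrow 0 \]
and its cohomology long exact sequence give $\reg(C_h) \leq \reg(X) = d-r+3$; combined with $\reg(C_h) \geq \sreg(X) = d-r+3$, this yields $\reg(C_h) = \sreg(X)$ for every $h$. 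For the depth bound, suppose toward a contradiction that $\depth(X) = 3$, so $A = S/I_X$ is Cohen--Macaulay. Then a generic linear form $h$ is a non-zero divisor on $A$ and $A/hA$ is Cohen--Macaulay of depth $2$; having no $S_+$-torsion, it coincides with the homogeneous coordinate ring of $C_h \subset \mathbb{H}_h = \mathbb{P}^{r-1}$, making $C_h$ arithmetically Cohen--Macaulay, hence projectively normal. But $C_h \cong \mathbb{P}^1$ has $h^0(C_h, \mathcal{O}_{C_h}(1)) = d+1 > r = h^0(\mathbb{H}_h, \mathcal{O}(1))$ since $d \geq r+1$, contradicting projective normality. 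Hence $\depth(X) \leq 2$.

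For part (c), fix $h \in \mathbb{U}(X)$ and consider the linear projection $\pi_{\mathbb{L}_h}\colon \mathbb{P}^r \dashrightarrow \mathbb{P}^{r-2}$ from the extremal secant line $\mathbb{L}_h$. Since $\mathbb{L}_h$ meets the smooth curve $C_h$ in $d-r+3$ reduced points, the image $Y_h := \pi_{\mathbb{L}_h}(X)$ is a non-degenerate surface of degree $d - (d-r+3) = r-3$ in $\mathbb{P}^{r-2}$, hence a surface of minimal degree. By the Del Pezzo--Bertini classification it is either a rational normal surface scroll $S(b_h, r-b_h-3)$ with $0 \leq b_h \leq (r-3)/2$, or---only possible when $r = 7$---the Veronese surface in $\mathbb{P}^5$; the latter is excluded because the rulings of the scroll cover $\widetilde{X} = S(a,d-a)$ from (a)(3) descend to a one-parameter family of lines on $X$, whose generic members are skew to $\mathbb{L}_h$ (otherwise the finite set $\mathbb{L}_h \cap X$ would contain a point from each ruling and be infinite), and therefore project to a one-parameter family of lines on $Y_h$, which the Veronese does not admit. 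Thus $Y_h = S(b_h, r-b_h-3)$, and taking the full preimage under $\pi_{\mathbb{L}_h}$ identifies $W_h = {\rm Join}(X, \mathbb{L}_h)$ as the cone over $Y_h$ with vertex $\mathbb{L}_h$, which is precisely the rational normal scroll $S(0,0,b_h,r-b_h-3) \subset \mathbb{P}^r$ with $\mathbb{L}_h = S(0,0)$. Part (d) follows at once: $X \subset W_h$ and the homogeneous vanishing ideal of a rational normal scroll of codimension $r-4$ is generated by the $\binom{r-3}{2}$ two-by-two minors of a suitable $2 \times (r-3)$ catalecticant matrix, so $h^0(\mathbb{P}^r, \mathcal{J}_X(2)) \geq h^0(\mathbb{P}^r, \mathcal{J}_{W_h}(2)) = \binom{r-3}{2}$. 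The chief obstacles are the saturation step in (b)---which is automatic from $\depth(A/hA) = 2$---and the Veronese exclusion together with the identification $W_h = S(0,0,b_h,r-b_h-3)$ in (c), both ultimately controlled by the fact that $\mathbb{L}_h$ is a proper secant cutting the smooth curve $C_h$ transversely.
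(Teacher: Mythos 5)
Your proposal is correct and follows essentially the same route as the paper: part (a) by feeding Proposition~\ref{3.1'' Proposition} into Theorem~\ref{2.8' Proposition}, part (b) by comparing $\reg(C_h)$ with $\reg(X)$ and using that a hyperplane section curve of maximal regularity cannot be arithmetically Cohen--Macaulay, and parts (c),(d) by projecting from $\mathbb{L}_h$, recognizing a surface of minimal degree of degree $r-3$, and excluding the Veronese by means of the one-parameter family of lines on $X$ coming from the rulings of $\widetilde{X}$. The only cosmetic deviations are that you rule out $\depth(X)=3$ by a direct linear-normality count where the paper invokes the known arithmetic depth of curves of maximal regularity, and that you classify the projected image $Y_h\subset\mathbb{P}^{r-2}$ and then cone over it, whereas the paper classifies the fourfold $W_h$ directly.
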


\begin{proof}
(a): According to Proposition~\ref{3.1'' Proposition} (b) (1), the
hyperplane section curve $X \cap \mathbb{H}_h = C_h$ is smooth and
rational for all $h$ in the dense open subset $\mathbb{U}(X)$ of
$S_1$. This shows at once, that $X$ is sectionally rational and has
finite non-normal locus. So, we get statement (a) as well as its
claims (1)--(5) on application of Notation and Reminder~\ref{3.1'
Notation and Reminder} (B) and Theorem~\ref{2.8' Proposition}.
\smallskip

\noindent (b): By Theorem~\ref{2.8' Proposition} (a) and Notation
and Reminder~\ref{3.1' Notation and Reminder} (B) we have $\reg(X) =
d-r+3 = \sreg X$. It follows that $\reg(C_h) \leq \reg(X) = d-r+3$
for all $h \in S_1 \setminus \{0\}$ and hence $\mathbb{W}(X) = S_1
\setminus \{0\}$ (see Notation and Reminder~ \ref{3.1' Notation and
Reminder} (C)). To prove the inequality $\depth(X) \leq 2$, assume
that $\depth(X) > 1$ and let $h \in \mathbb{U}(X)$. Then, the ring
$S/(I,h)$ is the homogeneous coordinate ring of the curve $C_h
\subset \mathbb{H}_h$, which is of maximal regularity and hence of
arithmetic depth $1$. It follows that $\depth(X) = \depth(S/I)  =
2$.
\smallskip

\noindent (c): Let $h \in \mathbb{U}(X)$ and observe that $X \cap
\mathbb{L}_h \subset \Reg(X)$. Let $W_h:= \rm{Join}(X,\mathbb{L}_h)
\subset \mathbb{P}^r$. Then $W_h \subset \mathbb{P}^r$ is a
non-degenerate irreducible projective variety of dimension $4$. Let
$\mathbb{I} := \mathbb{P}^{r-2} \subset \mathbb{P}^r$ be a general
$(r-2)$-plane and let $\varrho_h: X\setminus \mathbb{L}_h
\rightarrow W_h \cap \mathbb{I} \subset \mathbb{I}$ be the finite
dominant morphism obtained by projecting from $\mathbb{L}_h$. A
general $(r-2)$-plane $\mathbb{E} = \mathbb{P}^{r-2} \subset
\mathbb{P}^r$, which contains $\mathbb{L}_h$, satisfies $\#(X \cap
\mathbb{E}) = d$. Therefore it follows that $W_h \cap \mathbb{I}
\subset \mathbb{I}$ is of degree $d-(d-r+3)=r-3=
\codim_{\mathbb{P}^r}(W_h) + 1$. So, $W_h \subset \mathbb{P}^r$ is
of minimal degree and moreover $\mathbb{L}_h \subset \Sing(W_h)$.
Hence, $W_h$ is either projectively equivalent to a scroll
$S(0,0,b_h,r-b_h-3) \subset \mathbb{P}^r$ for some non-negative
integer $b_h \leq \frac{r-3}{2}$ and $\mathbb{L}_h = S(0,0)$, or
else $r=7$ and $W_h$ is a cone with vertex $\mathbb{L}_h$ over a
Veronese surface contained in some subspace $\mathbb{P}^5 \subset
\mathbb{P}^7$. As $X$ is the projected image of a surface scroll
$\widetilde{X} \subset \mathbb{P}^{r+1}$, it contains a
one-dimensional family of lines disjoint to $\mathbb{L}_h$, so that
$W_h$ contains a one-dimensional family of $3$-spaces containing
$\mathbb{L}_h$. This excludes the second case.
\smallskip

\noindent (d): This follows from statement (c), as
$h^0(\mathbb{P}^r,\mathcal{J}_X(2)) \geq h^0(\mathbb{P}^r,
\mathcal{J}_{W_h}(2)) = \binom{r-3}{2}$.
\end{proof}

\section{Extremal Varieties}
\label{4'. Extremal Varieties}

\noindent In this section we first define the notion of extremal
variety $\mathbb{F}(X)$ of a surface $X \subset \mathbb{P}^r$ of
maximal sectional regularity as the closed union of all special
extremal secant lines to $X$ hence of all lines in the set
${}^*\Sigma^{\circ}_{d-r+3}(X) = \{\mathbb{L}_h \mid h \in
\mathbb{U}(X))$ (see Propostion~\ref{3.1'' Proposition} (b)). As a
main result of this section, we will show that $\mathbb{F}(X)$ is
either a plane or a smooth rational $3$-fold scroll and that the
latter case only occurs if $r = 5$.

\begin{definition and remark}\label{4.2' Definition and Remark} (A) Let $5 \leq r < d$ and let $X\subset \mathbb{P}^r$ be a surface of degree $d$ which is of
maximal sectional regularity. We write
$$\mathbb{F}(X) := \overline{\bigcup_{h \in \mathbb{U}(X)}\mathbb{L}_h} = \overline{\bigcup_{\mathbb{L} \in {}^*\Sigma^{\circ}_{d-r+3}(X)} \mathbb{L}} \subset \mathbb{P}^r$$
for the closure of the union of all special extremal secant lines to
$X$ and call $\mathbb{F}(X)$ the \textit{extremal variety} of $X$.

\noindent (B) As $\mathbb{U}(X)$ is an infinite set of
$(d-r+3)$-secant lines of $X$, we clearly must have
$$\dim (X\cap \mathbb{F}(X)) \geq 1 \mbox{ and } \dim(\mathbb{F}(X)) \geq 2.$$

\noindent (C) Assume first, that our surface $X \subset
\mathbb{P}^r$ is a cone with vertex $p$ over the irreducible
non-degenerate curve $C = X\cap \mathbb{H} \subset \mathbb{H}$ of
degree $d$, where $\mathbb{H} = \mathbb{P}^{r-1} \subset
\mathbb{P}^r$ is a hyperplane which avoids $p$. Now, for all $h \in
\mathbb{U}(X)$ we must have $p \notin \mathbb{H}_h$, so that $C$ and
$C_h$ are isomorphic via the projection $\pi_p: \mathbb{H}
\stackrel{\cong}{\rightarrow} \mathbb{H}_h$ from $p$. This shows,
that $C \subset \mathbb{H}$ is a curve of maximal regularity and
that its unique extremal secant line $\mathbb{L}$ is mapped onto
$\mathbb{L}_h$ by $\pi_p$. Therefore, if $X$ is a cone, we see:
\begin{itemize}
               \item[\rm{(1)}] $\mathbb{F}(X) = \mathbb{P}^2 \subset \mathbb{P}^r \mbox{ with } p \in \mathbb{F}(X)$.
               \item[\rm{(2)}] $\{\mathbb{L}_h \mid h \in \mathbb{U}(X)\} = \{\mathbb{L} \in \Sigma_{d-r+3}(X) \mid p \notin \mathbb{L}\}$.
\end{itemize}

\noindent (D) Assume now, that $X \subset \mathbb{P}^r$ is an
arbitrary non-degenerate irreducible surface of degree $d$. Then,
instead of the previously introduced extremal variety
$\mathbb{F}(X)$ of $X$ one also can introduce the \textit{extended
extremal variety} of $X$ as the closed union
$$\mathbb{F}^+(X) := \overline{\bigcup_{\mathbb{L} \in \Sigma^{\circ}_{d-r+3}(X)} \mathbb{L}}$$
of all proper extremal secant lines to $X$. If $X \subset \mathbb{P}^r$ is of maximal sectional regularity, we clearly have
$$\mathbb{F}(X) \subseteq \mathbb{F}^+(X),$$
and it might indeed be of interest to know, under which circumstances we have equality in this context.
\end{definition and remark}
\smallskip

In fact, we may extend claim (2) of Definition and Remark~\ref{4.2' Definition and Remark} (C) to arbitrary surfaces of maximal sectional regularity as follows.

\begin{proposition}\label{4.3'' Lemma}
Let the notations and hypotheses be as in Definition and Remark~\ref{4.2' Definition and Remark}.
\begin{itemize}
\item[\rm{(a)}] The following equalities hold
\begin{align*}
\{\mathbb{L}_h \mid h \in \mathbb{U}(X)\} & = \{\mathbb{L} \in \Sigma_{d-r+3}(X) \mid X \cap \mathbb{L} \mbox{ is a finite subset of } \Reg(X)\}\\
& =  \{\mathbb{L} \in \Sigma_3(X) \mid X \cap \mathbb{L} \mbox{ is a finite subset of } \Reg(X)\}.
\end{align*}
\item[\rm{(b)}] The set $\{\mathbb{L}_h \mid h \in \mathbb{U}(X) \} = {}^*\Sigma^{\circ}_{d-r+3}(X)$ is locally closed in $\mathbb{G}(1, \mathbb{P}^r)$.
\item[\rm{(c)}] If $p \in X \setminus {\rm Reg}(X)$, then $\dim(X \cap \langle p, \mathbb{L}_h \rangle) > 0$ for all $h \in \mathbb{U}(X)$.
\end{itemize}
\end{proposition}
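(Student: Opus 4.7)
My plan uses the scroll representation $X = \widetilde{X}_{\Lambda}$ with $\widetilde{X} \subset \mathbb{P}^{d+1}$ a smooth rational normal scroll of degree $d$ and $\Lambda = \mathbb{P}^{d-r}$ (Theorem~\ref{3.2' Theorem}(a)(3)) as the central tool. Writing $L$, $M$, $R$ for the three sets of part (a) from left to right, the inclusion $M \subseteq R$ is trivial since $d-r+3 \geq 4$. For $L \subseteq M$, I fix $h \in \mathbb{U}(X)$: then $C_h$ is integral of maximal regularity, so smooth rational with unique extremal secant line $\mathbb{L}_h$ by Remark and Definition~\ref{2.3'' Remark and Definition}(B), and $\mathbb{L}_h \cap X = \mathbb{L}_h \cap C_h$ has length $d-r+3$. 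To see $\mathbb{L}_h \cap X \subset \Reg(X)$, I compute the preimage $\pi_{\Lambda}^{-1}(\mathbb{L}_h \cap X) = \widetilde{X} \cap \langle \Lambda, \mathbb{L}_h\rangle$, a $0$-dimensional linear section of the minimal-degree variety $\widetilde{X}$ by a $\mathbb{P}^{d-r+2}$, whose length is at most $d-r+3$; a point of $\mathbb{L}_h \cap X$ in $X \setminus \Reg(X)$ would contribute at least two units of preimage length by Theorem~\ref{3.2' Theorem}(a)(4)(i), violating the bound.

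For $R \subseteq L$, given a proper $3$-secant $\mathbb{L}$ with $\mathbb{L} \cap X \subset \Reg(X)$, I apply Bertini to the pencil of hyperplanes through $\mathbb{L}$ (whose base locus $\mathbb{L}$ is not contained in $X$) to choose $h \in \mathbb{U}(X)$ with $\mathbb{H}_h \supset \mathbb{L}$. Then $C_h$ is integral of maximal regularity with unique extremal secant $\mathbb{L}_h$. Lifting $\mathbb{L} \cap X$ via $\pi_{\Lambda}$ (an isomorphism over $\Reg(X)$) yields a subscheme of length $\geq 3$ on the rational normal curve $\widetilde{C}_h := \pi_{\Lambda}^{-1}(C_h) \subset \mathbb{P}^d$, where $\mathbb{P}^d$ is the hyperplane of $\mathbb{P}^{d+1}$ corresponding to $\mathbb{H}_h$. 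The span $\langle \Lambda, \mathbb{L}\rangle = \mathbb{P}^{d-r+2}$ meets $\widetilde{C}_h$ in length $\geq 3$; combining the chord structure of the rational normal curve with the uniqueness of the $(d-r+3)$-secant line of $C_h$ (Remark and Definition~\ref{2.3'' Remark and Definition}(B)(c)) forces $\langle \Lambda, \mathbb{L}\rangle = \langle \Lambda, \mathbb{L}_h\rangle$, and hence $\mathbb{L} = \mathbb{L}_h$.

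For part (b), the identification ${}^*\Sigma^{\circ}_{d-r+3}(X) = \{\mathbb{L}_h \mid h \in \mathbb{U}(X)\}$ follows directly by comparing Definition and Remark~\ref{4.1''' Definition and Remark}(B) (with $c+1 = r-1$ for a surface) and Notation and Reminder~\ref{3.1' Notation and Reminder}(C). Local closure is inherited from part (a): this set equals $\Sigma_{d-r+3}(X) \cap \{\mathbb{L} : \mathbb{L} \not\subset X,\ \mathbb{L} \cap (X \setminus \Reg(X)) = \emptyset\}$, the intersection of the closed set $\Sigma_{d-r+3}(X)$ (Notation and Reminder~\ref{4.1'' Notation and Reminder}(A)) with two open conditions -- the complement of $\Sigma_{\infty}(X)$ and the lines avoiding the closed finite set $X \setminus \Reg(X)$. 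For part (c), I fix $p \in X \setminus \Reg(X)$ and $h \in \mathbb{U}(X)$ and assume $\dim(X \cap \langle p, \mathbb{L}_h\rangle) = 0$ for contradiction: since $\mathbb{L}_h \cap X \subset \Reg(X)$ by part (a), $p \notin \mathbb{L}_h$, so $\mathbb{M} := \langle p, \mathbb{L}_h\rangle$ is a $\mathbb{P}^2$, and Theorem~\ref{2.8' Proposition}(g) with $s = 2$ yields $\#(\Reg(X) \cap \mathbb{M}) + 2 \#((X \cap \mathbb{M})_{\rm red} \setminus \Reg(X)) \leq d-r+4$; however $\mathbb{L}_h \cap X$ supplies $d-r+3$ regular points of $\mathbb{M}$ and $p$ contributes at least one singular point (with weight two), giving lower bound $d-r+5$ -- contradiction.

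The main obstacle I anticipate is the chord-uniqueness step in $R \subseteq L$: passing from a $\mathbb{P}^{d-r+2}$ containing $\Lambda$ and meeting $\widetilde{C}_h$ in length $\geq 3$ to the conclusion that this subspace is the extremal chord through $\Lambda$. This blends the length bound on linear sections of a minimal-degree variety with the uniqueness of the $(d-r+3)$-secant line of $C_h$, and scheme-theoretic care is needed to handle potentially non-reduced intersection schemes -- particularly for the $L \subseteq M$ step where the length bookkeeping for preimages of non-reduced points at singularities of $X$ requires a more delicate analysis than the set-theoretic count.
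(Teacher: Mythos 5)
Parts (b) and (c) of your proposal are sound and essentially coincide with the paper's arguments (the paper's part (c) uses the bound in the form of Theorem~\ref{3.2' Theorem} (a)(5), yours the form of Theorem~\ref{2.8' Proposition} (g); same device). In part (a), the inclusion of the second set in the third is indeed trivial, and your preimage-length count for the first inclusion works, though the paper's one-line argument is cleaner: for $h \in \mathbb{U}(X)$ the Cartier divisor $C_h = X \cap \mathbb{H}_h$ is smooth, so every point of $C_h$ --- in particular every point of $X \cap \mathbb{L}_h = C_h \cap \mathbb{L}_h$ --- is a regular point of $X$.

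The genuine gap is in the reverse inclusion $R \subseteq L$, which is the real content of the proposition: a proper $3$-secant line avoiding $X \setminus \Reg(X)$ must already be some $\mathbb{L}_h$, hence a $(d-r+3)$-secant. Your proposed mechanism --- that a $\mathbb{P}^{d-r+2}$ containing $\Lambda$ and meeting the rational normal curve $\widetilde{C}_h$ in length $\geq 3$ must equal $\langle \Lambda, \mathbb{L}_h \rangle$ by ``the chord structure plus uniqueness of the extremal secant'' --- does not follow. Remark and Definition~\ref{2.3'' Remark and Definition} (B)(c) asserts uniqueness only of the $(d-r+2)$-secant line of a curve of maximal regularity; it gives no control whatsoever over lines meeting $C_h$ in a scheme of length exactly $3$, and a priori there could be many $\mathbb{P}^{d-r+2}$'s through $\Lambda$ cutting $\widetilde{C}_h$ in length $3$. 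You flag this yourself as the main obstacle, and it is not a technicality. The paper closes it with a tool you already use in part (c): assuming $\mathbb{L} \neq \mathbb{L}_h$, set $\mathbb{V} := \langle \mathbb{L}, \mathbb{L}_h \rangle = \mathbb{P}^s$ with $s \in \{2,3\}$; then $X \cap \mathbb{V}$ is finite, $X \cap (\mathbb{L} \cup \mathbb{L}_h) \subset \Reg(X)$, and $\#\big(X \cap (\mathbb{L} \cup \mathbb{L}_h)\big) \geq 3 + (d-r+3) - \varepsilon \geq d-r+s+3$, which violates the bound $d-r+s+2$ of Theorem~\ref{3.2' Theorem} (a)(5). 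A secondary weak point: producing $h \in \mathbb{U}(X)$ with $\mathbb{L} \subset \mathbb{H}_h$ by ``Bertini on the pencil through $\mathbb{L}$'' is not justified as stated, since a dense open subset of $S_1$ need not meet the linear subspace of forms vanishing on $\mathbb{L}$; the paper instead takes a general hyperplane of $\mathbb{P}^{d+1}$ through $\widetilde{\mathbb{L}} = \langle \Lambda, \mathbb{L} \rangle$, observes that it cuts $\widetilde{X}$ in a rational normal curve mapping isomorphically onto $C_h$, and uses $\mathbb{W}(X) = S_1 \setminus \{0\}$ from Theorem~\ref{3.2' Theorem} (b) to conclude $h \in \mathbb{U}(X)$.
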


\begin{proof}
(a): The inclusion $''\subseteq``$ between the first and the second
set follows from the fact, that $C_h := X \cap \mathbb{H}_h$ is
smooth for each $h \in \mathbb{U}(X)$ and hence can only contain
smooth points of $X$. The inclusion between the second and the third
set in our statement is immediate.

So, let $\mathbb{L} \in \Sigma_3(X)$ such that $X \cap \mathbb{L}$
is finite and contained in $\Reg(X)$, and assume that $\mathbb{L}_h
\neq \mathbb{L}$ for all $h \in \mathbb{U}(X)$. We aim for a
contradiction.

By Theorem~\ref{3.2' Theorem} (a)(3) we may write $X =
\widetilde{X}_{\Lambda}$ and $\Sing(\pi_{\Lambda}: \widetilde{X}
\twoheadrightarrow X) = X \setminus \Reg(X)$, where $\widetilde{X}
\subset \mathbb{P}^{d+1}$ is a surface scroll, $\Lambda =
\mathbb{P}^{d-r} \subset \mathbb{P}^{d+1}$ is a subspace disjoint to
$\widetilde{X}$ and $\pi_{\Lambda}$ is induced by a linear
projection $\pi'_{\Lambda}:\mathbb{P}^{d+1}\setminus \Lambda
\twoheadrightarrow \mathbb{P}^r$. Let $\widetilde{\mathbb{L}}:=
\overline{(\pi'_{\Lambda})^{-1}(\mathbb{L})} = \mathbb{P}^{d-r+2}
\subset \mathbb{P}^{d+1}$. Then, the set $\widetilde{X} \cap
\widetilde{\mathbb{L}} = (\pi_{\Lambda})^{-1}(X\cap \mathbb{L})$ is
finite.

Let $\widetilde{\mathbb{H}} = \mathbb{P}^d \subset \mathbb{P}^{d+1}$
be a general hyperplane which contains the space
$\widetilde{\mathbb{L}}$. If $\widetilde{X}$ is not a cone, we may
conclude by \cite[Remark 2.3 (B)]{BP2}, that the intersection
$\widetilde{X} \cap \widetilde{\mathbb{H}} \subset
\widetilde{\mathbb{H}}$ is a rational normal curve. If
$\widetilde{X}$ is a cone, the fact that $\mathbb{L}$ avoids the
singular locus of $X$ implies that $\widetilde{\mathbb{L}}$ does not
contain the vertex of $\widetilde{X}$ and we end up again with the
conclusion that $\widetilde{X} \cap \widetilde{\mathbb{H}} \subset
\widetilde{\mathbb{H}}$ is a rational normal curve.

By Theorem~\ref{3.2' Theorem} (b), there is some $h \in
\mathbb{W}(X)$ such that $\mathbb{H}_h =
\pi'_{\Lambda}(\widetilde{\mathbb{H}}\setminus \Lambda) =
\mathbb{P}^{r-1} \subset \mathbb{P}^r$. As $\widetilde{\mathbb{H}}$
is general and $X \cap \mathbb{L}$ avoids the finite set $X
\setminus \Reg(X) = \Sing(\pi_{\Lambda})$, the intersection $C_h = X
\cap \mathbb{H}_h$ avoids the set $\Sing(\pi_{\Lambda})$. Therefore
the induced map $\pi_{\Lambda}\upharpoonright: \widetilde{X} \cap
\widetilde{\mathbb{H}} \rightarrow C_h$ is an isomorphism and hence
$C_h \subset \mathbb{H}_h$ is a smooth rational curve of degree $d$,
so that $h \in \mathbb{U}(X)$. By our assumption we have $\mathbb{L}
\neq \mathbb{L}_h$, hence $\mathbb{V} := \langle
\mathbb{L}_h,\mathbb{L}\rangle = \mathbb{P}^s \subset \mathbb{H}_h$
with $s \in \{2,3\}$. As $C_h \subset \mathbb{H}_h =
\mathbb{P}^{r-1}$ is an irreducible non-degenerate curve and $s \leq
3 < r-1$, the intersection $X \cap \mathbb{V}$ is finite.

As $\mathbb{L} \neq \mathbb{L}_h$ we have $\#(X \cap (\mathbb{L}
\cup \mathbb{L}_h)) \geq \#(X \cap \mathbb{L}) + \#(X \cap
\mathbb{L}_h) - \varepsilon$ with $\varepsilon = 1$ if $\mathbb{L}$
and $\mathbb{L}_h$ meet in a point of $X$, and $\varepsilon = 0$
otherwise. In the first case, we have $s = 2$, so that always
$3-\varepsilon \geq s$. Therefore, we obtain
$$\infty > \#(X \cap \mathbb{V}) \geq \# (X \cap (\mathbb{L} \cup \mathbb{L}_h)) \geq \#(X \cap \mathbb{L}) +
\#(X\cap \mathbb{L}_h) - \varepsilon \geq 3 + d-r+3 - \varepsilon
\geq d-r+s+3.$$ As $\mathbb{L} \cup \mathbb{L}_h \subset {\rm
Reg}(X)$ this contradicts Theorem~\ref{3.2' Theorem} (a)(5).
\smallskip

\noindent (b): This is clear, as $\Sigma_{d-r+3}(X) \subset
\mathbb{G}(1,\mathbb{P}^r)$ is closed (see Notation and
Reminder~\ref{4.1'' Notation and Reminder} (B)) and $X \setminus
{\rm Reg}(X)$ is finite (see Theorem~\ref{3.2' Theorem} (a)).
\smallskip

\noindent (c): Let $h \in \mathbb{U}(X)$, and observe that by
statement (a) we have $\mathbb{M} := \langle p,\mathbb{L}_h \rangle
= \mathbb{P}^2$. Assume that $\#(X \cap \mathbb{M}) < \infty$. By
statement (a) we have $X \cap \mathbb{L}_h \subset {\rm Reg}(X) \cap
\mathbb{M}$. By our choice of $p$ we have $p \in (X \cap \mathbb{M})
\setminus {\rm Reg}(X)$. Therefore we get
$$d-r+5 \leq \#(X \cap \mathbb{L}_h) + 2 \leq \#\big({\rm Reg}(X) \cap \mathbb{M}\big) + \# \big((X \cap \mathbb{M})_{\rm red} \setminus {\rm Reg}(X)\big).$$
But this contradicts Theorem~\ref{3.2' Theorem} (a)(5) and hence proves our claim.
\end{proof}

We now give a first criterion for the planarity of the extremal variety $\mathbb{F}(X)$ of our surface $X \subset \mathbb{P}^r$ of maximal sectional regularity.
We begin with the following auxiliary result.

\begin{lemma}\label{4.4'' Lemma} Let $5 \leq r < d$, let $X \subset \mathbb{P}^r$ be a surface of degree $d$, and let
$\mathbb{F} = \mathbb{P}^2 \subset \mathbb{P}^r$ be a plane, such that $\dim(X \cap \mathbb{F}) = 1$. Then, the following statements hold
\begin{itemize}
\item[\rm{(a)}] If $\deg_{\mathbb{F}}(X\cap \mathbb{F}) \geq d-r+3$, then $X$ is of maximal sectional regularity and $\mathbb{F}(X) = \mathbb{F}$.
\item[\rm{(b)}] If $X$ is of maximal sectional regularity and $\deg_{\mathbb{F}}(X \cap \mathbb{F}) \geq 3$, then $\mathbb{F}(X) = \mathbb{F}$.
\item[\rm{(c)}] If $X$ is of maximal sectional regularity and $\mathbb{L}_h \subset \mathbb{F}$ for general $h \in \mathbb{U}(X)$, then $\mathbb{F}(X) = \mathbb{F}$.
\end{itemize}
\end{lemma}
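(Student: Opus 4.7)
The common strategy is to analyze, for a hyperplane $\mathbb{H} \subset \mathbb{P}^r$, the line $\mathbb{L}' := \mathbb{F} \cap \mathbb{H}$ and to apply B\'ezout in $\mathbb{F}$. For general $\mathbb{H}$, the trace $\mathbb{L}'$ is a general line in $\mathbb{F}$, since the map $\mathbb{H} \mapsto \mathbb{F} \cap \mathbb{H}$ from the $(\mathbb{P}^r)^*$ of hyperplanes to the $\mathbb{P}^2$ of lines in $\mathbb{F}$ is dominant. Since $X \cap \mathbb{F}$ is a curve in $\mathbb{F}$ of degree $e := \deg_{\mathbb{F}}(X \cap \mathbb{F})$ (the case $\mathbb{F} \subset X$ being excluded by $\dim X = \dim \mathbb{F}$, irreducibility of $X$, and $\deg X = d > 1$), for general $\mathbb{H}$ the line $\mathbb{L}'$ is not a component of $X \cap \mathbb{F}$, so $X \cap \mathbb{L}' = (X \cap \mathbb{F}) \cap \mathbb{L}'$ is a finite subscheme of length exactly $e$. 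Moreover, when $X$ is of maximal sectional regularity, the finiteness of $X \setminus \Reg(X)$ (Theorem~\ref{3.2' Theorem}(a)(1)) ensures that for general $\mathbb{H}$ this finite set lies in $\Reg(X)$.

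For part (a), I would take $\mathbb{H}$ general so that $C := X \cap \mathbb{H} \subset \mathbb{H} = \mathbb{P}^{r-1}$ is irreducible and non-degenerate by Bertini. Then $\mathbb{L}'$ is a proper $e$-secant line of $C$ with $e \geq d-r+3$ (it cannot lie in $C$, which is irreducible of degree $d > 1$); the existence of such a secant gives $\reg(C) \geq d-r+3$ by Notation and Reminder~\ref{4.1'' Notation and Reminder}(C), and the Gruson-Lazarsfeld-Peskine bound gives $\reg(C) \leq d-r+3$, so $C$ is of maximal regularity. Since $\mathbb{H}$ is general, $X$ is of maximal sectional regularity. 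By Remark and Definition~\ref{2.3'' Remark and Definition}(B)(c), applied in $\mathbb{P}^{r-1}$ with $r-1 \geq 4$, the $(d-r+3)$-secant line of $C$ is unique; hence $\mathbb{L}_h = \mathbb{L}' \subset \mathbb{F}$ for general $h \in \mathbb{U}(X)$. Part (c), once proved, then yields $\mathbb{F}(X) = \mathbb{F}$.

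For part (b), the weaker hypothesis $e \geq 3$ forces $e \geq d-r+3$ via the preliminary observation: for general $h \in \mathbb{U}(X)$ the line $\mathbb{L}' = \mathbb{F} \cap \mathbb{H}_h$ belongs to $\Sigma_3(X)$ with $X \cap \mathbb{L}'$ a finite subset of $\Reg(X)$, so Proposition~\ref{4.3'' Lemma}(a) forces $\mathbb{L}' \in {}^*\Sigma^\circ_{d-r+3}(X)$, whence $e = \#(X \cap \mathbb{L}') \geq d-r+3$. This reduces to the setting of (a), and the same uniqueness argument gives $\mathbb{L}_h = \mathbb{L}' \subset \mathbb{F}$ for general $h$; part (c) then concludes. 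For part (c) itself, if $\mathbb{L}_h \subset \mathbb{F}$ for general $h$, then the preimage of the closed subvariety $\mathbb{G}(1,\mathbb{F}) \subset \mathbb{G}(1,\mathbb{P}^r)$ under the continuous map $h \mapsto \mathbb{L}_h$ is closed in $\mathbb{U}(X)$ and contains a dense subset of the irreducible variety $\mathbb{U}(X)$, hence equals $\mathbb{U}(X)$. Therefore $\mathbb{F}(X) = \overline{\bigcup_h \mathbb{L}_h} \subset \mathbb{F}$, and $\dim \mathbb{F}(X) \geq 2 = \dim \mathbb{F}$ (Definition and Remark~\ref{4.2' Definition and Remark}(B)) forces equality.

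The main technical obstacle is the collection of Bertini-type genericity checks supporting the opening observation: dominance of the trace map $\mathbb{H} \mapsto \mathbb{F} \cap \mathbb{H}$ onto the lines of $\mathbb{F}$, the fact that a general such $\mathbb{L}'$ is neither a component of $X \cap \mathbb{F}$ nor meets the finite locus $X \setminus \Reg(X)$, and the continuity of $h \mapsto \mathbb{L}_h$ on the irreducible variety $\mathbb{U}(X)$. None of these is deep, but all must be verified carefully before invoking B\'ezout on $\mathbb{F}$ and Proposition~\ref{4.3'' Lemma}(a).
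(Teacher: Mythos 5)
Your proof is correct and uses the same ingredients as the paper's --- B\'ezout on the trace $\mathbb{F} \cap \mathbb{H}$, the Gruson--Lazarsfeld--Peskine bound forcing the general hyperplane section curve to be of maximal regularity in (a), Proposition~\ref{4.3'' Lemma} (a) to upgrade $\deg_{\mathbb{F}}(X\cap\mathbb{F}) \geq 3$ to $\geq d-r+3$ in (b), and uniqueness of the extremal secant line --- but it reverses the logical dependencies. The paper proves (a) first, deduces (b) from (a), and deduces (c) from (b) via the observation that $\mathbb{L}_h \subset \mathbb{F}$ forces $\deg_{\mathbb{F}}(X \cap \mathbb{F}) \geq d-r+3$; you instead prove (c) directly by a closure-and-irreducibility argument on $\mathbb{U}(X)$ and route (a) and (b) through it. Your organization has the merit of making explicit a step the paper leaves implicit: the paper's proof of (a) only establishes $\sreg(X) = d-r+3$ and never says why containment of the general $\mathbb{L}_h$ in $\mathbb{F}$ yields $\mathbb{F}(X) = \mathbb{F}$, which is exactly what your direct proof of (c) supplies. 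The one point you flag but do not verify --- continuity of $h \mapsto \mathbb{L}_h$ --- does need an argument, and a clean one is available: the incidence set $\{(h,\mathbb{L}) \mid h \in \mathbb{U}(X),\ \mathbb{L} \in \Sigma_{d-r+3}(X),\ \mathbb{L} \subset \mathbb{H}_h\}$ is closed in $\mathbb{U}(X) \times \mathbb{G}(1,\mathbb{P}^r)$, hence proper over $\mathbb{U}(X)$, and it is bijective onto $\mathbb{U}(X)$ because the integral curve $C_h$ of degree $d>1$ contains no line, so the fibre over $h$ is exactly $\{\mathbb{L}_h\}$; a proper bijection is a homeomorphism, so $\{h \in \mathbb{U}(X) \mid \mathbb{L}_h \subset \mathbb{F}\}$ is closed, which is all your argument for (c) requires.
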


\begin{proof}
(a): Set $t := \deg_{\mathbb{F}}(X \cap \mathbb{F})$ and let
$\mathbb{H} = \mathbb{P}^{r-1} \subset \mathbb{P}^r$ be a general
hyperplane. Then, the line $\mathbb{L} := \mathbb{F} \cap
\mathbb{H}$ is $t$-secant to the integral curve $C := X\cap
\mathbb{H} \subset \mathbb{H}$ of degree $d$. Therefore $t \leq
\reg(C) \leq d-r+3$, whence $t = d-r+3$. Thus $X \cap \mathbb{H} = C
\subset \mathbb{H} = \mathbb{P}^{r-1}$ is a curve of maximal
regularity $d-r+3$. As $\mathbb{H}$ is general, we may assume that
$\mathbb{H} = \mathbb{H}_h$ for some $h \in \mathbb{U}(X)$ and
therefore $\sreg(X) = d-r+3$.
\smallskip

\noindent (b): Let $\mathbb{P}^1 = \mathbb{L} \subset \mathbb{F}$ be
a general line. Then $X \cap \mathbb{L} \subset \Reg(X)$ and $\#(X
\cap \mathbb{L}) = \#\big((X \cap \mathbb{F})\cap \mathbb{L}\big) =
\deg_{\mathbb{F}}(X \cap \mathbb{F}) \geq 3$. So, by
Proposition~\ref{4.3'' Lemma} (a) we have $\deg_{\mathbb{F}}(X \cap
\mathbb{F}) = \#(X \cap \mathbb{L}) \geq d-r+3$. Now, we may
conclude by statement (a). \smallskip

\noindent (c): Clearly, for general $h \in \mathbb{U}(X)$, we have
$$d-r+3 = \#(C_h \cap \mathbb{L}_h) = \#(X \cap \mathbb{L}_h) \leq \#\big((X \cap \mathbb{F})\cap \mathbb{H}_h\big) = \deg(X \cap \mathbb{F}).$$
So, our claim follows by statement (b).
\end{proof}

Now, we can prove the announced criterion for the planarity of the extremal variety.

\begin{proposition}\label{4.5'' Proposition} Let $5 \leq r < d$ and let $X \subset \mathbb{P}^r$ be surface of degree $d$ and of maximal sectional regularity. Then,
the following conditions are equivalent.
\begin{itemize}
\item[\rm{(i)}] $\mathbb{F}(X)$ is a plane.
\item[\rm{(ii)}] $\dim(\mathbb{F}(X)) = 2$
\item[\rm{(iii)}] For all $h_1, h_2 \in \mathbb{U}(X)$ the lines $\mathbb{L}_{h_1}$ and $\mathbb{L}_{h_2}$ meet.
\item[\rm{(iv)}] There is a non-empty open set $U \subset \mathbb{U}(X)$ such that the lines $\mathbb{L}_{h_1}$ and $\mathbb{L}_{h_2}$ meet for all $h_1, h_2 \in U$.
\end{itemize}
\end{proposition}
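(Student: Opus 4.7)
The implications $(i) \Rightarrow (ii)$, $(i) \Rightarrow (iii)$ and $(iii) \Rightarrow (iv)$ are formal: $\dim \mathbb{P}^2 = 2$; any two lines in a plane meet; and one may take $U = \mathbb{U}(X)$. I will therefore concentrate on the two genuine implications $(iv) \Rightarrow (i)$ and $(ii) \Rightarrow (i)$, which close the cycle of equivalences.

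For $(iv) \Rightarrow (i)$, I exploit the point-counting bound of Theorem~\ref{3.2' Theorem}~(a)(5). By Definition and Remark~\ref{4.2' Definition and Remark}~(B) we have $\dim \mathbb{F}(X) \geq 2$, so the map $h \mapsto \mathbb{L}_h$ is not constant on $U$, and I may pick $h_1, h_2 \in U$ with $\mathbb{L}_{h_1} \neq \mathbb{L}_{h_2}$. Condition $(iv)$ gives a unique intersection point $p$ and a plane $\mathbb{M} := \langle \mathbb{L}_{h_1}, \mathbb{L}_{h_2} \rangle = \mathbb{P}^2$. By Proposition~\ref{4.3'' Lemma}~(a), each $\mathbb{L}_{h_i}$ satisfies $\#(\mathbb{L}_{h_i} \cap X) = d-r+3$ with support in $\Reg(X)$, and a length-theoretic inclusion--exclusion yields $\#\bigl((\mathbb{L}_{h_1} \cup \mathbb{L}_{h_2}) \cap X\bigr) \geq 2(d-r+3) - 1 = 2d - 2r + 5$ with this subscheme supported in $\Reg(X) \cap \mathbb{M}$. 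If $X \cap \mathbb{M}$ were finite, Theorem~\ref{3.2' Theorem}~(a)(5) with $s = 2$ would force $2d - 2r + 5 \leq d - r + 4$, contradicting $d > r$; hence $\dim(X \cap \mathbb{M}) = 1$. A B\'ezout estimate in $\mathbb{M} = \mathbb{P}^2$ applied to the proper secant $\mathbb{L}_{h_1} \not\subset X$ then gives $\deg_{\mathbb{M}}(X \cap \mathbb{M}) \geq d-r+3$, and Lemma~\ref{4.4'' Lemma}~(a) concludes $\mathbb{F}(X) = \mathbb{M}$, a plane.

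For $(ii) \Rightarrow (i)$, the ingredients are an incidence-dimension count and the classical fact that the vertex locus of a projective variety is a closed linear subspace. By Proposition~\ref{3.1'' Proposition}~(a),(b)(2) the set $\Sigma := \overline{{}^*\Sigma^{\circ}_{d-r+3}(X)} = \overline{\{\mathbb{L}_h : h \in \mathbb{U}(X)\}}$ is an irreducible $2$-dimensional subvariety of $\mathbb{G}(1, \mathbb{P}^r)$, and the incidence variety $I := \{(y, \mathbb{L}) \in \mathbb{F}(X) \times \Sigma : y \in \mathbb{L}\}$ has dimension $3$ (its first projection is a $\mathbb{P}^1$-bundle over $\Sigma$). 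Assuming $\dim \mathbb{F}(X) = 2$, the second projection $I \to \mathbb{F}(X)$ has general fibers of dimension $\geq 1$, so through a general $y \in \mathbb{F}(X)$ passes an irreducible $1$-parameter subfamily $\Sigma_y^{(0)} \subseteq \Sigma$ of lines; their union is an irreducible surface contained in the irreducible $\mathbb{F}(X)$, hence equal to it. Thus every $z \in \mathbb{F}(X)$ lies on a line of $\Sigma$ through $y$, i.e.\ $\mathbb{F}(X)$ is a cone with vertex $y$, for every general $y$. Since the vertex locus of $\mathbb{F}(X)$ is a closed linear subspace of $\mathbb{P}^r$ contained in $\mathbb{F}(X)$ and contains a dense open set of $\mathbb{F}(X)$, it must coincide with $\mathbb{F}(X)$, which therefore is a $2$-plane.

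The main obstacle is the cone--vertex argument in $(ii) \Rightarrow (i)$: one must carefully extract a $1$-parameter subfamily of lines through a general point, verify by dimension and irreducibility that it sweeps out all of $\mathbb{F}(X)$, and then invoke the classical linearity of the vertex locus. The step $(iv) \Rightarrow (i)$, by contrast, reduces to the sharp inequality in Theorem~\ref{3.2' Theorem}~(a)(5) together with B\'ezout in $\mathbb{P}^2$ and Lemma~\ref{4.4'' Lemma}~(a), and is otherwise routine.
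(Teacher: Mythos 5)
Your handling of the trivial implications is fine, but the step you call ``routine'' in (iv) $\Rightarrow$ (i) is exactly where the proof breaks. Having shown $\dim(X\cap\mathbb{M})=1$, you claim that B\'ezout applied to $\mathbb{L}_{h_1}\not\subset X$ gives $\deg_{\mathbb{M}}(X\cap\mathbb{M})\geq d-r+3$. The inequality actually goes the other way: if $C$ denotes the divisorial (pure one-dimensional) part of $X\cap\mathbb{M}$ in $\mathbb{M}=\mathbb{P}^2$, then B\'ezout (or Lemma~\ref{4.12'' Lemma}) yields $\#(C\cap\mathbb{L}_{h_1})=\deg(C)$, whence $d-r+3=\#(X\cap\mathbb{L}_{h_1})\geq\deg(C)$, an \emph{upper} bound on the degree. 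The length $d-r+3$ of $X\cap\mathbb{L}_{h_1}$ could a priori be concentrated on isolated points or embedded components of the scheme $X\cap\mathbb{M}$ (which can certainly occur where the plane meets the surface transversally), so nothing forces the divisorial part to have degree $\geq d-r+3$, nor even $\geq 3$ as Lemma~\ref{4.4'' Lemma}~(b) would require. The paper closes this direction differently: after choosing $h_1,h_2\in U$ with $\mathbb{L}_{h_1}\neq\mathbb{L}_{h_2}$ spanning a plane $\mathbb{F}$, it observes that for all $h$ in a non-empty open set $W\subset U$ the line $\mathbb{L}_h$ meets $\mathbb{L}_{h_1}$ and $\mathbb{L}_{h_2}$ in two distinct points and hence lies in $\mathbb{F}$; then Lemma~\ref{4.4'' Lemma}~(c) applies, and its proof obtains the degree bound legitimately by cutting $X\cap\mathbb{F}$ with a \emph{general} hyperplane (which avoids all associated points, so that length equals degree). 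You should adopt that route or otherwise rule out the zero-dimensional contributions.

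Your (ii) $\Rightarrow$ (i) is a genuinely different argument from the paper's and is attractive, but it leans on the irreducibility of $\Sigma=\overline{{}^*\Sigma^{\circ}_{d-r+3}(X)}$ and hence of $\mathbb{F}(X)$, which Proposition~\ref{3.1'' Proposition} does not give (it only gives $\dim\Sigma=2$); without irreducibility of $\mathbb{F}(X)$ the cone swept out through a general point need not equal all of $\mathbb{F}(X)$, and the vertex-locus conclusion collapses. The paper avoids this by decomposing $\mathbb{F}(X)$ into irreducible components, fixing a two-dimensional component $D_1$ containing $\mathbb{L}_h$ for all $h$ in an open set, and using Bertini to see that a general hyperplane section of $D_1$ is integral of degree $\deg(D_1)$ yet contains (hence equals) the line $\mathbb{L}_h$, so $D_1$ is a plane; Lemma~\ref{4.4'' Lemma}~(c) then finishes. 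If you justify that the assignment $h\mapsto\mathbb{L}_h$ is a morphism on a dense open subset of the irreducible set $\mathbb{U}(X)$ (so that $\Sigma$ is irreducible), your cone--vertex argument goes through and is a nice alternative; as written, that hypothesis is asserted rather than proved.
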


\begin{proof}
(i) $\Rightarrow$ (ii) is obvious.

(ii) $\Rightarrow$ (i): Assume that $\mathbb{F}(X)$ is of dimension
$2$ and write $\mathbb{F}(X) = D_1 \cup \cdots \cup D_t \cup E$,
where $t \in \mathbb{N}$, $D_1,\ldots, D_t \subset \mathbb{P}^r$ are
the different $2$-dimensional irreducible components of
$\mathbb{F}(X)$ and $E \subset \mathbb{P}^r$ is closed, reduced and
of dimension $\leq 1$. For general $h \in \mathbb{U}(X)$, there is
an index $i_h \in \{1,\ldots,t\}$ such that $\mathbb{L}_h \subset
D_{i_h}$. So, without loss of generality we may assume that there is
a non-empty open set $U \subset \mathbb{U}(X)$ such that
$\mathbb{L}_h \subset D_1$ for all $h \in U$. By Bertini we thus
find a non-empty open set $W \subset U$ such that $D_1 \cap
\mathbb{H}_h \subset \mathbb{P}^r$ is an integral closed subscheme
of dimension $1$ and of degree $\deg(D_1)$ for all $h \in W$. As
$\mathbb{L}_h \subset D_1 \cap \mathbb{H}_h$ for all $h \in U$, it
follows, that $D_1 \cap \mathbb{H}_h = \mathbb{L}_h$ for all $h \in
W$. This shows, that $D_1 \subset \mathbb{P}^r$ is a plane which
contains $\mathbb{L}_h$ for all $h \in W$, and hence proves our
claim by Lemma~\ref{4.4'' Lemma} (c).

The implications (i) $\Rightarrow$ (iii) and (iii) $\Rightarrow$
(iv) are obvious, so that it remains to show the implication (iv)
$\Rightarrow$ (i). We find $h_1, h_2 \in U$ such that
$\mathbb{L}_{h_1}$ and $\mathbb{L}_{h_2}$ are distinct, and hence
span a plane $\mathbb{P}^2 =\mathbb{F} \subset \mathbb{P}^r$.
Clearly, there is an non-empty open set $W \subset U$ such that
$\mathbb{L}_h$ avoids the common point of $\mathbb{L}_{h_1}$ and
$\mathbb{L}_{h_2}$ for all $h \in W$. So, for all $h \in W$ we must
have $\mathbb{L}_h \subset \mathbb{F}$. By Lemma~\ref{4.4'' Lemma}
(c) it follows that $\mathbb{F}(X) = \mathbb{F}$.
\end{proof}

Our next aim is to give a link between the invariant $b_h$ of Theorem~\ref{3.2' Theorem} (c) and the nature of the extremal variety $\mathbb{F}(X)$ of $X$. We begin with a
few preparations.

\begin{notation and remark}\label{4.6'' Notation and Remark} (A) Let $5 \leq r \leq d$ and let $C \subset \mathbb{P}^{r-1}$ be a curve of degree $d$, of maximal
regularity and with extremal secant line $\mathbb{L}_C$. Then, the
variety ${\rm Join}(\mathbb{L}_C,C) \subset \mathbb{P}^{r-1}$ is
known to be a threefold scroll of type $S(0,0,r-3)$ with vertex
$\mathbb{L}_C = S(0,0) \subset S(0,0,r-3) = {\rm
Join}(\mathbb{L}_C,C)$.

\noindent (B) Keep the above notations and let $X =
\widetilde{X}_{\Lambda} \subset \mathbb{P}^r$ be a non-conic surface
of maximal sectional regularity, where $\widetilde{X} = S(a,d-a)
\subset \mathbb{P}^{d+1}$, and where the subspace $\Lambda =
\mathbb{P}^{d-r} \subset \mathbb{P}^{d+1}$ and also the induced
projection morphism $\pi_{\Lambda}: \widetilde{X} \twoheadrightarrow
X$ are defined as in Theorem~\ref{3.2' Theorem} (a). Observe in
particular that $a > 0$, so that the scroll $\widetilde{X}$ is
smooth. We fix a canonical projection $\varphi:\widetilde{X}
\twoheadrightarrow \mathbb{P}^1$. For each closed point $x  \in
\mathbb{P}^1$, let $\mathbb{L}(x)$ denote the ruling line
$\varphi^{-1}(x)$ of $\widetilde{X}$ and let
$\mathbb{L}_{\Lambda}(x) := \pi_{\Lambda}(\mathbb{L}(x)) =
\mathbb{P}^1 \subset \mathbb{P}^r$. Then it obviously holds
$$X = \bigcup_{x \in \mathbb{P}^1}\mathbb{L}_{\Lambda}(x).$$

\noindent (C) Keep the previous notations and hypotheses and let $p
\in X$ be a closed point. As $\pi_{\Lambda}: \widetilde{X}
\twoheadrightarrow X$ is finite and almost non-singular with
$\Sing(\pi_{\Lambda}) = X \setminus \Reg(X)$, we have
$$1 \leq \#\{x \in \mathbb{P}^1 \mid p \in \mathbb{L}_{\Lambda}(x)\} < \infty, \mbox{ with equality at the first place if } p \in \Reg(X).$$
Now, let $C \subset X$ be a closed integral subscheme of dimension $1$ whose linear span $\langle C \rangle \subseteq \mathbb{P}^r$ satisfies the condition
$2 \leq \dim \langle C \rangle \leq r-1$. As $\Sing(\pi_{\Lambda}) = X \setminus \Reg(X)$
is a finite set, the closed subscheme $\widetilde{C} := \overline{(\pi_{\Lambda})^{-1}(C \cap \Reg(X))} \subset \widetilde{X}$ is integral and of dimension $1$ with
$2 \leq \dim \langle \widetilde{C} \rangle \leq d$, hence a degenerate prime divisor on $X$, and thus a curve section of $\widetilde{X}$. Therefore
$$\#(\widetilde{C} \cap \mathbb{L}(x)) = 1 \mbox{ for all } x \in \mathbb{P}^1.$$
From this it follows by the previous observation that
$$ \#(C \cap \mathbb{L}_{\Lambda}(x)) = 1, \mbox{ for a general closed point } x \in \mathbb{P}^1.$$
\end{notation and remark}

\begin{lemma}\label{4.7'' Lemma}
Let the notations and hypotheses be as in Notation and Reminder~\ref{4.6'' Notation and Remark} (A). Let $C \subset Y$, where
$Y = S(0,0,r-3) \subset \mathbb{P}^{r-1}$ is a threefold scroll. Then the vertex $\mathbb{L} = S(0,0) \subset S(0,0,r-3)$ of $Y$ equals $\mathbb{L}_C$ and $Y =
{\rm Join}(\mathbb{L}_C,C)$
\end{lemma}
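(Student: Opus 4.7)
The plan is to show that the vertex $\mathbb{L}$ of $Y$ is itself a $(d-r+3)$-secant line to $C$; the uniqueness assertion of Remark and Definition~\ref{2.3'' Remark and Definition}(B)(c) (available as $r \geq 5$) then forces $\mathbb{L} = \mathbb{L}_C$, and the equality $Y = {\rm Join}(\mathbb{L}_C, C)$ follows since the identification $Y = {\rm Join}(\mathbb{L}, C)$ comes for free from the cone structure.

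First I establish the degree formula. Since $Y$ is the cone with vertex $\mathbb{L}$ over the rational normal curve $\widetilde{C}_{r-3} \subset \mathbb{P}^{r-3}$, projection from $\mathbb{L}$ restricts to a surjective morphism $\pi|_C : C \to \widetilde{C}_{r-3}$ (using non-degeneracy of $C$, which prevents $C$ from lying in a single ruling plane) of some degree $e \geq 1$. Setting $\ell := \#(C \cap \mathbb{L})$, a hyperplane-section count through $\mathbb{L}$ gives $d = e(r-3) + \ell$, and since every ruling plane of $Y$ is $\langle \mathbb{L}, x\rangle$ for some $x \in C$, we also have $Y = {\rm Join}(\mathbb{L}, C)$. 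The problem therefore reduces to proving $e = 1$, since then $\ell = d-r+3$ as required.

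The main case is $\mathbb{L}_C \subset Y$, which by Bezout holds automatically whenever $d - r + 3 > \deg Y = r-3$, i.e., $d > 2r-6$; this covers $r \in \{5,6\}$ since $d \geq r+1$. Lines on $Y = S(0,0,r-3)$ are classified: they are either the vertex $\mathbb{L}$ or lie in some ruling plane $P_y$. Assuming $e \geq 2$ (whence $\mathbb{L} \neq \mathbb{L}_C$ by uniqueness), we get $\mathbb{L}_C \subset P_y$ for some $y$, hence $\#(C \cap P_y) \geq \#(C \cap \mathbb{L}_C) = d-r+3$. Working on the resolution $\widetilde{Y} = \mathbb{P}(\mathcal{O}^2 \oplus \mathcal{O}(r-3))$ with $[\widetilde{C}] = e\xi^2 + \ell \xi f$, the intersection $\widetilde{C} \cdot \widetilde{P}_y = e$ gives the bound $\#(C \cap P_y) \leq \ell + e = d - e(r-4)$. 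Combining yields $e(r-4) \leq r-3$, forcing $e = 1$ when $r \geq 6$; for $r=5$, $e=2$ is formally allowed but equality throughout then forces $C \cap \mathbb{L} \subseteq C \cap \mathbb{L}_C \subseteq \mathbb{L} \cap \mathbb{L}_C$, a single point, so $\ell \leq 1$, contradicting $\ell = d-4 \geq 2$.

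The remaining case $\mathbb{L}_C \not\subset Y$ (possible only when $r \geq 7$ and $r+1 \leq d \leq 2r-6$) is the main technical obstacle. The idea is to project $\mathbb{L}_C$ from $\mathbb{L}$ into $\mathbb{P}^{r-3}$ and exploit that rational normal curves of degree $r-3$ admit no $k$-secant lines with $k \geq 3$. If $\mathbb{L} \cap \mathbb{L}_C = \emptyset$, the image is a line meeting $\widetilde{C}_{r-3}$ in at most two points, so by pigeonhole two of the $d-r+3 \geq 4$ points of $\mathbb{L}_C \cap C$ map to the same $y$, forcing $\mathbb{L}_C \subset P_y \subset Y$, a contradiction. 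If $\mathbb{L} \cap \mathbb{L}_C = \{p\}$, the image of $\mathbb{L}_C$ collapses to a single point of $\widetilde{C}_{r-3}$, so at least $d-r+2$ points of $\mathbb{L}_C \cap C$ lie in one fiber of $\pi|_C$, giving $e \geq d-r+2$; combining with $e(r-3) \leq d$ yields $d(r-4) \leq (r-2)(r-3)$, hence $d \leq (r-2)(r-3)/(r-4) < r+1$ for $r \geq 6$, contradicting $d \geq r+1$.
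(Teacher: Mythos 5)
Your main-case argument is sound, and it is essentially the paper's proof in a slightly different arrangement: both proofs project from the vertex line $\mathbb{L}$, observe that $\mathbb{L}_C$ (if distinct from $\mathbb{L}$) must sit inside a ruling plane of $Y$, and derive a numerical contradiction from the degree $e$ of the induced morphism $C \twoheadrightarrow S(r-3)$. The paper's version of the count is $e \geq \#(C\cap\mathbb{L}_C) = d-r+3$ against $e \leq d/(r-3)$; yours is $e(r-4)\leq r-3$ plus the short supplementary argument at $r=5$. The degree formula $d = e(r-3)+\ell$ and the bound $\#(C\cap P_y)\leq \ell+e$ are both correct.

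The problem is the case split. You only obtain $\mathbb{L}_C\subset Y$ from Bezout when $d-r+3>\deg Y = r-3$, which forces you to treat $\mathbb{L}_C\not\subset Y$ separately for $r\geq 7$, $r+1\leq d\leq 2r-6$. That case is in fact vacuous: $Y=S(0,0,r-3)$ is a variety of minimal degree, so its homogeneous ideal is generated by quadrics, and a line meeting $Y$ in a scheme of length $\geq 3$ lies on every such quadric and hence in $Y$; since $\#(C\cap\mathbb{L}_C)=d-r+3\geq 3$, one always has $\mathbb{L}_C\subset Y$. This is exactly the one-line observation the paper uses, and it matters here because your treatment of the residual case is not watertight. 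In the sub-case $\mathbb{L}\cap\mathbb{L}_C=\{p\}$ you assert that at least $d-r+2$ points of $\mathbb{L}_C\cap C$ lie in a single fiber of $\pi|_C$; this tacitly assumes the scheme $\mathbb{L}_C\cap C$ contributes length at most $1$ at $p$. If $\mathbb{L}_C$ has contact of order $m\geq 2$ with $C$ at $p$, you only get $e\geq d-r+3-m$, and the inequality $d(r-4)\leq (r-2)(r-3)$ no longer follows; nothing in your argument excludes such higher-order contact. (A smaller instance of the same issue: the pigeonhole step in the sub-case $\mathbb{L}\cap\mathbb{L}_C=\emptyset$ needs the $d-r+3$ intersection points to be distinct, or else must be rephrased in terms of the length of the image scheme inside $M\cap S(r-3)$, which has length at most $2$.) Replacing the Bezout step by the quadric-generation argument removes the residual case entirely and leaves a complete proof.
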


\begin{proof}
We assume that $\mathbb{L} \neq \mathbb{L}_C$ and aim for a contradiction. As $\#(C \cap \mathbb{L}_C) > 2$ we have $\mathbb{L}_C \subset Y$ and hence
$\langle \mathbb{L}, \mathbb{L}_C \rangle \subset Y$, so that $\mathbb{L}$ and $\mathbb{L}_C$ are coplanar. Now, the linear projection map
$\pi_{\mathbb{L}}: Y \setminus \mathbb{L} \rightarrow S(r-3) \subset \mathbb{P}^{r-3}$ induces a dominant morphism $C\setminus (C\cap \mathbb{L}) \rightarrow S(r-3)$.
As $C$ is smooth, this morphism may be extended to a surjective morphism $\phi: C \twoheadrightarrow S(r-3)$. This implies that
$$\deg(\phi) = \frac{d-\#(C \cap \mathbb{L})}{\deg_{\mathbb{P}^{r-3}}(S(r-3))} \leq \frac{d}{r-3}.$$
As $\mathbb{L}$ and $\mathbb{L}_C$ are coplanar, there is a point $z \in S(r-3)$ such that $\phi(C\cap \mathbb{L}_C) = \{z\}$. As $S(r-3)$ is smooth, this implies that
$$\deg(\phi) = \#\phi^{-1}(z) \geq \#(C \cap \mathbb{L}_C) = d-r+3.$$
The two previous inequalities imply that $\frac{d}{r-3} \geq d-r+3$, which is impossible if $d \geq r$. This contradiction shows that $\mathbb{L} = \mathbb{L}_C$
and hence proves our claim.
\end{proof}

\begin{lemma}\label{4.8'' Lemma}
Let $5 \leq r < d$ and let $X \subset \mathbb{P}^r$ be a surface of degree $d$ and of maximal sectional regularity.
Then, for each $h \in \mathbb{U}(X)$, the linear projection $\pi'_{\mathbb{L}_h}: \mathbb{P}^r \setminus \mathbb{L}_h \twoheadrightarrow \mathbb{P}^{r-2}$ from
$\mathbb{L}_h = \mathbb{P}^1 \subset \mathbb{P}^r$ induces a birational morphism
$$\pi_{\mathbb{L}_h}: X \setminus (X \cap \mathbb{L}_h) \rightarrow Z_h := \overline{\pi_{\mathbb{L}_h}\big(X\setminus (X\cap \mathbb{L}_h)\big)} \subset \mathbb{P}^{r-2},$$
and $Z_h \subset \mathbb{P}^{r-2}$ is a rational surface scroll of
type $S(b_h,r-3-b_h)$, where $b_h \leq \frac{r-3}{2}$ is as in
Theorem~\ref{3.2' Theorem} (c). Moreover, if $X$ is non-conic and
with $\mathcal{S}_h := \{x \in \mathbb{P}^{1} \mid
\mathbb{L}_{\Lambda}(x) \cap \mathbb{L}_h \neq \emptyset\}$, the
following statements hold:
\begin{itemize}
\item[\rm{(a)}] $1 \leq \#\mathcal{S}_h \leq d-r+3$, and $\pi_{\mathbb{L}_h}(\mathbb{L}_{\Lambda}(x))$ is a line if $x \in \mathbb{P}^1 \setminus \mathcal{S}_h$ and a
point if $x \in \mathcal{S}_h$.
\item[\rm{(b)}] If $z \in Z_h$ is a general point, there is a unique point $x_z \in \mathbb{P}^1$ with $z \in \pi_{\mathbb{L}_h}(\mathbb{L}_{\Lambda}(x_z))$.
\item[\rm{(c)}] The set $T_h := \Sing(\pi_{\mathbb{L}_h}) \cap \pi_{\mathbb{L}_h}(X \setminus(X\cap \mathbb{L}_h))$ is finite.
\item[\rm{(d)}] If $b_h > 0$ and $C \subset X$ is an integral closed subscheme of dimension $1$ with $4 \leq \dim \langle C \rangle \leq r-1$,
then $C' := \overline{\pi_{\mathbb{L}_h}(C \setminus (C \cap \mathbb{L}_h)}$ is a curve section of $Z_h$.
\end{itemize}
\end{lemma}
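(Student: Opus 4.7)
The plan is to derive the principal structural assertion almost directly from Theorem~\ref{3.2' Theorem}(c), and then to unpack the four refinements (a)--(d) by analysing the projection in terms of the two nested scrolls $W_h \supset X$ and $Z_h$.

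For the main claim, set $W_h := \mathrm{Join}(X, \mathbb{L}_h)$. By Theorem~\ref{3.2' Theorem}(c), $W_h$ is a $4$-fold rational normal scroll of type $S(0,0,b_h,r-b_h-3) \subset \mathbb{P}^r$ with vertex $\mathbb{L}_h = S(0,0)$. Vertex projection coincides with $\pi'_{\mathbb{L}_h}$ and sends $W_h \setminus \mathbb{L}_h$ onto the smooth base $2$-fold $S(b_h, r-3-b_h) \subset \mathbb{P}^{r-2}$, so a priori $Z_h \subseteq S(b_h, r-3-b_h)$. I then exploit a degree count: a general hyperplane of $\mathbb{P}^{r-2}$ pulls back under $\pi'_{\mathbb{L}_h}$ to a hyperplane $H \subset \mathbb{P}^r$ containing $\mathbb{L}_h$, and the identity
\[
d = \#(X \cap H) = \#(X \cap \mathbb{L}_h) + \deg(\pi_{\mathbb{L}_h}|_X)\cdot\deg(Z_h) = (d-r+3) + \deg(\pi_{\mathbb{L}_h}|_X)\cdot(r-3)
\]
forces $\deg(\pi_{\mathbb{L}_h}|_X) = 1$, hence $\pi_{\mathbb{L}_h}|_X$ is birational and the image exhausts the irreducible surface $S(b_h, r-3-b_h)$.

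For (a), I first observe that no $\mathbb{L}_\Lambda(x)$ coincides with $\mathbb{L}_h$ (else $\mathbb{L}_h \subseteq X$, contradicting $\#(X \cap \mathbb{L}_h)=d-r+3<\infty$), so for $x \in \mathcal{S}_h$ the intersection $p_x := \mathbb{L}_\Lambda(x) \cap \mathbb{L}_h$ is a single point in $X \cap \mathbb{L}_h$. By Theorem~\ref{3.2' Theorem}(a) we have $X \cap \mathbb{L}_h \subseteq \Reg(X) = X \setminus \Sing(\pi_\Lambda)$, so $p_x$ has a unique preimage on $\widetilde{X}$, lying on a unique ruling $\mathbb{L}(x)$; this makes $x \mapsto p_x$ injective and gives $\#\mathcal{S}_h \leq d-r+3$, with non-emptiness clear. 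The description of $\pi_{\mathbb{L}_h}(\mathbb{L}_\Lambda(x))$ is then immediate: for $x \notin \mathcal{S}_h$ the line $\mathbb{L}_\Lambda(x)$ misses $\mathbb{L}_h$ and projects isomorphically to a line, while for $x \in \mathcal{S}_h$ the plane $\langle \mathbb{L}_\Lambda(x), \mathbb{L}_h \rangle$ contains $\mathbb{L}_h$ and therefore collapses to a point. Part (b) then drops out of the scroll structure: the ruling pencil of $Z_h$ is parametrised by the same $\mathbb{P}^1$ as those of $W_h$ and $X$, and a general $z \in Z_h$ lies on a unique ruling, the image of $\mathbb{L}_\Lambda(x)$ for a unique $x \in \mathbb{P}^1 \setminus \mathcal{S}_h$.

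For (c), I set $\mathbb{M}_z := \langle z, \mathbb{L}_h \rangle = \mathbb{P}^2$, so that the fibre $\pi_{\mathbb{L}_h}^{-1}(z) \cap X$ equals $(X \cap \mathbb{M}_z) \setminus \mathbb{L}_h$. Proposition~\ref{4.3'' Lemma}(c) tells me every $p \in X \setminus \Reg(X)$ has positive-dimensional fibre over $\pi_{\mathbb{L}_h}(p)$, and the image of the finite set $X \setminus \Reg(X)$ under $\pi_{\mathbb{L}_h}$ is finite. For every other $z$ the intersection $X \cap \mathbb{M}_z$ is finite, and Theorem~\ref{3.2' Theorem}(a)(5) with $s=2$ gives
\[
\#(\Reg(X) \cap \mathbb{M}_z) + 2\#\bigl((X \cap \mathbb{M}_z)_{\mathrm{red}} \setminus \Reg(X)\bigr) \leq d-r+4,
\]
so the $d-r+3$ regular points already lying on $\mathbb{L}_h \subset \mathbb{M}_z$ leave room for at most one extra regular point and no non-regular points. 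Hence $\pi_{\mathbb{L}_h}$ is a bijective birational morphism onto its image away from a finite set, which is precisely the content of $T_h$ being finite.

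The main obstacle will be (d), where the hypothesis $b_h > 0$ ensures $Z_h$ is smooth. Given $C \subset X$ of dimension $1$ with $4 \leq \dim\langle C\rangle \leq r-1$, I plan to lift $C$ via Notation and Remark~\ref{4.6'' Notation and Remark}(C) to the integral curve $\widetilde{C} := \overline{\pi_\Lambda^{-1}(C \cap \Reg(X))} \subset \widetilde{X}$, which is a degenerate prime divisor on the smooth scroll $\widetilde{X} = S(a, d-a)$ and hence a curve section. The composed birational morphism $\widetilde{X} \twoheadrightarrow X \to Z_h$ intertwines the two ruling projections onto the common $\mathbb{P}^1$, so $C'$ will again be a prime divisor on $Z_h$ of class of the form $H-kF$ in $\mathrm{Pic}(Z_h)$. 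The delicate step is to prove $\dim\langle C'\rangle \leq r-3$: if $\dim\langle C\rangle = r-1$ the hyperplane $\langle C\rangle$ either contains or meets $\mathbb{L}_h$, and I must use the containment $C \subset W_h$ together with the scroll structure of $W_h$ to rule out the "meets but does not contain" alternative; the intermediate dimensions $4 \leq \dim\langle C\rangle \leq r-2$ will be handled similarly, with the scroll structure of $W_h$ forcing enough incidence between $\langle C\rangle$ and $\mathbb{L}_h$ to degenerate the projected span.
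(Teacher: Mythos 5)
Your handling of the birationality claim and of parts (a)--(c) follows essentially the same circle of ideas as the paper (which gets birationality by applying Theorem~\ref{3.2' Theorem}~(a)(5) to the plane $\mathbb{M} := \overline{(\pi'_{\mathbb{L}_h})^{-1}(p)}$ over a general $p \in Z_h$, and deduces (a), (b) from Notation and Remark~\ref{4.6'' Notation and Remark}~(C) together with $X \cap \mathbb{L}_h \subset \Reg(X)$). Two smaller corrections are needed. First, your degree count is stated with the wrong codimension: the preimage of a hyperplane of $\mathbb{P}^{r-2}$ is a hyperplane $H$ of $\mathbb{P}^r$, and $X \cap H$ is then a curve, not $d$ points; you need a general $(r-2)$-plane through $\mathbb{L}_h$ (equivalently a general $\mathbb{P}^{r-4} \subset \mathbb{P}^{r-2}$), as in the proof of Theorem~\ref{3.2' Theorem}~(c). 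Second, in (c) you assert that $X \cap \mathbb{M}_z$ is finite for every $z$ outside $\pi_{\mathbb{L}_h}(X \setminus \Reg(X))$; this is the \emph{converse} of Proposition~\ref{4.3'' Lemma}~(c), which is not available. You must separately argue that only finitely many planes through $\mathbb{L}_h$ cut $X$ in a curve (e.g.\ by semicontinuity of fibre dimension and birationality, which is in effect what the paper does).

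The genuine problem is (d). You read ``curve section of $Z_h$'' as meaning a degenerate divisor, propose to prove $\dim\langle C'\rangle \le r-3$, and assert that $C'$ has class of the form $H-kF$. That is not what is meant, and it is false precisely in the situation where (d) is used: in the proof of Proposition~\ref{4.9'' Theorem} the lemma is applied to $C = C_{h'}$, and the paper there records $\langle C'\rangle = \pi'_{\mathbb{L}_h}(\mathbb{H}_{h'}) = \mathbb{P}^{r-2}$, so $C'$ is non-degenerate in $\mathbb{P}^{r-2}$ and its class is of the form $H+kF$ with $k>0$. ``Curve section'' here means a section of the ruling of $Z_h$, i.e.\ a curve meeting the general ruling line in exactly one point. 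Accordingly the paper's proof of (d) shows $\#(C' \cap \mathbb{L}) \le 1$ for the general ruling line $\mathbb{L} = \pi_{\mathbb{L}_h}(\mathbb{L}_{\Lambda}(x_z))$: by (b) and (c) one may choose $\mathbb{L}$ to avoid $\Sing(\pi_{\mathbb{L}_h})$, so that $\pi_{\mathbb{L}_h}^{-1}(\mathbb{L}) = \mathbb{L}_{\Lambda}(x_z)$, and then Notation and Remark~\ref{4.6'' Notation and Remark}~(C) gives $\#(C \cap \mathbb{L}_{\Lambda}(x_z)) \le 1$; the hypothesis $4 \le \dim\langle C\rangle$ only serves to guarantee that $C'$ is not itself a line. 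So your plan for (d) is not merely unexecuted (you flag the key step as still to be done) -- it aims at a statement that would contradict the lemma's intended application, and this part needs to be redone along the lines just described.
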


\begin{proof}
Observe that $\pi_{\mathbb{L}_h}$ coincides with the restriction $\pi'_{\mathbb{L}_h}\upharpoonright_{X \setminus (X\cap \mathbb{L}_h)}$ of
the linear projection map $\pi'_{\mathbb{L}_h}: \mathbb{P}^r \setminus \mathbb{L}_h \twoheadrightarrow \mathbb{P}^{r-2}$ to
$X \setminus (X \cap \mathbb{L}_h)$. Now, in the notations of Theorem~\ref{3.2' Theorem} (c), we have $\mathbb{L}_h = S(0,0) \subset S(0,0,b_h,r-3-b_h) = W_h =
{\rm Join}(\mathbb{L}_h,X)$, so that
$$Z_h = \overline{\pi'_{\mathbb{L}_h}(W_h \setminus \mathbb{L}_h)}  = S(b_h,r-b_h-3) \subset \mathbb{P}^{r-2}.$$
As $X$ is a union of lines, the same is true for the (irreducible non-degenerate) closed subset $Z_h \subset \mathbb{P}^{r-2}$. In particular we must have $\dim Z_h \geq 2$.
Now, let $p \in Z_h$ be a general point. Then $\pi^{-1}_{\mathbb{L}_h}(p)$ is a finite non-empty set and $\mathbb{M} := \overline{(\pi'_{\mathbb{L}_h})^{-1}(p)} =
\mathbb{P}^2 \subset \mathbb{P}^r$ is a plane which contains $\mathbb{L}_h$. As $X \cap \mathbb{L}_h \subset {\rm Reg}(X)$ (see Proposition~\ref{4.3'' Lemma} (a), as
$X \setminus {\Reg}(X)$ is finite and because $p \in Z_h$ is general, we may assume that $X \cap \mathbb{M} \subset {\rm Reg}(X)$. By Theorem~\ref{3.2' Theorem} (a)(5)
we thus obtain
$$\#(\pi^{-1}_{\mathbb{L}_h}(p)) + \#(X \cap \mathbb{L}_h) = \#(\pi^{-1}_{\mathbb{L}_h}(p) \cup \big(X\cap \mathbb{L}_h)\big) = \#(X \cap \mathbb{M}) \leq d-r+4.$$
As $\#(X \cap \mathbb{L}_h) = d-r+3$, it follows that $\#(\pi^{-1}(p)) \leq 1$. So $\pi_{\mathbb{L}_h}:X \rightarrow Z_h$ is indeed birational.

Assume from now on, that $X$ is non-conic.
\smallskip

\noindent (a): This follows by Notation and Remark~\ref{4.6''
Notation and Remark} (C) and the fact that $X \cap \mathbb{L}_h
\subset \Reg(X)$ (see Proposition~\ref{4.3'' Lemma} (a)).
\smallskip

\noindent (b): Let $q \in \mathbb{L}$ be general. Then
$\pi^{-1}_{\mathbb{L}_h}(q)$ consists of a single point $ p \in X$,
which indeed belongs to $\Reg(X)$. Now, we may again conclude by
Notation and Remark~\ref{4.6'' Notation and Remark} (C).
\smallskip

\noindent (c): Let $t \in U_h = T_h \cap {\rm Reg}(Z_h) =
\Sing(\pi_{\mathbb{L}_h}) \cap \pi_{\mathbb{L}_h}(X\setminus (X \cap
\mathbb{L}_h)) \cap {\rm Reg}(Z_h)$. As  $t$ is a regular point of
$Z_h$ and the morphism $\pi_{\mathbb{L}_h}: X \rightarrow Z_h$ is
birational, it follows that $\pi^{-1}_{\mathbb{L}_h}(t) \subset X$
has pure dimension $1$. As $X$ is a surface, $U_h$ must be finite.
As $Z_h$ has at most one singularity, this proves our claim.
\smallskip

\noindent (d): Observe, that $C' \subset Z_h$ is an integral closed
subscheme of dimension $1$ but not a line. So, it suffices to show,
that $\#(C' \cap \mathbb{L}) \leq 1$ for a general ruling line
$\mathbb{L}$ of a fixed ruling family of $Z_h$ . Let $z \in Z_h$ be
general. We consider the line
$\mathbb{L}:=\pi_{\mathbb{L}_h}(\mathbb{L}_{\Lambda}(x_z))$ of
statement (b). Assume first, that $Z_h \neq S(1,1)$ , so that $Z_h$
admits only one family of ruling lines.  As $z$ is general in $Z_h$,
the line $\mathbb{L}$ is the unique ruling line of $Z_h$ passing
through $z$. By statement (c) we may assume that $\mathbb{L}$ avoids
the set $\Sing(\pi_{\mathbb{L}_h}) \subset Z_h$, so that
$\pi^{-1}_{\mathbb{L}_h}(\mathbb{L}) = \mathbb{L}_{\Lambda}(x_z)$.
Hence by Notation and Remark~\ref{4.6'' Notation and Remark} (C) we
get $\#\pi^{-1}_{\mathbb{L}_h}(C' \cap \mathbb{L}) = \#(C \cap
\mathbb{L}_{\Lambda}(x_z)) \leq 1$.

If $Z_h = S(1,1)$, one of the two ruling families of $Z_h$ contains
the line $\mathbb{L}:=\pi_{\mathbb{L}_h}(\mathbb{L}_{\Lambda}(x_z))$
for general $z \in Z_h$. Now we may conclude as above.
\end{proof}

\begin{proposition}\label{4.9'' Theorem}
Let $5 \leq r < d$ and let $X \subset \mathbb{P}^r$ be a surface of degree $d$ and of maximal sectional regularity. Let the
notations be as in Theorem~\ref{3.2' Theorem} (c).
\begin{itemize}
\item[\rm{(a)}] If $b_h = 0$ for some $h \in \mathbb{U}(X)$, then it holds
\begin{itemize}
\item[\rm{(1)}] $b_{h'} = 0$ and $\mathbb{F}(X) = S(0,0,0) \subset S(0,0,0,r-3) = W_{h'}$ for all $h' \in \mathbb{U}(X)$;
\item[\rm{(2)}] $\mathbb{F}(X) = \mathbb{P}^2$.
\end{itemize}
\item[\rm{(b)}] If $b_h > 0$ for some $h \in \mathbb{U}(X)$, then it holds
\begin{itemize}
\item[\rm{(1)}] $r = 5$ and, in addition, for all $h' \in \mathbb{U}(X)$ we have $b_{h'} = 1$ and $\mathbb{L}_{h'}$ is either equal or disjoint to $\mathbb{L}_h$;
\item[\rm{(2)}] $X \subset \mathbb{F}(X)$ and $\dim(\mathbb{F}(X)) = 3$.
\end{itemize}
\end{itemize}
\end{proposition}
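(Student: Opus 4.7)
The plan is to split on whether $b_h = 0$ or $b_h > 0$, and in each case exploit the explicit structure of the four-fold scroll $W_h = S(0,0,b_h,r-3-b_h)$ from Theorem~\ref{3.2' Theorem}~(c), together with Lemma~\ref{4.7'' Lemma} (identification of the vertex line of a threefold scroll $S(0,0,r-3)$ containing a maximal-regularity curve), Lemma~\ref{4.8'' Lemma} (the birational projection from $\mathbb{L}_h$), Proposition~\ref{4.5'' Proposition} (the planarity criterion for $\mathbb{F}(X)$), and the secant-line bound Theorem~\ref{3.2' Theorem}~(a)(5).

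\emph{Case (a): $b_h = 0$.} Here $W_h$ is a cone over the rational normal curve $S(r-3) \subset \mathbb{P}^{r-3}$ with vertex a plane $V_h \supset \mathbb{L}_h$. For a generic $h' \in \mathbb{U}(X)$, the section $W_h \cap \mathbb{H}_{h'}$ is a threefold scroll of type $S(0,0,r-3) \subset \mathbb{H}_{h'} \cong \mathbb{P}^{r-1}$ with vertex line $V_h \cap \mathbb{H}_{h'}$, and it contains the maximal-regularity curve $C_{h'}$. By Lemma~\ref{4.7'' Lemma} its vertex line coincides with the unique extremal secant line $\mathbb{L}_{h'}$, so $\mathbb{L}_{h'} = V_h \cap \mathbb{H}_{h'} \subset V_h$. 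Lemma~\ref{4.4'' Lemma}~(c) then yields $\mathbb{F}(X) = V_h = \mathbb{P}^2$, proving (a)(2). For (a)(1), the plane $\mathbb{F}(X)$ sits inside every $W_{h'}$ (a generic line in $\mathbb{F}(X)$ through a given point meets both $X \cap \mathbb{F}(X)$ and $\mathbb{L}_{h'} \subset \mathbb{F}(X)$); if $b_{h'} > 0$, the projection $\pi_{\mathbb{L}_{h'}}$ of Lemma~\ref{4.8'' Lemma} would collapse $\mathbb{F}(X) \setminus \mathbb{L}_{h'}$ to a single point of the smooth base scroll $Z_{h'}$, forcing $X \cap \mathbb{F}(X)$ to be finite, contradicting that it is a curve of degree $d-r+3 \geq 4$. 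Hence $b_{h'} = 0$ and $V_{h'} = \mathbb{F}(X)$.

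\emph{Case (b): $b_h > 0$.} The vertex of $W_h$ is now just the line $\mathbb{L}_h$. By the same projection argument as above, $\mathbb{F}(X)$ cannot be a plane: otherwise $\mathbb{F}(X) \subset W_h$ would project via $\pi_{\mathbb{L}_h}$ to a single point of the smooth base $Z_h$, forcing $X \cap \mathbb{F}(X)$ to be finite. Thus Proposition~\ref{4.5'' Proposition} gives $\dim \mathbb{F}(X) \geq 3$, and the upper bound $\dim \mathbb{F}(X) \leq 3$ follows from the identity ${}^*\mathfrak{d}(X) = 2t-2 = 2$ in Theorem~\ref{2.3'' Theorem} (a two-parameter family of lines sweeps out at most a threefold). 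The inclusion $X \subset \mathbb{F}(X)$ follows since the family $\{\mathbb{L}_{h'}\}_{h' \in \mathbb{U}(X)}$ covers a dense subset of $X$, each line contributing $d-r+3$ points. This gives (b)(2). For $r = 5$: Proposition~\ref{4.5'' Proposition} together with $\dim \mathbb{F}(X) = 3$ produces $h_1, h_2, h_3 \in \mathbb{U}(X)$ whose special extremal secants are pairwise skew. These span a $\mathbb{P}^5 \subseteq \mathbb{P}^r$ containing $3(d-r+3)$ smooth points of $X$; if $r \geq 6$ and $X \cap \mathbb{P}^5$ is finite, Theorem~\ref{3.2' Theorem}~(a)(5) with $s=5$ yields $3(d-r+3) \leq d-r+7$, i.e., $d \leq r-1$, contradicting $d > r$. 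So $r = 5$ and $b_h = 1$ since $0 < b_h \leq (r-3)/2 = 1$. Finally, (b)(1) is completed by symmetrically applying the dichotomy to each $h' \in \mathbb{U}(X)$ (ruling out $b_{h'} = 0$ via case (a), which would force $\mathbb{F}(X)$ to be a plane), and by applying Theorem~\ref{3.2' Theorem}~(a)(5) with $s = 2$ together with Lemma~\ref{4.4'' Lemma}~(b) to exclude single-point intersections between $\mathbb{L}_h$ and $\mathbb{L}_{h'}$.

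The main obstacle will be the $r = 5$ step: the secant bound Theorem~\ref{3.2' Theorem}~(a)(5) requires $X \cap \mathbb{P}^5$ to be finite for the specific $\mathbb{P}^5$ spanned by three skew special extremal secants. This finiteness must be verified via a separate geometric argument using the non-degeneracy of $X$ in $\mathbb{P}^r$ and the structure of $\mathbb{F}(X)$ as a line-swept threefold, since the expected-dimension count alone does not exclude a curve component of $X \cap \mathbb{P}^5$ when $r = 6$.
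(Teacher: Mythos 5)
Your treatment of (a)(2) is essentially the paper's: cut $W_h=S(0,0,0,r-3)$ with $\mathbb{H}_{h'}$, apply Lemma~\ref{4.7'' Lemma} to identify $\mathbb{L}_{h'}$ with the vertex line, and conclude with Lemma~\ref{4.4'' Lemma}~(c). The difficulty lies in everything that depends on the claim ``$b_h>0$ rules out planarity of $\mathbb{F}(X)$'', and there your argument has a genuine gap. You argue that if $\mathbb{F}(X)$ were a plane containing $\mathbb{L}_{h}$, then $\pi_{\mathbb{L}_{h}}$ would collapse $\mathbb{F}(X)\setminus\mathbb{L}_{h}$ to a point of $Z_{h}$, ``forcing $X\cap\mathbb{F}(X)$ to be finite.'' That inference does not hold: $\pi_{\mathbb{L}_{h}}\upharpoonright X\setminus(X\cap\mathbb{L}_{h})$ is birational onto $Z_{h}$ but is neither finite nor proper, and a birational map onto a smooth surface can perfectly well contract a curve to a point; indeed the proof of Lemma~\ref{4.8'' Lemma}~(c) explicitly allows one-dimensional fibres over the finitely many points of $T_h$. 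So no contradiction is reached, and with it fall your proof of (a)(1), your proof that $\mathbb{F}(X)$ is not a plane in case (b), and hence the lower bound $\dim\mathbb{F}(X)\geq 3$.

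The paper's engine at this point is different and is the step you would need to supply: using Lemma~\ref{4.8'' Lemma}~(d), the map $\pi_{\mathbb{L}_h}$ induces an \emph{isomorphism} $\varphi:C_{h'}\stackrel{\cong}{\rightarrow} C'$ onto a curve section of $Z_h$, and if $\mathbb{L}_{h'}$ met $\mathbb{L}_h$ (necessarily at the vertex $q$ of $V_{h,h'}$) then $\varphi$ would have a fibre of cardinality $d-r+3>1$ over $\pi'_{\mathbb{L}_h}(\mathbb{L}_{h'}\setminus\{q\})$. This pairwise disjointness of the special extremal secant lines is what yields non-planarity via Proposition~\ref{4.5'' Proposition}, and it is also what makes the incidence projection $T\rightarrow X\cap\mathbb{F}(X)$ injective, which is how the paper obtains $\dim\mathbb{F}(X)\leq 1+\dim(X\cap\mathbb{F}(X))$ and hence $X\subset\mathbb{F}(X)$; your ``the lines cover a dense subset of $X$'' is likewise unsupported without it (a priori all the lines could meet $X$ inside a fixed curve). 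Two further soft spots: three pairwise skew lines need not span a $\mathbb{P}^5$ (they may span only a $\mathbb{P}^3$ or $\mathbb{P}^4$), and, as you note yourself, the finiteness of $X\cap\mathbb{P}^5$ needed for Theorem~\ref{3.2' Theorem}~(a)(5) is unverified. The paper avoids both issues by first showing $b_h=1$ (the image $\pi'_{\mathbb{L}_h}(\mathbb{L}_{h'})$ is a line section of $Z_h$) and then observing that for $r\geq 6$ this would force every $\mathbb{L}_{h'}$, hence $\mathbb{F}(X)$, hence $X$, into the $\mathbb{P}^3=S(0,0,1)\subset W_h$ --- contradicting non-degeneracy.
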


\begin{proof}
Fix some $h \in \mathbb{U}(X)$ and consider the non-empty open subset of $\mathbb{U}(X)$ defined by
$$U:= \{h' \in \mathbb{U}(X) \mid \mathbb{L}_{h'} \nsubseteq \mathbb{H}_h\} = \{h' \in \mathbb{U}(X) \mid \mathbb{L}_{h'} \neq \mathbb{L}_h\}.$$
As in Theorem~\ref{3.2' Theorem} (c), let $W_h := {\rm Join}(\mathbb{L}_h, X) = S(0,0,b_h,r-3-b_h) \subset \mathbb{P}^r$. Then, for any $h' \in U$ we have
$\#(W_h \cap \mathbb{L}_{h'}) \geq \#(X \cap \mathbb{L}_{h'}) \geq d-r+3 > 2$, thus $\mathbb{L}_{h'} \subset W_h$ and hence
$$C_{h'} \cup \mathbb{L}_{h'} \subset \mathbb{H}_{h'} \cap W_h.$$
Keep in mind, that
$$V_{h,h'} := \mathbb{H}_{h'} \cap W_h \subset \mathbb{H}_{h'} = \mathbb{P}^{r-1}$$
is a threefold scroll of type $S(0,b_h,r-3-b_h)$. Observe that the vertex $q$ of $V_{h,h'}$ is the intersection of $\mathbb{L}_h$ with $\mathbb{H}_{h'}$. Observe also
that the restriction of the linear projection $\pi'_{\mathbb{L}_H}: \mathbb{P}^r\setminus \mathbb{L}_h \twoheadrightarrow \mathbb{P}^{r-2}$ to $\mathbb{H}_{h'} \setminus
\{q\}$ yields the linear projection $\pi'_q$ centered at q, thus:
$$\mathbb{L}_h \cap \mathbb{H}_{h'} = \{q\} \quad \mbox{ and } \quad \pi'_{\mathbb{L}_h}\upharpoonright_{\mathbb{H}_{h'}\setminus\{q\}}=\pi'_q: \mathbb{H}_{h'}
\setminus\{q\}\twoheadrightarrow \mathbb{P}^{r-2}.$$

Suppose first, that $b_h = 0$ and let $\mathbb{L}$ be the line
$S(0,0) \subset V_{h,h'} = S(0,0,r-3)$. By Lemma~\ref{4.7'' Lemma}
it follows that $\mathbb{L} = \mathbb{L}_{h'}$ and hence that
$\mathbb{L}_{h'} \subset S(0,0,0) \subset S(0,0,0,r-3-b_h) = W_h$.
Now, Lemma~\ref{4.4'' Lemma} (c) implies that $\mathbb{F}(X) =
S(0,0,0) = \mathbb{P}^2$.

Suppose now, that $b_h > 0$ and let $h' \in U$. According to
Lemma~\ref{4.8'' Lemma} (d), the curve $C' :=
\overline{\pi_{\mathbb{L}_h}(C_{h'} \setminus (C_{h'} \cap
{\mathbb{L}_h}))}$ is a curve section of $Z_h = S(b_h,r-3-b_h)$,
with $\langle C' \rangle = \pi'_{\mathbb{L}_h}(\mathbb{H}_{h'}) =
\mathbb{P}^{r-2}$. Moreover by Lemma~\ref{4.8'' Lemma} (c), the set
$C' \cap \Sing(\pi_{\mathbb{L}_h})$ is finite, so that the induced
morphism $\pi_{\mathbb{L}_h}\upharpoonright: C_{h'} \setminus
(C_{h'} \cap \mathbb{L}_h) \rightarrow C'$ is birational. As $C'$ is
smooth and rational, this latter morphism extends to a unique
isomorphism
$$\varphi: C_{h'} \stackrel{\cong}{\rightarrow} C'.$$
We aim to show first, that the two distinct lines $\mathbb{L}_h$ and
$\mathbb{L}_{h'}$ are disjoint. Assume that $\mathbb{L}_h$ and
$\mathbb{L}_{h'}$ are not disjoint. Then they must meet in the
vertex $q$ of the scroll $V_{h,h'} = S(0,b_h,r-3-b_h)$. Hence, the
point $p = \pi'_{\mathbb{L}_h}(\mathbb{L}_{h'}\setminus\{q\}) \in
Z_h$ satisfies $p \in C'$ and $\#\varphi^{-1}(p) =
\#(\mathbb{L}_{h'}\cap C_{h'}) = d-r+3 > 1$, a contradiction. This
proves the stated disjointness of the extremal secant lines
$\mathbb{L}_{h'}$ and $\mathbb{L}_{h}$.

By Proposition~\ref{4.5'' Proposition} it now follows, that
$\mathbb{F}(X)$ is not a plane if $b_h > 0$. Applying the previous
arguments to all $h' \in \mathbb{U}(X)$ instead of $h$, we see that
either $b_{h'} = 0$ for all $h' \in \mathbb{U}(X)$ or $b_{h'} > 0$
for all $h' \in \mathbb{U}(X)$. This observation completes in
particular the proof of statement (a).

It remains to complete the proof of statement (b). We first aim to
show, that
$$\dim(\mathbb{F}(X)) \leq 1 + \dim(X \cap \mathbb{F}(X)).$$
To do so, we consider the coincidence set
$$Y: =\{(x,\mathbb{L}_h) \mid h \in \mathbb{U}(X) \mbox{ and } x \in \mathbb{L}_h\} \subset \mathbb{P}^r \times \mathbb{G}(1,\mathbb{P}^r)$$
-- which is locally closed in $\mathbb{G}(1,\mathbb{P}^r)$ by Proposition~\ref{4.3'' Lemma} (b) -- and its locally closed subset
$$T := Y \cap \big (X \times \mathbb{G}(1,\mathbb{P}^r)\big) = \{(x,\mathbb{L}) \in Y \mid x \in X \} \subset \mathbb{P}^r \times \mathbb{G}(1,\mathbb{P}^r).$$
Now, the projection morphism $\varrho: \mathbb{P}^r \times \mathbb{G}(1,\mathbb{P}^r) \twoheadrightarrow \mathbb{P}^r$
maps $T$ to $X \cap \mathbb{F}(U)$ and hence induces a morphism $\varrho\upharpoonright: T \rightarrow X \cap \mathbb{F}(X)$. As any two distinct lines $\mathbb{L}_h$ and
$\mathbb{L}_{h'}$ with $h,h' \in \mathbb{U}(X)$ are disjoint, the map $\varrho\upharpoonright$ is injective, so that
$$\dim(T) \leq \dim(X \cap \mathbb{F}(X)).$$
Moreover we have $\varrho(Y) =\bigcup_{h \in \mathbb{U}(X)} \mathbb{L}_h$, so that $\mathbb{F}(X) = \overline{\varrho(Y)}$ and hence
$$\dim(\mathbb{F}(X)) \leq \dim(Y).$$
The projection morphism $\sigma: \mathbb{P}^r \times \mathbb{G}(1,\mathbb{P}^r) \twoheadrightarrow \mathbb{G}(1,\mathbb{P}^r)$ satisfies $\sigma(Y) = \sigma(T) =
\{\mathbb{L}_h \mid h \in \mathbb{U}(X)\}$. Moreover for each $h \in \mathbb{U}(X)$ we have $\sigma^{-1}(\mathbb{L}_h) \cap Y =
\{(x,\mathbb{L}_h) \mid x \in \mathbb{L}_h\} \cong \mathbb{P}^1$. This shows, that
$$\dim(Y) \leq \dim\big(\overline{\sigma(T)}\big) + 1 \leq \dim(T) + 1,$$
so that indeed $\dim(\mathbb{F}(X)) \leq \dim(Y) \leq \dim(T) + 1
\leq \dim(X \cap \mathbb{F}(X)) + 1$.

As $\mathbb{F}(X) \neq \mathbb{P}^{2}$ we have $\dim(\mathbb{F}(X))
\geq 3$ (see Proposition~\ref{4.5'' Proposition} and Definition and
Remark~ \ref{4.2' Definition and Remark} (B)). It follows that $X
\subset \mathbb{F}(X)$ and $\dim(\mathbb{F}(X)) = 3$. This proves
claim (2) of statement (b).

It remains to complete the prove of claim (1) of statement (b).
Observe that the line $\mathbb{M}_{h'} :=
\pi'_{\mathbb{L}_h}(\mathbb{L}_{h'}) \subset \mathbb{P}^{r-2}$
satisfies $\varphi(C_{h'} \cap \mathbb{L}_{h'}) =
\pi'_{\mathbb{L}_{h}}(C_{h'} \cap \mathbb{L}_{h'}) \subset Z_h \cap
\mathbb{M}_{h'}$. As $\varphi$ is an isomorphism, we have $\# (C'
\cap \mathbb{M}_{h'}) = \#\varphi(C_{h'} \cap \mathbb{L}_{h'}) =
\#(C_{h'} \cap \mathbb{L}_{h'}) = d-r+3 > 2$, and hence
$\mathbb{M}_{h'} \subset Z_h$.

As $\mathbb{L}_{h'} \cap \mathbb{L}_h = \emptyset$, the line
$\mathbb{L}_{h'} \subset V_{h,h'}$ avoids the vertex $q$ of
$V_{h.h'}$ and hence is not contained in any of the ruling planes of
$V_{h,h'}$. As the ruling lines of the surface scroll $Z_h$ are
precisely the images of the ruling planes of $V_{h,h'}$ under the
linear projection map $\pi'_q: \mathbb{H}_{h'}\setminus \{q\}
\twoheadrightarrow \mathbb{P}^{r-2}$ centered at $q$, it follows
that $\mathbb{M}_{h'} = \pi'_{\mathbb{L}_h}(\mathbb{L}_{h'}) =
\pi'_q(\mathbb{L}_{h'}) \subset Z_h$ is not a ruling line of $Z_h$.
So $\mathbb{M}_{h'}$ must be a line section of $Z_h$, and hence $b_h
= 1$.

Our next aim is to show that $r=5$. Assume to the contrary, that $r
\geq 6$. Then $\mathbb{L}_{h'} \subset V_{h,h'} = S(0,1,r-4) \subset
S(0,0,1,r-4) = W_h$ shows that $\mathbb{L}_{h'} \subset S(0,0,1) =
\mathbb{P}^3$ for all $h' \in \mathbb{U}(X)$, so that $\mathbb{F}(X)
\subset \mathbb{P}^3$. But now, by claim (b)(2) we get that $X
\subset \mathbb{P}^3$, and this contradiction shows, that indeed
$r=5$.

Applying this to arbitrary $h' \in \mathbb{U}(X)$ instead of $h$, we
get in particular that $b_{h'} = 1$ for all $h \in \mathbb{U}(X)$,
and claim (1) of statement (b) is shown completely.
\end{proof}

We may summarize the previous result as follows:

\begin{theorem}\label{4.10''' Theorem}
Let $5 \leq r < d$ and let $X \subset \mathbb{P}^r$ be a surface of degree $d$ which is of maximal sectional regularity. Then, in the notations of Theorem~\ref{3.2' Theorem}
(c) we have
\begin{itemize}
\item[\rm{(a)}] $\mathbb{F}(X)$ is a plane if and only if $b_h = 0$ for all $h \in \mathbb{U}(X)$, and this is always the case if $r \geq 6$.
\item[\rm{(b)}] $\mathbb{F}(X)$ is not a plane if and only if $r=5$ and $b_h = 1$ for all $h \in \mathbb{U}(X)$.
\end{itemize}
\end{theorem}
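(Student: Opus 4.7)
The plan is to observe that Theorem~\ref{4.10''' Theorem} is a compact reformulation of Proposition~\ref{4.9'' Theorem}, so the proof amounts to careful unwinding of the latter, with no new geometric input required.

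First I would establish the following dichotomy from Proposition~\ref{4.9'' Theorem}: either $b_h = 0$ for every $h \in \mathbb{U}(X)$, or else $r = 5$ and $b_h = 1$ for every $h \in \mathbb{U}(X)$. Indeed, if some $h \in \mathbb{U}(X)$ satisfies $b_h = 0$, then part (a)(1) of Proposition~\ref{4.9'' Theorem} forces $b_{h'} = 0$ for all $h' \in \mathbb{U}(X)$. Otherwise every $b_h$ is positive, and applying part (b)(1) at any particular $h \in \mathbb{U}(X)$ yields $r = 5$ and $b_{h'} = 1$ for all $h' \in \mathbb{U}(X)$.

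For statement (a), the implication ``if all $b_h = 0$, then $\mathbb{F}(X)$ is a plane'' is exactly part (a)(2) of Proposition~\ref{4.9'' Theorem}. The converse follows by contraposition from part (b)(2) of the same proposition: if some $b_h > 0$, then $\dim \mathbb{F}(X) = 3$, contradicting planarity of $\mathbb{F}(X)$. The assertion ``always the case if $r \geq 6$'' then follows from the dichotomy above, since the alternative with $b_h > 0$ requires $r = 5$.

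Statement (b) is the logical complement of statement (a): combining the dichotomy with the equivalence established for (a), we have $\mathbb{F}(X)$ not a plane if and only if some (equivalently every) $b_h > 0$, which by the dichotomy happens precisely when $r = 5$ and $b_h = 1$ for all $h \in \mathbb{U}(X)$. Since the whole argument is pure bookkeeping on top of Proposition~\ref{4.9'' Theorem}, I do not anticipate any serious obstacle; the only care needed is to make sure the dichotomy is drawn cleanly from parts (a)(1) and (b)(1) of the referenced proposition before invoking the planarity/dimension statements (a)(2) and (b)(2).
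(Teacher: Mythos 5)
Your proposal is correct and follows exactly the paper's route: the paper's proof of this theorem consists of the single line ``This is immediate by Proposition~\ref{4.9'' Theorem},'' and your unwinding of parts (a)(1), (a)(2), (b)(1), (b)(2) of that proposition into the stated dichotomy is precisely the bookkeeping being left implicit.
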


\begin{proof}
This is immediate by Proposition~\ref{4.9'' Theorem}.
\end{proof}

We now aim to describe the extremal variety $\mathbb{F}(X)$ of a surface $X \subset \mathbb{P}^5$ of maximal sectional regularity in case it is not a plane. We begin
with some preparations.

\begin{notation and reminder} \label{4.x Notation and Reminder}
(A) Let $s > 2$ and let $Y \subset \mathbb{P}^s$ be a smooth
rational normal surface scroll with projection morphism $\varphi:Y
\twoheadrightarrow \mathbb{P}^1$. Let $F := \varphi^{-1}(x) =
\mathbb{P}^1 \in {\rm Div}(Y)$ denote a fibre and $H := Y \cap
\mathbb{P}^{s-1} \in {\rm Div}(Y)$ a general hyperplane section, and
let $\overline{F}, \overline{H} \in {\rm Cl}(Y)$ be the
corresponding divisor classes. As $H \subset Y$ is a section of
$\varphi$ we have ${\rm Cl}(Y) = \mathbb{Z}\overline{H}
\oplus\mathbb{Z}\overline{F}$, so that any divisor $D \in {\rm
Div}(Y)$ is linearly equivalent to a divisor of the form $aH + bF$,
with uniquely determined integers $a, b \in \mathbb{Z}$ (see
\cite[Proposition V.2.3]{Ha}).

\noindent (B) Let the notations be as in part (A) and keep in mind
that $H\cdot H = \deg(Y)$, $H \cdot F = 1$ and $F \cdot F = 0$. Let
$a, b \in \mathbb{Z}$ and let $D \in |aH + bF|$. If follows that
$\deg(D) = D \cdot H = a\deg(Y) + b$. As $D$ is effective, it is
linearly equivalent to a divisor of the form $\sum_{i=1}^t n_iC_i +
cF$ with pairwise distinct prime divisors $C_1,\ldots,C_t$ which are
not fibers, with $n_1,\ldots,n_t > 0$, $c \geq 0$ and with $t \geq
0$. If follows, that
$$a = (aH + bF)\cdot F = D\cdot F = (\sum_{i=1}^tn_iC_i + cF)\cdot F = \sum_{i=1}^t n_i C_i \cdot F \geq \sum_{i=1}^t n_i,$$
with equality at the last place if and only if $C_1,\ldots,C_t \subset Y$ are sections.
\end{notation and reminder}

\begin{lemma} \label{4.10'' Lemma}
Let $H$ and $F$ be respectively a general hyperplane section and a ruling line of the rational normal surface scroll $Y := S(1,2) \subset
\mathbb{P}^4$. Let $C \subset \mathbb{P}^4$ be a non-degenerate integral curve of degree $d > 5$ which is of maximal regularity. If $C \subset S(1,2)$, then $C$ is linearly
equivalent to the divisor $H + (d-3)F$ and the line $S(1) \subset S(1,2)$ is the extremal secant line to $C$.
\end{lemma}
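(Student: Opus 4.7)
The plan is to combine the class group description of $Y = S(1,2)$ with the uniqueness of the extremal secant line of a curve of maximal regularity in $\mathbb{P}^4$. Write $C \sim aH + bF$ with $a,b \in \mathbb{Z}$. Since $C$ is integral of degree $d > 1$, it is not a ruling, so $a = C \cdot F \geq 1$. The degree gives
\[
d = \deg(C) = C \cdot H = aH^2 + bF\cdot H = 3a+b.
\]
Moreover, the minimal section $C_0 := S(1) \subset S(1,2)$ is the unique line in the class $\overline{H} - 2\overline{F}$: indeed $(\overline{H}-2\overline{F}) \cdot \overline{H} = 3-2 = 1$ and $(\overline{H}-2\overline{F})^2 = 3 - 4 = -1$, identifying it with the $(-1)$-section. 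Since $C \neq C_0$ (different degrees), $C \cdot C_0 = a + b \geq 0$.

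Next, I would invoke Remark and Definition~\ref{2.3'' Remark and Definition} (B)(b),(c) to get the unique extremal $(d-2)$-secant line $\mathbb{L}$ of $C$. The first key step is to show $\mathbb{L} \subset Y$: if not, then $\mathbb{L}$ is a line not lying in the cubic surface $Y$, so
\[
d - 2 = \#(\mathbb{L} \cap C) \leq \#(\mathbb{L} \cap Y) \leq \deg(Y) = 3,
\]
contradicting $d > 5$. Hence $\mathbb{L} \subset Y$, and the lines on the smooth scroll $S(1,2)$ are exactly the rulings and $C_0$.

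Finally, I would rule out that $\mathbb{L}$ is a ruling line. If $\mathbb{L} \sim F$, then $d - 2 = \#(\mathbb{L}\cap C) = C \cdot F = a$, so $a \geq d-2$; combined with $3a + b = d$ and $a + b \geq 0$ one gets $2a \leq d$, i.e.\ $a \leq d/2$, forcing $d-2 \leq d/2$ and $d \leq 4$, a contradiction. Therefore $\mathbb{L} = C_0 = S(1)$, which is the extremal secant line as claimed. Computing,
\[
d - 2 = C \cdot S(1) = (aH + bF)\cdot(H - 2F) = 3a - 2a + b = a + b.
\]
Together with $3a + b = d$ this forces $a = 1$ and $b = d-3$, so $C \sim H + (d-3)F$.

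The argument is almost mechanical once the class group generators and the identification $S(1) \sim H - 2F$ are in place; the only genuine obstacle is verifying that the extremal line lies on $Y$, which is handled by the elementary degree estimate above using the assumption $d > 5$.
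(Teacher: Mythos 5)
Your proof is correct, but it takes a genuinely different route from the paper's. The paper pins down the coefficient $a$ in $C \sim aH+bF$ cohomologically: since $Y$ is arithmetically Cohen--Macaulay, the sequence $0 \to \mathcal{J}_Y \to \mathcal{J}_C \to \mathcal{O}_Y(-C) \to 0$ gives $H^1(\mathbb{P}^4,\mathcal{J}_C(1)) \cong H^1(Y,\mathcal{O}_Y((1-a)H-bF))$, which vanishes for $a>1$; that would make $C$ linearly normal, contradicting the fact (\cite{BS2}, Proposition 2.7 a)) that a curve of maximal regularity of degree $>r+1$ is never linearly normal. The degree relation $d = 3+b$ then gives $b=d-3$, and the identification of $S(1)$ as the extremal secant line is a one-line intersection computation at the end. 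You instead start from the existence and uniqueness of the extremal secant line $\mathbb{L}$ (Remark and Definition~\ref{2.3'' Remark and Definition} (B)(b),(c), ultimately \cite{GruLPe}), force $\mathbb{L}\subset Y$ by a degree estimate, classify the lines on $S(1,2)$ as the rulings together with $S(1)\sim H-2F$, exclude the ruling case numerically, and then read off $a=1$, $b=d-3$ from the two intersection numbers $C\cdot H = 3a+b = d$ and $C\cdot S(1) = a+b = d-2$. Your route avoids sheaf cohomology and the linear-normality input entirely, at the cost of front-loading the secant-line geometry that the paper only needs at the very end; both arguments are sound. Two small points worth tightening: the bound $\#(\mathbb{L}\cap Y)\le 3$ for $\mathbb{L}\not\subset Y$ is cleanest via the fact that $Y$ is cut out by quadrics (so the length is even at most $2$), or via $\reg(Y)=2$, rather than via refined B\'ezout; and strictly the definition of $\Sigma^{\circ}_{d-2}$ only gives $\#(\mathbb{L}\cap C)\ge d-2$, but your inequalities still close up ($a+b\ge d-2$ together with $3a+b=d$ and $a\ge 1$ forces $a=1$), so nothing is lost.
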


\begin{proof}
Let $C$ be linearly equivalent to $aH + bF$. By Notation and Reminder~\ref{4.x Notation and Reminder} (B) we have $a \geq 1$. As the surface $Y$ is
arithmetically Cohen-Macaulay, we have
 $H^i(\mathbb{P}^4, \mathcal{J}_Y(1)) = 0$ for $i=1,2$ and so, the short exact sequence
$$0 \longrightarrow \mathcal{J}_Y \longrightarrow \mathcal{J}_C \longrightarrow \mathcal{O}_Y(-C) \longrightarrow 0$$
implies an isomorphism $H^1(\mathbb{P}^4, \mathcal{J}_C(1)) \cong
H^1\big(\mathbb{P}^4,\mathcal{O}_Y((1-a)H - bF)\big)$. If $a >1$, we
have $H^1\big(\mathbb{P}^4, \mathcal{O}_Y((1-a)H - bF )\big) = 0$,
and we get the contradiction that the curve of maximal regularity $C
\subset \mathbb{P}^4$ is linearly normal (see Proposition 2.7 a) of
\cite{BS2}). Therefore it holds $a=1$. As $d = \deg(C) = \deg(Y) + b
= 3+b$ (see Notation and Reminder~\ref{4.x Notation and Reminder}
(A)), we obtain $b = d-3$ and $C$ is linearly equivalent to $H +
(d-3)F$.

Moreover the line section $\mathbb{L} = S(1)$ of $Y = S(1,2)$
satisfies the condition
$$\#(C \cap \mathbb{L}) = C\cdot \mathbb{L} = (H +(d-3)F)\cdot \mathbb{L} = d-2,$$
and hence $\mathbb{L}$ is indeed the extremal secant line to $C$.
\end{proof}

\begin{notation and reminder}\label{4.16'' Notation and Reminder}
Let $n \in \mathbb{Z}$. We say, that the closed subscheme $Z \subset \mathbb{P}^r$ is $n$-\textit{normal} if
$H^1(\mathbb{P}^r, \mathcal{J}_Z(n)) = 0$ and we introduce the \textit{index of normality} of $Z$ as
$$N(Z) := \sup\{n \in \mathbb{Z} \mid Z \mbox{ is not $n$-normal} \} = \en(\bigoplus _{n \in \mathbb{Z}} H^1(\mathbb{P}^r,\mathcal{J}_Z(n)) = {\rm end}(H^1(S/I_Z)).$$
Keep in mind that $N(Z) \leq \reg(Z)-2$ and that $N(Z) = -\infty$ if $\depth Z > 1$. In particular, if $5 \leq r < d$ and $X \subset \mathbb{P}^r$ is a surface
of maximal sectional regularity and degree $d$, we have
$$N(X) \leq d-r+1.$$
\end{notation and reminder}
\smallskip

\begin{lemma}\label{4.10''' Lemma}
Let $5 \leq r < d$, let $Y \subset \mathbb{P}^r$ be an irreducible
surface of degree $d$ which is contained in the smooth threefold
scroll $Z:= S(1,a, r-a-3)$ with $1 \leq a \leq \frac{r-2}{2}$ as a
divisor linearly equivalent to $H+(d-r+2)F$. Then, the following
statements hold:
\begin{enumerate}
\item[\rm{(a)}]  $N(Y) = d-r+1$ and
$$h^1(\mathbb{P}^r, \mathcal{J}_Y(d-r+1))=\begin{cases}
1 & \mbox{if $a\geq 2$},\\
d-r & \mbox{if $a=1$ and $r \geq 6$, and}\\
\binom{d-3}{2} & \mbox{if $a =1$ and $r=5$ }.
\end{cases}\\$$
\item[\rm{(b)}] The minimal number of generators in degree $d-r+3$ of the homogeneous vanishing ideal $I_Y \subset S$ of $Y$ is given by
$$\beta_{1,d-r+2}(Y)=\begin{cases}
1 & \mbox{if $a\geq 2$},\\
d-r+3 & \mbox{if $a=1$ and $r \geq 6$, and}\\
\binom{d-1}{2} & \mbox{if $a=1$ and  $r=5$}.
\end{cases}$$
\end{enumerate}
\end{lemma}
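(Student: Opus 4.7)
The backbone of the argument will be the short exact sequence of sheaves on $\mathbb{P}^r$
\[
0 \longrightarrow \mathcal{J}_Z \longrightarrow \mathcal{J}_Y \longrightarrow \mathcal{O}_Z(-Y) \longrightarrow 0. \tag{$\ast$}
\]
Since $Z$ is a smooth scroll of minimal degree, it is arithmetically Cohen-Macaulay, so $H^i(\mathbb{P}^r, \mathcal{J}_Z(n)) = 0$ for $i = 1, 2$ and every $n \in \mathbb{Z}$. Twisting $(\ast)$ by $\mathcal{O}(n)$ and passing to cohomology will therefore yield the isomorphisms
\[
H^1(\mathbb{P}^r, \mathcal{J}_Y(n)) \;\cong\; H^1\bigl(Z,\, \mathcal{O}_Z((n-1)H - (d-r+2)F)\bigr),
\]
together with a parallel identification for $H^0$. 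Both computations in statements (a) and (b) are in this way pushed onto $Z$.

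The cohomology on $Z$ will be accessed via the structure map $\pi \colon Z = \mathbb{P}(\mathcal{E}) \to \mathbb{P}^1$, where $\mathcal{E} = \mathcal{O}_{\mathbb{P}^1}(1) \oplus \mathcal{O}_{\mathbb{P}^1}(a) \oplus \mathcal{O}_{\mathbb{P}^1}(r-a-3)$. For $k \geq 0$ the higher direct images of $\mathcal{O}_Z(kH + \ell F)$ vanish and $\pi_*\mathcal{O}_Z(kH + \ell F) = S^k(\mathcal{E})(\ell)$; hence the relevant cohomology on $Z$ reduces to a cohomology computation on $\mathbb{P}^1$ of a direct sum of line bundles $\mathcal{O}(i + ja + m(r-a-3) + \ell)$ indexed by triples $(i,j,m) \in \mathbb{N}_0^3$ with $i+j+m = k$.

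For statement (a) I set $k = n-1$ and $\ell = -(d-r+2)$. The minimum weighted degree in the decomposition is $k$, attained on the $\mathcal{O}(1)$ factor of $\mathcal{E}$; hence for $n \geq d-r+2$ every twisted summand has degree $\geq -1$ on $\mathbb{P}^1$, so $H^1$ vanishes and $N(Y) \leq d-r+1$. For $n = d-r+1$ the twisted summands of degree $-2$ are precisely those indexed by triples $(i,j,m)$ of weighted degree equal to $d-r$, and counting these in the three cases (first $a \geq 2$, then $a = 1$ with $r \geq 6$, then $a = 1$ with $r = 5$) produces the three multiplicities listed in (a), simultaneously establishing the equality $N(Y) = d-r+1$.

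For statement (b) I use the short exact sequence of graded $S$-modules $0 \to I_Z \to I_Y \to I_Y/I_Z \to 0$. The same vanishing $H^1(\mathcal{J}_Z(n)) = 0$ identifies $I_Y/I_Z$ with the section module $\bigoplus_n H^0(Z, \mathcal{O}_Z((n-1)H - (d-r+2)F))$. Because $I_Z$ is generated in degree $2$, no minimal generator of $I_Z$ lies in degree $d-r+3 \geq 3$, whence
\[
\beta_{1,d-r+2}(Y) = \dim_K\bigl((I_Y/I_Z)_{d-r+3} \,/\, S_1 \cdot (I_Y/I_Z)_{d-r+2}\bigr).
\]
Each graded piece on the right is again computed on $\mathbb{P}^1$ via the decomposition above, but now via $H^0$ instead of $H^1$. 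The main obstacle of the proof lies in this cokernel count: one has to detect which global sections in the higher degree are already obtained as products of a linear form with a section in degree $d-r+2$ and therefore do not count as minimal generators. When $a \geq 2$ only the single summand of minimal weighted degree survives, and exactly one new generator persists; when $a = 1$ the coexistence of many $\mathcal{O}(0)$ summands in degree $d-r+3$ with matching $\mathcal{O}(-1)$ summands in degree $d-r+2$ forces a finer bookkeeping; the extreme degeneration $r=5$ collapses the weighted-degree filtration entirely and produces $\binom{d-1}{2}$ generators. A case analysis parallel to that in part (a) then yields the three formulas in (b).
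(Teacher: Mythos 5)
Your part (a) is the paper's own argument, step for step: the ideal--sheaf sequence $0 \to \mathcal{J}_Z \to \mathcal{J}_Y \to \mathcal{O}_Z(-Y) \to 0$, arithmetic Cohen--Macaulayness of the scroll to kill $H^1(\mathcal{J}_Z(n))$ and $H^2(\mathcal{J}_Z(n))$, and the pushforward $\pi_*\mathcal{O}_Z((n-1)H+\ell F)=S^{n-1}\mathcal{E}\otimes\mathcal{O}_{\mathbb{P}^1}(\ell)$ reducing everything to line bundles on $\mathbb{P}^1$. One caution: you assert, without performing it, that the count of $\mathcal{O}_{\mathbb{P}^1}(-2)$-summands at $n=d-r+1$ ``produces the three multiplicities listed.'' If you actually carry it out for $a=1$, $b=r-4\ge 2$, the summands of degree $-2$ in $\bigoplus_{i+j+m=d-r}\mathcal{O}_{\mathbb{P}^1}(i+j+mb-(d-r+2))$ are exactly those with $m=0$, and there are $d-r+1$ of them, not $d-r$ (test case $S(1,1,2)\subset\mathbb{P}^6$, $d=7$: $\mathcal{E}(-3)=\mathcal{O}(-2)^{2}\oplus\mathcal{O}(-1)$ has $h^1=2$, while the stated value is $1$). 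Whether this reflects a slip in the lemma's middle entry or in the normalization of $a$, it shows the count is load-bearing and must be written out rather than asserted.

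For part (b) there is a genuine gap. Your reduction is sound as far as it goes: since $I_Z$ is generated by quadrics and $d-r+3\ge 4$, one has $(I_Z)_{d-r+3}=S_1(I_Z)_{d-r+2}$, hence $\beta_{1,d-r+2}(Y)=\dim\bigl((I_Y/I_Z)_{d-r+3}/S_1\cdot(I_Y/I_Z)_{d-r+2}\bigr)$, and $(I_Y/I_Z)_n\cong H^0\bigl(Z,\mathcal{O}_Z((n-1)H-(d-r+2)F)\bigr)$. But the cokernel of the multiplication map $S_1\otimes H^0\bigl(S^{d-r+1}\mathcal{E}(-(d-r+2))\bigr)\to H^0\bigl(S^{d-r+2}\mathcal{E}(-(d-r+2))\bigr)$ is the entire content of the statement, and it is not ``parallel to part (a)'': the $h^1$-count in (a) is a mechanical enumeration of summands of degree $\le -2$, whereas the image of a multiplication map is not determined by the splitting types alone --- you must track the $H^0(\mathcal{O}_Z(H))$-module structure of $\bigoplus_k H^0(S^k\mathcal{E}(\ell))$ and decide monomial by monomial which sections in the top degree are products. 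That computation is exactly what the paper outsources: its proof of (b) consists of the single line ``see \cite{P}.'' As written, your proposal establishes (a) up to the unverified count and only sets up, but does not prove, (b).
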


\begin{proof}
(a): We set $b := r-a-3$ and consider the short exact sequence
\begin{equation*}
0 \rightarrow  \mathcal{J}_Z  \rightarrow \mathcal{J}_Y \rightarrow \mathcal{O}_Z(-Y) \rightarrow 0.
\end{equation*}
Then, since $Z$ is arithmetically Cohen-Macaulay, we have
\begin{equation*}
H^1(\mathbb{P}^r, \mathcal{J}_Y(n)) \cong H^1(Z,\mathcal{O}_Z(-Y + nH)) \mbox{ for all } n \in \mathbb{Z}.
\end{equation*}
Setting $\mathcal{E}: =\mathcal{O}_{\mathbb{P}^1}(1) \oplus \mathcal{O}_{\mathbb{P}^1}(a)\oplus \mathcal{O}_{\mathbb{P}^1}(b)$ we may write
\begin{equation*}
\begin{array}{ll}
H^1(Z,\mathcal{O}_Z(-Y+nH )) &=H^1(Z, \mathcal{O}_Z((n-1)H-(d-r+2)F))\\
                           &=H^1(\mathbb{P}^1, {\rm sym}^{n-1}\mathcal{E} \otimes \mathcal{O}_{\mathbb{P}^1}(-d+r-2)) \\
                           &=\underset{0 \leq i,j, i+j \leq n-1}{\bigoplus}H^1(\mathbb{P}^1, \mathcal{O}_{\mathbb{P}^1}(i+ja+(n-i-j)b-d+r-2)).
\end{array}
\end{equation*}
Altogether, we obtain that $h^1(\mathbb{P}^r, \mathcal{J}_Y (n) = 0$ for all $n > d-r+1$ and
\begin{equation*} h^1(\mathbb{P}^r,\mathcal{J}_Y(d-r+1)) =
\begin{cases}
                              1 & \mbox{if $a\geq 2$},\\
                              d-r & \mbox{if $a=1$ and $b \geq 2$, and}\\
                             \binom{d-3}{2} & \mbox{if $a=b=1$}.
\end{cases}
\end{equation*}
This proves statement (a).
\smallskip

\noindent (b): For this statement, see \cite{P}.
\end{proof}

Now, we are ready to prove the following structure result on non-planar extremal varieties.

\begin{theorem}\label{4.11'' Theorem}
Let $5 \leq r < d$ and let $X \subset \mathbb{P}^r$ be a surface of degree $d$ and of maximal sectional regularity such that
$\mathbb{F}(X)$ is not a plane. Then we have:
\begin{itemize}
\item[\rm{(a)}] $r=5$ and $\mathbb{F}(X) = S(1,1,1)$.
\item[\rm{(b)}] $X$ is contained in $\mathbb{F}(X)$, smooth and linearly equivalent to the divisor $H + (d-3)F$, where $H$ is the hyperplane divisor and $F$ is
ruling plane of $\mathbb{F}(X) = S(1,1,1)$.
\item[\rm{(c)}] $N(X) = d-4$, $\e(X) = 0$ and moreover
                \begin{itemize}
                \item[\rm{(1)}] $h^1(\mathbb{P}^r,\mathcal{J}_X(d-4)) = \binom{d-3}{2}$;
                \item[\rm{(2)}] $h^2(\mathbb{P}^r,\mathcal{J}_X(n)) = 0$ for all $n \in \mathbb{Z}$;
                \item[\rm{(3)}] $h^3(\mathbb{P}^r,\mathcal{J}_X(n)) = 0$ for all $n \geq 0$.
                \end{itemize}
\item[\rm{(d)}] $\beta_{1,d-3}(X) = \binom{d-1}{2}$.
\item[\rm{(e)}] ${}^*\Sigma^{\circ}_{d-2}(X) = \Sigma^{\circ}_{d-2}(X) = \Sigma_3(X) \setminus \Sigma_{\infty}(X)$ and $\mathbb{F}^+(X) = \mathbb{F}(X)$. Moreover
                $\overline{{}^*\Sigma^{\circ}_{d-2}(X)} = \Sigma_3(X)$ and the image of this set under the Pl\"ucker embedding
                $\psi: \mathbb{G}(1,\mathbb{P}^5) \rightarrow \mathbb{P}^{14}$ is a Veronese surface in a subspace $\mathbb{P}^5 \subset \mathbb{P}^{14}$.
\end{itemize}
\end{theorem}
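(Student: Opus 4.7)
The plan is to ride on what has already been done. From Theorem~\ref{4.10''' Theorem} we already know $r=5$, and the proof of Proposition~\ref{4.9'' Theorem}(b) delivers $X \subset \mathbb{F}(X)$, $\dim \mathbb{F}(X) = 3$, the fact that the lines $\mathbb{L}_h$ for $h \in \mathbb{U}(X)$ form a $2$-parameter family of pairwise disjoint lines sweeping out $\mathbb{F}(X)$, and the containment $\mathbb{L}_{h'} \subset W_h = S(0,0,1,1)$ for all $h,h' \in \mathbb{U}(X)$. To finish part (a), I would compute $\deg \mathbb{F}(X) = 3$ via a generic hyperplane slice: for $h' \in \mathbb{U}(X)$ general, the surface $\mathbb{F}(X) \cap \mathbb{H}_{h'}$ contains $C_{h'} \cup \mathbb{L}_{h'}$, and Lemma~\ref{4.10'' Lemma} places $C_{h'}$ on a uniquely determined scroll $S(1,2) \subset \mathbb{P}^4$ with $\mathbb{L}_{h'}$ as directrix. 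The key missing step is the identification $\mathbb{F}(X)\cap \mathbb{H}_{h'} = S(1,2)$, which I would establish by noting that the $2$-parameter point set $\{\mathbb{L}_h \cap \mathbb{H}_{h'} : h \in \mathbb{U}(X)\setminus\{h'\}\}$ already sweeps out this $S(1,2)$. Given $\deg \mathbb{F}(X) = 3$, the $3$-fold $\mathbb{F}(X) \subset \mathbb{P}^5$ is of minimal degree and ruled by pairwise disjoint lines---hence is not a cone---which forces $\mathbb{F}(X) = S(1,1,1)$ by the classification of $3$-folds of minimal degree in $\mathbb{P}^5$.

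For part (b), I would pin down the divisor class of $X$ inside $\mathbb{F}(X)$. Writing $X \equiv aH + bF$ in ${\rm Cl}(\mathbb{F}(X)) = \mathbb{Z}\overline{H} \oplus \mathbb{Z}\overline{F}$, the degree relation $d = X \cdot H^2 = 3a + b$ combined with the restriction $X \cap \mathbb{H}_{h'} = C_{h'} \equiv H' + (d-3)F'$ on $\mathbb{F}(X) \cap \mathbb{H}_{h'} = S(1,2)$ (Lemma~\ref{4.10'' Lemma}) forces $a = 1$ and $b = d-3$. Smoothness of $X$ then follows because any Cartier divisor on the smooth $3$-fold $\mathbb{F}(X)$ is locally Cohen-Macaulay, while Theorem~\ref{3.2' Theorem}(a)(1) gives $\Reg(X) = \CM(X) = X$.

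Parts (c) and (d) reduce to direct applications of Lemma~\ref{4.10''' Lemma} with the data $r=5$, $a=1$, $Z = S(1,1,1)$ and $Y = X \in |H + (d-r+2)F|$: this at once delivers $N(X) = d-4$, $h^1(\mathbb{P}^5, \mathcal{J}_X(d-4)) = \binom{d-3}{2}$ and $\beta_{1,d-3}(X) = \binom{d-1}{2}$. The vanishing $\e(X) = 0$ is immediate from smoothness, since each $\mathcal{O}_{X,x}$ is then Cohen-Macaulay and hence $H^1_{\mathfrak{m}_{X,x}}(\mathcal{O}_{X,x}) = 0$. Combined with Theorem~\ref{3.2' Theorem}(a)(2), this gives $h^2(\mathbb{P}^5, \mathcal{J}_X(n)) = \e(X) = 0$ for all $n \leq 0$; the non-increasing behaviour for $n \geq 0$ together with the $(d-2)$-regularity of $X$ then forces the full vanishing for all $n \in \mathbb{Z}$. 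The vanishing of $h^3(\mathcal{J}_X(n))$ for $n \geq 0$ is already contained in Theorem~\ref{3.2' Theorem}(a)(2)(3).

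For part (e), the equalities ${}^*\Sigma^{\circ}_{d-2}(X) = \Sigma^{\circ}_{d-2}(X) = \Sigma_3(X) \setminus \Sigma_\infty(X)$, and hence $\mathbb{F}^+(X) = \mathbb{F}(X)$, follow from Proposition~\ref{4.3'' Lemma}(a) together with $\Reg(X) = X$ from (b). For the Pl\"ucker description, I would use the Segre presentation $\mathbb{F}(X) = S(1,1,1) = \mathbb{P}^1 \times \mathbb{P}^2 \subset \mathbb{P}(V \otimes W)$ with $V = K^2$ and $W = K^3$: the $\mathbb{L}_h$ are exactly the lines $\mathbb{P}^1 \times \{p\}$ for $p \in \mathbb{P}^2$, and their Pl\"ucker coordinates $\wedge^2(V \otimes \langle w \rangle) = (\wedge^2 V) \otimes w^{\otimes 2}$ realize the map $[w] \mapsto [w^{\otimes 2}]$, that is, the standard Veronese embedding $\mathbb{P}^2 \hookrightarrow \mathbb{P}({\rm Sym}^2 W) = \mathbb{P}^5 \subset \mathbb{P}^{14}$. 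The principal obstacle throughout the proof will be the identification $\mathbb{F}(X)\cap \mathbb{H}_{h'} = S(1,2)$ in part (a); once this is achieved, all downstream statements---including the more delicate closure equality $\overline{{}^*\Sigma^{\circ}_{d-2}(X)} = \Sigma_3(X)$, which additionally requires showing that every line contained in $X$ is a limit of extremal secants---reduce to the cited lemmas and to classical Segre--Veronese geometry.
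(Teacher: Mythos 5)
Your treatment of parts (b)--(e) is essentially sound and runs parallel to the paper's: the divisor class via Lemma~\ref{4.10'' Lemma}, smoothness via Cohen--Macaulayness of a divisor on the smooth $S(1,1,1)$ plus $\Reg(X)=\CM(X)$ from Theorem~\ref{3.2' Theorem}~(a)(1), parts (c) and (d) from Lemma~\ref{4.10''' Lemma} with $a=1$, $r=5$, and the Veronese computation for (e) (where the paper additionally has to rule out, by a dimension count on the set $\Omega$ of lines lying in ruling planes, that the closure of ${}^*\Sigma^{\circ}_{d-2}(X)$ picks up lines that are not fibres of $\mathbb{P}^1\times\mathbb{P}^2\to\mathbb{P}^2$ -- a point you elide by asserting outright that every $\mathbb{L}_h$ is a fibre).

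The genuine gap is in part (a), at exactly the step you yourself flag as ``the principal obstacle.'' You need $\deg\mathbb{F}(X)=3$, and your route to it requires identifying $\mathbb{F}(X)\cap\mathbb{H}_{h'}$ with a scroll $S(1,2)$ containing $C_{h'}$. But Lemma~\ref{4.10'' Lemma} does not ``place $C_{h'}$ on a uniquely determined $S(1,2)$'' -- it takes $C\subset S(1,2)$ as a \emph{hypothesis} and deduces the divisor class; a curve of maximal regularity in $\mathbb{P}^4$ is only guaranteed to lie on the degree-$2$ cone $S(0,0,2)={\rm Join}(\mathbb{L}_{C},C)$, not on any $S(1,2)$. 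Your proposed fix -- that the two-parameter family of points $\mathbb{L}_h\cap\mathbb{H}_{h'}$ ``sweeps out this $S(1,2)$'' -- presupposes the very surface whose existence and degree are in question, so the argument is circular. (Your subsequent ``ruled by pairwise disjoint lines, hence not a cone'' also needs an argument, though one can be supplied.) The paper closes this gap differently: since each $W_h=S(0,0,1,1)$ is a quadric hypersurface in $\mathbb{P}^5$ containing $\mathbb{F}(X)$, for two distinct special extremal secants the intersection $W_h\cap W_{h'}$ is an arithmetically Cohen--Macaulay threefold of degree $4$ containing the $3$-space $\langle\mathbb{L}_h,\mathbb{L}_{h'}\rangle$, hence decomposes as $\langle\mathbb{L}_h,\mathbb{L}_{h'}\rangle\cup D$ with $D$ integral nondegenerate of degree $3$; nondegeneracy of $X$ forces $\mathbb{F}(X)=D$, and the cases $S(0,0,3)$ and $S(0,1,2)$ are then excluded using $\depth(X)\le 2$ and Proposition~\ref{4.9'' Theorem}~(b)(1) respectively. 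Some such global argument (or a correct local one) is needed; as written, part (a) of your proof does not go through.
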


\begin{proof}
(a): By Proposition~\ref{4.9'' Theorem} we already know that $r=5$,
$X \subset \mathbb{F}(X)$ and $\dim(\mathbb{F}(X)) = 3$. Now, let
$h, h' \in \mathbb{U}(X)$ such that $\mathbb{L}_{h'} \neq
\mathbb{L}_h$. Then, according to Proposition~\ref{4.9'' Theorem}
(b), the two $4$-scrolls $W_h$ and $W_{h'}$ in $\mathbb{P}^5$ are
both of type $S(0,0,1,1)$ and $\langle \mathbb{L}_h, \mathbb{L}_{h'}
\rangle \subset \mathbb{P}^5$ is a $3$-space.

As $X \subset W_h$ and $\#(X\cap \mathbb{L}_{h''}) = d-2 > 2$ for
all $h'' \in \mathbb{U}(X)$, we have $\mathbb{F}(X) \subset W_h$.
Moreover, as $\mathbb{L}_h$ is the vertex of $W_h = S(0,0,1,1)$ and
$\mathbb{L}_{h'} \subset W_h$ it holds $\langle \mathbb{L}_h,
\mathbb{L}_{h'} \rangle \subset W_h$. Hence, by symmetry we get
$$X \subset \mathbb{F}(X) \subset W_h \cap W_{h'} \mbox{ and } \mathbb{P}^3 = \langle \mathbb{L}_h,\mathbb{L}_{h'}\rangle \subset W_h \cap W_{h'}.$$
As $W_h$ and $W_{h'}$ are two distinct integral hyperquadrics in
$\mathbb{P}^5$, the intersection $W_h \cap W_{h'}$ is arithmetically
Cohen-Macaulay, satisfies $\dim(W_h \cap W_{h'}) = 3$ and $\deg(W_h
\cap W_{h'}) = 4$. In particular it follows that $W_h \cap W_{h'} =
\langle \mathbb{L}_h, \mathbb{L}_{h'} \rangle \cup D$, where $D
\subset \mathbb{P}^5$ is an non-degenerate integral closed subscheme
of dimension $3$ and degree $3$. As $X \subset \mathbb{P}^5$ is
non-degenerate and contained in $W_h \cap W_{h'}$, we have $X
\subset D$. As $\#(X \cap \mathbb{L}_{h''}) = d-r+3 > 2$, it follows
that $\mathbb{L}_{h''} \subset D$ for all $h'' \in \mathbb{U}(X)$,
and hence that $\mathbb{F}(X) = D$.

Observe that $D$ is a scroll of type $S(1,1,1)$ or $S(0,1,2)$ or
$S(0,0,3)$. We aim to exclude the latter two cases. Assume first,
that $D = S(0,0,3)$. As $X \subset D$ is a Weil divisor, it follows,
that $X$ is arithmetically Cohen-Macaulay, which contradicts
Theorem~\ref{3.2' Theorem} (b). Assume now, that $D = S(0,1,2)$ and
let $h'' \in \mathbb{U}(X)$ be general.  Then, we have
$$C_{h''} \subset D \cap \mathbb{H}_{h''} = S(1,2) \subset H_{h''} = \mathbb{P}^4,$$
and according to Lemma~\ref{4.10'' Lemma}, the line section $S(1)$
of $S(1,2)$ coincides with the extremal secant line
$\mathbb{L}_{h''}$ of $C_{h''}$, so that $\mathbb{L}_{h''}$ is
contained in the plane $\mathbb{P}^2 = S(0,1) \subset S(0,1,2) = D$
for general $h'' \in \mathbb{U}(X)$, and this contradicts
Proposition~\ref{4.9'' Theorem} (b)(1). Therefore indeed
$\mathbb{F}(X) = D = S(1,1,1)$, and statement (a) is shown
completely.
\smallskip

\noindent (b): For all $h \in \mathbb{U}(X)$ we have
$$C_h = X \cap \mathbb{H}_h = X \cap (D \cap \mathbb{H}_h) \subset D \cap \mathbb{H}_h = S(1,2) \subset \mathbb{H}_h = \mathbb{P}^4$$
and Lemma~\ref{4.10'' Lemma} yields that the divisor $X$ is linearly
equivalent to $H + (d-3)F$. It remains to show, that $X$ is smooth.

As $X$ is a divisor of the smooth variety $D = S(1,1,1)$ it is a
Cohen-Macaulay variety. So, by Theorem~\ref{3.2' Theorem} (a)(1) it
follows indeed that $X$ is smooth.
\smallskip

\noindent (c): The equality $N(X) = d-4$ and claim (1) follow
immediately from statements (a) and (b) by Lemma~\ref{4.10''' Lemma}
(a), whereas the vanishing of $\e(X)$ follows from the fact that $X$
is smooth. Claims (2) and (3) follow from the fact that $\e(X) = 0$
by Theorem~\ref{3.2' Theorem}(a)(2).
\smallskip

\noindent (d): This follows from statements (a) and (b) by
Lemma~\ref{4.10''' Lemma} (b).
\smallskip

\noindent (e): By statement (b) the surface $X$ is smooth. So, the
equalities ${}^*\Sigma^{\circ}_{d-2}(X) = \Sigma^{\circ}_{d-2}(X) =
\Sigma_3(X) \setminus \Sigma_{\infty}(X)$ follow immediately by
Proposition~\ref{4.3'' Lemma} (a),(b). The equality
$\overline{{}^*\Sigma^{\circ}_{d-2}(X)} = \Sigma_3(X)$ now follows
easily as $X \subset \mathbb{F}(X)$ (see statement (b)).

To prove the remaining claim, we identify $\mathbb{F}(X) = S(1,1,1)$
with the image of the Segre embedding $\sigma:\mathbb{P}^1 \times
\mathbb{P}^2 \rightarrow \mathbb{P}^5$. Let
$$\Theta := \{\mathbb{P}^1 \times \{q\} \mid q \in \mathbb{P}^2\} \subset \mathbb{G}(1,\mathbb{P}^5)$$
denote the closed subset of all fibers under the canonical
projection $\mathbb{P}^1 \times \mathbb{P}^2 \twoheadrightarrow
\mathbb{P}^2$, and let
$$\Omega := \bigcup_{p \in \mathbb{P}^1} \mathbb{G}(1,\{p\} \times \mathbb{P}^2) \subset \mathbb{G}(1,\mathbb{P}^5)$$
denote the closed subset of all lines contained in some fiber of the
canonical projection $\mathbb{P}^1 \times \mathbb{P}^2
\twoheadrightarrow \mathbb{P}^1$, hence in a ruling plane of
$S(1,1,1)$. By Proposition~\ref{4.9'' Theorem} (b)(1) we have
$$\#\big({}^*\Sigma^{\circ}_{d-2}(X) \cap \mathbb{G}(1,\{p\} \times \mathbb{P}^2)\big) \leq 1 \mbox{ for all } p \in
\mathbb{P}^1,$$
and hence $\dim({}^*\Sigma^{\circ}_{d-2}(X) \cap \Omega) \leq 1$.
Therefore $U := {}^*\Sigma^{\circ}_{d-2}(X) \setminus \Omega$ is a
locally closed dense subset of
$\overline{{}^*\Sigma^{\circ}_{d-2}(X)}$, consists of line sections
of $S(1,1,1)$, and hence is contained in $\Theta$. It follows that
$\overline{{}^*\Sigma^{\circ}_{d-2}(X)} \subseteq \Theta$, so that
we get the inclusion
$$\psi(\overline{{}^*\Sigma^{\circ}_{d-2}(X)}) \subseteq \psi(\Theta).$$
By standard arguments on Pl\"ucker embeddings one sees that
$\psi(\Theta)$ is the Veronese surface in some subspace
$\mathbb{P}^5 \subset \mathbb{P}^{14}$. As the left hand side of the
previous inclusion is a surface, we get our claim.
\end{proof}

\section{Planar Extremal Varieties}
\label{6. Planar Extremal Varieties}

\noindent In this section, we give a few results which concern the
``general`` case in which the extremal variety is a plane. The case
of surfaces of maximal sectional regularity which are cones is
understood by what is said in Definition and Remark~\ref{4.2'
Definition and Remark} (C). Therefore, we restrict ourselves to
consider the case of non-conic surfaces of maximal sectional
regularity. Observe, that according to Theorem~\ref{4.10'''
Theorem}, our results apply to all non-conic surfaces of maximal
sectional regularity in $\mathbb{P}^r$ with $r \geq 6$. We begin
with the following auxiliary results.

\begin{lemma}\label{4.12'' Lemma}
Let $s > 1$, let $C \subset \mathbb{P}^s$ be a closed subscheme of dimension $1$ and degree $d$ and let $\mathbb{H} = \mathbb{P}^{s-1} \subset \mathbb{P}^s$ be a
hyperplane. Then
$$\#(C \cap \mathbb{H}) \geq d \mbox{  with equality if and only if } {\rm Ass}_C(\mathcal{O}_C) \cap \mathbb{H} = \emptyset.$$
\end{lemma}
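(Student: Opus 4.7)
The plan is to reduce the statement to a Hilbert-function computation on the exact sequence of graded $S$-modules
$$0 \longrightarrow (0 :_A h) \longrightarrow A(-1) \stackrel{\cdot h}{\longrightarrow} A \longrightarrow A/hA \longrightarrow 0,$$
where $A = S/I_C$ is the (saturated) homogeneous coordinate ring of $C \subset \mathbb{P}^s$ and $h \in S_1$ is a linear form cutting out $\mathbb{H}$.

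First I would dispose of the trivial case in which $\mathbb{H}$ contains some one-dimensional irreducible component of $C$: here $C \cap \mathbb{H}$ has positive dimension, so $\#(C \cap \mathbb{H}) = \infty > d$, while the generic point of that component lies in $\Ass_C(\mathcal{O}_C) \cap \mathbb{H}$, so both sides of the claimed equivalence fail and the statement holds trivially. From now on one may assume that $\mathbb{H}$ meets no one-dimensional component of $C$, which means $h$ avoids every minimal prime of $A$, the intersection $C \cap \mathbb{H}$ is zero-dimensional, and therefore $\#(C \cap \mathbb{H}) = \dim_K (A/hA)_n$ for all $n \gg 0$.

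Next, since the Hilbert polynomial of $A$ has the shape $P_A(n) = dn + e$, additivity of Hilbert polynomials on the four-term sequence above gives
$$\dim_K (A/hA)_n \;=\; d \;+\; \dim_K(0 :_A h)_n \quad \text{for all } n \gg 0,$$
which immediately yields the inequality $\#(C \cap \mathbb{H}) \geq d$. The equality statement is thus reduced to showing that $\dim_K (0 :_A h)_n = 0$ for $n \gg 0$ if and only if $\Ass_C(\mathcal{O}_C) \cap \mathbb{H} = \emptyset$.

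Finally, I would argue the equality case as follows. The associated primes of the submodule $(0 :_A h) \subseteq A$ are exactly those associated primes of $A$ which contain $h$. Saturation of $I_C$ implies that $A$ has no irrelevant associated primes, hence neither does $(0 :_A h)$; consequently $\dim_K (0 :_A h)_n$ vanishes for $n \gg 0$ if and only if $(0:_A h) = 0$, i.e.\ if and only if $h$ is an $A$-regular element. Because the standing reduction has already ruled out the minimal primes of $A$ (corresponding to the $1$-dimensional components of $C$) as candidates for containing $h$, this last condition is equivalent to $h$ avoiding every embedded associated prime of $A$, which is exactly the geometric condition $\Ass_C(\mathcal{O}_C) \cap \mathbb{H} = \emptyset$. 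The main technical point is the clean identification of $\Ass_C(\mathcal{O}_C)$ with the non-irrelevant associated primes of $A$ and the transition from the Hilbert function to the Hilbert polynomial in sufficiently large degree -- both routine once the saturation of $I_C$ is invoked.
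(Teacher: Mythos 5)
Your proposal is correct and follows essentially the same route as the paper: the same four-term exact sequence $0 \to (0:_A h)(-1) \to A(-1) \to A \to A/hA \to 0$, additivity of Hilbert polynomials giving $H_{A/hA}(t) = d + H_{(0:_A h)}(t-1)$, and the identification of the vanishing of the correction term with $h$ avoiding the non-irrelevant associated primes of $A$. The only difference is your (harmless but unnecessary) preliminary case distinction for components contained in $\mathbb{H}$, which the paper's uniform Hilbert-polynomial argument absorbs automatically.
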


\begin{proof}
Let $R = K\oplus R_1\oplus R_2\oplus \ldots = K[R_1]$ be the homogeneous coordinate ring of $C$ and let $f \in R_1$ be such that $C\cap \mathbb{H}
= \Proj(R/fR)$. Let $H_R(t) = dt + c$ be the Hilbert polynomial of $R$. Then, the two exact sequences
$$0\rightarrow fR \rightarrow R \rightarrow R/fR\rightarrow 0 \mbox{ and } 0 \rightarrow(0:_R f)(-1) \rightarrow R(-1) \rightarrow fR \rightarrow 0$$
yield that the Hilbert polynomial of $R/fR$ is given by
$$H_{R/fR}(t) = d + H_{(0:_R f)}(t-1).$$
Observe that the polynomial $H_{(0:_R f)}(t-1)$ vanishes if and only if $(0:_R f)_t = 0$ for all $t \gg 0$, hence if and only if
$$f\notin \bigcup_{\mathfrak{p} \in {\rm Ass}(R)\setminus \{R_+\}} \mathfrak{p}.$$
But this latter condition is equivalent to the requirement that ${\rm Ass}(C) \cap \mathbb{H} = \emptyset$.
\end{proof}

\begin{lemma}\label{4.13'' Lemma}
Let $5 \leq r$ and let $C \subset \mathbb{P}^{r-1}$ be a curve of degree $d \geq 3r-6$ which is of maximal regularity and with extremal secant line
$\mathbb{L} \subset \mathbb{P}^{r-1}$. Then $\depth(C \cup \mathbb{L}) = 1$.
\end{lemma}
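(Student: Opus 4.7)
The plan is to verify that $Y := C \cup \mathbb{L}$ is not arithmetically Cohen--Macaulay, i.e. that $h^1(\mathbb{P}^{r-1}, \mathcal{J}_Y(n)) \neq 0$ for some $n$; since $I_Y \subset S$ is saturated and $Y$ is non-empty we automatically have $\depth(Y) \geq 1$, so such a non-vanishing will force $\depth(Y) = 1$. I expect to establish the non-vanishing already in degree $n = 2$.

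The key geometric input is Notation and Remark~\ref{4.6'' Notation and Remark} (A), which identifies $V := \operatorname{Join}(\mathbb{L}, C) \subset \mathbb{P}^{r-1}$ with the threefold scroll $S(0,0,r-3)$ having vertex $\mathbb{L}$. Thus $V$ is a variety of minimal degree $r-3$ containing $Y$, and the standard determinantal description of its ideal (as the $2 \times 2$ minors of a $2 \times (r-3)$ catalecticant matrix of linear forms) yields $h^0(\mathcal{J}_V(2)) = \binom{r-3}{2}$, so that $h^0(\mathcal{J}_Y(2)) \geq \binom{r-3}{2}$. In parallel, I would compute $h^0(\mathcal{O}_Y(2))$ from the Mayer--Vietoris sequence
\[
0 \to \mathcal{O}_Y \to \mathcal{O}_C \oplus \mathcal{O}_{\mathbb{L}} \to \mathcal{O}_{C \cap \mathbb{L}} \to 0,
\]
twisted by $2$: the right-hand restriction map is surjective because the $d-r+3$ points of $C \cap \mathbb{L}$ impose independent conditions on $H^0(C, \mathcal{O}_C(2)) \cong H^0(\mathbb{P}^1, \mathcal{O}(2d))$ by interpolation on $\mathbb{P}^1$, giving $h^0(\mathcal{O}_Y(2)) = (2d+1) + 3 - (d-r+3) = d + r + 1$.

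Combining these two inputs through the ideal-sheaf sequence $0 \to \mathcal{J}_Y \to \mathcal{O}_{\mathbb{P}^{r-1}} \to \mathcal{O}_Y \to 0$ yields
\[
h^1(\mathcal{J}_Y(2)) = h^0(\mathcal{O}_Y(2)) - \tbinom{r+1}{2} + h^0(\mathcal{J}_Y(2)) \geq (d + r + 1) - \tbinom{r+1}{2} + \tbinom{r-3}{2} = d - 3r + 7,
\]
which is $\geq 1$ exactly under the hypothesis $d \geq 3r - 6$. Hence $h^1(\mathbb{P}^{r-1}, \mathcal{J}_Y(2)) \neq 0$, forcing $\depth(Y) \leq 1$, and together with the automatic lower bound $\depth(Y) \geq 1$ one concludes $\depth(Y) = 1$. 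I anticipate no serious obstacle: the only non-routine step is the identification of $\operatorname{Join}(\mathbb{L}, C)$ with the scroll $S(0,0,r-3)$, which is supplied by Notation and Remark~\ref{4.6'' Notation and Remark} (A), and the numerical hypothesis $d \geq 3r-6$ is tailored so that the bound $d-3r+7$ is positive.
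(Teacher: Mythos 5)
Your argument is correct, and it reaches the conclusion by a genuinely different route than the paper does, although both proofs hinge on the same geometric input, namely that $Y := C \cup \mathbb{L}$ lies on the threefold scroll ${\rm Join}(\mathbb{L},C) = S(0,0,r-3)$ and hence on at least $\binom{r-3}{2}$ quadrics. The paper argues by contradiction on the level of $C$ alone: assuming $Y$ is arithmetically Cohen--Macaulay, it invokes \cite[Proposition 3.6]{BS2} to get the exact value $\dim_K(I_C)_2 = \binom{r}{2}-d-1$, and then the inequality $\binom{r-3}{2} = \binom{r-1}{2}-2r+5 \leq \binom{r}{2}-d-1$ forces $d \leq 3r-7$, contradicting $d \geq 3r-6$. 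You instead work directly with $Y$ and exhibit the nonvanishing cohomology class: the Mayer--Vietoris computation $h^0(\mathcal{O}_Y(2)) = (2d+1)+3-(d-r+3) = d+r+1$ (legitimate, since $C$ is smooth and rational by Remark and Definition~\ref{2.3'' Remark and Definition} (B)(a), so the length-$(d-r+3)$ scheme $C \cap \mathbb{L}$ imposes independent conditions on $H^0(\mathbb{P}^1,\mathcal{O}(2d))$), combined with $h^0(\mathcal{J}_Y(2)) \geq \binom{r-3}{2}$, gives $h^1(\mathcal{J}_Y(2)) \geq d-3r+7 \geq 1$. The trade-off: your version is self-contained modulo the identification of the join with $S(0,0,r-3)$ and even produces the explicit degree ($n=2$) in which $1$-normality fails, at the cost of a slightly longer computation; the paper's version is shorter but outsources the decisive numerical identity to an external result from \cite{BS2}. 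All the numerics in your proposal check out ($\binom{r+1}{2}-\binom{r-3}{2} = 4r-6$, and $(d+r+1)-(4r-6) = d-3r+7$), and the passage from $H^1(\mathbb{P}^{r-1},\mathcal{J}_Y(2)) \neq 0$ to $\depth(Y)=1$ is justified exactly as you say via the Serre--Grothendieck correspondence of Notation and Reminder~\ref{2.4'' Notation and Reminder}.
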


\begin{proof}
Since $C$ is contained in a rational $3$-fold scroll $\mathbb{S} := S(0,0,r-3)$, we have
 $$\dim_K(I_C)_2 \geq \dim_K(I_{\mathbb{S}})_2 = \binom{r-1}{2} - 2r+5.$$
Assume now, that $\depth(C \cup \mathbb{L}) \neq 1$, so that $C \cup \mathbb{L}$ is arithmetically Cohen Macaulay. Then, by Proposition 3.6 of~\cite{BS2} it follows that
$\dim_K(C)_2 = \binom{r}{2}-d-1$, whence $\binom{r-1}{2} - 2r + 5 \leq \binom{r}{2} - d - 1$, thus the contradiction that $d \leq 3r-7$.
\end{proof}

Now, we can formulate and prove the first main result of this section. We use the notations introduced in Notation and Reminder~\ref{2.4'' Notation and Reminder} and
~\ref{2.7' Notation and Reminder} (C), (D).

\begin{theorem}\label{4.14'' Theorem}
Let $5 \leq r < d$ and let $X \subset \mathbb{P}^r$ be a surface of degree $d$ and of maximal sectional regularity which is not a cone.
Assume that $\mathbb{F} = \mathbb{F}(X)$ is a plane, set $C : = X \cap \mathbb{F}$, $Y := X \cup \mathbb{F}$ and let $I$ and $L$ respectively denote the homogeneous
vanishing ideal of $X$ and of $\mathbb{F}$ in $S$. Then the following statements hold
\begin{itemize}
\item[\rm{(a)}] Each line $\mathbb{L} \subset \mathbb{F}$ which is not contained in $X$, satisfies $\#(C \cap \mathbb{L}) = \#(X \cap \mathbb{L}) = d-r+3$. In particular,
                $C \subset \mathbb{F}$ is a curve of degree $d-r+3$ and has no closed associated points.
\item[\rm{(b)}] $X \setminus {\rm Reg}(X) \subseteq C \setminus {\rm Reg}(C)$.
\item[\rm{(c)}] $I_{d-r+3} \setminus I \cap L \neq \emptyset$ and for each $f \in I_{d-r+3} \setminus I \cap L$ it holds $I = (I \cap L, f)$.
\item[\rm{(d)}] \begin{itemize}
                \item[\rm{(1)}] $h^1(\mathbb{P}^r, \mathcal{J}_X(n)) = h^1(\mathbb{P}^r,\mathcal{J}_Y(n))$ for all $n\in \mathbb{Z}$.
                \item[\rm{(2)}] $h^2(\mathbb{P}^r, \mathcal{J}_X(d-r)) = 1$ and $h^2(\mathbb{P}^r,\mathcal{J}_X(n)) = 0$ for all $n > d-r$.
                \item[\rm{(3)}] $h^2(\mathbb{P}^r,\mathcal{J}_Y(n)) = 0$ for all $n \geq d-r$.
                \item[\rm{(4)}] $h^3(\mathbb{P}^r,\mathcal{J}_X(n-1)) = h^3(\mathbb{P}^r,\mathcal{J}_Y(n)) = 0$ for all $n\geq 0$.
                \end{itemize}
\item[\rm{(e)}] \begin{itemize}
                \item[\rm{(1)}] $h^2(\mathbb{P}^r,\mathcal{J}_X(n)) = h^2(\mathbb{P}^r,\mathcal{J}_Y(n)) + \max\{0,\binom{-n+d-r+2}{2}\}$ for all $n \geq 0$.
                \item[\rm{(2)}] ${\rm e}(X) = h^2(\mathbb{P}^r,\mathcal{J}_Y(0)) + \binom{d-r+2}{2} = h^2(\mathbb{P}^r, \mathcal{J}_X(n))$ for all $n \leq 0$.
                \item[\rm{(3)}] $h^2(\mathbb{P}^r,\mathcal{J}_Y(n+1)) \geq h^2(\mathbb{P}^r,\mathcal{J}_Y(n))$ for all $n \leq 0$, with equality for $n = 0$.
                \item[\rm{(4)}] $h^2(\mathbb{P}^r,\mathcal{J}_Y(n+1)) \leq {\rm max}\{0,h^2(\mathbb{P}^r,\mathcal{J}_Y(n)) - 1\}$ for all $n > 0$.
                \item[\rm{(5)}] If $h^2(\mathbb{P}^r, \mathcal{J}_Y(0) = 0$, then $h^2(\mathbb{P}^r,\mathcal{J}_Y(n)) = 0$ for all $n \in \mathbb{Z}$.
                \end{itemize}
\item[\rm{(f)}] For the pair $\tau(X) := \big(\depth(X),\depth(Y)\big)$ we have
                \begin{itemize}
                \item[\rm{(1)}] $\tau(X) = (2,3)\quad \quad \quad \quad \quad \quad$ if $r+1 \leq d \leq 2r-4$;
                \item[\rm{(2)}] $\tau(X) \in \{(1,1),(2,2),(2.3)\}$ if $2r-3 \leq d \leq 3r-7$;
                \item[\rm{(3)}] $\tau(X) \in \{(1,1). (2,2)\}$ $\mbox{ }\quad \quad$ if $3r-6 \leq d$.
                \end{itemize}
\item[\rm{(g)}] $\reg(Y) \leq d-r+3$.
\item[\rm{(h)}] The image of $\overline{{}^*\Sigma^{\circ}_{d-r+3}(X)}$ under the Pl\"ucker embedding
                $\psi:\mathbb{G}(1,\mathbb{P}^r) \rightarrow \mathbb{P}^{\binom{r+1}{2}-1}$ is a plane.
\end{itemize}
\end{theorem}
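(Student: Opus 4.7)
The plan is to show that the closure $\overline{{}^*\Sigma^{\circ}_{d-r+3}(X)}$ is already contained in the sub-Grassmannian $\mathbb{G}(1,\mathbb{F})$ of lines of $\mathbb{F} = \mathbb{F}(X)$, and then to use a dimension count to deduce that its Plücker image fills out $\psi(\mathbb{G}(1,\mathbb{F}))$, which is known to be a plane.

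First I would observe that for every $h \in \mathbb{U}(X)$ the special extremal secant line $\mathbb{L}_h$ is by definition a line in $\mathbb{F}$, so
\[
{}^*\Sigma^{\circ}_{d-r+3}(X) \;=\; \{\mathbb{L}_h \mid h \in \mathbb{U}(X)\} \;\subseteq\; \mathbb{G}(1,\mathbb{F}),
\]
(see Proposition~\ref{3.1'' Proposition}(b)(2) and Definition and Remark~\ref{4.2' Definition and Remark}(A)). Since $\mathbb{G}(1,\mathbb{F})$ is closed in $\mathbb{G}(1,\mathbb{P}^r)$, passing to closures yields
\[
\overline{{}^*\Sigma^{\circ}_{d-r+3}(X)} \;\subseteq\; \mathbb{G}(1,\mathbb{F}).
\]

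Next I would invoke the well-known fact that the Plücker embedding of $\mathbb{G}(1,\mathbb{F})$ into $\psi(\mathbb{G}(1,\mathbb{P}^r)) \subset \mathbb{P}^{\binom{r+1}{2}-1}$ is a linear embedding: the Plücker coordinates of lines in a fixed $2$-plane $\mathbb{F} \subset \mathbb{P}^r$ span precisely a $3$-dimensional subspace of $\bigwedge^2 K^{r+1}$ (namely $\bigwedge^2 H^0(\mathbb{F},\mathcal{O}_{\mathbb{F}}(1))$), so $\psi(\mathbb{G}(1,\mathbb{F}))$ is a linearly embedded $\mathbb{P}^2$ inside $\mathbb{P}^{\binom{r+1}{2}-1}$.

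Finally, by Theorem~\ref{2.3'' Theorem} (applied to the surface $X$, i.e.\ with $t=2$) we have ${}^*\mathfrak{d}(X) = \mathfrak{d}(X) = 2$; in particular
\[
\dim\bigl(\overline{{}^*\Sigma^{\circ}_{d-r+3}(X)}\bigr) \;=\; 2.
\]
Since $\psi$ is a closed immersion, its restriction to $\overline{{}^*\Sigma^{\circ}_{d-r+3}(X)}$ is finite, hence preserves dimension. Thus $\psi(\overline{{}^*\Sigma^{\circ}_{d-r+3}(X)})$ is an irreducible (as the closure of an irreducible set by Proposition~\ref{4.5'' Proposition}--type arguments) closed subset of dimension $2$ contained in the $2$-dimensional linear space $\psi(\mathbb{G}(1,\mathbb{F})) \cong \mathbb{P}^2$, so by irreducibility and dimension it must equal all of $\psi(\mathbb{G}(1,\mathbb{F}))$.

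I do not foresee any substantive obstacle here: the entire argument is a clean consequence of results already in hand, the only point requiring a moment of care being the observation that the Plücker image of the sub-Grassmannian of lines in a plane is itself a linear $\mathbb{P}^2$, which is classical. The irreducibility of $\overline{{}^*\Sigma^{\circ}_{d-r+3}(X)}$ is implicit in its description as the closure of the image of the irreducible rational family $\{\mathbb{L}_h \mid h \in \mathbb{U}(X)\}$ parametrized by the open set $\mathbb{U}(X) \subset S_1$; if one prefers to avoid irreducibility, the same dimension and containment argument applies componentwise to conclude that every irreducible component of the image is already equal to the whole plane $\psi(\mathbb{G}(1,\mathbb{F}))$.
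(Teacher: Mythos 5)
Your argument for part (h) is correct, and it is a mild variant of the paper's: the paper first establishes in part (a) that \emph{every} line of $\mathbb{F}$ not contained in $X$ is a proper $(d-r+3)$-secant, from which it concludes the equality $\overline{{}^*\Sigma^{\circ}_{d-r+3}(X)} = \mathbb{G}(1,\mathbb{F})$ directly, whereas you obtain the same equality from the inclusion $\overline{{}^*\Sigma^{\circ}_{d-r+3}(X)} \subseteq \mathbb{G}(1,\mathbb{F})$ together with the dimension count ${}^*\mathfrak{d}(X)=2$ of Theorem~\ref{2.3'' Theorem}. Your route has the small advantage of not depending on part (a); both then appeal to the classical fact that $\psi(\mathbb{G}(1,\mathbb{P}^2))=\mathbb{P}(\bigwedge^2 W)$ is a linearly embedded plane.

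The genuine gap is that part (h) is all you prove. The statement to be established is the entire Theorem~\ref{4.14'' Theorem}, and its substance lies in parts (a)--(g): the degree and associated-point analysis of $C=X\cap\mathbb{F}$ via Lemma~\ref{4.12'' Lemma}, the containment $X\setminus\Reg(X)\subseteq C\setminus\Reg(C)$, the generation statement $I=(I\cap L,f)$, and above all the cohomological and depth computations, which rest on the short exact sequence $0 \rightarrow (S/L)(-d+r-3) \rightarrow S/I\cap L \rightarrow S/I \rightarrow 0$ coming from (c) and on the hyperplane-section results of \cite{BS2} for curves of maximal regularity and their unions with the extremal secant line. None of this is touched by your proposal, and none of it follows from the Pl\"ucker-image argument; indeed your proof of (h) is logically independent of (a)--(g) precisely because you replaced the paper's use of (a) by a dimension count. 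As written, the proposal establishes one of eight assertions and leaves the remaining seven, including all of the theorem's quantitative content, unproven.
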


\begin{proof}
In the proof, we prefer to use local cohomology instead of sheaf cohomology. So keep in mind that
$$H^i(\mathbb{P}^r,\mathcal{J}_X(n)) = H^i(S/I)_n \mbox { and } H^i(\mathbb{P}^r,\mathcal{J}_Y(n)) = H^i(S/I\cap L)_n \mbox{ for } i = 1,2,3 \mbox{ and } n \in \mathbb{Z}.$$

\noindent (a): First let $h \in \mathbb{U}(X)$. Then $\mathbb{L}_h
\subset \mathbb{F}$ and $\#(C\cap \mathbb{L}_h) = \#(X \cap
\mathbb{L}_h)
 = \#(C_h \cap \mathbb{L}_h) = d-r+3$. This shows, that $C \subset \mathbb{F}$ is a closed subscheme of dimension $1$ and degree $d-r+3$. Now, let $\mathbb{L} \subset
\mathbb{F}$ be an arbitrary line which is not contained in $X$. As
$C \subset \mathbb{F}$ is of dimension $1$ and of degree $d-r+3$ we
have $\#(X \cap \mathbb{L}) = \#(C \cap \mathbb{L}) \geq d-r+3$. As
$\reg(X) = d-r+3$ (see Theorem~\ref{3.2' Theorem} (b)) we also have
$\#(X \cap \mathbb{L}) \leq d-r+3$, so that indeed $\#(X \cap
\mathbb{L}) = d-r+3$. Now, it follows by Lemma~\ref{4.12'' Lemma}
that $C$ has no closed associated point.
\smallskip

\noindent (b): Let $p \in X \setminus {\rm Reg}(X)$. We first show,
that $p \in C$. Assume that this is not the case so that $p \notin
\mathbb{F}$. Now, as seen previously, a general hyperplane
$\mathbb{H} = \mathbb{P}^{r-1} \subset \mathbb{P}^r$ which runs
through $p$ has the property that $(X \cap \mathbb{H})_{\rm red}
\subset \mathbb{H}$ is a non-degenerate irreducible curve. Set
$\mathbb{L} := \mathbb{H} \cap \mathbb{F}$. Then $\#(X \cap \langle
p, \mathbb{L} \rangle) < \infty$. In particular the line $\mathbb{L}
\subset \mathbb{F}$ is not contained in $X$, so that $\#(X \cap
\mathbb{L}) = d-r+3$ by statement (a). As $\mathbb{H}$ is general,
the line $\mathbb{L}$ also avoids the finite set $X\setminus {\rm
Reg}(X)$. It follows by Proposition~\ref{4.3'' Lemma} (a) that there
is some $h \in \mathbb{U}(X)$ such that $\mathbb{L} = \mathbb{L}_h$,
and statement (c) of that same proposition yields the contradiction
that $\#(X \cap \langle p, \mathbb{L}\rangle) = \infty$. Therefore
we have indeed $p \in C$.

Now, let $\mathbb{L} \subset \mathbb{F}$ be a general line which
runs through $p$. As $\mathbb{L}$ avoids all singular points of $X$
different from $p$ it follows by Theorem~\ref{3.2' Theorem} (a)(5)
that $\#(X \cap \mathbb{L} \setminus {p}) + 2 \leq d-r+3$. So, by
statement (a) we get
$$d-r+3 - \mult_p(C \cap \mathbb{L}) = \#(C \cap \mathbb{L} \setminus {p}) \leq \#(X \cap \mathbb{L} \setminus {p}) \leq d-r+1$$
and hence $\mult_p(C \cap \mathbb{L}) \geq 2$. This shows, that $p$
is a singular point of $C$.
\smallskip

\noindent (c): According to statement (a), there is a homogeneous
polynomial $g \in S_{d-r+3} \setminus L$ such that the homogeneous
vanishing ideal $(I + L)^{\sat} \subset S$ of $C$ in $S$ can be
written as $(L,g)$. In particular we have $I_{\leq d-r+2} \subset
L$. As $\reg(X) = d-r+3$, the ideal $I \subset S$ is generated by
homogeneous polynomials of degree $\leq d-r+3$. As $g \notin L$ it
follows that $I_{d-r+3}$ is not contained in $L$ and hence that $I+L
= (L,f)$ for all $f \in I_{d-r+3} \setminus I \cap L$. Therefore $ I
= I\cap(I+L) = I\cap (L,f) = (I\cap L,f)$ for all such $f$.
\smallskip

\noindent (d): According to statement (c) we have an exact sequence
$$0 \rightarrow (S/L)(-d+r-3) \rightarrow S/I\cap L \rightarrow S/I \rightarrow 0.$$
Applying cohomology and observing that $H^i((S/L)(-d+r-3))_n$ vanishes if either $i \neq 3$ or else if $i=3$ and $n \geq d-r+1$, and keeping in mind
Theorem~\ref{3.2' Theorem} (a)(2), we get claims (1) and (4) and also
that
$$H^2(S/I)_n = H^2(S/I \cap L)_n = 0 \mbox{ for all } n > d-r.$$
It remains to show that $H^2(S/I)_{d-r} \cong K$ and $H^2(S/I\cap L)_{d-r} = 0$. To this end, let $h \in \mathbb{U}(X)$ and consider the induced exact
sequence
$$H^1(S/I) \longrightarrow H^1(S/(I,h)^{\rm sat}) \longrightarrow H^2(S/I)(-1) \stackrel{h}{\longrightarrow} H^2(S/I) \longrightarrow H^2(S/(I,h)^{\rm sat}).$$
As $S/(I,h)^{\rm sat}$ is the homogeneous coordinate ring of the
curve of maximal regularity $C_h \subset \Proj(S/hS) =
\mathbb{P}^{r-1}$ we get from Proposition 2.7 of \cite{BS2}, that
$H^1(S/(I,h)^{\rm sat})_{d-r+1} \cong K$ and $H^2(S/(I,h)^{\rm
sat})_n = 0$ for all $n \geq 0$.

As $H^2(S/I)_{d-r+1} = 0$, it follows, that $h^2(S/I)_{d-r} =
\dim(H^2(S/I)_{d-r}) \leq 1$. As $H^2(S/L) = 0$ the first exact
sequence used above induces exact sequences
$$ 0 \rightarrow H^2(S/I \cap L)_n \longrightarrow H^2(S/I))_n \longrightarrow H^3(S/L)_{n-d+r-3} \longrightarrow H^3(S/I\cap L)_n$$
for all $n \in \mathbb{Z}$. In view of claim (4) we thus get exact sequences
$$0 \rightarrow H^2(S/I \cap L)_n \longrightarrow H^2(S/I))_n \longrightarrow H^3(S/L)_{n-d+r-3} \longrightarrow 0 \mbox{ for all } n \geq 0.$$
We apply this with $n = d-r$. As $H^3(S/L)_{-3} \cong H^2(\mathbb{P}^2, \mathcal{O}_{\mathbb{P}^2}(-3)) \cong K$ it follows that
$\dim(H^2(S/I)_{d-r}) = \dim(H^2(S/I\cap L)_{d-r}) + 1$. In view of our previous observation, this shows that
$$H^2(S/I)_{d-r} \cong K \mbox{  and } H^2(S/I \cap L)_{d-r} = 0.$$
\smallskip

\noindent (e): If we apply the last exact sequence used in the proof
of statement (d) for all $n \geq 0$ and bear in mind that
$h^3(S/L)_{n-d+r-3} = h^2(\mathbb{P}^2,
\mathcal{O}_{\mathbb{P}^2}(n-d+r-3)) = \binom{-n-d-r+2}{2}$ we get
claim (1).

Claim (2) follows immediately from claim (1), as $h^2(S/I)_0 =
h^1(X,\mathcal{O}_X) = {\rm e}(X)$ (see Theorem~\ref{3.2' Theorem}
(a)(2)).

Now, let $h \in \mathbb{U}(X)$ be general. Then $C_h = X \cap
\mathbb{H}_h \subset \mathbb{H}_h = \mathbb{P}^{r-1}$ is a curve of
degree $d$ and maximal regularity, with extremal secant line
$\mathbb{L}_h:= \mathbb{F} \cap \mathbb{H}_h$. Moreover the ring
$S/(I\cap L ,h)^{\rm sat}$ is the homogeneous coordinate ring of
$C_h \cup \mathbb{L}_h$ in $S$. So, by Proposition 2.7 c),d) of
\cite{BS2} we have $H^1(S/(I \cap L, h)^{\rm sat})_{n+1} = 0$ for
all $n \leq 0$ and -- in addition -- that $H^2(S/(I \cap L, h)^{\rm
sat})_{n+1} = 0$ for all $n \geq 0$. Now, the induced exact sequence
$$H^1(S/(I \cap L, h)^{\rm sat}) \longrightarrow H^2(S/I\cap L)(-1) \stackrel{h}{\longrightarrow} H^2(S/I\cap L) \longrightarrow H^2(S/(I \cap L, h)^{\rm sat})$$
proves claim (3) and shows that the map $H^2(S/I\cap L)(-1)
\stackrel{h}{\longrightarrow} H^2(S/I\cap L)$ is an epimorphism in
all positive degrees.

Finally, by Remark 3.2 B) of \cite{BS2}, the graded $S$-module
$H^1(S/(I\cap L,h)^{\rm sat})$ is generated by homogeneous elements
of degree $2$. Consequently the same holds for the kernel of the map
$H^2(S/I\cap L)(-1) \stackrel{h}{\longrightarrow} H^2(S/I\cap L)$ in
the above sequence. As this map is an epimorphism in all positive
degrees, we get claim (4). Claim (5) is an immediate consequence of
claim (4).
\smallskip

\noindent (f): We keep the above notations. It follows easily by
Theorem~\ref{3.2' Theorem} (c) and the claims proved in statement
(d), that
$$\tau(X) \in \{(1,1),(2,2),(2,3)\}.$$
This proves in particular claim (2).

Now, let $d \leq 2r-4 = 2(r-1) -2$. Then, by Proposition 3.5 of
\cite{BS2} it follows that the $2$-dimensional ring $S/(I \cap
L,h)^{\rm sat}$ is Cohen-Macaulay, so that $H^1(S/(I \cap L, h)^{\rm
sat}) = 0$. Thus, the above exact sequence shows that the map
$H^2(S/I\cap L)(-1) \stackrel{h}{\longrightarrow} H^2(S/I\cap L)$ is
injective. The short exact sequence
$$H^1(S/I\cap L)(-1) \stackrel{h}{\longrightarrow} H^1(S/I\cap L) \longrightarrow H^1(S/(I \cap L, h)^{\rm sat})$$
also shows, that the map $H^1(S/I\cap L)(-1)
\stackrel{h}{\longrightarrow} H^1(S/I\cap L)$ is surjective. It
follows, that $H^1(S/I\cap L) = H^2(S/I\cap L) = 0$, so that
$\depth(S/I\cap L) = 3$. In view of the above observation it follows
that $\tau(X) = (2,3)$, and this proves claim (1).

Now, let $d \geq 3r - 6$. It follows by Lemma~\ref{4.13'' Lemma}
that $\depth(S/(I\cap L, h)^{\rm sat}) = 1$. Consequently, we must
have $\depth(S/I\cap L) \leq 2$ and our previous observation gives
$\tau(X) \in \{(1,1),(2,2)\}$. This proves claim (3).
\smallskip

\noindent (g): This is immediate by claims (1),(3) and (4) of
statement (d).
\smallskip

\noindent (h): By statement (a) we have
$\overline{{}^*\Sigma^{\circ}_{d-r+3}(X)} = \mathbb{G}(1,\mathbb{F})
= \mathbb{G}(1,\mathbb{P}^2)$. Standard arguments show that
$\psi\big(\mathbb{G}(1,\mathbb{P}^2)\big)$ is a plane in
$\mathbb{P}^{\binom{r+1}{2}-1}$. This proves our claim.
\end{proof}

\begin{corollary}\label{4.15'' Corollary} Let the notations and hypotheses be as be as in Theorem~\ref{4.14'' Theorem}. Then it holds
\begin{itemize}
\item[\rm{(a)}] ${\rm e}(X) = 0$ or else ${\rm e}(X) \geq \binom{d-r+2}{2}$.
\item[\rm{(b)}] ${\rm e}(X) \geq \binom{d-r+2}{2} $ if and only if $\mathbb{F}(X) = \mathbb{P}^2$.
\end{itemize}
\end{corollary}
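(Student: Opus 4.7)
The plan is to read the corollary as a dichotomy statement about non-conic surfaces $X \subset \mathbb{P}^r$ of maximal sectional regularity with $5 \leq r < d$, and to derive it by combining the two structure theorems for the extremal variety already proved in the paper, namely Theorem~\ref{4.11'' Theorem} (for the non-planar case $\mathbb{F}(X) = S(1,1,1)$) and Theorem~\ref{4.14'' Theorem} itself (for the planar case $\mathbb{F}(X) = \mathbb{P}^2$). The point is that no further geometry is needed: each of these two theorems supplies exactly the cohomological information required about $\e(X)$.

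First, I would invoke Theorem~\ref{4.10''' Theorem} to split into two mutually exclusive cases. Case~(1): $\mathbb{F}(X)$ is a plane. Here I would apply Theorem~\ref{4.14'' Theorem} (e)(2), which gives
\[
\e(X) \;=\; h^2(\mathbb{P}^r,\mathcal{J}_Y(0)) + \binom{d-r+2}{2} \;\geq\; \binom{d-r+2}{2}.
\]
Case~(2): $\mathbb{F}(X)$ is not a plane. Then by Theorem~\ref{4.10''' Theorem} we are forced into $r=5$ and $\mathbb{F}(X) = S(1,1,1)$, and Theorem~\ref{4.11'' Theorem} (c) immediately yields $\e(X) = 0$. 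This case analysis already proves statement~(a): in every situation we land either on $\e(X)=0$ or on the lower bound $\e(X) \geq \binom{d-r+2}{2}$.

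For statement~(b), the implication ``$\mathbb{F}(X) = \mathbb{P}^2 \Rightarrow \e(X) \geq \binom{d-r+2}{2}$'' is just Case~(1) again. For the converse, I would note that since $d > r \geq 5$, we have $d-r+2 \geq 3$, so $\binom{d-r+2}{2} \geq 3 > 0$; hence the inequality $\e(X) \geq \binom{d-r+2}{2}$ is incompatible with $\e(X) = 0$. By the contrapositive of Case~(2), $\mathbb{F}(X)$ must then be a plane, i.e.\ $\mathbb{F}(X) = \mathbb{P}^2$.

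There is essentially no obstacle here — the corollary is a direct bookkeeping consequence of the two structure theorems for $\mathbb{F}(X)$, and the only minor check is that $\binom{d-r+2}{2}$ is strictly positive so that the two alternatives in (a) are genuinely disjoint and the equivalence in (b) is forced. The one stylistic choice to make is whether to present (a) and (b) in parallel or to prove (a) first and then deduce (b) from it together with the trivial positivity $\binom{d-r+2}{2} > 0$; I would prefer the latter, since it keeps the logical flow linear and emphasises that the whole statement is really a single dichotomy inherited from Theorem~\ref{4.10''' Theorem}.
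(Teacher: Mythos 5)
Your proposal is correct and follows essentially the same route as the paper: the planar case is handled by Theorem~\ref{4.14'' Theorem} (e)(2), and the non-planar case by Theorem~\ref{4.11'' Theorem}, which gives $\e(X)=0$ (the paper phrases this via smoothness of $X$, you cite part (c) directly). Your extra remark that $\binom{d-r+2}{2}>0$ makes the two alternatives disjoint is a sensible, if implicit in the paper, finishing touch.
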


\begin{proof}
If $\mathbb{F}(X)$ is a plane, it follows by Theorem~\ref{4.14''
Theorem} (e)(2) that ${\rm e}(X) \geq \binom{d-r+2}{2}$. If
$\mathbb{F}(X)$ is not a plane it follows by Theorem~\ref{4.11''
Theorem} that $X$ is smooth and hence satisfies ${\rm e}(X) = 0$.
This proves our claim.
\end{proof}

Our next main result is devoted to the relations between the index of normality $N(X)$, the Betti numbers $\beta_{i,j}(X)$ and the nature of the union
$X \cup \mathbb{F}(X)$, where $X$ is a surface of maximal sectional regularity with planar extremal variety $\mathbb{F}(X)$. We begin with two auxiliary results.

\begin{lemma}\label{4.16'' Lemma} Let $5 \leq r < d$, let $X \subset \mathbb{P}^r$ be a surface of degree $d$ and of maximal sectional regularity which is not a cone and
assume that $\mathbb{F}(X)$ is a plane. Let $Y := X \cup \mathbb{F}(X)$ and set $m:= \reg(Y)$. Then the following statements hold:
\begin{itemize}
\item[\rm{(a)}] For all $i \geq 1$ we have
$$\beta_{i,j}(X) = \begin{cases}
                              \beta_{i,j}(Y)                           & \mbox{for $1 \leq j \leq m-1$},\\
                              \beta_{i,j}(Y) = 0                       & \mbox{for  $m \leq j \leq d-r+1$},\\
                              \beta_{i,d-r+2}(Y) + \binom{r-2}{i-1} & \mbox{for $j=d-r+2$}.
\end{cases}$$
\item[\rm{(b)}] $m \leq d-r+2$ if and only if $\beta_{i,j}(X) = \binom{r-2}{i-1}$ for all $i \geq 1$.
\end{itemize}
\end{lemma}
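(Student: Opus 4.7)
The plan is to leverage the short exact sequence
$$0 \to (S/L)(-(d-r+3)) \to S/(I \cap L) \to S/I \to 0$$
provided by Theorem~\ref{4.14'' Theorem}(c) (noting that $I \cap L = I_Y$ is the homogeneous vanishing ideal of $Y = X \cup \mathbb{F}$), and to apply $\Tor^S(K,-)$. The resulting long exact sequence, split into internal degrees, will connect the Betti numbers of $X$ and $Y$ via the Betti numbers of the shifted module $N := (S/L)(-(d-r+3))$.

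Since $\mathbb{F}$ is a plane of codimension $r-2$, the ideal $L$ is generated by $r-2$ independent linear forms, and the Koszul complex on these forms furnishes its minimal free resolution. Hence $\Tor^S_i(K, S/L)$ sits entirely in internal degree $i$ with dimension $\binom{r-2}{i}$. Shifting, we find that $\Tor_i(K,N)_{i+j}$ is nonzero only for $j = d-r+3$, with dimension $\binom{r-2}{i}$, while $\Tor_{i-1}(K,N)_{i+j}$ is nonzero only for $j = d-r+2$, with dimension $\binom{r-2}{i-1}$.

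The case analysis then proceeds cleanly. For $j \notin \{d-r+2,d-r+3\}$ both Koszul contributions to the long exact sequence vanish and it degenerates to the isomorphism $\beta_{i,j}(X) = \beta_{i,j}(Y)$; combined with the defining vanishing $\beta_{i,j}(Y) = 0$ for $j \ge m := \reg(Y)$, this already establishes the first two cases of statement~(a). In the critical case $j = d-r+2$, the long exact sequence produces an exact five term piece of $K$-vector spaces whose dimensions are
$$0 \to \beta_{i,d-r+2}(Y) \to \beta_{i,d-r+2}(X) \to \binom{r-2}{i-1} \to \beta_{i-1,d-r+3}(Y) \to \beta_{i-1,d-r+3}(X) \to 0.$$
The main (though mild) step will be to kill the two rightmost terms: $\reg(X) = d-r+3$ from Theorem~\ref{3.2' Theorem}(b) gives $\beta_{i-1,d-r+3}(X) = 0$, and the bound $\reg(Y) \le d-r+3$ from Theorem~\ref{4.14'' Theorem}(g) gives $\beta_{i-1,d-r+3}(Y) = 0$. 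The surviving three term short exact sequence then yields the third case of~(a).

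Statement~(b) follows by reading off~(a): the condition $\beta_{i,d-r+2}(X) = \binom{r-2}{i-1}$ for all $i \ge 1$ is equivalent, via the third case of~(a), to $\beta_{i,d-r+2}(Y) = 0$ for all $i \ge 1$ (and this also holds for $i=0$ since $Y$ is non-degenerate). Combined with the already established vanishing $\beta_{i,d-r+3}(Y) = 0$ coming from Theorem~\ref{4.14'' Theorem}(g), this is in turn equivalent to $\reg(S/I_Y) \le d-r+1$, that is, to $m = \reg(Y) \le d-r+2$.
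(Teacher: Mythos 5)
Your proof is correct and follows essentially the same route as the paper's: both apply $\Tor^S_\bullet(K,-)$ to the exact sequence $0 \to (S/L)(-(d-r+3)) \to S/(I\cap L) \to S/I \to 0$ from Theorem~\ref{4.14'' Theorem}~(c), use the Koszul resolution of the plane's vanishing ideal to confine the extra contributions to internal degrees $d-r+2$ and $d-r+3$, and kill the boundary terms via $\reg(Y)\le d-r+3$ (and $\reg(X)=d-r+3$) before reading off statement (b) from statement (a).
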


\begin{proof}
Let $I$ and $L$ respectively denote the homogeneous vanishing ideals
of $X$ and $\mathbb{F}(X)$ in $S$, so that $\beta_{i,j}(X) =
\beta_{i,j}(S/I)$ and $\beta_{i,j}(Y) = \beta_{i,j}(S/I\cap L)$ for
all $i,j \in \mathbb{N}$.

\noindent (a): By statement (c) of Theorem~\ref{4.14'' Theorem} we
have an exact sequence
$$0 \rightarrow (S/L)(-d+r-3) \rightarrow S/I\cap L \rightarrow S/I \rightarrow 0,$$
which induces exact sequences
$$\Tor^S_i(K,S/L)_{i+j-d+r-3} \rightarrow \Tor^S_i(K,S/I \cap L)_{i+j} \rightarrow \Tor^S_i(K,S/I)_{i+j}$$
$$\rightarrow\Tor^S_{i-1}(K,S/L)_{(i-1)+j-d+r-2} \rightarrow \Tor^S_{i-1}(K,S/I\cap L)_{(i-1)+j+1}$$
After an appropriate change of coordinates in $S$ we may assume that $L = \langle x_3,\ldots,x_r \rangle$, and this shows that for all $k \in \mathbb{N}_0$ we have
$$\dim_K\big(\Tor^S_k(K,S/L)_{k+l}\big) = \beta_{k,l}(S/L) = \begin{cases}
                                                     0 & \mbox{if $l \neq 0$}\\
                                                     \binom{r-2}{k} & \mbox{if $l = 0$}
\end{cases}$$
Therefore, the above exact sequences make us end up with isomorphisms
$$\Tor^S_i(K,S/I \cap L)_{i+j} \cong \Tor^S_i(K,S/I)_{i+j} \mbox{ for all } i \geq 1 \mbox{ and all } j \in \{1,2,\ldots, d-r+1\}.$$
As $\reg(S/I\cap L) = \reg(Y) - 1 = m-1$, we have
$\beta_{i,j}(S/I\cap L) = 0 \mbox{ for all } i \geq 1 \mbox{ and all
} j \geq m$. So by the above isomorphisms we get the requested
values of $\beta_{i,j}(S/I)$ for all $i \geq 1$ and all $j
\in\{1,\ldots,d-r+1\}$.

As $\reg(S/I\cap L) = \reg(Y) -1 \leq d-r+2$ (see
Theorem~\ref{4.14'' Theorem} (g)), the last module in the above
exact sequences vanishes for $j=d-r+2$. So, our previous observation
on the Betti numbers $\beta_{k,l}(S/L)$ yields a short exact
sequence
$$0\rightarrow \Tor^S_i(K,S/I\cap L)_{i+d-r+2} \rightarrow \Tor^S_i(K,S/I)_{i+d-r+2} \rightarrow K^{\binom{r-2}{i-1}} \rightarrow 0 \mbox{ for all } i \geq 1,$$
which shows that $\beta_{i,d-r+2}(S/I) = \beta_{i,d-r+2}(S/I \cap L)
+ \binom{r-2}{i-1}$, and this proves our claim.
\smallskip

\noindent (b): As already said above, we have $\reg(S/I\cap L) =
\reg(Y) -1 \leq d-r+2$, whence $\beta_{i,j}(Y) = 0$ for all $i \geq
1$ and all $j \geq d-r+3$. From this we conclude by statement (a).
\end{proof}

\begin{notation and reminder}\label{4.16''' Notation and Reminder}
Let $T = \bigoplus _{n \in \mathbb{Z}} T_n$ be a graded $S$-module. Then, we denote the
\textit{socle} of $T$ by ${\rm Soc}(T)$, thus:
$${\rm Soc}(T) := (0:_T S_+) \cong \Hom_S(K,T) = \Hom_S(S/S_+,T).$$
Keep in mind that the socle of a graded Artinian $S$-module $T$ is a $K$-vector space of finite dimension which vanishes if and only if $T$ does.
\end{notation and reminder}

\begin{lemma}\label{4.16'''' Lemma}
Let the notations and hypotheses be as in Lemma~\ref{4.16'' Lemma} and let $I \subset S$ denote the homogeneous vanishing ideal of $X$.
Then we have the following statements
\begin{itemize}
\item[\rm{(a)}] ${\rm Soc}(H^1(S/I))(-r-1) \cong \Tor^S_r(K,S/I)$.
\item[\rm{(b)}] If $\depth(X) = 1$, then $H^1\big(\mathbb{P}^r,\mathcal{J}_X(N(X))\big) \cong \Tor^S_r(K,S/I)_{N(X)+r+1}.$
\item[\rm{(c)}] $N(X) \leq d-r$ if and only if $\beta_{r,d-r+2}(X) = 0$.
\end{itemize}
\end{lemma}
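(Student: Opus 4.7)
The plan for (a) is graded local duality combined with a minimality argument. I first observe that $I = I_X$ is saturated, so $\depth(S/I) \geq 1$ and therefore the projective dimension of $S/I$ over $S$ is at most $r$. Let $F_{\bullet} \twoheadrightarrow S/I$ be a minimal graded free resolution, with last nonzero term $F_r = \bigoplus_j S(-r-j)^{\beta_{r,j}(X)}$. Graded local duality, applied with the canonical module $\omega_S = S(-r-1)$, yields a natural isomorphism
$$H^1(S/I)^{\vee} \;\cong\; \Ext^r_S(S/I, \omega_S) \;=\; \Ext^r_S(S/I, S)(-r-1),$$
where $(-)^{\vee} = \Hom_K(-,K)$ denotes graded Matlis duality. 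Since $F_{\bullet}$ is minimal, all differentials in $\Hom_S(F_{\bullet}, S)$ have entries in $S_+$, so $\Ext^r_S(S/I, S) \otimes_S K \cong \Hom_S(F_r, S) \otimes_S K$, which is manifestly the graded $K$-dual of $F_r \otimes_S K = \Tor^S_r(K, S/I)$. Combining this with the adjunction $\Hom_S(K, M^{\vee}) \cong (K \otimes_S M)^{\vee}$ and the rule $(V(k))^{\vee} = V^{\vee}(-k)$ for graded vector spaces, I will read off
$${\rm Soc}(H^1(S/I)) = \Hom_S(K, H^1(S/I)) \;\cong\; \Tor^S_r(K, S/I)(r+1),$$
which is exactly the content of (a) after transferring the twist.

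For (b), the hypothesis $\depth(X) = 1$ makes $H^1(S/I)$ a nonzero Artinian graded $S$-module (local cohomology of a finitely generated module at the irrelevant ideal), with well-defined top graded piece in degree $N(X) < \infty$. That top piece is automatically annihilated by $S_+$, since there is nothing higher for $S_+$ to multiply into, so ${\rm Soc}(H^1(S/I))_{N(X)} = H^1(S/I)_{N(X)}$. Evaluating (a) in graded degree $N(X) + r + 1$ and identifying $H^1(S/I)_{N(X)}$ with $H^1(\mathbb{P}^r, \mathcal{J}_X(N(X)))$ via the Serre-Grothendieck correspondence of Notation and Reminder~\ref{2.4'' Notation and Reminder} (A) will then give (b).

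For (c), applying (a) in graded degree $d+2$ yields
$$\beta_{r, d-r+2}(X) \;=\; \dim_K \Tor^S_r(K, S/I)_{d+2} \;=\; \dim_K {\rm Soc}(H^1(S/I))_{d-r+1}.$$
The a priori bound $N(X) \leq d-r+1$ from Notation and Reminder~\ref{4.16'' Notation and Reminder} forces $H^1(S/I)_n = 0$ for every $n \geq d-r+2$, so the entire piece $H^1(S/I)_{d-r+1}$ lies in the socle and ${\rm Soc}(H^1(S/I))_{d-r+1} = H^1(S/I)_{d-r+1}$. Thus $\beta_{r, d-r+2}(X) = 0$ if and only if $H^1(S/I)_{d-r+1} = 0$, which is precisely the condition $N(X) \leq d-r$.

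The main technical effort lies in part (a) — specifically, in tracking the graded twists through local duality, minimality, and the socle-versus-top adjunction cleanly enough to land on the stated $(-r-1)$-shift. Once (a) is in hand, (b) and (c) are immediate from the Artinianity of $H^1(S/I)$ and the already-established bound $N(X) \leq d-r+1$.
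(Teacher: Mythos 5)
Your proposal is correct, and for part (a) it takes a genuinely different route from the paper. The paper's argument is Koszul-theoretic: it considers the short exact sequence $0 \rightarrow S/I \rightarrow D \rightarrow H^1(S/I) \rightarrow 0$ with $D := D_{S_+}(S/I)$, applies the Koszul functor $K(\underline{x};\bullet)$ for $\underline{x} = x_0,\ldots,x_r$, and uses $\depth(D) \geq 2$ to kill $H_{r+1}(\underline{x};D)$ and $H_r(\underline{x};D)$, so that the connecting map gives an isomorphism between $H_{r+1}(\underline{x};H^1(S/I)) \cong {\rm Soc}(H^1(S/I))(-r-1)$ and $H_r(\underline{x};S/I) \cong \Tor^S_r(K,S/I)$. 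You instead invoke graded local duality $H^1(S/I)^{\vee} \cong \Ext^r_S(S/I,S(-r-1))$, the minimality of the resolution together with $F_{r+1}=0$ (so that $\Ext^r_S(S/I,S)\otimes_S K \cong \Hom_S(F_r,S)\otimes_S K$), and the Hom--tensor adjunction; your bookkeeping of twists does land correctly on the stated $(-r-1)$-shift. Each approach has a mild advantage: yours treats the case $\depth(X) \geq 2$ uniformly (both sides vanish under local duality, with no separate remark needed), while the paper's is self-contained modulo the depth-sensitivity of Koszul homology and avoids invoking local and Matlis duality as black boxes. Your parts (b) and (c) carry essentially the same content as the paper's: (b) is identical (the top graded piece of the Artinian module $H^1(S/I)$ is automatically socle), and in (c) you evaluate (a) directly in degree $d+2$ and use $N(X) \leq d-r+1$ to see that $H^1(S/I)_{d-r+1}$ is entirely socle, which is the same reduction the paper performs by passing through statement (b).
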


\begin{proof}
(a): If $\depth(X) > 1$, both of the occurring modules vanish and our claim is obvious. So, we assume that $\depth(X) = 1$ and consider the total ring of
sections $D := D_{S_+}(S/I) = \bigoplus_{n \in \mathbb{Z}} H^0(\mathbb{P}^r, \mathcal{O}_X(n))$ of $X$, as well as the short exact sequence
$$0 \longrightarrow S/I \longrightarrow D \longrightarrow H^1(S/I) \longrightarrow 0.$$
We apply the Koszul functor $K(\underline{x}; \bullet)$ with respect to the sequence $\underline{x} := x_0,x_1,\ldots,x_r$ to this sequence and end up in homology with
an exact sequence
$$H_{r+1}(\underline{x};D) \rightarrow H_{r+1}(\underline{x};H^1(S/I)) \rightarrow H_r(\underline{x}; S/I) \rightarrow H_r(\underline{x}; D).$$
As $\depth(D) > 1$ the first and the last module in this sequence vanish, so that
$$H_{r+1}(\underline{x};H^1(S/I)) \cong H_r(\underline{x}; S/I).$$
As the Koszul complex $K(\underline{x},S)$ provides a free
resolution of $K = S/S_+$ and $K(\underline{x}; S/I) \cong
K(\underline{x}; S) \otimes_S S/I$ we have $H_r(\underline{x}; S/I)
\cong \Tor^S_r(K,S/I)$. As the sequence $\underline{x}$ has length
$r+1$, we have $H_{r+1}(\underline{x}; H^1(S/I)) \cong {\rm
Soc}(H^1(S/I))(-r-1)$. Altogether, we now obtain statement (a).
\smallskip

\noindent (b): As $N(X) = {\rm end}(H^1(S/I))$, we have
$$H^1\big(\mathbb{P}^r,\mathcal{J}_X(N(X))\big) \cong H^1(S/I)_{N(X)} = {\rm Soc}(H^1(S/I))_{N(X)}.$$
Now, our claim follows immediately by statement (a).
\smallskip

\noindent (c): If $\depth(X) > 1$ we have $N(X) = -\infty$ and
$\beta_{r,d-r+2}(X) = 0$, so that our claim is true. We thus may
assume that $\depth(X) = 1$. As $\reg(X) = d-r+3$ we have $N(X) \leq
d-r+1$ and $\Tor^S_r(K,S/I)_{r + l} = 0$ for all $l \geq d-r+3$.
Now, we may conclude by statement (b).
\end{proof}

Now, we are ready to give the announced main result.

\begin{theorem}\label{4.17'' Proposition}
Let $5 \leq r < d$ and assume that the surface $X \subset \mathbb{P}$ is non-conic, has degree $d$ and is of maximal sectional
regularity.
\begin{itemize}
\item[\rm{(a)}] The following statements are equivalent:
                \begin{itemize}
                \item[\rm{(i)}]   $N(X) \leq d-r$.
                \item[\rm{(ii)}]  $\mathbb{F}(X) = \mathbb{P}^2$ and $\reg(X \cup \mathbb{F}(X)) \leq d-r+2$.
                \item[\rm{(iii)}] $\beta_{i,d-r+2}(X) = \binom{r-2}{i-1}$ for all $i \geq 1$.
                \item[\rm{(iv)}]  $\mathbb{F}(X) = \mathbb{P}^2$ and $\beta_{r,d-r+2}(X) = 0$.
                \end{itemize}
\item[\rm{(b)}] The following statements are equivalent:
               \begin{itemize}
               \item[\rm{(i)}]  $\beta_{1,d-r+2}(X) = 1$.
               \item[\rm{(ii)}] $\mathbb{F}(X) = \mathbb{P}^2$ and $I\cap L = (I_{\leq d-r+2})$, where $I$ and $L$ are the homogeneous vanishing ideals of $X$
                                respectively of $\mathbb{F}(X)$ in $S$.
               \end{itemize}
                Moreover, if the equivalent statements (b) (i) and (ii) hold, then we have (in the notations introduced in Notation and Reminder~
                \ref{4.1'' Notation and Reminder} and Definition and Remark~\ref{4.2' Definition and Remark}):
               \begin{itemize}
               \item[\rm{(1)}] If $\mathbb{L} \in \Sigma^{\circ}_{d-r+3}(X)$, then $\mathbb{L} \subset \mathbb{F}(X)$.
               \item[\rm{(2)}] $\Sec_{d-r+3}(X) = X \cup \mathbb{F}(X)$.
               \item[\rm{(3)}] $\mathbb{F}^+(X) = \mathbb{F}(X)$.
               \end{itemize}
\item[\rm{(c)}] The equivalent conditions (i) -- (iv) of statement (a) imply the equivalent conditions (i) and (ii) (and hence also the claims (1), (2) and (3))
                of statement (b).
\end{itemize}
\end{theorem}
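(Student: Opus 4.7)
The plan is to bundle everything into the technical bookkeeping already carried out in Lemmas~\ref{4.16'' Lemma} and~\ref{4.16'''' Lemma} and the cohomological information of Theorem~\ref{4.14'' Theorem}, after the initial observation that both $N(X) \leq d-r$ and $\beta_{1,d-r+2}(X) = 1$ force $\mathbb{F}(X)$ to be a plane, thereby excluding the exceptional case of Theorem~\ref{4.11'' Theorem}.

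For Part (a) I would close the cycle (i) $\Rightarrow$ (ii) $\Rightarrow$ (iii) $\Rightarrow$ (iv) $\Rightarrow$ (i). First, (i) forces $\mathbb{F}(X) = \mathbb{P}^2$, since otherwise Theorem~\ref{4.11'' Theorem}(c) would give $N(X) = d-4 > d-r$ for $r \geq 5$. Granted planarity, to verify $\reg(Y) \leq d-r+2$ it suffices to test $H^i(\mathbb{P}^r, \mathcal{J}_Y(d-r+2-i)) = 0$ for $i = 1, 2, 3$, the higher cohomologies vanishing automatically since $\dim Y = 2$ and $d \geq r+1$. The case $i = 1$ is precisely $N(X) \leq d-r$ in view of Theorem~\ref{4.14'' Theorem}(d)(1), while the cases $i = 2$ and $i = 3$ follow from Theorem~\ref{4.14'' Theorem}(d)(3) and (4). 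This gives (ii). The implication (ii) $\Rightarrow$ (iii) is Lemma~\ref{4.16'' Lemma}(b). For (iii) $\Rightarrow$ (iv) take $i = r$ in (iii) to obtain $\beta_{r,d-r+2}(X) = 0$; planarity is forced because the non-planar alternative would yield $\beta_{1,d-r+2}(X) = \binom{d-1}{2} > 1$ by Theorem~\ref{4.11'' Theorem}(d). Finally (iv) $\Rightarrow$ (i) is Lemma~\ref{4.16'''' Lemma}(c), whose hypotheses include the planarity built into (iv).

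For Part (b) the same dichotomy excludes the non-planar case by Theorem~\ref{4.11'' Theorem}(d), so I may assume $\mathbb{F}(X) = \mathbb{P}^2$. Lemma~\ref{4.16'' Lemma}(a) at $i = 1$ reads $\beta_{1,d-r+2}(X) = \beta_{1,d-r+2}(Y) + 1$, so (i) is equivalent to $\beta_{1,d-r+2}(Y) = 0$; combined with $\reg(Y) \leq d-r+3$ from Theorem~\ref{4.14'' Theorem}(g), this amounts to $I_Y = I \cap L$ being generated in degrees $\leq d-r+2$. Theorem~\ref{4.14'' Theorem}(c) furnishes $f \in I_{d-r+3}$ with $I = (I \cap L, f)$, hence $(I \cap L)_m = I_m$ for all $m \leq d-r+2$, whence $I \cap L = (I_{\leq d-r+2})$. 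For the consequences: given (ii), a proper $(d-r+3)$-secant line $\mathbb{L} \cong \mathbb{P}^1$ satisfies $\#(X \cap \mathbb{L}) = d-r+3$, and any element of $I$ of degree $\leq d-r+2$ has too small a degree for its restriction to $\mathbb{L}$ to contain $X \cap \mathbb{L}$, hence vanishes on $\mathbb{L}$; thus $I_{\leq d-r+2} \subset I_\mathbb{L}$ and $I \cap L \subset I_\mathbb{L}$, so $\mathbb{L} \subset Y$, and $\mathbb{L} \not\subset X$ gives $\mathbb{L} \subset \mathbb{F}(X)$, which is (1). Statement (3) is immediate from $\mathbb{F}(X) \subset \mathbb{F}^+(X)$, and (2) follows since $X$ is covered by the lines $\mathbb{L}_\Lambda(x)$ of Notation and Remark~\ref{4.6'' Notation and Remark} (each in $\Sigma_\infty(X) \subset \Sigma_{d-r+3}(X)$) while $\mathbb{F}(X)$ is covered by its special extremal secant lines.

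For Part (c), specializing the identity in (a)(iii) to $i = 1$ immediately yields $\beta_{1,d-r+2}(X) = \binom{r-2}{0} = 1$, which is (b)(i). The main obstacle is the cohomology check in (i) $\Rightarrow$ (ii) of Part (a): one has to recognize that Theorem~\ref{4.14'' Theorem}(d), together with the trivial vanishing of higher cohomologies of an ideal sheaf on $\mathbb{P}^r$ whose support is a surface, already encodes the full regularity bound $\reg(Y) \leq d-r+2$. Everything else reduces to the Tor exact sequence of $0 \to (S/L)(-d+r-3) \to S/(I \cap L) \to S/I \to 0$ underlying Lemma~\ref{4.16'' Lemma} and to the Koszul duality computation underlying Lemma~\ref{4.16'''' Lemma}.
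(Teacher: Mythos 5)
Your proof is correct and follows essentially the same route as the paper's, resting on the same ingredients (Theorem~\ref{4.11'' Theorem} to force planarity of $\mathbb{F}(X)$, Theorem~\ref{4.14'' Theorem} (c),(d),(g) for the cohomological and ideal-theoretic input, and Lemmas~\ref{4.16'' Lemma} and~\ref{4.16'''' Lemma} for the Betti-number translations), with part (a) merely organized as a single cycle (i) $\Rightarrow$ (ii) $\Rightarrow$ (iii) $\Rightarrow$ (iv) $\Rightarrow$ (i) instead of pairwise equivalences. The only point to tidy is that your justifications of (b)(2) and (b)(3) as written supply only the inclusions $\Sec_{d-r+3}(X) \supseteq X \cup \mathbb{F}(X)$ and $\mathbb{F}(X) \subseteq \mathbb{F}^+(X)$; the reverse inclusions -- which are the substantive ones -- follow at once from (b)(1) together with the fact that every line $\mathbb{L}$ with $\#(X\cap\mathbb{L}) > d-r+3$ lies in $X$, exactly as in the paper.
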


\begin{proof}
(a): (i) $\Rightarrow$ (ii): Let $N(X) \leq d-r$. It follows by
Theorem~\ref{4.11'' Theorem} (c), that $\mathbb{F}(X) =
\mathbb{P}^2$. Let $I$ and $L \subset S$ respectively denote the
homogeneous vanishing ideals of $X$ and $\mathbb{F}(X)$. According
to Theorem~\ref{4.14'' Theorem} (d)(1) we have ${\rm
end}(H^1(S/I\cap L) = {\rm end}(H^1(S/I) = N(X) \leq d-r$. So, it
follows by Theorem~\ref{4.14'' Theorem} (d)(3),(4) that $\reg(S/I
\cap L) \leq d-r+1$, whence $\reg(X) \cup \mathbb{F}(X) = \reg(I
\cap L) \leq d-r+2$.

(ii) $\Rightarrow$ (i): As ${\rm end}(H^1(S/I)) = N(X)$, this is an easy consequence of Theorem~\ref{4.14'' Theorem} (d)(1),(3) and (4).

(ii) $\Rightarrow$ (iii): This follows by Lemma~\ref{4.16'' Lemma}.

(iii) $\Rightarrow$ (ii): Assume that statement (iii) holds. Then we have in particular that $\beta_{1,d-r+2}(X) = 1$. By Theorem~\ref{4.11'' Theorem} (d) it follows that
$\mathbb{F}(X) = \mathbb{P}^2$. Now, we may again conclude by Lemma~\ref{4.16'' Lemma}.

(iii) $\Leftrightarrow$ (iv): This is clear by Lemma~\ref{4.16''''
Lemma}.
\smallskip

\noindent (b): (i) $\Rightarrow $ (ii): Assume that
$\beta_{1,d-r+2}(X) = 1$. Then Theorem~\ref{4.11'' Theorem} (d)
implies that $\mathbb{F}(X) = \mathbb{P}^2$. If follows by
Theorem~\ref{4.14'' Theorem} (c) that $(I_{\leq d-r+2}) = I \cap L$.

(ii)$\Rightarrow$(i): This follows immediately by
Theorem~\ref{4.14'' Theorem} (c).

Assume now, that the equivalent statements (i) and (ii) hold, let
$\mathbb{L} \in \Sigma_{d-r+3}(X)$ be not contained in $X$ and let
$M \subset S$ be the homogeneous vanishing ideal of $\mathbb{L}$.
Then, $(I_{d-r+2}) \subset M$. As $I\cap L = (I_{\leq d-r+2})$ it
follows that $I \cap L \subset M$. As $\mathbb{L}$ is not contained
in $X$, the ideal $I$ is not contained in $M$. It follows that $L
\subset M$, and hence that $\mathbb{L} \subset \mathbb{F}(X)$. This
proves claim (1). Claim (2) is immediate by claim (1), as $X$ is a
union of lines and each line $\mathbb{L} \subset \mathbb{P}^r$ with
$\#(X \cap \mathbb{L}) > d-r+3$ is contained in $X$. Now claim (3)
is obvious, too.
\smallskip

\noindent (c): This follows from the fact that statement (a)(iii)
implies that $\beta_{1,d-r+2}(X) = 1$.
\end{proof}

We have seen above, that surfaces $X$ of maximal sectional regularity and sub-maximal index of normality $N(X) < d-r+1$ (see Notation and
Reminder~\ref{4.16'' Notation and Reminder}) have a planar extremal variety and show an interesting behavior of Betti numbers. We therefore can expect, that in the
extremal case $N(X) = -\infty$ -- hence in the case $\depth(X) = 2$ -- we get further detailed information on the Betti numbers if $X$ is of ``small degree``.
Our next main result is devoted to this case.

\begin{theorem}\label{3.4' Theorem}
Let $5 \leq r < d$, assume that the surface $X\subset \mathbb{P}^r$ of degree $d$ is of maximal sectional
regularity and satisfies $\depth(X) = 2$. Let $I \subset S$ be the homogeneous vanishing ideal of $X$ and let $J := (I_{\leq d-r+2}) \subset S$ be the ideal generated by
all polynomials of degree $\leq d-r+2$ in $I$.
\begin{itemize}
\item[\rm{(a)}] $\mathbb{F}(X) = \mathbb{P}^2$ and $J = I \cap L$, where $L \subset S$ is the homogeneous vanishing ideal of $\mathbb{F}(X)$ in $S$.
\item[\rm{(b)}] If the ring $S/J$ is Cohen-Macaulay -- hence if $\tau(X) = (2,3)$ -- we have $\reg(S/J) = \reg(X \cup \mathbb{F}(X)) -1 = 2$ and
$$h^2(\mathbb{P}^r,\mathcal{J}_X(n)) = \begin{cases} & \e(X) = \binom{d-r+2}{2} \mbox{ for all } n\leq 0, \\
                             & \binom{d-r-n+2}{2} \mbox{ for } 0 < n\leq d-r, \\
                             & 0 \mbox{ for } n > d-r.
               \end{cases}$$
\item[\rm{(c)}] Let $d \leq 2r-5$. Then, setting
        \[
        a_i := (d-r+1)\binom{r-1}{i}+\binom{r-2}{i-1}, \quad
        c_i := (d-1) \binom{r-2}{i}-\binom{r-2}{i+1},
        \]
        for all $i\in\{1,\ldots,r\}$, we have
        \[
        \Tor^S_i(K,S/I) = K^{u_i}(-i-1)\oplus K^{v_i}(-i-2)\oplus K^{\binom{r-2}{i-1}}(-i-d-r-2)
        \]
        with
        \begin{align*} u_1 & = \binom{r}{2}-d-1, \\
                                       u_i & = c_i-a_i \quad \quad \quad \mbox{ for } 2\leq i \leq 2r-d-3, \\
                                               u_i & \leq c_i \quad \quad \quad \quad \quad \mbox{ for } 2r-d-2 \leq i \leq r-1.
                        \end{align*}
                and
        \begin{align*} v_i & = 0 \quad \quad \quad \quad \quad \quad \quad \quad \mbox{ for } 1\leq i \leq 2r-d-4 \mbox{ and } i = r,\\
                               v_i & = u_{i+1}+a_{i+1}-c_{i+1} \quad \mbox{ for } 2r-d-3\leq i \leq r-3,\\
                                               v_{r-2} & = d-r.
                                \end{align*}

\end{itemize}
\end{theorem}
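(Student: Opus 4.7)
The proof decomposes along the three statements (a), (b), and (c). The central technical tool is the short exact sequence of graded $S$-modules provided by Theorem~\ref{4.14'' Theorem}~(c),
$$0 \longrightarrow (S/L)(-d+r-3) \longrightarrow S/(I\cap L) \longrightarrow S/I \longrightarrow 0,$$
together with the observation that, after a suitable change of coordinates, $S/L \cong K[x_0,x_1,x_2]$ is a polynomial ring: in particular $H^i(S/L) = 0$ for $i \neq 3$ and the Koszul complex on $x_3,\ldots,x_r$ gives $\Tor_i^S(K,S/L) \cong K^{\binom{r-2}{i}}$, concentrated in internal degree $i$.

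For (a), the hypothesis $\depth(X) = 2$ forces $H^1(S/I) = 0$, so the index of normality satisfies $N(X) = -\infty \leq d-r$. In the non-conic case, Theorem~\ref{4.17'' Proposition}~(a), in the form (i)$\Rightarrow$(ii)\,\&\,(iii), yields $\mathbb{F}(X) = \mathbb{P}^2$ and $\beta_{1,d-r+2}(X) = 1$, and the equivalence (i)$\Leftrightarrow$(ii) of Theorem~\ref{4.17'' Proposition}~(b) then identifies $I \cap L = (I_{\leq d-r+2}) = J$. In the conic case $\mathbb{F}(X) = \mathbb{P}^2$ is given by Definition and Remark~\ref{4.2' Definition and Remark}~(C), and the ideal-generation statement reduces to the analogous fact for the hyperplane section curve of maximal regularity.

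For (b), I take the long exact sequence of local cohomology of the displayed short exact sequence. The vanishings $H^1(S/I) = 0$, the Cohen--Macaulay assumption on $S/J$ (which kills $H^0, H^1, H^2$ of $S/J$), and $H^i(S/L) = 0$ for $i \neq 3$ reduce it at every degree $n$ to
$$0 \longrightarrow H^2(S/I)_n \longrightarrow H^3(S/L)_{n-d+r-3} \longrightarrow H^3(S/J)_n \longrightarrow H^3(S/I)_n \longrightarrow 0.$$
Theorem~\ref{4.14'' Theorem}~(d)~(4) kills $H^3(S/I)_n$ for $n \geq -1$; then Theorem~\ref{4.14'' Theorem}~(e)~(1) combined with $h^2(S/J) = 0$ gives $h^2(S/I)_n = \binom{d-r+2-n}{2}$ for $0 \leq n \leq d-r$, matching the dimension of $H^3(S/L)_{n-d+r-3}$, so the first map is an isomorphism and $H^3(S/J)_n = 0$ in this range. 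At $n = -1$, Theorem~\ref{4.14'' Theorem}~(e)~(2) gives $\dim H^2(S/I)_{-1} = \e(X) = \binom{d-r+2}{2}$ whereas $\dim H^3(S/L)_{-d+r-4} = \binom{d-r+3}{2}$, forcing $\dim H^3(S/J)_{-1} = d-r+2 > 0$. Hence $\reg(S/J) = 2$, so $\reg(Y) = 3$, and the cohomology formula for $h^2(\mathbb{P}^r,\mathcal{J}_X(n))$ falls out by combining these computations with Theorem~\ref{4.14'' Theorem}~(e)~(2) for $n \leq 0$ and~(d)~(2) for $n > d-r$.

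For (c), the long exact sequence of $\Tor^S(K,\cdot)$ of the same short exact sequence, combined with the fact that $d > r$ separates the degree $i+d-r+3$ from both $i+1$ and $i+2$, yields $\beta_{i,1}(X) = u_i := \beta_{i,1}(S/J)$, $\beta_{i,2}(X) = v_i := \beta_{i,2}(S/J)$, together with $\beta_{i,d-r+2}(X) = \binom{r-2}{i-1}$ from the boundary map; the regularity bounds $\reg(S/J) = 2$ and $\reg(S/I) = d-r+2$ force all other rows of the Betti table of $X$ to vanish, so only the three stated rows appear. The explicit values of $u_i$ and $v_i$ are then extracted by matching two expressions for the Hilbert series of $S/J$: on the one hand, the short exact sequence gives $H_{S/J}(z) = H_{S/I}(z) + z^{d-r+3}/(1-z)^3$, with $H_{S/I}(z)$ known from the projective description $X = \widetilde{X}_\Lambda$ and part (b); on the other, the Cohen--Macaulay hypothesis with $\reg(S/J) = 2$ imposes the form
$$H_{S/J}(z) = \frac{1 + \sum_{i\geq 1} (-1)^i (u_i z^{i+1} + v_i z^{i+2})}{(1-z)^{r+1}}.$$
The constants $a_i, c_i$ appear as the signed Koszul-type coefficients in this comparison; the base case $u_1 = \binom{r}{2}-d-1$ equals $\dim(I_X)_2$, and $v_{r-2} = d-r$ is pinned down by the numerator's top degree. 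The main obstacle lies in the intermediate range $2r-d-3 \leq i \leq r-3$, where the linear and quadratic strands of the Betti table of $S/J$ interact, and in the upper range $2r-d-2 \leq i \leq r-1$, where consecutive-Betti cancellations leave only the inequality $u_i \leq c_i$; the hypothesis $d \leq 2r-5$ is exactly what ensures the intermediate range is non-empty and the Hilbert-series identities can be resolved into equalities.
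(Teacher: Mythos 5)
Your parts (a) and (b) are correct. For (a) you follow the same route as the paper (reduction to Theorem~\ref{4.17'' Proposition} via $N(X)=-\infty$ when $\depth(X)=2$), with the welcome extra care of treating the conic case separately. For (b) you take a genuinely different path: the paper passes to a general hyperplane section, observes that $S/(I\cap L,h)$ is the coordinate ring of the arithmetically Cohen--Macaulay scheme $C_h\cup\mathbb{L}_h$, and cites an external result to get $\reg(S/J)=2$, whereas you read off $\operatorname{end}\bigl(H^3(S/J)\bigr)=-1$ directly from the four-term local cohomology sequence induced by $0\to (S/L)(-d+r-3)\to S/J\to S/I\to 0$, using the dimension count $\binom{d-r+3}{2}-\binom{d-r+2}{2}=d-r+2>0$ at $n=-1$. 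This is self-contained modulo Theorem~\ref{4.14'' Theorem} and is arguably cleaner than the paper's argument.

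Part (c), however, has a genuine gap. Your plan is to pin down $u_i=\beta_{i,1}(S/J)$ and $v_i=\beta_{i,2}(S/J)$ by comparing two expressions for the Hilbert series of $S/J$. But the Hilbert series only determines the alternating sums $\sum_i(-1)^i\beta_{i,m-i}$ in each total degree $m$; since the rows $j=1$ and $j=2$ of the Betti table overlap ($\beta_{i+1,1}$ and $\beta_{i,2}$ both sit in total degree $i+2$), this yields only the differences $u_{i+1}-v_i$ (corrected by the known third-row contribution), never $u_i$ and $v_i$ individually. Precisely the content of the theorem beyond these differences --- the vanishing $v_i=0$ for $i\leq 2r-d-4$, hence the equalities $u_i=c_i-a_i$ in the low range, and the inequality $u_i\leq c_i$ in the upper range --- cannot be extracted from any Hilbert-series identity; it reflects the absence of consecutive cancellation in the minimal free resolution, which is extra structural information. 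The paper gets this by a different reduction: since $\depth(S/I)=2$, a general linear form $h$ is a nonzerodivisor on $S/I$ and $S/(I,h)$ is the coordinate ring of the maximal-regularity curve $C_h\subset\mathbb{H}_h$, so $\beta_{i,j}(X)=\beta_{i,j}(C_h)$, and the stated values of $u_i$ and $v_i$ are then quoted from the known Betti table of curves of maximal regularity in \cite[Theorem 1.2]{BS5} --- a result which itself rests on a graded mapping-cone analysis of the projection from the rational normal curve (this is also where the specific constants $a_i$ and $c_i$ come from), not on Hilbert functions. To repair your argument you would either need to make this hyperplane-section reduction and invoke that theorem, or supply an independent proof of the strand separation (e.g.\ via an $N_{2,p}$-type vanishing for the linear strand), neither of which the Hilbert-series comparison provides. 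Your description of the three-row shape of the Betti table is essentially right, but for the stated reason one should use Lemma~\ref{4.16'' Lemma}~(a) together with $\reg(S/J)=2$ rather than the regularity bound on $S/I$ alone, which by itself does not kill the rows $3\leq j\leq d-r+1$.
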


\begin{proof}
(a): This is clear by Theorem~\ref{4.17'' Proposition}.
\smallskip

\noindent (b): Let $S/J$ be Cohen Macaulay. Then, according to
statement (a) $S/I \cap L$ is Cohen Macaulay. Now, let $h \in
\mathbb{U}(X)$. As the ring $S/I \cap L$ is Cohen Macaulay and $h$
is a non-zero divisor with respect to $S/I \cap L$, it follows that
the homogeneous coordinate ring $S/\langle I\cap L, h\rangle$ of $(X
\cup \mathbb{F}(X)) \cap \mathbb{H}_h = C_h \cup \mathbb{L}_h$ is
Cohen-Macaulay. So, by \cite[Theorem 3.3]{BP2} we have
$\reg(S/\langle J, h \rangle) = \reg(S/\langle I\cap L, h\rangle) =
2$ and the fact that $h$ is a non-zero divisor with respect to $S/I
\cap L = S/J$ implies that $\reg(S/J) = 2$. By our hypothesis we
also have $H^2_{*}(\mathbb{P}^r,\mathcal{J}_{X \cup \mathbb{F}(X)})
= H^2(S/I \cap L) = 0$. So, by Theorem~\ref{4.14'' Theorem} (e) (1)
and Theorem~\ref{3.4' Theorem} (b)(2) the values of
$h^2(\mathbb{P}^r,\mathcal{J}_X(n))$ are as stated above.
\smallskip

\noindent (c): Let $d \leq 2r-5$ and let $h \in \mathbb{U}(X)$.  As
$\depth(X) > 1$, the ring $S/\langle I,h \rangle$ is the homogeneous
coordinate ring of the curve $C_h$ in $S$. As $h$ is a non-zero
divisor with respect to $S/I$,  we have $\beta^S_{i,j}(S/I) =
\beta_{i,j}(S/\langle I, h \rangle$ for all $i, j\geq 1$. Now our
claim follows immediately by the approximation of the Betti numbers
$\beta_{i,j}(S/\langle I, h \rangle) = \beta_{i,j}(C_h)$ of the
homogeneous the curve of maximal regularity $C_h \subset
\mathbb{H}_h$ given in \cite[Theorem 1.2]{BS5}.
\end{proof}

We now briefly revisit the special case of surfaces $X \subset \mathbb{P}^r$ of degree $r+1$.

\begin{remark} \label{3.3' Remark} (s. \cite{B2}, \cite{BS6})
(A) Assume that $r \geq 5$ and let our surface $X \subset \mathbb{P}^r$ be of degree $r+1$. Then,
we can distinguish $9$ cases, which show up by their numerical invariants
as presented in the following table. Here $\sigma(X)$ denotes the \textit{sectional
genus} of $X$, that is the arithmetic genus of the generic hyperplane section curve $C_h \quad
(h \in \mathbb{U}(X))$ or equivalently, the sectional genus of the polarized surface
$(X, \mathcal{O}_X(1))$ in the sense of Fujita \cite{Fu}.

\[
\begin{tabular}{| c | c | c | c | c | c | c |}
    \hline
    $\rm{Case}$ & $\sreg(X)$ & $\depth(X)$
    & $\sigma(X)$ & $\e(X)$ & $h^1_A(1)$ & $h^1_A(2)$  \\ \hline
    $1$ &$2$ &$3$ &$2$ &$0$ &$0$ &$0$ \\ \hline
    $2$ &$3$ &$2$ &$1$ &$0$ &$0$ &$0$ \\ \hline
    $3$ &$3$ &$2$ &$1$ &$1$ &$0$ &$0$ \\ \hline
    $4$ &$3$ &$1$ &$1$ &$0$ &$1$ &$0$ \\ \hline
    $5$ &$3$ &$2$ &$0$ &$2$ &$0$ &$0$ \\ \hline
    $6$ &$3$ &$1$ &$0$ &$1$ &$1$ &$\leq 1$ \\ \hline
    $7$ &$3$ &$1$ &$0$ &$0$ &$2$ &$\leq 2$ \\ \hline
    $8$ &$4$ &$2$ &$0$ &$3$ &$0$ &$0$ \\ \hline
    $9$ &$4$ &$1$ &$0$ &$0$ &$2$ &$3$  \\ \hline
    \end{tabular}
\]
\\
The case $9$ occurs only if $r = 5$. In \cite{B2} and \cite{BS6} we listed indeed two more cases $10$ and $11$, of which we did not know at that time, whether they might
occur at all. For these two cases we had $\sreg(X) = 4 = d-r+3$ and ${\rm e}(X) \in \{1,2\}$. As these surfaces would be of maximal sectional regularity, this would
contradict Corollary~\ref{4.15'' Corollary}. So, surfaces which fall under the cases $10$ and $11$ cannot occur at all. In the case $9$ we have ${\rm e}(X) = 0$, and hence
by Corollary~\ref{4.15'' Corollary} and Theorem~\ref{4.11'' Theorem} we must have $r = 5$ and $\mathbb{F}(X) = S(1,1,1)$ in this case. \\
(B) In view of Theorem~\ref{3.2' Theorem}, the surfaces of types 8 and 9 are of particular
interest, as they are the ones of maximal sectional regularity within all the 9 listed types.
Observe, that among all surfaces $X$ of degree $r+1$ in $\mathbb{P}^r$, those of type
8 are precisely the ones $X$ which are of maximal sectional regularity and of arithmetic depth
$\geq 2$. If $r \geq 6$, the surfaces of type 8 are precisely the ones which are of maximal sectional regularity.\\
(C) Observe, that in the cases 5 -- 9 we have $\sigma(X) = 0$. This means, that the surfaces which fall
under these 5 types are all sectionally rational and have finite non-normal locus. So, by Theorem~\ref{2.3' Theorem}, these surfaces are almost
non-singular projections of a rational normal surface scroll $\widetilde{X} = S(a,r+1-a)$ with $0 \leq a \leq \frac{r+1}{2}$, even if they are cones
(see \cite[Corollary 5.11]{BS5} for the non-conic case). So, according to Theorem~\ref{2.8' Proposition} the surfaces $X$ of types 5 -- 9 all satisfy
the Eisenbud-Goto inequality $\reg(X) \leq 4$, with equality in the cases 8 and 9 (see Theorem~\ref{3.2' Theorem} (b)). In the cases 1 -- 5, the values of
$h^i(\mathbb{P}^r, \mathcal{J}_X(n)) =: h^i(S/I)_n \quad (i=1,2, n\in \mathbb{Z})$ (see \cite[Reminder 2.2 (C),(D)]{BS6}) show, that $\reg(X) = 3$.
In the case 6 we may have $\reg(X) = 3$ whereas in the case 7, we know even that $\reg(X)$ my take both values $3$ and $4$
(see \cite[Example 3.5, Examples 3.4 (A),(B), (C)]{BS6}). This shows in particular, that there are sectionally rational surfaces $X \subset \mathbb{P}^r$
of degree $r+1$ with finite non-normal locus and $\sreg(X) < \reg(X)$.\\
\end{remark}

We now make explicit the Betti numbers of the surfaces $X \subset \mathbb{P}^r$ of degree $r+1$ which are of maximal sectional regularity and of arithmetic depth $1$,
thus of the surfaces which fall under the type 8 of Remark~\ref{3.3' Remark}.

\begin{corollary} \label{3.5' Corollary}
Assume that the surface $X \subset \mathbb{P}^r_K$ is of degree $r+1$. Then
\begin{itemize}
        \item[{(a)}] The following conditions are equivalent:
                \begin{itemize}
                                                \item[\rm{(i)}] The surface $X$ is of type 8.
                        \item[\rm{(ii)}] $\e(X) = 3$.
                        \item[\rm{(iii)}] $\sreg(X) = 4$ and $\depth(X) =2$.
                        \item[\rm{(iv)}] $\sreg(X) = 4$ and $X$ does not fall under the case 9 of
                                                Remark~\ref{3.3' Remark}.
                \end{itemize}
        \item[{(b)}] If the above equivalent conditions hold, and in the notations of Theorem
                \ref{3.4' Theorem} for all $i\in\{1,\ldots,r\}$ we have
                \[
                \Tor^S_i(K,S/I) = K^{u_i}(-i-1)\oplus K^{v_i}(-i-2)\oplus K^{\binom{r-2}{i-1}}(-i-3)
                \]
                with
                \begin{align*}
                        u_1 &=  \binom{r-1}{2}-3, \\
                                                u_i &=  (r-1)\binom{r-2}{i}-\binom{r-2}{i+1}-3\binom{r-2}{i-1}\mbox{ for } 2\leq i \leq r-4,\\
                                                u_{r-3} &\in \{0,r-2\} \mbox{ and } \quad u_i  = 0 \mbox{ for } i\geq r-2.
                                \end{align*}
                and
                                \begin{align*}
                        v_i & =  0 \mbox{ for } 1 \leq i \leq r-5\mbox {and } i\geq r-1,\\
                                                v_{r-4} & =  u_{r-3} + (r-1)\binom{r-2}{2}-3\binom{r-2}{3}-r+2,\\
                                                v_{r-3} & = 2r-4, \\
                                                v_{r-2} & = 3.
                \end{align*}
\end{itemize}
\end{corollary}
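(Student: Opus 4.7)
The plan for part (a) is to read the four equivalences off the classification table in Remark~\ref{3.3' Remark}, together with Corollary~\ref{4.15'' Corollary} which rules out the two hypothetical cases 10 and 11. The $\e(X)$-column of that table shows that $\e(X) = 3$ singles out row 8; this yields (i) $\Leftrightarrow$ (ii). Likewise the pair $(\sreg(X),\depth(X)) = (4,2)$ occurs only in row 8, since sectional regularity $4$ is exhibited only by rows 8 and 9 and row 9 has arithmetic depth $1$; this yields (i) $\Leftrightarrow$ (iii). The equivalence (iii) $\Leftrightarrow$ (iv) is then immediate, since removing case 9 from the rows with $\sreg(X) = 4$ leaves only case 8.

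For part (b), the strategy is to apply Theorem~\ref{3.4' Theorem}(c) with $d = r+1$. The required hypotheses are met: by (a)(iii), $\sreg(X) = 4 = d-r+3$, which combined with Proposition~\ref{3.1'' Proposition}(a) shows that $X$ is of maximal sectional regularity, and $\depth(X) = 2$ holds by assumption. For $r \geq 6$ the numerical condition $d = r+1 \leq 2r-5$ is satisfied, and substituting $d = r+1$ into
\[
a_i = (d-r+1)\binom{r-1}{i} + \binom{r-2}{i-1}, \qquad c_i = (d-1)\binom{r-2}{i} - \binom{r-2}{i+1}
\]
and applying Pascal's identity $\binom{r-1}{i} = \binom{r-2}{i} + \binom{r-2}{i-1}$ reduces $u_i = c_i - a_i$ (for $2 \leq i \leq 2r-d-3 = r-4$) to the stated combinatorial expression; the closed-form values of $u_1$, $v_{r-4}$, $v_{r-3}$, $v_{r-2}$ and the vanishing of the remaining $u_i, v_i$ follow from the corresponding assertions in Theorem~\ref{3.4' Theorem}(c) after the same substitution.

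The main obstacle is the borderline case $r = 5$, where $d = 6$ exceeds the bound $2r-5$ and Theorem~\ref{3.4' Theorem}(c) does not formally apply. I would handle it as follows: since $\depth(X) = 2$, any $h \in \mathbb{U}(X)$ is a non-zero divisor on $S/I$, so that $\beta_{i,j}(X) = \beta_{i,j}(C_h)$ for the generic hyperplane section curve $C_h \subset \mathbb{P}^4$, which has degree $6$ and maximal regularity $4$. Invoking \cite[Theorem 1.2]{BS5} --- the very input that drives Theorem~\ref{3.4' Theorem}(c) --- one reads off the Betti table of $C_h$ and verifies by direct substitution that for $r = 5$ the listed formulas collapse consistently (the interval $2 \leq i \leq r-4$ becomes empty, and the alternative $u_{r-3} = u_2 \in \{0, r-2\}$ reflects the two admissible Betti shapes of a degree-$6$ curve of maximal regularity in $\mathbb{P}^4$).
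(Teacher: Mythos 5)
Part (a) of your proposal is fine and coincides with the paper's own (very brief) argument: the four equivalences are read off the table in Remark~\ref{3.3' Remark}~(A), with Corollary~\ref{4.15'' Corollary} disposing of the phantom cases 10 and 11.

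Part (b), however, has a genuine gap. First, for $r \geq 6$ Theorem~\ref{3.4' Theorem}~(c) with $d = r+1$ does \emph{not} deliver all the stated data: in the tail range $2r-d-2 = r-3 \leq i \leq r-1$ it only provides the inequality $u_i \leq c_i$, and it gives nothing that pins down $u_{r-3} \in \{0, r-2\}$, $u_{r-2} = u_{r-1} = 0$, $v_{r-1} = 0$, or the exact values $v_{r-3} = 2r-4$ and $v_{r-2} = 3$ (its own formula $v_{r-2} = d-r$ would even give a different number). The paper supplies this missing tail information by passing to the generic hyperplane section curve $C_h \subset \mathbb{H}_h = \mathbb{P}^{r-1}$ --- which has degree $r+1 = (r-1)+2$, i.e.\ is a curve of degree $s+2$ in $\mathbb{P}^s$ --- and invoking the exact Betti-number computation of \cite[Theorem 6.12]{BS1} for such curves; your proposal never brings in this ingredient for $r \geq 6$, so the assertion that the remaining values ``follow from the corresponding assertions in Theorem~\ref{3.4' Theorem}~(c) after the same substitution'' is not correct. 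Second, your treatment of $r = 5$ rests on applying \cite[Theorem 1.2]{BS5} to a curve of degree $6$ and maximal regularity in $\mathbb{P}^4$; but the hypothesis $d \leq 2r-5$ in Theorem~\ref{3.4' Theorem}~(c) is there precisely because that curve-level approximation does not cover this range (here $d = 6 = 2r-4$), so you are invoking it exactly where it is unavailable. The paper's proof instead uses \cite[Theorem 6.12]{BS1} for $r=5$ as well, which applies to $C_h$ for every $r$ since $\deg C_h = (r-1)+2$. The correct skeleton is: reduce to $C_h$ via $\depth(X) = 2$ (as you do), apply \cite[Theorem 6.12]{BS1} to obtain the refined tail of the resolution, and use Theorem~\ref{3.4' Theorem}~(c) only for the middle range when $r \geq 6$.
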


\begin{proof}
Statement (a) follows easily on use of the table in Remark~\ref{3.3'
Remark} (A).
\smallskip

\noindent (b): As $\depth(X) = 2$, the surface $X$ has the same
Betti numbers as its hyperplane section curve $C_h \subset
\mathbb{H}_h = \mathbb{P}^{r-1}$  ($h \in \mathbb{U}(X)$). As $X$ is
of maximal sectional regularity, we may apply \cite[Theorem
6.12]{BS1} to the curve $C_h$ in order to obtain information on the
Betti numbers of $X$. In particular we get that $u_{r-3} \in \{0,
r-3\}$. If $r = 5$ our claim follows easily from the mentioned
theorem of \cite{BS1}. If $r \geq 6$ we may conclude directly by
Theorem \ref{3.4' Theorem} (d).
\end{proof}

The last main result of this section characterizes non-conic surfaces of maximal sectional regularity in terms of projections of smooth rational surface scrolls which
are generically injective along appropriate effective divisors on these scrolls. We first prove an auxiliary result.

\begin{lemma}\label{5.x'' Lemma}
Let $d > 2, s > 1$, let $\widetilde{X} \subset \mathbb{P}^{d+1}$ be a smooth rational normal surface scroll and let $\mathbb{K} = \mathbb{P}^s \subset \mathbb{P}^{d+1}$
be such that $\widetilde{X} \cap \mathbb{K} \subset \mathbb{K}$ is a subscheme of dimension $1$ and degree $\geq s$. Then
$$\deg(\widetilde{X} \cap \mathbb{K}) = s \mbox{ and } \widetilde{X} \cap \mathbb{K} \in {\rm Div}(\widetilde{X}).$$
\end{lemma}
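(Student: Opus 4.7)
The plan is to reduce the statement to an analysis of effective divisor classes on $\widetilde{X}$ together with a global-generation argument on $\mathbb{P}^1$. First, note that necessarily $s\le d$, since $s=d+1$ would force $\mathbb{K}=\mathbb{P}^{d+1}$ and $\widetilde{X}\cap\mathbb{K}=\widetilde{X}$ would have dimension $2$. Write $\widetilde{X}\cong\mathbb{P}(\mathcal{O}_{\mathbb{P}^1}(a)\oplus\mathcal{O}_{\mathbb{P}^1}(d-a))$ with $1\le a\le d-a$, let $\varphi:\widetilde{X}\to\mathbb{P}^1$ denote the ruling, and use $H,F$ as in Notation and Reminder~\ref{4.x Notation and Reminder}. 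Put $C:=\widetilde{X}\cap\mathbb{K}$ and let $D\subseteq C$ be the unmixed $1$-dimensional part of $C$, which is an effective Cartier divisor on the smooth surface $\widetilde{X}$. Since embedded $0$-dimensional primary components of $C$ contribute only to the constant term of the Hilbert polynomial, $\deg D=\deg C\ge s$; and $D\subseteq\mathbb{K}$ gives $\langle D\rangle\subseteq\mathbb{K}$, so $\dim\langle D\rangle\le s$.

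Next I would determine the class of $D$ by a case analysis on $D\sim mH+nF$, using $\varphi_*\mathcal{O}_{\widetilde{X}}(kH+\ell F)=(\operatorname{Sym}^k\mathcal{E})\otimes\mathcal{O}_{\mathbb{P}^1}(\ell)$ (for $k\ge 0$, and $0$ for $k<0$, where $\mathcal{E}=\mathcal{O}(a)\oplus\mathcal{O}(d-a)$) to compute $h^0(\mathcal{O}_{\widetilde{X}}(H-D))$; by linear normality of $\widetilde{X}$, this number equals the codimension of $\langle D\rangle$ in $\mathbb{P}^{d+1}$. If $m\ge 2$, or if $m=1$ and $n>0$, then $h^0(\mathcal{O}(H-D))=0$, so $\langle D\rangle=\mathbb{P}^{d+1}$, contradicting $\dim\langle D\rangle\le s\le d$. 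If $m=0$, then $D$ is a sum of $n$ fibers and a direct computation of $h^0(\mathcal{O}(H-nF))$ gives $\deg D\le (s+1)/2<s$ in every subcase, contradicting $\deg D\ge s$. If $m=1$ and $n\le 0$, then $h^0(\mathcal{O}(-nF))=-n+1$ yields $\dim\langle D\rangle=d+n=\deg D$; combined with $s\le\deg D$ and $\dim\langle D\rangle\le s$, this forces $\deg D=s$, $\langle D\rangle=\mathbb{K}$, and $D\sim H+(s-d)F$.

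Finally I would show $C=D$ scheme-theoretically, which simultaneously establishes both assertions of the lemma. The ideal sheaf $\mathcal{I}_{C,\widetilde{X}}$ is generated by the restrictions $\bar\ell_1,\dots,\bar\ell_{d+1-s}\in H^0(\widetilde{X},\mathcal{O}(H))$ of $d+1-s$ linear forms defining $\mathbb{K}$. Each $\bar\ell_i$ vanishes on $D$, hence lies in $H^0(\mathcal{I}_{D,\widetilde{X}}\otimes\mathcal{O}(H))\cong H^0(\mathcal{O}_{\widetilde{X}}((d-s)F))$, a space of dimension $d-s+1$. By non-degeneracy of $\widetilde{X}\subset\mathbb{P}^{d+1}$ the $\bar\ell_i$ are linearly independent, so they form a basis of $H^0(\mathcal{O}((d-s)F))$. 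Since $d-s\ge 0$, the line bundle $\mathcal{O}((d-s)F)=\varphi^*\mathcal{O}_{\mathbb{P}^1}(d-s)$ is globally generated, so this basis generates it stalkwise. Twisting by $\mathcal{O}(-H)$, the $\bar\ell_i$ generate the invertible ideal sheaf $\mathcal{O}_{\widetilde{X}}(-D)=\mathcal{I}_{D,\widetilde{X}}$, whence $\mathcal{I}_{C,\widetilde{X}}=\mathcal{I}_{D,\widetilde{X}}$ and $C=D$ is a Cartier divisor of degree $s$.

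The main obstacle is the divisor-class elimination in the second paragraph; its crux is the identity $\dim\langle D\rangle=\deg D$ for $D\sim H+nF$ with $n\le 0$, which pins down both the class and the linear span of the pure part of $C$ simultaneously. The concluding identification $\mathcal{I}_C=\mathcal{O}(-D)$ then reduces to the global generation of $\varphi^*\mathcal{O}_{\mathbb{P}^1}(d-s)$, which relies crucially on $s\le d$.
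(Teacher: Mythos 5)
Your proposal is correct, but it takes a genuinely different route from the paper. The paper argues by contradiction in two short steps: if $\widetilde{X}\cap\mathbb{K}$ were not a divisor, factoriality of the local rings of the smooth surface would force an embedded (associated) point $\widetilde{x}$, and a hyperplane $\mathbb{P}^{s-1}\subset\mathbb{K}$ through $\widetilde{x}$ meeting $\widetilde{X}$ finitely would then, by Lemma~\ref{4.12'' Lemma}, cut out a scheme of length strictly greater than $\deg(\widetilde{X}\cap\mathbb{K})\geq s$ --- contradicting the fact that a finite $(s-1)$-plane section of a rational normal scroll has length at most $s$; the same slicing with a general $\mathbb{P}^{s-1}\subset\mathbb{K}$ rules out $\deg(\widetilde{X}\cap\mathbb{K})>s$. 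You instead split off the pure one-dimensional part $D$, pin down its class by playing $\deg D\geq s$ against $\dim\langle D\rangle\leq s$ via $h^0(\mathcal{O}_{\widetilde{X}}(H-D))$ and linear normality, and then absorb any embedded components by showing that the $d+1-s$ restricted linear forms are forced to be a basis of $H^0(\mathcal{O}_{\widetilde{X}}((d-s)F))$ and hence generate $\mathcal{I}_D$ stalkwise. Your route is longer and more computational, and one intermediate claim is stated slightly loosely (in the case $D\sim nF$ with $a<n\leq d-a$ the contradiction comes from $\dim\langle D\rangle=a+n>\deg D$ rather than from $\deg D\leq(s+1)/2$), but it is self-contained --- it avoids both Lemma~\ref{4.12'' Lemma} and the length bound for finite linear sections of scrolls --- and it yields more: it identifies the class $\widetilde{X}\cap\mathbb{K}\in|H+(s-d)F|$ and shows $\langle\widetilde{X}\cap\mathbb{K}\rangle=\mathbb{K}$, facts which the paper has to re-derive separately in the proof of Theorem~\ref{5.x^iv Theorem}.
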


\begin{proof}
Assume, that $C := \widetilde{X}\cap \mathbb{K} \notin {\rm Div}(\widetilde{X})$. As $\widetilde{X}$ is smooth, this means that $C$ is not a Cartier
divisor so that the local vanishing ideal $\mathcal{J}_{C,\widetilde{x}} \subset \mathcal{O}_{\widetilde{X},\widetilde{x}}$ of $C$ is not principal for some closed point
$\widetilde{x} \in C$.  As $\mathcal{O}_{\widetilde{X},\widetilde{x}}$ is a local factorial domain of dimension $2$ and ${\rm height}(\mathcal{J}_{C,\widetilde{x}}) = 1$,
it follows that
$$\mathfrak{m}_{\widetilde{X},\widetilde{x}} \in {\rm Ass}_{\mathcal{O}_{\widetilde{X},\widetilde{x}}}\big(\mathcal{O}_{\widetilde{X},\widetilde{x}}
/\mathcal{J}_{C,\widetilde{x}}\big)$$
and hence that $\widetilde{x} \in {\rm Ass}(\mathcal{O}_{\widetilde{X}}/\mathcal{J}_C) = {\rm Ass}_C(\mathcal{O}_C)$. Now, there is a space $\mathbb{H} = \mathbb{P}^{s-1}
\subset \mathbb{K} = \mathbb{P}^s$ such that $\widetilde{x} \in \mathbb{H}$ and $\dim(\widetilde{X} \cap \mathbb{H}) = 0$. According to Lemma~\ref{4.12'' Lemma} we now get
$$\infty > \#(\widetilde{X} \cap \mathbb{H}) = \#(C \cap \mathbb{H}) > s.$$
As $\widetilde{X} \subset \mathbb{P}^{d+1}$ is a smooth rational
normal scroll and $\mathbb{H} \subset \mathbb{P}^{d+1}$ is an
$(s-1)$-space, this is a contradiction, and hence $C \in {\rm
Div}(\widetilde{X})$.

Assume now, that $\deg(C) > s$. Then for a general $(s-1)$-space
$\mathbb{H} \subset \mathbb{K}$ we have again the previous
inequalities and hence a contradiction. Therefore
$\deg(\widetilde{X} \cap \mathbb{K}) = \deg(C) = s$.
\end{proof}

\begin{theorem}\label{5.x^iv Theorem} Let $5 \leq r < d$ and assume that the surface $X \subset \mathbb{P}$ is non-conic, has degree $d$ and is of maximal sectional
regularity.
\begin{itemize}
\item[\rm{(a)}] The following statements are equivalent:
                \begin{itemize}
                \item[\rm{(i)}] $\mathbb{F}(X) = \mathbb{P}^2$.
                \item[\rm{(ii)}] $X = \widetilde{X}_{\Lambda}$, where $\widetilde{X} \subset \mathbb{P}^{d+1}$ is a smooth rational normal surface scroll and $\Lambda =
                                 \mathbb{P}^{d-r}$ is disjoint to $\widetilde{X}$ and contained in the linear span $\langle D \rangle = \mathbb{P}^{d-r+3} \subset
                                 \mathbb{P}^{d+1}$ of a divisor $D \in |H + (3-r)F|$ (where $H$ and $F$ are defined as in Notation and Reminder~
                                 \ref{4.x Notation and Reminder}) such that the restriction
                                 $$\pi'_{\Lambda} \upharpoonright :\langle D \rangle \setminus \Lambda \twoheadrightarrow \mathbb{P}^2$$
                                 of the projection map $\pi'_{\Lambda}: \mathbb{P}^{d+1} \setminus \Lambda \twoheadrightarrow \mathbb{P}^r$ is generically injective along $D$.
                 \end{itemize}
\item[\rm{(b)}] If the equivalent conditions (i) and (ii) of statement (a) hold, we have
                \begin{itemize}
                \item[\rm{(1)}] $\langle D \rangle = \mathbb{E} := \overline{(\pi'_{\Lambda})^{-1}\big(\mathbb{F}(X)\big)}$;
                \item[\rm{(2)}] $D =  (\pi_{\Lambda})^{-1}\big(X \cap \mathbb{F}(X)\big)= \widetilde{X} \cap \mathbb{E} = C + \sum_{j=1}^s m_j\mathbb{L}_j$,
                                where $C \subset \widetilde{X}$ is a curve section, $\mathbb{L}_1,\ldots,\mathbb{L}_s$ are pairwise different ruling lines of $\widetilde{X}$
                                and $m_1,\ldots,m_s$ are positive integers with $\sum_{j=1}^s m_j = d-r-\deg(C)+3$.
                \end{itemize}
\end{itemize}
\end{theorem}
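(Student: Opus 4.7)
The plan is to derive both implications of (a) by connecting $X$ to its canonical projecting scroll and to read off the structural claims in (b) from the same construction. Throughout, let $\widetilde{X} = S(a, d-a) \subset \mathbb{P}^{d+1}$ and $\Lambda = \mathbb{P}^{d-r} \subset \mathbb{P}^{d+1}$ be the smooth rational normal surface scroll and the disjoint projection center supplied by Theorem~\ref{3.2' Theorem}~(a)(3), so that $X = \widetilde{X}_\Lambda$ and the finite morphism $\pi_\Lambda : \widetilde{X} \twoheadrightarrow X$ is the normalization (hence birational) with $\Sing(\pi_\Lambda) = X \setminus \Reg(X)$ finite.

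For the implication (i) $\Rightarrow$ (ii), set $\mathbb{F} := \mathbb{F}(X) = \mathbb{P}^2$, $C := X \cap \mathbb{F}$ and $\mathbb{E} := \overline{(\pi'_\Lambda)^{-1}(\mathbb{F})}$, so that $\Lambda \subset \mathbb{E} = \mathbb{P}^{d-r+3}$, and define $D := \widetilde{X} \cap \mathbb{E}$, which is the scheme-theoretic preimage $\pi_\Lambda^{-1}(C)$. By Theorem~\ref{4.14'' Theorem}~(a) the curve $C$ has degree $d-r+3$, and the birational finite map $\pi_\Lambda \upharpoonright_D : D \twoheadrightarrow C$ forces $\dim D = 1$ and $\deg D = d-r+3$; Lemma~\ref{5.x'' Lemma} then makes $D$ a Cartier divisor on $\widetilde{X}$. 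A general ruling of $\widetilde{X}$ is not contained in $\mathbb{E}$ (else $\widetilde{X}$ would be degenerate in $\mathbb{P}^{d+1}$), so meets $\mathbb{E}$ transversally in one point, giving $D \cdot F = 1$ and hence $D \in |H + (3-r)F|$. The crucial span identification $\langle D \rangle = \mathbb{E}$ (which also proves claim (b)(1)) comes from combining $H - D \sim (r-3)F$ with the short exact sequence
\begin{equation*}
0 \longrightarrow \mathcal{O}_{\widetilde{X}}(H - D) \longrightarrow \mathcal{O}_{\widetilde{X}}(H) \longrightarrow \mathcal{O}_D(H) \longrightarrow 0
\end{equation*}
and projective normality of $\widetilde{X}$: this yields $\dim \langle D \rangle = d + 1 - h^0(\mathcal{O}_{\widetilde{X}}((r-3)F)) = d + 1 - (r-2) = d - r + 3$, so $\langle D \rangle = \mathbb{E}$ and in particular $\Lambda \subset \langle D \rangle$, while generic injectivity of $\pi'_\Lambda \upharpoonright : \langle D \rangle \setminus \Lambda \twoheadrightarrow \mathbb{P}^2$ along $D$ is inherited from the birationality of $\pi_\Lambda$.

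For (ii) $\Rightarrow$ (i), put $\mathbb{F} := \pi'_\Lambda(\langle D \rangle \setminus \Lambda) = \mathbb{P}^2$; generic injectivity of $\pi'_\Lambda \upharpoonright$ along $D$ makes $\pi_\Lambda(D) \subseteq X \cap \mathbb{F}$ a curve of degree $\deg D = d - r + 3$, so Lemma~\ref{4.4'' Lemma}~(a) forces $\mathbb{F}(X) = \mathbb{F} = \mathbb{P}^2$. The remaining identifications in (b)(2), namely $D = \pi_\Lambda^{-1}(X \cap \mathbb{F}(X)) = \widetilde{X} \cap \mathbb{E}$, are immediate from the forward construction. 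To obtain the decomposition $D = C + \sum_j m_j \mathbb{L}_j$, split $D = \sum n_i E_i$ into prime components: the relation $1 = D \cdot F = \sum n_i (E_i \cdot F)$ with nonnegative summands forces exactly one prime component $C$ to satisfy $C \cdot F = 1$ with multiplicity one (a curve section in the sense of Notation and Remark~\ref{4.6'' Notation and Remark}~(C)), the remaining components being distinct rulings $\mathbb{L}_j$ with multiplicities $m_j > 0$, and $\deg D = d-r+3$ supplies $\sum m_j = d - r - \deg(C) + 3$. The main subtlety throughout is the identification $\langle D \rangle = \mathbb{E}$: purely projective reasoning using that $\pi_\Lambda(D) = C$ is non-degenerate in $\mathbb{F}$ gives only the weaker $\langle D \rangle + \Lambda = \mathbb{E}$, and it is the scroll-specific cohomological count via $h^0(\mathcal{O}_{\widetilde{X}}((r-3)F)) = r-2$ that closes this gap.
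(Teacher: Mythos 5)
Your overall strategy coincides with the paper's: both directions run through $\mathbb{E} := \overline{(\pi'_{\Lambda})^{-1}(\mathbb{F}(X))}$, $D := \widetilde{X}\cap\mathbb{E}$, Lemma~\ref{5.x'' Lemma}, and Lemma~\ref{4.4'' Lemma}~(a). However, there is a gap at the single most important intermediate step, the identification of the divisor class of $D$. You claim $D\cdot F = 1$ because a general ruling $F$ is not contained in $\mathbb{E}$ and ``so meets $\mathbb{E}$ transversally in one point.'' A line not contained in a linear subspace of codimension $r-2\geq 3$ meets it in \emph{at most} one point; a general such line meets it in none. So your argument only yields $D\cdot F\leq 1$, i.e.\ $D\sim aH+bF$ with $a\in\{0,1\}$. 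The case $a=0$ --- where $D$ is a sum of $d-r+3$ ruling lines counted with multiplicity --- is a genuine possibility that must be excluded, and nothing in your write-up does so. The paper rules it out by observing that such a union of rulings would span a linear subspace of dimension strictly greater than $d-r+3=\dim\mathbb{E}$, contradicting $\langle D\rangle\subseteq\mathbb{E}$ (this is the ``$t=0$'' case in the paper's decomposition $D=\sum n_iC_i+\sum m_j\mathbb{L}_j$). Since everything downstream --- $D\in|H+(3-r)F|$, your computation of $\dim\langle D\rangle$, and the component analysis in (b)(2) --- is conditioned on $D\cdot F=1$, you must supply this exclusion.

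Apart from that, your route differs from the paper's in two ways worth recording. First, once $D\cdot F=1$ is secured, your derivation of $D=C+\sum_j m_j\mathbb{L}_j$ from $1=\sum_i n_i(E_i\cdot F)$ is cleaner than the paper's, which separately argues that there is only one non-ruling component (two such components would force a general ruling to meet $\mathbb{E}$ in two points and hence lie in it) and that its multiplicity is one (via a tangent-plane argument). Second, your computation $\dim\langle D\rangle=d+1-h^0(\mathcal{O}_{\widetilde{X}}((r-3)F))=d-r+3$, using the linear normality of $\widetilde{X}$ and the sequence relating $\mathcal{J}_D(1)$ to $\mathcal{O}_{\widetilde{X}}(H-D)$, is a genuinely different and more robust argument than the paper's, which reads the span off the explicit decomposition; your closing remark that purely projective reasoning only gives $\langle D\rangle+\Lambda=\mathbb{E}$ correctly identifies why such an input is needed. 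The remaining pieces --- (ii)$\Rightarrow$(i) via Lemma~\ref{4.4'' Lemma}~(a) and the reading-off of statement (b) from the forward construction --- agree with the paper.
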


\begin{proof}:
(a): (i) $\Rightarrow$ (ii): According to Theorem~\ref{3.4' Theorem}
(a) (3) we may write $X = \widetilde{X}_{\Lambda}$, where $S(a,d-a)
= \widetilde{X} \subset \mathbb{P}^{d+1}$ is a smooth rational
surface scroll and $\Lambda = \mathbb{P}^{d-r} \subset
\mathbb{P}^{d+1}$ is disjoint to $X$ and the induced projection
morphism $\pi_{\Lambda}: \widetilde{X} \twoheadrightarrow X$ is
almost non-singular.

Now, let $\mathbb{E} :=
\overline{(\pi'_{\Lambda})^{-1}(\mathbb{F}(X))} = \mathbb{P}^{d-r+3}
\subset \mathbb{P}^{d+1}$. Then $D: = \widetilde{X} \cap \mathbb{E}
= (\pi_{\Lambda})^{-1}(X \cap \mathbb{F}(X)) \subset \mathbb{E}$ is
a subscheme of dimension $1$ and degree $\geq d-r+3$. By
Lemma~\ref{5.x'' Lemma} it follows that $D \in {\rm
Div}(\widetilde{X})$ and $D \subset \mathbb{E}$ is of degree
$d-r+3$.

Now, we write
$$D = \sum_{i=1}^t n_iC_i + \sum_{j=1}^s m_j\mathbb{L}_j$$
with $s,t \in \mathbb{N}_0$, $n_1,\ldots,n_t ; m_1,\ldots,m_s \in \mathbb{N}$, with pairwise distinct prime divisors $C_i \in {\rm Div}(\widetilde{X})$ and pairwise distinct
ruling lines $\mathbb{L}_1,\ldots,\mathbb{L}_s$ of $\widetilde{X}$ such that
$$\sum_{i=1}^t n_i \deg(C_i) + \sum_{j=1}^s m_j = \deg(D) = d-r+3.$$
Now, $t = 0$ would imply that $\sum_{j=1}^s m_j = d-r+3$, and hence
that $d-r+3 < \dim(\langle \sum_{j=1}^s m_i\mathbb{L}_j \rangle) =
\dim(\langle D \rangle) \leq \dim(\mathbb{E}) = d-r+3$. This
contradiction shows that $t > 0$.

As $D = \widetilde{X} \cap \mathbb{E}$ and each of the  curves $C_i$
intersects all ruling lines $\mathbb{L}$ of $\widetilde{X}$, we must
have $t = 1$. So, writing $C := C_1$ and $n:=n_1$ we obtain $D = nC
+ \sum_{j=1}^s m_j \mathbb{L}_j$ with $n > 0$. As $\langle C \rangle
\subseteq \langle D \rangle$ is of dimension $\leq d$, the curve $C
\subset \widetilde{X}$ is a section of $\widetilde{X}$.

Moreover, we must have $n = 1$. Otherwise we would get ${\rm
mult}_C(\widetilde{X} \cap \mathbb{E}) = n > 1$, so that the tangent
plane ${\rm T}_p(\widetilde{X})$ of $\widetilde{X}$ at a general
point $p \in C$ would be a $2$-plane contained in $\mathbb{E}$. But
this would imply the contradiction, that a general ruling line
$\mathbb{L}$ of $\widetilde{X}$ is contained in $\mathbb{E} =
\mathbb{P}^{d-r+3}$. Therefore, indeed $n = 1$, and hence
$$D = C + \sum_{j=1}^s m_j \mathbb{L}_j \mbox{ with } \sum_{j=1}^s m_j  = d-r+3 - c, \mbox{ where }c:= \deg(C).$$
It follows, that $D$ is linearly equivalent to $C + (d-r+3+c)F$ --
with $C\cdot F = 1$ -- and hence that $D \in |H + (3-r)F|$.

In particular $\langle D \rangle \subset \mathbb{E}$ is of dimension
$d-r+3$, so that $\langle D \rangle = \mathbb{E} =
\mathbb{P}^{d-r+3}$. As $\pi_{\Lambda}:\widetilde{X}
\twoheadrightarrow X$ is almost non-singular, the restriction
$\pi'_{\Lambda}\upharpoonright: \mathbb{E} = \langle D \rangle
\twoheadrightarrow  \mathbb{F}(X) = \mathbb{P}^2$ of
$\pi'_{\Lambda}: \mathbb{P}^{d+1} \twoheadrightarrow \mathbb{P}^r$
is generically injective along $D$.

(ii) $\Rightarrow$ (i): Let $X = \widetilde{X}_{\Lambda}$, where
$$\widetilde{X} \subset \mathbb{P}^{d+1}, \quad \Lambda = \mathbb{P}^{d-r} \subset \langle D \rangle = \mathbb{P}^{d-r+3} \mbox{ and } D \in |H + (3-r)F|$$
are as in statement (ii). Then $D \subset \widetilde{X} \cap
\mathbb{E}$ is of dimension $1$ and of degree $d-r+3$, and
$\mathbb{F} := \pi'_{\Lambda}(\mathbb{E} \setminus \Lambda)$ is a
$2$-plane in $\mathbb{P}^r$. As $\pi'_{\Lambda}\upharpoonright
\langle D \rangle \twoheadrightarrow \mathbb{F}$ is generically
injective, the projection map $\pi_{\Lambda}:
\widetilde{X}\twoheadrightarrow X$ is birational and hence
$\pi_{\Lambda}(D) \subset X \cap \mathbb{F}$ is of dimension $1$ and
of degree $d-r+3$. Therefore $X \cap \mathbb{F}$ is of dimension $1$
and of degree $d-r+3$. By Lemma~\ref{4.4'' Lemma} (ii) it follows
that $\mathbb{F}(X) = \mathbb{F} = \mathbb{P}^2$.
\smallskip

\noindent (b): The proof of statement (a) shows in particular that
condition (i) of that statement implies the two claims (1) and (2)
of statement (b).
\end{proof}

\section{Examples and Problems}
\label{7. Examples and Problems}

\noindent The central aim of this paper is to investigate
non-degenerate irreducible surfaces $X \subset \mathbb{P}^r$ of
degree $d$ with maximal sectional regularities, hence surfaces for
which the dimension $\mathfrak{d}(X)$ of the set
$\Sigma^{\circ}_{d-r+3}(X)$ of proper extremal secant lines takes
its maximally possible value. Our first aim in this present section
is to provide examples of smooth surfaces of extremal regularity for
which $\mathfrak{d}(X) \in \{-1,0,1\}$. In particular we shall see
that there are many such surfaces with $\mathfrak{d}(X) = -1$, that
is without extremal secant lines at all. The fact, that there are a
lot of surfaces of extremal regularity without extremal secant lines
does not correspond to the general expectation which arose by the
work of Gruson-Lazarsfeld-Peskine \cite{GruLPe}.

\begin{construction and examples}\label{7.1 Construction and Examples} (A) Let $a,b,d \in \mathbb{N}$ with $a \leq b$, let $r:=a+b+3$, assume that $d > r$ and consider the
smooth threefold rational normal scroll of degree $a+b+1 = r-2$
\begin{equation*}
Z := S(1,a,b) \subset \mathbb{P}^r.
\end{equation*}
Let $H, F \in {\rm Div}(Z)$ respectively be a hyperplane section and a ruling plane of $Z$, so that each divisor on $Z$ is linearly equivalent to $mH+nF$ for some integers
$m,n$. Let $X \subset \mathbb{P}^r$ be an non-degenerate irreducible surface of degree $d$ which is contained in $Z$ as a divisor linearly equivalent to $H+(d-r+2)F$.
Then one can easily see that
\begin{equation*}
h^0 (X, \mathcal{O}_X (1)) = h^0 (Z, \mathcal{O}_Z (1))+d-r+1 = d+2
\end{equation*}
This means that the linearly normal embedding $X \subset \mathbb{P}^{d+1}$ of $X$ by means of $\mathcal{O}_X (1)$ is of minimal degree and $X$
is the image of a surface $\widetilde{X} \subset \mathbb{P}^{d+1}$ of minimal degree under an isomorphic linear projection $\widetilde{X} \stackrel{\cong}{\rightarrow} X$,
hence
$$X = \widetilde{X}_{\Lambda}, \mbox{ with } \Lambda = \mathbb{P}^{d-r} \subset \mathbb{P}^{d+1} \mbox{ and }\Lambda \cap \Sec(\widetilde{X}) = \emptyset.$$
Keep in mind, that $\widetilde{X}$ is either a smooth rational
normal surface scroll, a cone over a rational normal curve or the
Veronese surface in $\mathbb{P}^5$. As $d+1 > 5$ and as a cone does
not admit a proper isomorphic linear projection, $\widetilde{X}
\subset \mathbb{P}^{d+1}$ is a smooth rational normal surface
scroll. In particular $X$ must be smooth and sectionally rational.
In addition, $X$ is not $(d-r+1)$-normal (see Lemma \ref{4.10'''
Lemma}) and hence ${\rm reg}(X) \geq d-r+3$. By Corollary~\ref{2.6'
Corollary} it follows that $\reg(X) = d-r+3$. In particular, $X$ is
a surface of extremal regularity.

Moreover, each line section $\mathbb{L}$ of $Z$ intersects the
divisor $H + (d-r+2)F$ in $d-r+2$ ruling planes and the hyperplane
section $H$. As $X \in |H+(d-r+2)F|$ it follows that $\mathbb{L}$ is
either contained in $X$ or else a proper $(d-r+3)$-secant line.
Conversely, each proper $(d-r+3)$-secant line to $X \in |H +
(d-r+2)F|$ must intersect $d-r+2$ ruling planes and the hyperplane
section $H$, and hence must be contained in $Z$ as a line section.
Therefore we have
$$\Sigma^{\circ}_{d-r+3}(X) = \{\mathbb{L} \in \mathbb{G}(1,\mathbb{P}^r) \mid \mathbb{L} \mbox{ is a line section of } Z \mbox{ and } \mathbb{L} \nsubseteq X\}.$$

\noindent (B) Suppose first that $a \geq 2$ and that the unique line
section $S(1) = \mathbb{L}$ of $Z$ is contained in $X$. Then,
according to the observation made in part (A), we have
$$\Sigma^{\circ}_{d-r+3}(X) = \emptyset, \mbox{ and hence } \mathfrak{d}(X) = {}^*\mathfrak{d}(X) = -1.$$
So, in this case $X \subset \mathbb{P}^r$ is a smooth surface of
extremal regularity having no proper extremal secant line at all.

\noindent (C) Suppose now that yet $a \geq 2$, but that the unique
line section $S(1) = \mathbb{L}$ of $Z$ is not contained in $X$.
Then, according to part (A) we have
$$\Sigma^{\circ}_{d-r+3}(X) = \{ \mathbb{L} \}, \mbox{ and hence }  \mathfrak{d}(X) = {}^*\mathfrak{d}(X) = 0.$$
In particular, in this case we and also have
$$ {}^*\Sigma^{\circ}_{d-r+3}(X)= \{\mathbb{L}\}.$$

\noindent (D) Suppose next, that $a =1$ and $b \geq 2$. Then, by
part (A)
$$\Sigma^{\circ}_{d-r+3}(X) = \{\mathbb{L} \in \mathbb{G}(1.\mathbb{P}^r) \mid \mathbb{L} \subset S(1,1) \mbox{ and } \mathbb{L} \nsubseteq X\}.,$$
So, in this case the set of proper extremal secant lines $\Sigma^{\circ}_{d-r+3}(X)$ to $X$ is obtained from the set
$\{\mathbb{L} \in \mathbb{G}(1.\mathbb{P}^r) \mid \mathbb{L} \subset S(1,1)\}$ of all line sections of $Z$ by removing finitely many lines. Therefore
$$ \mathfrak{d}(X) = {}^*\mathfrak{d}(X) = 1,$$
and for the closed union of all proper extremal secant to $X$ lines it holds
$$\mathbb{F}^{+}(X) = \overline{\bigcup_{\mathbb{L} \in \Sigma^{\circ}_{d-r+3}(X)}\mathbb{L}} = S(1,1) \subset Z.$$

\noindent (E) Suppose finally that $a =b=1$ and hence $r=5$, so that
$Z = S(1,1,1) \subset \mathbb{P}^5$.  By statement (A) we now have
$$\Sigma^{\circ}_{d-r+3}(X) = \{\mathbb{L} \in \mathbb{G}(1.\mathbb{P}^r) \mid \mathbb{L} \subset S(1,1,1) = Z \mbox{ and } \mathbb{L} \nsubseteq X\}.$$
Now, the set $\Sigma^{\circ}_{d-r+3}(X)$ of all proper extremal secant lines to $X$ is obtained by removing from the two-dimensional set
$\{\mathbb{L} \in \mathbb{G}(1.\mathbb{P}^r) \mid \mathbb{L} \subset S(1,1,1) = Z\}$ of all lines contained in $Z$ the at most one-dimensional family of all lines
$\mathbb{L}$ contained in $X$. Therefore
$$ \mathfrak{d}(X) = {}^*\mathfrak{d}(X) = 2,$$
and the extended extremal variety of $X$ -- thus the closed union of all proper extremal secant lines to $X$ -- coincides with the extremal variety of $X$, whence
$$\mathbb{F}(X) = \mathbb{F}^{+}(X) = \overline{\bigcup_{\mathbb{L} \in \Sigma^{\circ}_{d-r+3}(X)}\mathbb{L}} = S(1,1,1) = Z.$$
Observe, that now the surface $X$ is of maximal sectional regularity and falls under the exceptional case in which the extremal variety $\mathbb{F}(X)$ of $X$ is not a plane.
\end{construction and examples}

Next, we aim to present examples which concern surfaces $X$ of maximal sectional regularity which fall under the general case, in which the extremal variety $\mathbb{F}(X)$
of $X$ is a plane. So, let $5 \leq r < d$ and let $X \subset \mathbb{P}^r$ be a non-degenerate irreducible surface of degree $d$, which is of maximal sectional regularity.
We suppose that $\mathbb{F}(X)=\mathbb{P}^2$ and set $Y := X \cup \mathbb{F}(X)$. Then, Theorem~\ref{4.14'' Theorem} (f) can be tabulated
as follows where $\tau(X)$ denotes the pair $({\rm depth}(X),{\rm depth}(Y))$ of the arithmetic depths of $X$ and $Y$:

\[
\begin{tabular}{|c||c|c|c|}\hline
             {$d$}& {$r+1 \leq d \leq 2r-4$}  &{$2r-3 \leq d \leq 3r-7$}  &$d \geq 3r-6$\\\hline
             {$\tau(X)$}& {$(2,3)$} &$(1,1),(2,2),(2,3)$ &$(1,1),(2,2)$  \\\hline
\end{tabular}
\]
\vspace{0.2 cm}

\noindent The aim of this section is to provide examples of surfaces
$X \subset \mathbb{P}^r$ of maximal sectional regularity, having all
possible $\tau(X)$ listed in the above
table. Throughout this section we assume that the characteristic of $K$ is zero. \\

\begin{construction and examples}\label{7.2 Construction and Examples} (A) We throughout assume that the characteristic of the base field $K$ is zero. Let $a,b$ be
integers such that $3 \leq a \leq b$ and consider the standard
smooth rational normal surface scroll
\begin{equation*}
\widetilde{X} := S(a,b) \subset \mathbb{P}^{a+b+1}.
\end{equation*}
We shall construct our examples by varying $a$ and $b$ and by
projecting $S(a,b)$ from appropriate linear subspaces of
$\mathbb{P}^{a+b+1}$. The occurring Betti diagrams have been
computed by means of the Computer Algebra System Singular
\cite{GrPf}.

\noindent (B) Let $a \leq b$ and let $X = \widetilde{X}_{\Lambda}
\subset \mathbb{P}^{b+3}$ be the linear projection of $\widetilde{X}
= S(a,b)$ from a general $(a-3)$-dimensional subspace $\Lambda =
\mathbb{P}^{a-2}$ of $\langle S(a) \rangle = \mathbb{P}^a$. Observe
that $X \subset \mathbb{P}^{b+3}$ is a non-degenerate irreducible
surface of degree $a+b$. Moreover, the linear projection
$\pi'_{\Lambda}:\mathbb{P}^{a+b+1} \setminus \Lambda
\twoheadrightarrow \mathbb{P}^{b+3}$ in question maps the subspace
$\mathbb{P}^a \subset \mathbb{P}^{a+b+1}$ onto a plane $\mathbb{P}^2
= \mathbb{F} \subset \mathbb{P}^{b+3}$, so that the the rational
normal curve $S(a) = S(a,b) \cap \mathbb{P}^a \subset \mathbb{P}^a$
is mapped birationally onto the plane curve $C_a \cap
\mathbb{F}\subset \mathbb{F} := \mathbb{P}^2$ of degree $a$. As
$\deg(C_a) = a = a+b-(b+3)+3$, it follows by Lemma~\ref{4.4'' Lemma}
(a) that $X \subset \mathbb{P}^{b+3}$ is of maximal sectional
regularity and $\mathbb{F}(X) = \mathbb{F} = \mathbb{P}^2$. By
Theorem~\ref{2.8' Proposition} (a) we have ${\rm reg}(X)=a$.
Finally, as $(b+3)+1 \leq a+b \leq 2(b+3) - 4$ it follows from
Theorem~\ref{4.14'' Theorem} (f) that
$$\tau(X) = (2,3).$$

\noindent (C) Assume that $b \geq 3$ and let $X =
\widetilde{X}_{\Lambda} \subset \mathbb{P}^{a+3} = \mathbb{P}^r$ be
the non-degenerate irreducible surface of degree $d := a+b$ which is
obtained by a linear projection of $\widetilde{X} = S(a,b)$ from a
general $(b-3)$-dimensional subspace $\Lambda$ of $\langle S(b)
\rangle = \mathbb{P}^b \subset \mathbb{P}^{a+b+1}$. The underlying
projection $\pi'_{\Lambda}: \mathbb{P}^{a+b+1} \setminus \Lambda
\twoheadrightarrow \mathbb{P}^r$ maps the space $\mathbb{P}^b$ onto
a plane $\mathbb{F} = \mathbb{P}^2 \subset \mathbb{P}^{a+3}$ and
induces again a birational map from the rational normal curve $S(b)
= \widetilde{X} \cap \mathbb{P}^b$ onto the plane curve $C_b = X
\cap \mathbb{F} \subset \mathbb{F} = \mathbb{P}^2$ of degree $b$. It
follows as in part (B) that $X \subset \mathbb{P}^r$ is a
surface of maximal sectional regularity, that $\mathbb{F}(X) = \mathbb{F} = \mathbb{P}^2$ and that ${\rm reg}(X)=b = d-r+3$.\\
Now, by Theorem~\ref{4.14'' Theorem} (f) we obtain:
$$ \mbox{ If } b \leq a+2, \mbox{ then } \tau(X) = (2,3).$$

\noindent (D) To provide examples for further pairs $\tau(X)$, we
assume that $a+3 \leq b$ and we vary the center $\Lambda =
\mathbb{P}^{b-3} \subset \mathbb{P}^b = \langle S(b) \rangle$ of our
projection such the curve $C_b \subset \mathbb{F} = \mathbb{P}^2$
has the appropriate shape. To do so, we first consider the canonical
isomorphisms
$$\kappa: \mathbb{P}^1 \stackrel{\cong}{\rightarrow} S(b), \quad (s:t) \mapsto (0:\ldots:0:s^b:s^{b-1}t:\ldots:st^{b-1}:t^b) \in \mathbb{P}^{a+b+1}.$$
Then we choose a homogeneous polynomial $f \in K[s,t]$ of degree $b$ which is not divisible by $s$ and by $t$. Then we chose our center of projection $\Lambda
= \mathbb{P}_{b-3} \subset \mathbb{P}^b =\langle S(b) \rangle$ such that the composition
$$\pi \circ \kappa: \mathbb{P}^1 \rightarrow C_b \subset \mathbb{F} = \mathbb{P}^2$$
of the previously defined map $\kappa: \mathbb{P}^1 \rightarrow S(b)$ with the induced finite birational projection morphism
$\pi = \pi_{\Lambda} \upharpoonright: S(b) \twoheadrightarrow C_b$ is given by
$$\pi \circ \kappa = [s^b, f, t^b]: \mathbb{P}^1 \longrightarrow \mathbb{P}^2, \quad \big((s:t) \mapsto (s^b : f(s,t) : t^b)\big).$$
We denote the corresponding projected image $\widetilde{X}_{\Lambda} \subset \mathbb{P}^r$ of $\widetilde{X} = S(a,b)$ by $X_f$ and keep in mind that according to part
(C) the surface $X_f \subset \mathbb{P}^{a+3} = \mathbb{P}^r$ is of degree $d = a+b$ and of maximal sectional regularity.
Clearly, we may identify $\mathbb{F} \subset \mathbb{P}^{a+3} = \mathbb{P}^r$ with the subspace whose non-vanishing homogeneous coordinates sit at the last three places.
If we do so, we may write
$$X_f := \{\big(us^a:us^{a-1}t:\ldots:ust^{a-1}:ut^a:vs^b:vf(s,t):vt^b\big) \mid (s,t), (u,v) \in K^2 \setminus\{(0,0)\}\}.$$
After an appropriate choice of $f$, this latter presentation is accessible to syzygetic computations.
\end{construction and examples}
\vspace{0.2 cm}

\begin{example}\label{7.3 Example}
Let $(a,b)=(3,5)$ and $f := s^4t+s^3t^2+s^2t^3+st^4$. Then $X_f \subset \mathbb{P}^6$ is of degree $d = 8 (= 2r-4)$ and the graded
Betti numbers $\beta_{i,j} = \beta_{i,j}(X)$ of $X$ are as presented in the following table.

\begin{table}[hbt]
\begin{center}
\begin{tabular}{|c|c|c|c|c|c|c|c|}\hline
                          \multicolumn{2}{|c||}{$i$}& $1$&$2$&$3$&$4$&$5$&$6$  \\\cline{1-8}
             \multicolumn{2}{|c||}{$\beta_{i,1}$}& $6$&$8$&$3$&$0$&$0$&$0$ \\\cline{1-1}
             \multicolumn{2}{|c||}{$\beta_{i,2}$}& $4$&$12$&$12$&$4$&$0$&$0$\\\cline{1-1}
             \multicolumn{2}{|c||}{$\beta_{i,3}$}& $0$&$0$&$0$&$0$&$0$&$0$\\\cline{1-1}
             \multicolumn{2}{|c||}{$\beta_{i,4}$}& $1$&$4$&$6$&$4$&$1$&$0$\\\cline{1-8}
             \end{tabular}
\end{center}
\end{table}

By Lemma~\ref{4.16'' Lemma} (a) it follows from this graded Betti diagram of $X$, that
$$\tau(X)=(2,3).$$
\end{example}

\begin{example}\label{7.4 Example}
Let $(a,b)=(3,8)$ and consider $X_{f_i} \subset \mathbb{P}^6$ $(i=1,2,3)$ for the following choices of $f_i$:
\begin{enumerate}
\item[$(1)$] $f_1 = s^7t+s^6t^2+s^5t^3+s^4t^4+s^3t^5+s^2t^6+st^7$,
\item[$(2)$] $f_2 = s^7t+s^6t^2+s^5t^3+s^4t^4+s^3t^5+s^2t^6$, and
\item[$(3)$] $f_3 = s^7t+s^6t^2+s^5t^3+s^4t^4$.
\end{enumerate}
Then $X_{f_i} \subset \mathbb{P}^6$ is of degree $d = 11 \quad (= 2r-1 = 3r-7)$ for all $i=1,2,3$. The graded Betti diagrams of $X_{f_1}$, $X_{f_2}$ and
$X_{f_3}$ are given respectively in the three tables below.

\[
\begin{tabular}{|c|c|c|c|c|c|c|c|}\hline
                          \multicolumn{2}{|c||}{$i$}& $1$&$2$&$3$&$4$&$5$&$6$  \\\cline{1-8}
             \multicolumn{2}{|c||}{$\beta_{i,1}$}& $6$&$8$&$3$&$0$&$0$&$0$ \\\cline{1-1}
             \multicolumn{2}{|c||}{$\beta_{i,2}$}& $0$& $0$&$0$& $0$&$0$&$0$  \\\cline{1-1}
             \multicolumn{2}{|c||}{$\beta_{i,3}$}& $4$&$12$&$12$&$4$&$0$&$0$\\\cline{1-1}
             \multicolumn{2}{|c||}{$\beta_{i,4}$}& $0$&$0$&$0$&$0$&$0$&$0$\\\cline{1-1}
             \multicolumn{2}{|c||}{$\beta_{i,5}$}& $1$&$4$&$6$&$4$&$1$&$0$\\\cline{1-1}
             \multicolumn{2}{|c||}{$\beta_{i,6}$}& $0$&$0$&$0$&$0$&$0$&$0$\\\cline{1-1}
             \multicolumn{2}{|c||}{$\beta_{i,7}$}& $1$&$4$&$6$&$4$&$1$&$0$\\\cline{1-8}
             \end{tabular}
\]


\[
\begin{tabular}{|c|c|c|c|c|c|c|c|}\hline
                          \multicolumn{2}{|c||}{$i$}& $1$&$2$&$3$&$4$&$5$&$6$  \\\cline{1-8}
             \multicolumn{2}{|c||}{$\beta_{i,1}$}& $5$&$5$&$0$&$0$&$0$&$0$ \\\cline{1-1}
             \multicolumn{2}{|c||}{$\beta_{i,2}$}& $1$& $0$&$1$& $0$&$0$&$0$  \\\cline{1-1}
             \multicolumn{2}{|c||}{$\beta_{i,3}$}& $1$&$9$&$11$&$4$&$0$&$0$\\\cline{1-1}
             \multicolumn{2}{|c||}{$\beta_{i,4}$}& $4$&$18$&$32$&$28$&$12$&$2$\\\cline{1-1}
             \multicolumn{2}{|c||}{$\beta_{i,5}$}& $0$&$0$&$0$&$0$&$0$&$0$\\\cline{1-1}
             \multicolumn{2}{|c||}{$\beta_{i,6}$}& $0$&$0$&$0$&$0$&$0$&$0$\\\cline{1-1}
             \multicolumn{2}{|c||}{$\beta_{i,7}$}& $1$&$4$&$6$&$4$&$1$&$0$\\\cline{1-8}
             \end{tabular}
\]

\[
\begin{tabular}{|c|c|c|c|c|c|c|}\hline
                          \multicolumn{2}{|c||}{$i$}& $1$&$2$&$3$&$4$&$5$  \\\cline{1-7}
             \multicolumn{2}{|c||}{$\beta_{i,1}$}& $3$&$2$&$0$&$0$&$0$ \\\cline{1-1}
             \multicolumn{2}{|c||}{$\beta_{i,2}$}& $10$& $27$&$24$& $7$&$0$  \\\cline{1-1}
             \multicolumn{2}{|c||}{$\beta_{i,3}$}& $0$&$0$&$0$&$0$&$0$\\\cline{1-1}
             \multicolumn{2}{|c||}{$\beta_{i,4}$}& $0$&$0$&$0$&$0$&$0$\\\cline{1-1}
             \multicolumn{2}{|c||}{$\beta_{i,5}$}& $0$&$0$&$0$&$0$&$0$\\\cline{1-1}
             \multicolumn{2}{|c||}{$\beta_{i,6}$}& $0$&$0$&$0$&$0$&$0$\\\cline{1-1}
             \multicolumn{2}{|c||}{$\beta_{i,7}$}& $1$&$4$&$6$&$4$&$1$\\\cline{1-7}
             \end{tabular}
\]
By Lemma~\ref{4.16'' Lemma} (a) we can see from these tables that
$$\tau (X_{f_1}) = (2,2), \quad \tau(X_{f_2}) = (1,1) \mbox{ and  } \tau (X_{f_3}) = (2,3).$$
\end{example}

\begin{example}\label{7.5 Example}
Let $(a,b)=(3,9)$ and consider $X_f \subset \mathbb{P}^6$, $(i=1,2)$ for the two choices
\begin{enumerate}
\item[$(1)$] $f_1 = s^8t+s^7t^2+s^6t^3+s^5t^4+s^4t^5+s^3t^6+s^2t^7+st^8$ and
\item[$(2)$] $f_2 = s^8t+s^7t^2+s^6t^3+s^5t^4+s^4t^5+s^3t^6+s^2t^7$.
\end{enumerate}
Then $X_{f_i} \subset \mathbb{P}^6$ is of degree $d = 12 \quad (= 2r = 3r-6)$ for $i=1,2$. The graded Betti diagrams of $X_{f_1}$ and $X_{f_2}$ are
given respectively in the tables below, respectively.

\[
\begin{tabular}{|c|c|c|c|c|c|c|c|}\hline
                          \multicolumn{2}{|c||}{$i$}& $1$&$2$&$3$&$4$&$5$&$6$  \\\cline{1-8}
             \multicolumn{2}{|c||}{$\beta_{i,1}$}& $6$&$8$&$3$&$0$&$0$&$0$ \\\cline{1-1}
             \multicolumn{2}{|c||}{$\beta_{i,2}$}& $0$& $0$&$0$& $0$&$0$&$0$  \\\cline{1-1}
             \multicolumn{2}{|c||}{$\beta_{i,3}$}& $2$&$4$&$0$&$0$&$0$&$0$\\\cline{1-1}
             \multicolumn{2}{|c||}{$\beta_{i,4}$}& $1$&$4$&$10$&$6$&$1$&$0$\\\cline{1-1}
             \multicolumn{2}{|c||}{$\beta_{i,5}$}& $0$&$0$&$0$&$0$&$0$&$0$\\\cline{1-1}
             \multicolumn{2}{|c||}{$\beta_{i,6}$}& $1$&$4$&$6$&$4$&$1$&$0$\\\cline{1-1}
             \multicolumn{2}{|c||}{$\beta_{i,7}$}& $0$&$0$&$0$&$0$&$0$&$0$\\\cline{1-1}
             \multicolumn{2}{|c||}{$\beta_{i,8}$}& $1$&$4$&$6$&$4$&$1$&$0$\\\cline{1-8}
             \end{tabular}
\]
\[
\begin{tabular}{|c|c|c|c|c|c|c|c|}\hline
                          \multicolumn{2}{|c||}{$i$}& $1$&$2$&$3$&$4$&$5$&$6$  \\\cline{1-8}
             \multicolumn{2}{|c||}{$\beta_{i,1}$}& $5$&$5$&$0$&$0$&$0$&$0$ \\\cline{1-1}
             \multicolumn{2}{|c||}{$\beta_{i,2}$}& $0$& $0$&$1$& $0$&$0$&$0$  \\\cline{1-1}
             \multicolumn{2}{|c||}{$\beta_{i,3}$}& $5$&$15$&$15$&$5$&$0$&$0$\\\cline{1-1}
             \multicolumn{2}{|c||}{$\beta_{i,4}$}& $0$&$0$&$0$&$0$&$0$&$0$\\\cline{1-1}
             \multicolumn{2}{|c||}{$\beta_{i,5}$}& $5$&$23$&$42$&$38$&$17$&$3$\\\cline{1-1}
             \multicolumn{2}{|c||}{$\beta_{i,6}$}& $0$&$0$&$0$&$0$&$0$&$0$\\\cline{1-1}
             \multicolumn{2}{|c||}{$\beta_{i,7}$}& $0$&$0$&$0$&$0$&$0$&$0$\\\cline{1-1}
             \multicolumn{2}{|c||}{$\beta_{i,8}$}& $1$&$4$&$6$&$4$&$1$&$0$\\\cline{1-8}
             \end{tabular}
\]

\vspace{.2cm} 
By Lemma~\ref{4.16'' Lemma} (a) we can verify that
$$\tau(X_{f_1}) = (2,2) \mbox{ and } \tau(X_{f_2}) = (1,1).$$
\end{example}

\begin{remark}
The previously given examples of Betti tables have been computed
over a base field of characteristic $0$. It turns out, that in some
of these examples the Betti table varies with the characteristic of
the base field. The SINGULAR files with our computations are
available upon request to the authors.
\end{remark}

\begin{problem and remark}\label{7.6 Problem and Remark} (A) Let $5 \leq r <d$ and let $X \subset \mathbb{P}^r$ be a non-degenerate surface of degree $d$ which is of maximal
sectional regularity. We consider the three conditions
\begin{itemize}
\item[\rm{(i)}]   $N(X) \leq d-r$.
\item[\rm{(ii)}]  $\beta_{1,d-r+2}(X) = 1$.
\item[\rm{(iii)}] $\mathbb{F}(X) = \mathbb{P}^2$.
\end{itemize}

\noindent (B) By the implication (i) $\Rightarrow$ (iii) given in
statement (a) of Theorem~\ref{4.17'' Proposition} we have the
implication (i) $\Rightarrow$ (ii) among the above three conditions.
By the implication (i) $\Rightarrow$ (ii) given in statement (b) of
Theorem~\ref{4.17'' Proposition} we have the implication (ii)
$\Rightarrow$ (iii) among the above three conditions.

We expect, that the converse of both implications holds but could
not prove this. So we aim to pose the problem
\begin{itemize}
\item[\rm{(P)}] \textit{ Are the three conditions} (i), (ii) \textit{and} (iii) \textit{of part} (A) \textit{equivalent ?}
\end{itemize}
Observe, that in view of Theorem~\ref{4.11'' Theorem} (e) an affirmative answer to this would also answer affirmatively the question, whether the extended extremal variety
and the extremal variety of $X$ coincide, hence the question whether
\begin{itemize}
\item[\rm{(Q)}] $\quad \mathbb{F}^+(X) = \mathbb{F}(X)$ \textit{?}
\end{itemize}
\end{problem and remark}
\vspace{0.4 cm}

{\bf Acknowledgement.} The first named author thanks to the Korea University Seoul, to the Mathematisches Forschungsinstitut Oberwolfach, to the Martin-Luther 
Universit\"at Halle and to the Deutsche Forschungsgemeinschaft for their hospitality and the financial support provided during the preparation of this work. The
second named author was supported by the Nation Researcher program 2010-0020413 of NRF and MEST. The third named author was supported by the NRF-DAAD GEnKO Program 
(NRF-2011-0021014). The fourth named author thanks to the Korea University Seoul, to the Mathematisches Forschungsinstitut Oberwolfach and to the Deutsche 
Forschungsgemeinschaft for their hospitality respectively financial support offered during the preparation of this work.

\medskip

\end{document}